\newtheorem{theorem}{Theorem}
\newtheorem{lemma}[theorem]{Lemma}
\newtheorem{proposition}[theorem]{Proposition}
\newtheorem{corollary}[theorem]{Corollary}
\theoremstyle{definition}
\newtheorem{definition}[theorem]{Definition}
\newtheorem{notation}[theorem]{Notation}
\theoremstyle{remark}
\newtheorem{remark}[theorem]{Remark}
\newtheorem{example}[theorem]{Example}
\newcommand{\de}{\mathbf{i}}
\newcommand{\ex}{\mathbf{e}}
\newcommand{\ic}{\mathbf{c}}
\newcommand{\gras}[1]{{\mathbb #1}} 
\newcommand{\B}{\gras{B}}
\newcommand{\N}{\gras{N}}
\newcommand{\Z}{\gras{Z}} 
\newcommand{\Q}{\gras{Q}} 
\newcommand{\R}{\gras{R}} 
\newcommand{\C}{\gras{C}}  
\newcommand{\bK}{\gras{K}}
\newcommand{\cA}{\mathcal{A}}
\newcommand{\cB}{\mathcal{B}}
\newcommand{\cD}{\mathcal{D}}
\newcommand{\cE}{\mathcal{E}}
\newcommand{\cF}{\mathcal{F}}
\newcommand{\cH}{\mathcal{H}}
\newcommand{\cI}{\mathcal{I}}
\newcommand{\cL}{\mathcal{L}}
\newcommand{\cN}{\mathcal{N}}
\newcommand{\cO}{\mathcal{O}}
\newcommand{\cV}{\mathcal{V}}
\title[Ultrametric spaces of branches on arborescent singularities]
  {Ultrametric spaces of branches on arborescent singularities}
\author{Evelia R. Garc\'{\i}a Barroso}
\address{Departamento de Matem\'aticas, Estad\'{\i}stica e I.O.
Secci\'on de Matem\'aticas, Universidad de La Laguna. Calle Astrof\'{\i}sico Francisco S\'anchez, 
La Laguna 38200, Tenerife, Espa\~na.}
   \email{ergarcia@ull.es}
\author{Pedro D. Gonz\'alez P\'erez} 
\address{Instituto de Ciencias Matem\'aticas (CSIC-UAM-UC3M-UCM), 
Departamento de \'Algebra , Facultad de Ciencias Matem\'aticas,
Universidad Complutense de Madrid, Plaza de las Ciencias 3, Madrid 28070, Espa\~na.}
   \email{pgonzalez@mat.ucm.es}
\author{Patrick Popescu-Pampu}
   \address{Universit{\'e} Lille, D\'epartement de Maths., B\^atiment M2\\
     Cit\'e Scientifique, 59655, Villeneuve d'Ascq Cedex, France.}
   \email{patrick.popescu@math.univ-lille1.fr}   
 \subjclass[2000]{14J17 (primary), 32S25}
\keywords{Additive distance, Branch, Hierarchy, Mumford intersection number, 
Nef cone, Normal surface singularity, Resolution, Rooted tree, Ultrametric, Valuation.}
\begin{document}

{\bf This article appeared in ``\emph{Singularities, Algebraic Geometry, Commutative Algebra  
and Related Topics.  Festschrift for Antonio Campillo on the Occasion of his 65th Birthday.}'' 
G.-M. Greuel, L. Narv\'aez and S. Xamb\'o-Descamps eds., pp. 55--106, Springer, 2018. 
https://doi.org/10.1007/978-3-319-96827-8\_3}
\bigskip

 \maketitle
 
 \bigskip
{\em \hfill This paper is dedicated to Antonio Campillo and Arkadiusz P\l oski. \smallskip} 

\begin{abstract}
      Let $S$ be a normal complex analytic surface singularity. 
    We say that $S$ is \emph{arborescent} if the dual graph of any \emph{good}  resolution of it is a tree. 
    Whenever $A,B$ are distinct branches on $S$, we denote by 
    $A \cdot B$ their intersection number in the sense of Mumford.  
    If $L$ is a fixed branch, we define  
    $U_L(A,B)= (L \cdot A)(L \cdot B)(A \cdot B)^{-1}$ when $A \neq B$ and 
    $U_L(A,A) =0$ otherwise. We generalize  
    a theorem of P\l oski concerning  smooth germs of surfaces,  
    by proving that whenever  $S$ is arborescent, then $U_L$ is an ultrametric 
    on the set of branches of $S$ different from $L$. 
    We compute the maximum of $U_L$, which gives an analog of a theorem of Teissier. 
    We show that $U_L$ encodes topological information about 
    the structure of the embedded resolutions of  any finite set of branches.   This 
    generalizes a theorem of Favre and Jonsson concerning the case when both 
    $S$ and $L$ are smooth.  We generalize also from smooth germs to 
    arbitrary arborescent ones their valuative interpretation of the 
    dual trees of the resolutions of $S$. Our proofs are based in an essential way 
    on a determinantal identity of Eisenbud and Neumann. 
\end{abstract}

\tableofcontents

\pagebreak

\section{Introduction}  
\label{sec:intro}

P\l oski proved the following theorem in his paper \cite{P 85}: 

\medskip
\begin{quotation}
  ``{\em Let $f,g,h \in \C\{x,y\}$ be irreducible power series. Then in the sequence} 
     $$ \frac{m_0(f,g)}{(\mbox{ord } f) \: (\mbox{ord } g)},   \  \  
            \frac{m_0(f,h)}{(\mbox{ord } f) \: (\mbox{ord } h)},  \  \  
              \frac{m_0(g,h)}{(\mbox{ord } g) \: (\mbox{ord } h)} $$ 
    {\em there are two terms equal and the third is not less than the equal terms. 
    Here $m_0(f,g)$ denotes the intersection multiplicity of the branches $f=0, g=0$ and} 
    $\mbox{ord } f$ {\em stands for the order of $f$. }''
\end{quotation}
\medskip

Denote by $Z_f \subset (\C^2, 0)$ the \emph{branch} (that is, the germ of irreducible curve) 
defined by the equation $f =0$. One has analogously the branches $Z_g, Z_h$. 
Looking at the inverses of the previous quotients:
  $$U(Z_f, Z_g) := \left\{ 
        \begin{array}{cl}
               \dfrac{(\mbox{ord } f) \: (\mbox{ord } g)}{m_0(f,g)}, &  \mbox{if } Z_f \neq Z_g\\
               & \\
               0,   &   \mbox{if } Z_f = Z_g 
         \end{array}   \right. ,$$
one may express P\l oski's  theorem in the following equivalent way:
   
\begin{quotation}
  {\em $U$ is an ultrametric distance on the set of branches of $(\C^2,0)$. }
\end{quotation}

Note that, if $Z_f$ and $Z_g$  are two different branches and if $l \in \C\{x,y\}$ defines 
a smooth branch transversal to $Z_f$ and $Z_g$ 
(that is, if one has $\mbox{ord} \: f = m_0(l, f)$ and $\mbox{ord} \: g = m_0(l, g)$), 
then:
  $$U(Z_f, Z_g) =   \frac{m_0(l, f) \: m_0(l, g)}{m_0(f,g)}.  $$
  
  This is the view-point we take in our paper. \emph{Instead of working with multiplicities, 
  we work with intersection multiplicities (also called intersection numbers) with a fixed branch}. 
  More precisely, we study the properties of the quotients: 
   $$ U_L(A, B):= \left\{ \begin{array}{l}
              \dfrac{(L \cdot A) \: (L \cdot B)}{A \cdot B}, \:  \mbox{ if } A \neq B, \\
              \\
              0 , \:  \mbox{ if } A = B ,
                    \end{array} \right. $$
 when $A$ and $B$ vary in the set  of branches of a normal surface singularity $S$ which 
 are different from a fixed branch $L$.            
In the previous formula, $A \cdot  B \in \Q_+^*$ denotes \emph{Mumford's intersection number} 
of \cite[II.b]{M 61} (see Definition \ref{def:mumfint}). 

   \medskip

We focus on the germs of normal surfaces which have in common with $(\C^2,0)$ the 
following crucial property:  \emph{the dual graphs of their resolutions with simple 
normal crossings are trees}. We call \emph{arborescent} the normal surface singularities 
with this property. Note that in this definition we impose no conditions on the \emph{genera} 
of the irreducible components of exceptional divisors (see also Remark \ref{rem:exarb}).   

We prove that  (see Theorem \ref{thm:ultram}):

\begin{quotation}
  {\em Let $S$ be an arborescent  singularity and let $L$ be a 
  fixed branch on it. Then, $U_L$ is an ultrametric distance on the set of 
  branches of $S$ different from $L$. }
\end{quotation}

Given a finite set $\cF$ of branches on $S$, one gets in this way an infinite family of ultrametric 
distances on it, parametrized by the branches $L$ which do not belong to  $\cF$. 
But, whenever one has an ultrametric on a finite set $\cF$, there is a canonically 
associated rooted tree whose set of leaves is $\cF$ (the interior-rooted tree of 
Definition \ref{def:asstree}). 
In our context, we show that the previous 
infinite family of ultrametrics define all the same \emph{unrooted} tree. This tree may be  
interpreted in the following way using the resolutions of $S$ 
(see Theorem \ref{thm:topint} and Remark \ref{rem:ultradual}): 

\begin{quotation}
  {\em Let $S$ be an arborescent singularity and let 
   $\cF$ be a finite set of branches on it. Let $L$ be another branch, which 
   does not belong to $\cF$. Then the interior-rooted tree associated to $U_L$ 
   is homeomorphic to the union of the geodesics joining the representative 
   points of the branches of $\cF $ in the dual graph of any 
   embedded resolution of the reduced Weil divisor whose branches are the elements 
   of $\cF $.  }
\end{quotation}

Both theorems are based on the following result (reformulated differently in 
Proposition \ref{lem:equalfund}):

\begin{quotation}
  {\em Let $S$ be an arborescent singularity. Consider a 
  resolution  of it with simple normal crossing divisor $E$. 
  Denote by $(E_v)_{v \in \cV}$ the irreducible components of $E$ and by 
  $(\check{E}_v)_{v \in \cV}$ their \emph{duals}, that is, the divisors supported by $E$ 
  such that $\check{E}_v \cdot E_w = \delta_{vw}$, 
  for any $v, w \in \cV$ and $ \delta_{vw}$ denotes the Kronecker delta. 
  Then, for any $u,v,w \in \cV$ such that $v$ belongs 
  to the geodesic of the dual tree of $E$ which joins $u$ and $w$, one has:
     $$(- \check{E}_u \cdot \check{E}_v) (- \check{E}_v \cdot \check{E}_w) = 
          (- \check{E}_v \cdot \check{E}_v) (- \check{E}_u \cdot \check{E}_w)  .$$
     }
\end{quotation}

In turn, this last result is obtained from an identity between determinants of weighted trees 
proved by Eisenbud and Neumann \cite[Lemma 20.2]{EN 85} 
(see Proposition \ref{prop:equlfund} below). 
Therefore, our proof is completely different in spirit from P\l oski's original proof, which used 
computations with Newton-Puiseux series. Instead, we work exclusively with the numbers 
$-\check{E}_u \cdot \check{E}_v$, which are positive and birationally invariant, 
in the sense that they 
are unchanged if one replaces $E_u$ and $E_v$ by their strict transforms on a higher 
resolution (see Corollary \ref{cor:invint}).

\medskip
We were also inspired by an inequality of Teissier \cite[Page 40]{T 77}:

\begin{quotation}
  {\em   Let $S$ be a normal surface singularity with marked point $O$.  If $A, B$ are two 
  distinct branches on it and $m_O$ denotes the multiplicity function at $O$, 
  then one has the inequality:
      $$\frac{m_O(A) \cdot m_O(B)}{A \cdot B} \leq m_O(S).$$
}
\end{quotation}

We prove the following analog of it in the setting of arborescent singularities 
(see Corollary \ref{cor:upbound}, in which we describe also the case of equality):

\begin{quotation}
  {\em  Whenever $L, A, B$ are three pairwise distinct branches on the arborescent  
    singularity $S$ and $E_l$ is the unique component of the exceptional 
    divisor of an embedded resolution of $L$ which intersects the strict transform of $L$, 
    one has: 
        $$U_L(A,B) \leq - \check{E}_l \cdot \check{E}_l.$$}
\end{quotation}

Then, in Theorem \ref{thm:ultraint}, we prove the following analog of the fact that 
$U_L$ is an ultrametric: 

\begin{quotation}
{\em Under the hypothesis that 
\emph{the generic hyperplane section of an arborescent singularity 
$S$ is irreducible}, the function $U_O$ defined by the left-hand 
side of Teissier's inequality is also an ultrametric. }
\end{quotation}

Our approach allows us also to extend from smooth germs  
to arbitrary arborescent singularities $S$ a valuative intepretation 
given by Favre and Jonsson \cite[Theorem 6.50]{FJ 04} 
of the natural partial order on the rooted tree defined by $U_L$.  

Namely, consider a branch $A$ on $S$ different from $L$, and an embedded resolution 
of $L + A$. As before, we denote by $(E_v)_{v \in \cV}$ the irreducible components of 
its exceptional divisor. Look at the dual graph of the total transform of $L+A$ 
as a tree rooted at the strict transform of $L$, and denote by $\preceq_L$ the 
associated partial order on its set of vertices, identified with $\cV \cup \{L, A\}$. 
To each $v \in \cV$ is associated a valuation 
$\mathrm{ord}_v^L$ of the local ring $\mathcal{O}$ of 
$S$, proportional to the divisorial valuation of $E_v$ and normalized relative to $L$. 
 Similarly, to the branch $A$  
is associated a semivaluation $\mathrm{int}_A^L$ of $\mathcal{O}$, which is also 
normalized relative to $L$, defined by $\mathrm{int}_A^L(h) = (A \cdot Z_h) \cdot (A \cdot L)^{-1}$ 
(see Definition \ref{def:exval}). Given two semivaluations 
$\nu_1$ and $\nu_2$, say that $\nu_1 \leq_{val} \nu_2$ if $\nu_1(h) \leq \nu_2(h)$ for all 
$h \in \mathcal{O}$. This is obviously a partial order on the set of semivaluations 
of $\mathcal{O}$. We prove (see Theorem \ref{thm:ordsemival}):

\begin{quotation}
  {\em   For an arborescent singularity, 
     the inequality $\mathrm{ord}_{u}^L \leq_{val} \mathrm{ord}_{v}^L$ is equivalent 
            to the inequality $u \preceq_L v$. Similarly, the inequality   
            $\mathrm{ord}_{u}^L \leq_{val} \mathrm{int}_A^L$  is equivalent 
            to the inequality $ u \preceq_L A$. 
}
\end{quotation}

Even when $S$ is smooth, this result is stronger than the result of Favre and Jonsson, 
which concerns only the case where the branch $L$ \emph{is also smooth}. In  this 
last case, our Theorem \ref{thm:topint} specializes to Lemma 3.69 of \cite{FJ 04}, 
in which $U_L(A,B)$ is  expressed in terms 
of what they call the \emph{relative skewness function} $\alpha_x$ on a 
tree of conveniently normalized valuations (we give more explanations in 
Remark \ref{rem:skewness}).

As was the case for P\l oski's treatment in \cite{P 85}, Favre and Jonsson's study 
in \cite{FJ 04} is based in an important way on Newton-Puiseux series. We avoid  
completely the use of such series and we extend  their results to 
all arborescent singularities, by using instead the dual divisors $\check{E}_u$ defined 
above. Our treatment in terms of the divisors $\check{E}_u$ and the numbers 
$-\check{E}_u \cdot \check{E}_v$ 
was inspired by the alternative presentation of the theory of \cite{FJ 04} given by 
Jonsson in \cite[Section 7.3]{J 15}, again in the smooth surface case. 

As in this last paper, 
our study could be continued by looking at the projective system of embedded 
resolutions of divisors of the form $L+C$, for varying reduced Weil divisors $C$, 
and by gluing accordingly the corresponding ultrametric spaces 
and rooted trees (see Remark \ref{rem:birat}). 
One would get at the limit a description of a quotient 
of the Berkovich space of the arborescent singularity $S$. We decided 
not to do this in this paper, in order to isolate what we believe are the most 
elementary ingredients of such a construction, which do not depend in any 
way on Berkovich theory. 

Let us mention also another difference with the treatments of smooth germs 
$S$ in \cite{FJ 04} and \cite{J 15}. In both references, the authors treat simultaneously 
the relations between triples of functions on their trees (called \emph{skewness}, 
\emph{thinness} and \emph{multiplicity}). Our paper shows that an important part 
of their theory (for instance, the reconstruction of the shape of the trees from valuation 
theory) may be done by looking at only one function (the one they call the skewness).

In the whole paper, we work for simplicity with \emph{complex} normal surface singularities. 
Note that, in fact, our techniques make nearly everything work for singularities 
which are spectra of 
normal $2$-dimensional local rings defined over algebraically closed fields of arbitrary 
characteristic. Indeed, our treatment is based on the fact that the intersection matrix of a 
resolution of the singularity is negative definite (see Theorem \ref{prop:negdef} below), 
a theorem which is true in this greater generality (see Lipman \cite[Lemma 14.1]{L 69}). 
The only exception to this possibility of extending our results to positive 
characteristic is Section \ref{aplloski}, as it uses Newton-Puiseux series, 
which behave differently in positive characteristic (see Remark \ref{Campillo}).

\medskip

The paper is structured as follows.
In Section \ref{sec:genorm} we recall standard facts about Mumford's intersection theory 
on normal surface singularities. In Section \ref{sec:treeultram} we present basic 
relations between \emph{ultrametrics}, \emph{arborescent posets}, 
\emph{hierarchies} on finite sets, \emph{trees}, 
\emph{rooted trees}, \emph{height} and \emph{depth} functions on rooted trees,  
and \emph{additive distances} on unrooted ones. Even if those relations are standard, 
we could not find them formulated in a way adapted to our purposes. For this reason 
we present them carefully. The next two sections contain our results. 
Namely,  the ultrametric spaces of branches 
of arborescent singularities are studied in Section \ref{sec:main}  and the valuative interpretations 
are developed in Section \ref{sec:valcons}.  We dedicate a special subsection 
(\ref{aplloski}) to the original case considered by P\l oski, where both $S$ and $L$ are smooth, 
by giving an alternative proof of his theorem using so-called \emph{Eggers-Wall trees}. 
Finally, Section \ref{sec:oprob} 
contains examples and a list of open problems which 
turn around the following question: \emph{is it possible to extend at least partially 
our results to some singularities which are not arborescent}? Our examples show 
that there exist normal surface singularities and branches $L$ on them 
for which $U_L$ is not even a metric.

\medskip
Since the first version of this paper, we have greatly extended its results,  
in collaboration with Matteo Ruggiero (see \cite{GGPR 17}), 
modifiying also substantially our approach: 
instead of working with rooted trees associated with ultrametrics, we  
work with unrooted trees associated with metrics satisfying the so-called 
{\em four point condition}. Nevertheless, we feel that this first approach remains 
interesting and potentially useful in other contexts.

\section{A reminder on intersection theory for normal surface singularities}
\label{sec:genorm}

In this section we introduce the basic vocabulary and properties needed in the sequel about 
complex normal surface singularities $S$ and about the branches on them. 
In particular, we recall the notions of \emph{good resolution}, 
\emph{associated dual graph}, natural pair of dual lattices and 
\emph{intersection form} on $S$. 
We explain the notion of \emph{determinant} of $S$  
and the way to define, following Mumford, a rational \emph{intersection 
number of effective divisors without common components} on $S$. 
This definition is based in turn on the definition of the \emph{exceptional transform} 
of such a divisor on any resolution of $S$. The exceptional transform belongs to the 
\emph{nef cone} of the resolution. We show that the exceptional divisors belonging to the 
interior of the nef cone are proportional to exceptional transforms of principal divisors 
on $S$, a fact which we use later in the proof of Theorem \ref{thm:ordsemival}.

\subsection{The determinant of a normal surface singularity} 
$\:$ 
\medskip

In the whole paper, $(S,O)$ denotes a {\bf complex analytic normal surface singularity}, 
that is, a germ of complex analytic normal surface.  The germ is allowed to be 
smooth, in which case it will still be called a singularity. This is a common abuse of 
language.  Most of the time we will write simply $S$ instead of $(S,O)$. 

\begin{definition}
A {\bf resolution} of $S$ is a proper bimeromorphic morphism $\pi : \tilde{S} \to S$ 
with total space $\tilde{S}$ smooth. By abuse of language, we will also say in this case 
that $\tilde{S}$ is a resolution of $S$.  
The {\bf exceptional divisor} $E  := \pi^{-1}(O)$ of the resolution is considered 
as a reduced curve.  
A resolution of $S$ is {\bf good} if its exceptional divisor has simple  normal crossings, that is, if all its components are smooth and its singularities 
are ordinary double points. 
\end{definition}

A special case of the so-called \emph{Zariski main theorem} 
(see \cite[Cor. 11.4]{H 77}) implies that $E$ is connected, 
hence the associated \emph{weighted dual 
graph} is also connected:

\begin{definition}  \label{def:dualgraph}
   Let $\pi : \tilde{S} \to S$ be a good resolution of $S$.   We denote the irreducible 
   components of its exceptional divisor $E$ by $(E_u)_{u \in \cV}$.
   The {\bf weighted dual graph} 
   $\Gamma$ of the resolution has $\cV$ as vertex set. There are no loops, but as 
   many edges between the distinct vertices $u,v$ as the intersection number 
   $E_u \cdot E_v$. Moreover, each vertex $v \in \cV$ is weighted by the self-intersection 
   number $E_v \cdot E_v$ of $E_v$ on $\tilde{S}$. 
\end{definition}

Usually one decorates each vertex $u$ also by the genus of the corresponding component 
$E_u$. \emph{We do not do this, because those genera play no role in our study.}

The set $\cV$ may be seen not only as the vertex set of the dual graph $\Gamma$, 
but also as a set of parameters for canonical bases of the two following dual lattices, introduced by 
Lipman in  \cite[Section 18]{L 69}:
\[ \Lambda := \bigoplus_{u \in \cV} \Z  \:  E_u, \:  \:  
        \check{\Lambda}:= \mbox{Hom}_{\Z} ( \Lambda, \Z).\]
   The {\bf intersection form}: 
       $$\begin{array}{cccc} 
              I:    &    \Lambda \times \Lambda &   \to &    \Z   \\ 
                     &    (D_1, D_2) &  \to    &  D_1 \cdot D_2 
            \end{array}$$ 
   is the symmetric bilinear form on $\Lambda$ which computes the intersection 
   number of compact divisors on $\tilde{S}$.   
The following fundamental theorem was proved by Du Val \cite[Section 4]{DV 44} and 
Mumford \cite[Page 6]{M 61}.  
It is also a consequence of Zariski \cite[Lemma 7.4]{Z 62}. See also a proof 
in Lipman \cite[Lemma 14.1]{L 69}, where it is explained that the theorem 
remains true for normal surface singularities defined over arbitrary algebraically 
closed fields, possibly of positive characteristic:

\begin{theorem}  \label{prop:negdef}
  The intersection form of any resolution of $S$ is negative definite. 
\end{theorem}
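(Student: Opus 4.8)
The plan is to reproduce Mumford's classical argument, which reduces the assertion to an elementary lemma about real symmetric matrices with nonnegative off-diagonal entries. Fix a resolution $\pi\colon \tilde{S}\to S$ and let $(E_u)_{u\in\cV}$ be the irreducible components of $E=\pi^{-1}(O)$; by the Zariski main theorem recalled above, $E$ is connected, which is the same as saying that the graph on $\cV$ having an edge between $u\neq v$ exactly when $E_u\cdot E_v>0$ is connected. (The resolution need not be good for this proof.)

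The key construction is the following. Pick a nonzero non-unit $f\in\cO$ and consider the divisor $D:=\mathrm{div}_{\tilde{S}}(f\circ\pi)$ of the holomorphic function $f\circ\pi$ on $\tilde{S}$. Since $f\circ\pi$ is globally defined, $\cO_{\tilde{S}}(D)$ is trivial, and hence $D\cdot E_u=0$ for every $u\in\cV$. Split $D=Z+F$, where $F=\sum_{u\in\cV}a_u E_u$ gathers the components of $D$ supported on $E$ (so that $a_u=\mathrm{ord}_{E_u}(f\circ\pi)\geq 1$, because $f$ vanishes at $O=\pi(E_u)$), and $Z$ is the strict transform of $\{f=0\}$, which is effective with no component in common with any $E_u$. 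Then $F\cdot E_u=-Z\cdot E_u\leq 0$ for all $u$, since $Z\cdot E_u\geq 0$; and because $\{f=0\}$ is a curve through $O$, its strict transform $Z$ meets $E$, so that $F\cdot E_{u_0}=-Z\cdot E_{u_0}<0$ for at least one $u_0$. Thus $F$ is an effective divisor, supported on all of $E$ with positive coefficients, with $F\cdot E_u\leq 0$ for all $u$ and strict inequality for some $u_0$.

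It then remains to prove the linear algebra lemma: if $M=(m_{uv})_{u,v\in\cV}$ is real symmetric with $m_{uv}\geq 0$ for $u\neq v$, if the associated graph (an edge whenever $m_{uv}>0$) is connected, and if there is a vector $(a_u)$ with all $a_u>0$, $\sum_v m_{uv}a_v\leq 0$ for every $u$ and $\sum_v m_{u_0v}a_v<0$ for some $u_0$, then the quadratic form $Q(x)=\sum_{u,v}m_{uv}x_ux_v$ is negative definite. For this, insert the estimates $2x_ux_v\leq \tfrac{a_v}{a_u}x_u^2+\tfrac{a_u}{a_v}x_v^2$ into $Q(x)=\sum_u m_{uu}x_u^2+\sum_{u\neq v}m_{uv}x_ux_v$; symmetrizing the double sum collapses it to $Q(x)\leq\sum_u\tfrac{x_u^2}{a_u}\bigl(\sum_v m_{uv}a_v\bigr)\leq 0$. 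In the equality case $Q(x)=0$, each AM--GM inequality used must be an equality, which forces $x_u/a_u=x_v/a_v$ whenever $\{u,v\}$ is an edge, hence $x_u=c\,a_u$ for a single constant $c$ by connectedness; then $\sum_v m_{u_0v}a_v<0$ forces $c=0$, so $x=0$. Applying this with $m_{uv}=E_u\cdot E_v$ and the numbers $a_u$ above yields the theorem (and, incidentally, recovers $E_u\cdot E_u<0$ for all $u$).

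The only genuinely substantive step is the construction in the second paragraph: recognizing that the total transform of a nonzero non-unit of $\cO$ supplies, at no cost, an effective divisor with precisely the sign pattern required by the lemma --- and, within it, verifying the strict inequality $F\cdot E_{u_0}<0$, i.e. that the strict transform of $\{f=0\}$ actually meets the exceptional locus. The AM--GM estimate and the analysis of its equality case are then routine bookkeeping.
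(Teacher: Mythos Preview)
Your argument is correct and is precisely Mumford's proof from \cite{M 61}. Note, however, that the paper does not supply its own proof of this theorem: it simply records the statement and attributes it to Du Val \cite{V 44}, Mumford \cite{M 61}, and Zariski \cite{Z 62}. So there is nothing to compare against --- you have written out in full one of the proofs the paper merely cites, and done so cleanly.
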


As a consequence, the map:
   $$\widetilde{I}: \Lambda \to \check{\Lambda}$$
induced by the intersection form $I$ is an embedding of lattices, which 
allows to see $\Lambda$ as a sublattice of finite index of $\check{\Lambda}$, and 
$\check{\Lambda}$ as a lattice of the $\Q$-vector space 
$\Lambda_{\Q} := \Lambda \otimes_{\Z} \Q$. 
In particular, the real vector space $\check{\Lambda}_{\R}$ gets identified with $\Lambda_{\R}$.  
As the intersection form $I$ extends canonically to $\Lambda_{\Q}$, one may restrict it 
to $\check{\Lambda}$. We will denote those extensions by the same symbol $I$.  
The following lemma is immediate:

\begin{lemma}  \label{lem:chardual}
   Seen as a subset of $\Lambda_{\Q}$, the lattice $\check{\Lambda}$ 
   may be characterized in the following way:
     \[
       \check{\Lambda} = \{ D \in \Lambda_{\Q} \:  | \:  D \cdot E_u \in \Z, \mbox{ for all } u \in \cV \}.
      \]
\end{lemma}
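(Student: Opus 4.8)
The plan is to unwind the identification of $\Lambda^*$ with a subset of $\Lambda_\Q$ and then check the two inclusions directly; the only real input is the nondegeneracy of the intersection form supplied by Theorem \ref{prop:negdef}. That nondegeneracy makes $\widetilde{I} : \Lambda \to \Lambda^*$ injective of finite index, so after tensoring with $\Q$ it becomes an isomorphism $\Lambda_\Q \to \mathrm{Hom}_\Z(\Lambda,\Z)\otimes_\Z\Q$, and it is by inverting this isomorphism that $\Lambda^*$ is regarded as a subset of $\Lambda_\Q$. Spelled out, a vector $D \in \Lambda_\Q$ represents a homomorphism $\varphi \in \mathrm{Hom}_\Z(\Lambda,\Z)$ exactly when $D \cdot E_u = \varphi(E_u)$ for every $u \in \cV$; by bilinearity this then holds with $E_u$ replaced by an arbitrary element of $\Lambda$.

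Granting this description, I would conclude as follows. If $D \in \Lambda^*$ corresponds to $\varphi \in \mathrm{Hom}_\Z(\Lambda,\Z)$, then $D \cdot E_u = \varphi(E_u) \in \Z$ for each $u \in \cV$, because $\varphi$ takes integer values on the basis $(E_u)_{u \in \cV}$ of $\Lambda$; hence $D$ lies in the right-hand side of the claimed equality. Conversely, if $D \in \Lambda_\Q$ satisfies $D \cdot E_u \in \Z$ for all $u \in \cV$, then the rule $E_u \mapsto D \cdot E_u$ extends $\Z$-linearly to a homomorphism $\varphi : \Lambda \to \Z$, i.e. to an element of $\Lambda^*$; by construction $D$ is the vector of $\Lambda_\Q$ representing $\varphi$, so $D \in \Lambda^*$. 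I do not expect any genuine obstacle: once the pairing description of the inclusion $\Lambda^* \subset \Lambda_\Q$ is in place, the statement is just a restatement of the definition of the dual lattice, and the single point requiring mild care is that the identification in force is the one induced by $I$, rather than the one afforded by the dual basis of $(E_u)_{u \in \cV}$ in $\mathrm{Hom}_\Z(\Lambda,\Z)$.
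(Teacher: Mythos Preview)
Your proposal is correct and is precisely the unwinding that the paper leaves implicit: the paper gives no argument beyond declaring the lemma ``immediate'', and what you wrote is exactly the immediate verification one has in mind, namely that the embedding $\Lambda^* \hookrightarrow \Lambda_\Q$ is by definition the inverse of $\widetilde{I}\otimes\Q$, so membership in $\Lambda^*$ amounts to integrality of the pairings $D\cdot E_u$.
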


Denote by $\delta_{uv}$  the Kronecker delta.  The basis of $\check{\Lambda}$ 
which is dual to the basis 
 $(E_u)_{u \in \cV}$ of $\Lambda$ may be characterized in the following way 
 as a subset of $\Lambda_{\Q}$: 

\begin{definition} \label{def:Lipbasis}
 The {\bf Lipman basis} of $\pi$ is the set $(\check{E}_u)_{u \in \cV}$ of elements of the 
 lattice $\check{\Lambda}$ defined by:
   $$ \check{E}_u \cdot E_v = \delta_{uv}, \mbox{ for all } v \in \cV.$$
\end{definition}

Definition \ref{def:Lipbasis} implies immediately that: 
  \begin{equation} \label{eq:sense1}
        E_u = \sum_{w \in \cV} (E_w \cdot E_u) \check{E}_w, \:  \mbox{ for all } u \in \cV,
  \end{equation}
and that, conversely:
    \begin{equation}  \label{eq:sense2}
          \check{E}_u = \sum_{w \in \cV} (\check{E}_w \cdot \check{E}_u) E_w, \:  
              \mbox{ for all } u \in \cV. 
  \end{equation}
  
 More generally, whenever $D \in \Lambda_{\R}= \check{\Lambda}_{\R}$, one has:
    \begin{equation}  \label{eq:sense3}
         D = \sum_{w \in \cV} (D \cdot \check{E}_w ) E_w = 
              \sum_{w \in \cV} (D \cdot E_w) \check{E}_w. 
  \end{equation}   
  
  The following result is well-known and goes back at least to Zariski \cite[Lemma 7.1]{Z 62} 
  (see it also creeping inside Artin's proof of \cite[Prop. 2 (i)]{A 66}):  
  
  \begin{proposition}  \label{prop:negcoef}
      If $D \in \Lambda_{\Q} \setminus \{0\}$ is such that $D \cdot E_v \geq 0$ 
      for all $v \in \cV$, then all the coefficients $D \cdot \check{E}_v$ of 
      $D$ in the basis $(E_v)_{v \in \cV}$ of $\Lambda_{\Q}$ are negative.  
  \end{proposition}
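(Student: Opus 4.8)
The plan is to combine the negative-definiteness of the intersection form (Theorem \ref{prop:negdef}) with the connectedness of the dual graph $\Gamma$, in the style of the classical Zariski lemma.

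First, by formula (\ref{eq:sense3}) write $D = \sum_{v \in \cV} c_v E_v$, where $c_v := D \cdot E_v^* = E_v^* \cdot D$; these $c_v$ are exactly the coefficients we must show are negative. Partition $\cV$ into $\cV_+ := \{v \in \cV \mid c_v > 0\}$, $\cV_0 := \{v \in \cV \mid c_v = 0\}$ and $\cV_- := \{v \in \cV \mid c_v < 0\}$, and set $D_+ := \sum_{v \in \cV_+} c_v E_v$ and $D_- := \sum_{v \in \cV_-} (-c_v) E_v$, so that $D_+$ and $D_-$ both have non-negative coefficients, have disjoint supports, and satisfy $D = D_+ - D_-$.

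The key step is to show $\cV_+ = \emptyset$. Since each $c_v > 0$ for $v \in \cV_+$ and $D \cdot E_v \geq 0$ by hypothesis, we get $D \cdot D_+ = \sum_{v \in \cV_+} c_v (D \cdot E_v) \geq 0$. On the other hand, $D \cdot D_+ = D_+ \cdot D_+ - D_- \cdot D_+$, and $D_- \cdot D_+ \geq 0$ because the supports $\cV_-$ and $\cV_+$ are disjoint and all off-diagonal entries $E_v \cdot E_w$ (with $v \neq w$) of the intersection matrix are non-negative. Hence $D_+ \cdot D_+ = D \cdot D_+ + D_- \cdot D_+ \geq 0$; by Theorem \ref{prop:negdef} this forces $D_+ = 0$, i.e. $\cV_+ = \emptyset$. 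In particular all $c_v \leq 0$ and $D = -D_-$.

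It remains to show $\cV_0 = \emptyset$. Suppose not. Since $D \neq 0$ we also have $\cV_- \neq \emptyset$, so $\cV = \cV_0 \sqcup \cV_-$ is a partition into two non-empty sets; connectedness of $\Gamma$ then provides a vertex $v_0 \in \cV_0$ adjacent to some vertex of $\cV_-$. Consequently $E_{v_0} \cdot D_- = \sum_{w \in \cV_-} (-c_w)(E_{v_0} \cdot E_w) > 0$, as all terms are non-negative and at least one is strictly positive. But $D_- = -D$, so $E_{v_0} \cdot D_- = -(D \cdot E_{v_0}) \leq 0$ by hypothesis, a contradiction. Therefore $\cV_0 = \emptyset$ and every coefficient $c_v = D \cdot E_v^*$ is strictly negative. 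The only delicate point is the sign bookkeeping in the third paragraph — in particular the inequality $D_- \cdot D_+ \geq 0$, which hinges on the supports being disjoint; everything else is routine.
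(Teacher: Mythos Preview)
Your argument is correct and is precisely the classical Zariski-lemma argument: split $D$ into its positive and negative parts, use negative-definiteness to kill $D_+$, then use connectedness of $\Gamma$ to rule out vanishing coefficients. The paper does not actually supply a proof of this proposition; it only cites Zariski \cite[Lemma 7.1]{Z 62} and Artin \cite[Prop.~2(i)]{A 66}, whose arguments are exactly the one you have reproduced.
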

  
  Combining the equations (\ref{eq:sense1}) and (\ref{eq:sense2}), one gets:
  
  \begin{proposition}  \label{prop:invmat}
     Once one chooses a total order on $\cV$, the matrices $(E_u \cdot E_v)_{(u,v) \in \cV^2}$ 
     and $(\check{E}_u \cdot \check{E}_v)_{(u,v) \in \cV^2}$ are inverse of each other. 
  \end{proposition}
  
  By abuse of language, we say that the two previous functions from $\cV^2$ to $\Q$ 
  are \emph{matrices}, even without a choice of total order on the index set $\cV$.  
  The intersection matrix $(E_u \cdot E_v)_{(u,v) \in \cV^2}$  has negative entries 
  on the diagonal (as a consequence of Theorem \ref{prop:negdef}) and non-negative 
  entries outside it. By contrast: 
  
  \begin{proposition}  \label{prop:stricpos}
     The inverse matrix $(\check{E}_u \cdot \check{E}_v)_{(u,v) \in \cV^2}$ 
      has all its entries negative. 
  \end{proposition}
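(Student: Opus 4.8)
The plan is to show that every entry $E_u^* \cdot E_v^*$ is strictly negative by combining three ingredients: the negative-definiteness of the intersection form (Theorem \ref{prop:negdef}), the fact that each $E_u^*$ is an \emph{effective} divisor with support equal to all of $E$, and the connectedness of the dual graph $\Gamma$. First I would establish effectivity and full support of the Lipman duals. By equation (\ref{eq:sense2}) we have $E_u^* = \sum_{w} (E_w^* \cdot E_u^*) E_w$, so it suffices to prove that all coefficients $E_w^* \cdot E_u^*$ are negative — which is exactly the statement — but the standard route is the reverse: one first shows $E_u^* = \sum_w c_{uw} E_w$ with $c_{uw} > 0$ for all $w$, using negative-definiteness. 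Concretely, write $E_u^* = P - N$ with $P, N$ effective with no common components. Then $0 \le N \cdot E_w = P\cdot E_w - E_u^*\cdot E_w = P\cdot E_w - \delta_{uw}$ for every $w$ in the support of $N$; if $u$ is not in that support this gives $N \cdot E_w \ge 0$ for all such $w$, hence $N \cdot N \ge 0$, forcing $N = 0$ by Theorem \ref{prop:negdef}. If $u$ lies in the support of $N$, a small separate argument (intersecting against a component of $P$ adjacent to $N$, or using that $E$ is reduced and $E_u^*\cdot E \ge 0$ cannot hold while $N$ meets $P$) shows the decomposition still forces $N=0$. So $E_u^* = \sum_w c_{uw} E_w$ with all $c_{uw} \ge 0$ and $E_u^* \neq 0$.

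Next I would upgrade $c_{uw} \ge 0$ to $c_{uw} > 0$ for every $w$, using connectedness of $\Gamma$. Suppose some coefficient vanishes. Let $\cV_0 = \{w : c_{uw} = 0\}$ and $\cV_1 = \{w : c_{uw} > 0\}$; both are nonempty (the first by assumption, the second because $E_u^* \ne 0$ and in fact $c_{uu}>0$ since $E_u^* \cdot E_u^* \ne 0$). Because $\Gamma$ is connected there exist $w_0 \in \cV_0$ and $w_1 \in \cV_1$ with $E_{w_0} \cdot E_{w_1} > 0$. Then
\[
0 \le E_u^* \cdot E_{w_0} = \sum_{w} c_{uw}\, (E_w \cdot E_{w_0}) = c_{uw_0}(E_{w_0}\cdot E_{w_0}) + \sum_{w \ne w_0} c_{uw}(E_w \cdot E_{w_0}) \ge c_{uw_1}(E_{w_1}\cdot E_{w_0}) > 0,
\]
using $c_{uw_0} = 0$. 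But $E_u^* \cdot E_{w_0} = \delta_{u w_0}$, so this is impossible unless $u = w_0$ and the value is $1$; handling that boundary case requires choosing $w_0$ in $\cV_0 \setminus \{u\}$, which is possible as soon as $\cV_0 \ne \{u\}$, and the case $\cV_0 = \{u\}$ is excluded because $c_{uu} > 0$ means $u \notin \cV_0$. Hence $\cV_0 = \emptyset$ and all $c_{uw} > 0$: every $E_u^*$ is a positive combination of all components of $E$.

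Finally I would conclude. For any $u, v \in \cV$,
\[
E_u^* \cdot E_v^* = E_u^* \cdot \Big(\sum_w c_{vw} E_w\Big) = \sum_w c_{vw}\,(E_u^* \cdot E_w) = \sum_w c_{vw}\, \delta_{uw} = c_{vu},
\]
wait — more usefully, expand both: $E_u^* \cdot E_v^* = \sum_{w} c_{uw}(E_w \cdot E_v^*) = \sum_w c_{uw} \delta_{wv} = c_{uv}$, hmm this would give $c_{uv} = E_u^* \cdot E_v^*$ directly, which is what we want to show is negative, so I should instead pair $E_u^*$ (a positive combination of the $E_w$) with $E_v^*$ and use that $E_w \cdot E_v^* $ need not be nonnegative. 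The clean conclusion: pick any effective decomposition $E_u^* = \sum_w c_{uw}E_w$ with $c_{uw}>0$, then $E_u^*\cdot E_v^* = \sum_w c_{uw}(E_w \cdot E_v^*)$, and one shows $E_w \cdot E_v^* \le 0$ for all $w$ with at least one strict inequality among those with $c_{uw}>0$. That $E_w \cdot E_v^* \le 0$ follows because $E_v^*$ is nef-negative: writing $E_v^* = \sum_t c_{vt} E_t$, negative-definiteness and the earlier argument give $E_v^* \cdot E_v^* < 0$, and for general $w$ one uses $-E_v^*$ is in the cone dual to the effective cone... The main obstacle is precisely pinning down this last sign statement $E_w \cdot E_v^* \le 0$ cleanly; the most economical fix is to observe $\sum_w (E_w \cdot E_v^*) E_w^* = E_v^*$ by (\ref{eq:sense2}) applied with roles adjusted, combined with the fact that $E_v^*$ lies in the (closed) nef cone while $-E_w$ do not, and to conclude strict negativity of every entry from connectedness exactly as in the second paragraph but now applied to the symmetric matrix $(E_u^* \cdot E_v^*)$ directly: it has $E_u^*\cdot E_u^* < 0$ on the diagonal, its off-diagonal entries are $\le 0$ (a sign computation), it is the inverse of $(E_u\cdot E_v)$ by Proposition \ref{prop:invmat}, hence irreducible because $\Gamma$ is connected, and an irreducible matrix with nonpositive entries whose inverse has nonpositive off-diagonal entries has all entries nonzero — a short argument propagating nonvanishing along edges of $\Gamma$. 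I expect the sign bookkeeping in identifying $E_w \cdot E_v^* \le 0$ to be the only genuinely delicate point; everything else is linear algebra over the connected graph $\Gamma$ together with Theorem \ref{prop:negdef}.
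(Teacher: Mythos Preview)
Your proposal has a sign error running through it that makes the argument go the wrong way. You try to show that $E_u^*$ is \emph{effective}, i.e.\ that the coefficients $c_{uw}$ in $E_u^* = \sum_w c_{uw} E_w$ are positive. But by equation~(\ref{eq:sense2}) one has $c_{uw} = E_w^* \cdot E_u^*$, which is exactly the quantity the proposition says is \emph{negative}. In particular $c_{uu} = E_u^* \cdot E_u^* < 0$ by Theorem~\ref{prop:negdef}, contradicting your claim ``$c_{uu}>0$ since $E_u^*\cdot E_u^* \ne 0$''. So $-E_u^*$ is the divisor you should be proving effective with full support, not $E_u^*$.

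This sign error is also why your $P-N$ argument misfires. With $E_u^* = P - N$, the observation $c_{uu}<0$ forces $u \in \operatorname{supp}(N)$, so the relevant case is your ``small separate argument'' case --- and there the symmetric computation (for $w \in \operatorname{supp}(P)$ one has $P\cdot E_w = N\cdot E_w \ge 0$, hence $P\cdot P \ge 0$) gives $P=0$, not $N=0$. Your connectedness step would then work verbatim to upgrade $c_{uw}\le 0$ to $c_{uw}<0$. Finally, the claim ``$E_w \cdot E_v^* \le 0$'' in your last paragraph is simply false: $E_w\cdot E_v^* = \delta_{wv} \ge 0$ by definition of the Lipman basis.

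For comparison, the paper's proof is a two-line reduction: since $E_u^* \cdot E_v = \delta_{uv} \ge 0$ for every $v$, one applies Proposition~\ref{prop:negcoef} (the Zariski lemma stated immediately after) directly to $D = E_u^*$ to conclude that all coefficients $E_u^* \cdot E_v^*$ are negative. Your intended argument, once the sign is corrected, is essentially a direct proof of that very lemma specialised to $D = E_u^*$.
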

  
  \begin{proof}
      By formula (\ref{eq:sense2}), the entries of this matrix are the coefficients 
      of the various rational divisors $\check{E}_u$ in the basis $(E_v)_{v \in \cV}$ of 
      $\Lambda_{\Q}$. By Definition \ref{def:Lipbasis}, we see that 
      $\check{E}_u \cdot E_v \geq 0$ for all $v \in \cV$. One may conclude using 
      Proposition \ref{prop:negcoef}. 
  \end{proof}
    
    The fact that if the entries of a symmetric positive definite matrix are non-positive 
    outside the diagonal, then the entries of the inverse matrix are non-negative was proved 
    by Coxeter \cite[Lemma 9.1]{C 34} and differently by Du Val 
    \cite[Page 309]{DV 40}, following a suggestion of Mahler. The stronger fact 
    that in our case the entries of $(- \check{E}_u \cdot \check{E}_v)_{(u,v) \in \cV^2}$  
    are positive comes from the fact that the dual graph of the initial matrix 
    $(- E_u \cdot E_v)_{(u,v) \in \cV^2}$ is connected. 
    For historical details about this theme, see Coxeter \cite[Section 10.9]{C 73}.

  Proposition \ref{prop:negcoef} may be reformulated as the fact that 
  \emph{the pointed nef cone of $\pi$ is included 
  in the interior of the opposite of the effective cone of $\pi$}, 
  where we use the following terminology, which 
  is standard for global algebraic varieties:

  \begin{definition} \label{def:effnef}
     Let $\pi$ be a resolution of $S$. The {\bf effective cone} $\sigma$ of $\pi$ is the simplicial 
     subcone of $\Lambda_{\R}$ consisting of those divisors with non-negative coefficients 
     in the basis $(E_u)_{u \in \cV}$. The {\bf nef cone} $\check{\sigma}$ of $\pi$ is the simplicial 
     subcone of $\check{\Lambda}_{\R}$, identified to $\Lambda_{\R}$ through 
     $\tilde{I}$, consisting of those divisors whose intersections with all effective divisors 
     are non-negative. That is, $\sigma$ is the convex cone generated by 
     $(E_u)_{u \in \cV}$ 
     and $\check{\sigma}$ is the convex cone generated by $(\check{E}_u)_{u \in \cV}$. 
 \end{definition}

The determinant of a symmetric bilinear form on a lattice is well-defined: it is 
   independent of the 
basis of the lattice used to compute it. When the bilinear form is positive definite, 
the determinant is positive. This motivates to look also at the opposite of the intersection 
form (see Neumann and Wahl \cite[Sect. 12]{NW 05}). 
Up to the sign, the following notion was also studied in \cite{EN 85} and \cite{N 89}. 

\begin{definition}  \label{def:detgraph}
    Let $\pi$ be a good resolution of $S$ with weighted dual graph $\Gamma$. 
   The {\bf determinant} $\det(\Gamma) \in \N^*$ of $\Gamma$ is the determinant of  
   the opposite $-I$ of the intersection form, that 
   is, the determinant of the matrix $(-E_u \cdot E_v)_{(u,v) \in \cV^2}$. 
\end{definition}

It is well-known (see for instance \cite[Prop. 3.4 of Chap. 2]{D 92}) that $\det(\Gamma)$ 
is equal to the cardinal  of the torsion subgroup of the 
first integral homology group of the boundary (or link) of $S$. This 
shows that $\det(\Gamma)$ is independent of the choice of 
resolution of $S$. This fact could have been proved directly, by studying the 
effect of a blow-up of one point on the exceptional divisor of a given resolution and 
by using the fact that any two resolutions are related by a finite sequence of 
blow ups of points and of their inverse blow-downs.  As a consequence, we define:

\begin{definition}  \label{def:det(S)ing}
   The {\bf determinant $\det(S)$ of the singularity} $S$ is the determinant of 
   the weighted dual graph of any good resolution of it. 
\end{definition}

\subsection{Mumford's rational  intersection number of branches}
$\:$ 
\medskip

As explained in the introduction, we will be mainly interested by the \emph{branches} 
living on $S$: 

\begin{definition} \label{def:branch}
  A {\bf branch} on $S$ is a germ of irreducible formal curve on $S$. 
  We denote by $\cB(S)$ the set of branches on $S$. 
  A {\bf divisor} (respectively {\bf $\Q$-divisor}) 
  on $S$ is an element of the free abelian group (respectively {\bf $\Q$-vector space}) 
  generated by 
  the branches living on it. The divisor is {\bf effective} if all its coefficients are non-negative. 
  It is {\bf principal} if it is the divisor of a germ of formal function on $S$. 
\end{definition}

Note that the divisors we consider are Weil divisors, as they are not necessarily principal. 

We will study the divisors on $S$ using their \emph{embedded resolutions}, to 
which one extends the notion of \emph{weighted dual graph}: 

\begin{definition}  \label{def:embres}
    If $D$ is a divisor on $S$, an {\bf embedded 
    resolution} of $D$ is a resolution $ \pi : \tilde{S} \to S$ of $S$ such that the preimage  
    $\pi^{-1}| D |$ of $| D|$ on $\tilde{S}$ has simple normal crossings 
    (here $ | D |$ denotes the support of $D$). 
    The {\bf weighted dual graph} $\Gamma_D$ of $D$ with respect to $\pi$ 
    is obtained from the weighted dual 
    graph $\Gamma$ of $\pi$ by adding as new vertices the branches of  $D$ 
    and by joining each such branch $A$ to the unique 
    vertex $u(A) \in \cV$ such that $E_{u(A)}$ meets $A$.   
\end{definition}

The following construction of Mumford \cite{M 61} of a canonical, possibly non-reduced 
structure of $\Q$-divisor on $\pi^{-1}| D |$, will be very important for us:

\begin{definition} \label{def:totransf}
   Let $A$ be a  divisor on $S$ and  $\pi: \tilde{S} \to S$ a resolution 
   of $S$. The {\bf total transform} of $A$ on $\tilde{S}$ is the $\Q$-divisor 
   $\pi^*A = \tilde{A} + (\pi^*A)_{ex}$ on $\tilde{S}$ such that:
     \begin{enumerate}
         \item $\tilde{A}$ is the {\bf strict transform} of $A$ on $\tilde{S}$ (that is, the sum of 
         the closures  inside $\tilde{S}$ of the branches of $A$, keeping unchanged 
         the coefficient of each branch). 
         \item The support of the {\bf exceptional transform} $(\pi^*A)_{ex}$ of $A$ 
              on $\tilde{S}$ (or by $\pi$) 
         is included in the exceptional divisor $E$. 
         \item $\pi^*A\cdot  E_u =0$ for each irreducible component $E_u$ of $E$. 
     \end{enumerate}
\end{definition}

Such a divisor $(\pi^*A)_{ex}$ exists and is unique:

\begin{proposition}  \label{prop:exprexcept}
    Let $A$ be a  divisor on $S$ and  
    $\pi: \tilde{S} \to S$ a resolution  of $S$. Then the exceptional transform 
    $(\pi^*A)_{ex}$ of $A$ is given by the following formula:
       $$(\pi^*A)_{ex} = - \sum_{u \in \cV} (\tilde{A} \cdot E_u) \:   \check{E}_u. $$
    In particular, $(\pi^*A)_{ex}$ lies in the opposite of the nef cone. 
\end{proposition}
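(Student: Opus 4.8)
The plan is to verify that the divisor $D_A := - \sum_{u \in \cV} (\tilde{A} \cdot E_u)\, E_u^*$ satisfies the three defining properties of the exceptional transform from Definition \ref{def:totransf}, and to observe that uniqueness is automatic. First, the strict transform $\tilde{A}$ is defined intrinsically by taking closures of the branches of $A$ inside $\tilde{S}$, so property (1) is a matter of definition and there is nothing to prove there; what must be checked is that $\pi^*(A) := \tilde{A} + D_A$ is a legitimate $\Q$-divisor satisfying (2) and (3). Property (2), that the support of $D_A$ lies in $E$, is immediate from the formula, since each $E_u^*$ is by construction (Definition \ref{def:Lipbasis} and formula (\ref{eq:sense2})) a $\Q$-combination of the components $E_w$ of $E$, and a $\Q$-linear combination of such divisors is again supported on $E$.

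The heart of the argument is property (3): I would compute, for an arbitrary component $E_v$ of $E$,
\[
  \pi^*(A) \cdot E_v = \tilde{A} \cdot E_v + D_A \cdot E_v
     = \tilde{A} \cdot E_v - \sum_{u \in \cV} (\tilde{A} \cdot E_u)\,(E_u^* \cdot E_v).
\]
By the defining property $E_u^* \cdot E_v = \delta_{uv}$ of the Lipman basis, the sum collapses to the single term $(\tilde{A} \cdot E_v)$, so the whole expression vanishes, as desired. This uses in an essential way that the intersection numbers $\tilde{A} \cdot E_u$ make sense: each branch of $\tilde{A}$ is a compact-versus-local intersection with the compact curve $E_u$, which is a well-defined nonnegative integer, so the coefficients $(\tilde{A} \cdot E_u)$ appearing in the formula are genuine nonnegative integers and the displayed manipulation is valid over $\Q$.

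For existence and uniqueness, uniqueness follows because the intersection form on $\Lambda_{\Q}$ is negative definite (Theorem \ref{prop:negdef}), hence nondegenerate: if $D_A$ and $D_A'$ are both supported on $E$ and satisfy $(\tilde{A} + D_A) \cdot E_u = (\tilde{A} + D_A') \cdot E_u = 0$ for all $u$, then $(D_A - D_A') \cdot E_u = 0$ for all $u$, forcing $D_A = D_A'$; existence is exactly what the formula provides. Finally, the last sentence — that $D_A$ lies in the opposite of the nef cone $\check{\sigma}$ — follows from Definition \ref{def:effnef}: since each coefficient $\tilde{A}\cdot E_u$ is nonnegative, $-D_A = \sum_{u} (\tilde{A} \cdot E_u)\, E_u^*$ is a nonnegative combination of the generators $E_u^*$ of $\check{\sigma}$, so $-D_A \in \check{\sigma}$, i.e. $D_A \in -\check{\sigma}$. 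I do not anticipate a serious obstacle here; the only point requiring a little care is making sure the intersection numbers $\tilde{A} \cdot E_u$ in the formula are well-defined integers (branches being possibly non-algebraic formal curves), which is handled by the local intersection theory recalled earlier in this section.
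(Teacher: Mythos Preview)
Your proof is correct and essentially the same as the paper's. The only organizational difference is that the paper assumes a $D_A$ satisfying condition (3), writes $D_A \cdot E_u = -\tilde{A}\cdot E_u$, and then invokes the expansion $D = \sum_u (D\cdot E_u)\,E_u^*$ (formula (\ref{eq:sense3})) to read off the coefficients, whereas you posit the formula and verify condition (3) directly; both are the same computation with the duality $E_u^*\cdot E_v=\delta_{uv}$, and the nef-cone conclusion is argued identically.
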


\begin{proof}
   The third condition of Definition \ref{def:totransf} implies that:
      $ (\pi^*A)_{ex} \cdot E_u = - \tilde{A} \:  \cdot \: E_u$, for all  $u \in \cV. $
   By combining this with equation (\ref{eq:sense3}), we get:
      \[(\pi^*A)_{ex} =  \sum_{u \in \cV} ((\pi^*A)_{ex} \cdot E_u) \:   \check{E}_u =  
          - \sum_{u \in \cV} (\tilde{A} \cdot E_u) \:   \check{E}_u. \]
   As $\tilde{A} \cdot E_u \geq 0$ for all $u \in \cV$, we see that $- (\pi^*A)_{ex}$ lies 
   in the cone generated by $(\check{E}_u)_{u \in \cV}$ inside $\Lambda_{\R}$ which, 
   by Definition \ref{def:effnef}, is the nef cone $\check{\sigma}$. 
\end{proof}

In the case in which $A$ is principal, defined by a germ of 
holomorphic function $f_A$, then $(\pi^*A)_{ex}$ is 
simply the exceptional part of the principal divisor 
on $\tilde{S}$ defined by the pull-back function $\pi^* f_A$. By Proposition \ref{prop:exprexcept}, 
in this case $(\pi^*A)_{ex}$ belongs to the semigroup $- \check{\sigma} \: \cap \: \Lambda$ of 
integral exceptional divisors whose opposites are nef. In general 
not all the elements of this semigroup 
consist in such exceptional transforms of principal divisors, but this is true for those lying 
in the interior of  $- \check{\sigma}$:

\begin{proposition}  \label{prop:intrealis}
   Consider a resolution of $S$. 
   Any element of the lattice $\Lambda$ which lies in the interior of the cone  $- \check{\sigma}$ 
   has a multiple by a positive integer which is the exceptional transform of an 
   effective principal divisor on $S$. 
\end{proposition}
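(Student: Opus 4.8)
The plan is to reduce the statement to the following two facts: (i) a $\Q$-divisor $D \in \Lambda^*$ lying in the interior of $-\check\sigma$ can, after multiplication by a suitable positive integer, be assumed to lie in $\Lambda$ and still in the interior of $-\check\sigma$; and (ii) the interior of $-\check\sigma$ consists exactly of the exceptional transforms (up to positive rational multiple) of \emph{ample} $\Q$-divisors, in a sense made precise below. The key point is that an integral divisor $Z \in -\check\sigma \cap \Lambda$ which is interior to $-\check\sigma$ has the property that $-Z$ intersects every exceptional curve $E_u$ strictly positively, i.e. $Z \cdot E_u < 0$ for all $u \in \cV$. Such a $Z$ is (the negative of) a divisor that is ample relative to $\pi$, and we want to realize it as the exceptional part $D_A$ of a principal divisor.

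First I would make this reduction precise. Write $Z = \sum_{u} z_u E_u$ with $z_u \in \Z_{\geq 0}$ (the coefficients are non-negative by Proposition \ref{prop:negcoef}, since $-Z$ is nef), and set $c_u := -Z \cdot E_u = -(\sum_v z_v E_v)\cdot E_u$, which are \emph{strictly positive} integers precisely because $Z$ lies in the interior of $-\check\sigma$. By Proposition \ref{prop:exprexcept}, to exhibit $Z$ as an exceptional transform $D_A$ of an effective divisor $A$ it suffices to find an effective divisor $A = \sum \alpha_j B_j$ on $S$, a sum of branches with positive integer coefficients, whose strict transform $\tilde A$ meets $E_u$ with multiplicity exactly $c_u$ for every $u$; then $D_A = -\sum_u (\tilde A \cdot E_u) E_u^* = -\sum_u c_u E_u^* = Z$ by equation (\ref{eq:sense3}) applied to $Z$ (since $Z \cdot E_u = -c_u$). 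So the problem becomes: produce branches $B_j$ passing through prescribed smooth points of $E$ with prescribed transversality/tangency orders realizing the multiplicities $c_u$, and then show that the resulting reduced Weil divisor $A$ is principal after possibly passing to a further multiple (normal surface singularities need not have trivial local class group, so only a multiple of $A$ will be principal — this is where the "multiple by a positive integer" in the statement comes from).

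The hard part, and the step I expect to be the main obstacle, is the \textbf{existence of enough branches on $S$ with prescribed behaviour along $E$}, together with the passage from an effective Weil divisor to a principal one. For the first issue, one can argue as follows: on a resolution $\tilde S$, choose on each $E_u$ a number $c_u$ of distinct smooth points of $E$ (points not lying on any other component), and through each such point take a smooth curve germ on $\tilde S$ transverse to $E_u$; its image under $\pi$ is a branch on $S$. Summing these images with coefficient $1$ gives an effective divisor $A_0$ with $\tilde{A_0} \cdot E_u = c_u$ for all $u$, hence $D_{A_0} = Z$ exactly. This $A_0$ is effective but typically not principal. For the second issue, one invokes that the divisor class group of the local ring $\mathcal O$ of $S$ is finite — indeed it is isomorphic to a subgroup of $\mathrm{Pic}$ of the link, hence finite of order dividing $\det(S)$ by the discussion after Definition \ref{def:detgraph} — so some positive multiple $N A_0$ is the divisor of a germ of function $f$; but then $D_{Nf} = N D_{A_0} = N Z$, so $NZ$ is the exceptional transform of the effective principal divisor $\mathrm{div}(f)$, which is what we wanted with $N Z$ in place of $Z$ (after re-absorbing $N$ into the "positive integer multiple"). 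One subtlety to check carefully is that the strict transform of $\mathrm{div}(f)$ is indeed $N \tilde{A_0}$ and has no extra components meeting $E$; this holds because $\pi$ is already an embedded resolution of $A_0$ (we may always refine to assume this) and $\mathrm{div}(f)$ and $N A_0$ have the same support as Weil divisors on $S$, hence the same strict transform up to multiplicity $N$. Finally, since $-Z$ is ample relative to $\pi$ and a large multiple of a $\pi$-ample divisor is $\pi$-very ample, one could alternatively run the argument directly with sections of $\mathcal O_{\tilde S}(-NZ)$, but the class-group argument above is the most elementary and is the route I would write up.
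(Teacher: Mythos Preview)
Your argument has a genuine gap at the step where you invoke finiteness of the divisor class group of $\mathcal{O}$. The discussion after Definition \ref{def:detgraph} says that $\det(S)$ equals the order of the \emph{torsion} subgroup of $H_1$ of the link; it does not say that the local class group $\mathrm{Cl}(\mathcal{O})$ is finite. In fact it is not, in general: the paper explicitly allows the components $E_u$ to have arbitrary genus (see the comment after Definition \ref{def:dualgraph} and Remark \ref{rem:exarb}), and for such singularities Lipman's analysis in \cite[\S 14]{L 69} shows that $\mathrm{Cl}(\mathcal{O})$ contains contributions from the Jacobians $\mathrm{Pic}^0(E_u)$, hence is typically uncountable. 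So the passage ``some positive multiple $N A_0$ is principal'' is unjustified: your $A_0$ may have infinite order in the class group, and no amount of multiplying will make it principal.

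The paper's proof avoids the class group entirely. It quotes a criterion from \cite[Thm.~4.1]{CNP 06}: any $D\in\Lambda$ satisfying $(D+E+K)\cdot E_u\le -2$ for all $u$ is already the exceptional transform of an effective principal divisor. Since $H$ in the interior of $-\check\sigma$ has $H\cdot E_u<0$ for every $u$, one simply scales $H$ by a large enough integer $n$ so that $nH\cdot E_u < -(E+K)\cdot E_u -2$ for all $u$, and then $D=nH$ satisfies the criterion. This is essentially a relative-ampleness/vanishing argument packaged as a numerical test, and it is precisely the ``sections of $\mathcal{O}_{\tilde S}(-NZ)$'' route you sketch in your last sentence but then set aside. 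That throwaway alternative is the correct one; your main line via torsion of the class group is the one that fails.
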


\begin{proof}  
   Denote by $K$ a canonical divisor on the resolution $\tilde{S}$ and by 
   $E = \sum_{v \in \cV} E_v$ the reduced exceptional divisor.  
   By \cite[Theorem 4.1]{CNP 06}, any divisor $D \in \Lambda$ such that: 
      \begin{equation} \label{eq:fundineq} 
            (D + E + K) \cdot E_u \leq -2   \mbox{ for all } u \in \cV
      \end{equation}
   is the exceptional transform of an effective principal divisor. 
  
   Assume now that $H \in \Lambda$ belongs to the \emph{interior} of the opposite 
   $- \check{\sigma}$ of the nef cone. This means that $H \cdot E_u < 0$ for 
   any $u \in \cV$. There exists therefore $n \in \N^*$ such that:
    $ nH \cdot E_u < - ( E + K) \cdot E_u  -2 $, for all $ u \in \cV$. 
   Equivalently, $D := nH$ satisfies the inequalities (\ref{eq:fundineq}), 
   therefore it is the exceptional transform of an effective principal divisor. 
\end{proof}

Definition \ref{def:totransf} allowed Mumford to introduce a \emph{rational 
intersection number} of any two divisors on $S$ without common branches:

\begin{definition} \label{def:mumfint}
   Let $A, B$ be two divisors on $S$ without common branches. Then their 
   {\bf intersection number} $A \cdot B \in \Q$ is defined by:
       $A \cdot B := \pi^*A \cdot \pi^* B,$
    for any embedded resolution $\pi$ of $A+ B$. 
\end{definition}

The fact that this definition is independent of the resolution was proved by Mumford 
\cite[Section II (b)]{M 61}, 
by showing that it is unchanged if one blows up one point on $E$. 
As an immediate consequence we have:

\begin{proposition}  \label{prop:intexcep}
   Let $A, B$ be two divisors on $S$ without common branches and $(\pi^*A)_{ex}$, 
   $(\pi^*A)_{ex}$ be 
   their exceptional transforms on an embedded resolution $\tilde{S}$ 
   of $A+B$. Then $A \cdot B = - (\pi^*A)_{ex} \cdot  (\pi^*B)_{ex}.$
\end{proposition}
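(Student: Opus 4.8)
The plan is to compute both sides of the claimed identity $A \cdot B = -D_A \cdot D_B$ directly from the definitions, using the decomposition of the total transforms and the orthogonality properties that characterize the exceptional transforms. First I would fix an embedded resolution $\pi : \tilde{S} \to S$ of $A + B$ and write, following Definition \ref{def:totransf}, $\pi^*(A) = \tilde{A} + D_A$ and $\pi^*(B) = \tilde{B} + D_B$, where $\tilde{A}, \tilde{B}$ are the strict transforms and $D_A, D_B$ are supported on the exceptional divisor $E$. By Definition \ref{def:mumfint}, the intersection number is $A \cdot B = \pi^*(A) \cdot \pi^*(B)$.

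Next I would expand the product bilinearly:
\[
\pi^*(A) \cdot \pi^*(B) = (\tilde{A} + D_A) \cdot (\tilde{B} + D_B) = \tilde{A} \cdot \tilde{B} + \tilde{A} \cdot D_B + D_A \cdot \tilde{B} + D_A \cdot D_B.
\]
Since $A$ and $B$ have no common branches, the strict transforms $\tilde{A}$ and $\tilde{B}$ share no component, so the ``honest'' intersection number $\tilde{A} \cdot \tilde{B}$ of these curves on the smooth surface $\tilde{S}$ is well-defined and non-negative; but more to the point, since $\pi$ is an \emph{embedded} resolution of $A + B$, the reduced total transform $\pi^{-1}(A+B)$ has simple normal crossings, which in particular forces $\tilde{A} \cdot \tilde{B} = 0$: the strict transforms of the branches of $A$ and of $B$ have been separated. (If one wishes to allow branches of $A+B$ that meet after resolution only transversally, one should instead invoke that $A$ and $B$ have no common branch to see that this term does not contribute — but the SNC hypothesis is the clean way.)

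For the two mixed terms, I would use condition (3) of Definition \ref{def:totransf} together with the fact that $D_B$ is a combination of the components $E_u$ of $E$. Writing $D_B = \sum_{u} m_u E_u$ with $m_u \in \Q$, we get $\tilde{A} \cdot D_B = \sum_u m_u (\tilde{A} \cdot E_u)$; but $\pi^*(A) \cdot E_u = (\tilde{A} + D_A) \cdot E_u = 0$ gives $\tilde{A} \cdot E_u = - D_A \cdot E_u$, and since $D_A$ and $E_u$ are both supported on $E$... actually the cleanest route is: $\tilde{A} \cdot D_B = \sum_u m_u(\tilde A \cdot E_u)$, and separately $\pi^*(A) \cdot D_B = 0$ because $D_B$ is a combination of the $E_u$ and each $\pi^*(A) \cdot E_u = 0$ by condition (3); hence $\tilde A \cdot D_B = -D_A \cdot D_B$. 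Symmetrically $D_A \cdot \tilde B = -D_A \cdot D_B$. Substituting into the expansion: $A \cdot B = 0 - D_A \cdot D_B - D_A \cdot D_B + D_A \cdot D_B = -D_A \cdot D_B$, as claimed.

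The only genuine subtlety — really the only step requiring care rather than bookkeeping — is the vanishing of $\tilde{A} \cdot \tilde{B}$, i.e. making sure the hypothesis (no common branches, and a genuinely \emph{embedded} resolution of the sum $A+B$) is used correctly so that the strict transforms are disjoint; everything else is an immediate consequence of the defining orthogonality relation $\pi^*(A) \cdot E_u = 0$ and the bilinearity of the intersection form. I would also remark that the independence of the resolution, needed to make Definition \ref{def:mumfint} meaningful, is already granted by Mumford and recalled just above, so no further work is required there.
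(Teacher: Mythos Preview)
Your proof is correct and is precisely the short computation that the paper suppresses when it calls this proposition an ``immediate consequence'' of Definition~\ref{def:mumfint}; the paper gives no separate proof. Your identification of the one genuine point --- that $\tilde{A}\cdot\tilde{B}=0$ because in an embedded resolution of $A+B$ the strict transforms of distinct branches are separated (any common point would lie on $E$ and would force three components of the SNC divisor through it) --- is exactly right, and the rest is, as you say, bookkeeping from the orthogonality relation $\pi^*(A)\cdot E_u=0$.
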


Assume now that $A$ is a branch on $S$. Consider an embedded resolution of it. 
Recall from Definition \ref{def:embres} that $u(A)$ denotes the unique index $u \in  \cV$  
such that the strict transform $\tilde{A}$ meets $E_u$.  Then one has another immediate 
consequence of the definitions:

\begin{lemma}  \label{lem:excdual}
     If  $A$ is a branch on $S$ and if $\tilde{S}$ is an embedded resolution 
   of $A$, then   $(\pi^*A)_{ex} = - \check{E}_{u(A)}$.
\end{lemma}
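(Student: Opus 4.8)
The plan is to unwind the definitions and reduce everything to Proposition~\ref{prop:exprexcept} together with Lemma~\ref{lem:excdual}. Recall that $A$ is a branch, $\tilde S$ is an embedded resolution of $A$, and $u(A)\in\cV$ is the unique index for which the strict transform $\tilde A$ meets $E_{u(A)}$, meeting it transversally in one point because the resolution is embedded. So $\tilde A\cdot E_u=\delta_{u,u(A)}$ for every $u\in\cV$: the intersection number is $0$ for $u\neq u(A)$ since the supports are disjoint, and it is $1$ for $u=u(A)$ by the simple-normal-crossings condition.

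Now I would simply substitute this into the formula of Proposition~\ref{prop:exprexcept}. That proposition gives
\[
D_A \;=\; -\sum_{u\in\cV}(\tilde A\cdot E_u)\,E_u^{*}.
\]
Since $\tilde A\cdot E_u=\delta_{u,u(A)}$, all terms of the sum vanish except the one indexed by $u(A)$, which contributes $-E_{u(A)}^{*}$. Hence $D_A=-E_{u(A)}^{*}$, as claimed. That is the entire argument.

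There is essentially no obstacle here; the only point requiring a word of justification is the claim $\tilde A\cdot E_{u(A)}=1$, i.e.\ that the strict transform of a branch meets the exceptional divisor transversally in a single point on an embedded resolution. This is part of what it means for $\pi$ to be an embedded resolution of the (reduced) divisor $A$: by Definition~\ref{def:embres} the total transform $\pi^{-1}A$, taken with its reduced structure, has simple normal crossings, so $\tilde A$ is smooth, meets $E$ in one point, and does so transversally, along a component which is precisely $E_{u(A)}$ by the definition of $u(A)$. Everything else is the direct substitution above, so the lemma follows immediately from Proposition~\ref{prop:exprexcept}.
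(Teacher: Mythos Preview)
Your argument is correct and matches the paper's approach: the paper simply calls this ``another immediate consequence of the definitions'' without writing out a proof, and what you have done is spell out that immediate consequence via Proposition~\ref{prop:exprexcept} and the simple-normal-crossings condition from Definition~\ref{def:embres}. One small slip: in your opening sentence you say you will reduce to ``Proposition~\ref{prop:exprexcept} together with Lemma~\ref{lem:excdual}'', but Lemma~\ref{lem:excdual} is the very statement you are proving; presumably you meant Definition~\ref{def:embres}.
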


By combining Proposition \ref{prop:intexcep}, Lemma \ref{lem:excdual} and Proposition 
\ref{prop:stricpos}, one gets:

\begin{corollary}  \label{cor:invint} $\: $   

   \begin{enumerate}
       \item Assume that $A$ and $B$ are distinct branches on $S$. If $\tilde{S}$ 
            is an embedded resolution of $A + B$, then:
              $$A \cdot B = - \check{E}_{u(A)} \cdot \check{E}_{u(B)} >0. $$
         \item If $E_u, E_v$ are two components of the exceptional divisor 
               of a resolution, then the intersection number $\check{E}_u \cdot \check{E}_v$ 
               is independent of the resolution (that is, it will be the same if one 
               replaces $E_u, E_v$ by their strict transforms on another resolution). 
   \end{enumerate}
\end{corollary}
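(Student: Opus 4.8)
The plan is to deduce both statements directly from the preceding results, doing no fresh computation. For part (1), I would start from Proposition \ref{prop:intexcep}, which gives $A \cdot B = - D_A \cdot D_B$ for the exceptional transforms $D_A, D_B$ on any embedded resolution $\tilde S$ of $A+B$. Since $A$ and $B$ are branches, Lemma \ref{lem:excdual} identifies these exceptional transforms explicitly: $D_A = - E_{u(A)}^*$ and $D_B = - E_{u(B)}^*$. Substituting, $A \cdot B = - (-E_{u(A)}^*) \cdot (-E_{u(B)}^*) = - E_{u(A)}^* \cdot E_{u(B)}^*$. Positivity is then immediate: by Proposition \ref{prop:stricpos} every entry of the matrix $(E_u^* \cdot E_v^*)_{(u,v)\in\cV^2}$ is negative, so $- E_{u(A)}^* \cdot E_{u(B)}^* > 0$. (One should note that $u(A)$ and $u(B)$ are both legitimate indices in the vertex set $\cV$ of the common resolution $\tilde S$, which is why the matrix entry makes sense; if $u(A) = u(B)$ the argument still applies, using the diagonal entry.)

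For part (2), the point is birational invariance of the numbers $E_u^* \cdot E_v^*$ under passing to a higher resolution. The cleanest route is to reduce to part (1). Given components $E_u, E_v$ of the exceptional divisor of a resolution $\pi$, I would choose branches $A$ and $B$ on $S$ whose strict transforms on $\pi$ meet precisely $E_u$ and $E_v$ respectively — that is, with $u(A) = u$ and $u(B) = v$ relative to $\pi$ — and which are distinct (this is possible since one can always find a smooth curvetthrough a generic point of any given exceptional component; if $u = v$ pick two distinct such branches through $E_u$). Then part (1) gives $E_u^* \cdot E_v^* = - A \cdot B$. But Mumford's intersection number $A \cdot B$ is, by Definition \ref{def:mumfint}, an invariant of the divisors $A, B$ on $S$ alone, computed on \emph{any} embedded resolution of $A + B$; in particular it does not change if we pass to a higher resolution $\pi'$ dominating $\pi$. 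On $\pi'$, the strict transforms of $A$ and $B$ meet the strict transforms $E_u', E_v'$ of $E_u, E_v$ (for a sufficiently generic choice, or after one more blow-up to separate them), so applying part (1) again on $\pi'$ yields $E_u^{\prime *} \cdot E_v^{\prime *} = - A \cdot B = E_u^* \cdot E_v^*$. Since any two resolutions are dominated by a common one, this gives the asserted independence.

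The main obstacle I anticipate is the choice of branches realizing a prescribed incidence with given exceptional components, and in particular making sure one can take them distinct and arrange, after possibly one further blow-up, that their strict transforms on the higher resolution meet exactly the strict transforms $E_u', E_v'$ rather than some exceptional component created by the extra blow-ups. This is a standard transversality/genericity argument — picking smooth curve germs through generic points of $E_u$ and $E_v$ and using that blowing up points away from a curve does not affect where that curve's strict transform sits — but it needs to be stated with a little care. An alternative that sidesteps branch-realizability altogether is to argue the invariance of $E_u^* \cdot E_v^*$ directly: track how the Lipman basis element $E_u^*$ transforms under a single blow-up of a point $p \in E$ (its pullback picks up a correction along the new exceptional curve that is orthogonal to the strict transform $E_u'$), and check that the pairing $E_u^* \cdot E_v^*$ is unchanged, then iterate. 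I would present whichever is shorter given the conventions already fixed in the paper; the reduction to part (1) via Definition \ref{def:mumfint} is conceptually the lightest.
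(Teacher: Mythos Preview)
Your proposal is correct and matches the paper's approach: the paper simply says the corollary follows ``by combining Proposition \ref{prop:intexcep}, Lemma \ref{lem:excdual} and Proposition \ref{prop:stricpos}'', which is exactly your argument for part (1), and for part (2) the intended argument is precisely your reduction to the resolution-independence of Mumford's intersection number via branches realizing $u(A)=u$, $u(B)=v$. The genericity concern you flag is real but routine: choosing $A$ and $B$ through generic smooth points of $E_u$ and $E_v$ ensures their strict transforms avoid the finitely many centers of any sequence of blow-ups relating two resolutions, so no extra blow-up is needed.
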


\section{Generalities on ultrametrics and trees }
\label{sec:treeultram}

In this section we explain basic relations between  
\emph{ultrametrics}, \emph{arborescent posets},  
\emph{hierarchies} on finite sets, \emph{rooted} and \emph{unrooted trees}, 
\emph{height} and \emph{depth} functions on rooted trees,  
and \emph{additive distances} on unrooted ones.   The fact that we couldn't find these 
relations described in the literature in a way adapted to our purposes, explains 
the level of detail of this section. The framework developed here allows us  
to formulate in the next two sections simple conceptual proofs of our main results: 
Theorems \ref{thm:ultram}, \ref{thm:topint}, \ref{thm:ultraint}, \ref{thm:ordsemival} 
and Corollary \ref{cor:valtree}.

We begin by explaning the relation between \emph{rooted trees}, 
\emph{arborescent posets} 
and \emph{hierarchies} (see Subsection \ref{trees-arb-hier}). 
In Subsection \ref{ultramdepth} we recall the notion of  \emph{ultrametric spaces} and 
we explain that ultrametrics on finite sets may be alternatively 
presented as rooted trees endowed with a \emph{depth function}, the 
intermediate object in this structural metamorphosis being the \emph{hierarchy}  
of closed balls of the metric, which is an \emph{arborescent poset}. 
We also define a dual notion of \emph{height function} on a tree. 
This allows us to relate in Subsection \ref{addist} ultrametrics with \emph{additive distances} on 
unrooted trees: we explain that any choice of root allows to transform canonically such a 
distance into a height function. Finally, in Subsection \ref{aplloski} 
we apply the previous considerations 
by giving a new proof of P\l oski's original theorem, using the notion of \emph{Eggers-Wall tree}. 
\medskip

\subsection{Trees, rooted trees, arborescent partial orders and hierarchies} 
\label{trees-arb-hier}
$\:$ 
\medskip

If $V$ is a set, we denote by $\binom V2$ the set of its subsets with $2$ elements. 
There are two related notions of trees: combinatorial and topological.

\begin{definition} \label{def:combtree}
   A {\bf combinatorial tree} is a finite combinatorial connected graph without cycles, 
   that is, a pair 
   $(\cV, \cA)$ where 
   $\cV$ is a finite set of {\bf vertices}, $\cA \subset \binom \cV2$ is the set of {\bf edges} 
   and for any two distinct vertices $u,v$, there is a unique sequence 
   $\{v_0, v_1\}, ..., \{v_{k-1} ,v_k\}$ of edges, with $v_0=u, v_k=v$ and 
   $v_0, ..., v_k$ pairwise distinct.    
   Its {\bf geometric realization} is the simplicial 
   complex obtained by joining the vertices  which are end-points of edges 
   by segments in the real vector space $\R^{\cV}$. 
 \end{definition}
  
  \begin{definition} \label{def:toptree}
    A {\bf topological tree} $T$ is a topological space 
   homeomorphic to the geometric realization of a combinatorial tree. 
   If $u,v$ are two points of it, 
   the unique embedded arc joining $u$ and $v$ is called the {\bf geodesic 
   joining $u$ and $v$} and is denoted $[uv]=[vu]$. More generally, the {\bf convex hull} 
   $[V]$ of  a finite subset $V$ of $T$ is the union of the geodesics joining pairwise the 
   points of $V$. If $V=\{u,v,w, ...\}$, we denote also $[V] = [uvw ...]$. 
   The {\bf valency}  of a point 
   $u \in T$ is the number of connected components of its complement 
   $T  \setminus \{u\}$. 
   The {\bf ends} of the topological tree are its points  of valency $1$ and its {\bf nodes} are 
   its points of valency at least $3$.  An underlying combinatorial tree being fixed 
   (which will always be the case in the sequel), its vertices are by definition the 
   {\bf vertices} of $T$. A point $u \in T$ is called {\bf interior} if it is not an end. Denote: 
      \begin{itemize}
          \item $\cA(T) =$ the set of edges of $T$; 
         \item $\cV(T) =$ the set of vertices of $T$; 
         \item $\cE(T)= $ the set of ends of $T$;
         \item $\cN(T)= $ the set of nodes of $T$; 
         \item $\cI(T)= $ the set of interior vertices of $T$.
      \end{itemize}
\end{definition}
 
One has the 
inclusion $\cN(T) \cup \cE(T) \subset \cV(T)$, which is strict if and only if either there is at 
least one  vertex of valency $2$ or the tree is reduced to a point (that is, $\cV(T)$ has only
one element, hence $\cA(T) = \emptyset$). 

\medskip

We will use also the following vocabulary about posets:

\begin{definition} \label{def:pred}
In a poset $(V, \preceq)$, we say that $u\in V$ is a {\bf predecessor} of $v \in V$ 
or that $v$ is a {\bf successor} of $u$ if  $u \prec v$ (which means that the inequality is 
\emph{strict}). 
We say that the predecessor $u$ of $v$ is a {\bf direct predecessor} of $v$ if it  is not a 
predecessor of any other predecessor of $v$. Then we say also that $v$ is a 
{\bf direct successor} 
of $u$. Two elements of $V$ are {\bf comparable} if one is a predecessor of the other, 
and {\bf directly comparable} if one is a direct predecessor of the other one.  
\end{definition}

In the sequel we will work also with \emph{rooted trees}, where the root may be either 
a vertex or a point interior to some edge. A choice of root of a given tree endows it with 
a canonical partial order:

\begin{definition}  \label{def:rootree}
    Let $T$ be a topological tree. One says that it is {\bf rooted} if a point ${\rho} \in T$ 
    is chosen, called the {\bf root}. Whenever we want to emphasize the root, we 
    denote by $T_{\rho}$ the tree $T$ rooted at ${\rho}$.  
    The {\bf associated partial order} $\preceq_{\rho}$  on $T_{\rho}$ is defined by:
        \[u \preceq_{\rho} v \:  \Longleftrightarrow \:   [\rho u] \subset [\rho v].\]
     The maximal elements for this partial order are called the {\bf leaves} of $T_{\rho}$. 
     Their set is denoted by $\cL(T_{\rho})$. 
     The {\bf topological vertex set} $\cV_{top}(T_{\rho})$ of $T_{\rho}$ 
     consists of its leaves, its nodes 
     and its root. That is, it is defined by: 
        \begin{equation} \label{eq:topvertex}
        \cV_{top}(T_{\rho}) := \cL(T_{\rho}) \cup \cN(T) \cup \{\rho\}.
        \end{equation}
     If $V$ is a finite set, the restriction to $V$ of  a partial order $\preceq$ coming from a 
     structure of rooted tree with vertex set \emph{containing} $V$  
     is called an {\bf arborescent partial order}. In this case, 
     $(V, \preceq)$ is called an {\bf arborescent poset}.
\end{definition}

It is immediate to see that for a rooted tree $T_{\rho}$, the root $\rho$ is the absolute minimum of 
$(T_{\rho}, \preceq_{\rho})$. In addition,  the partial order $\preceq_{\rho}$ has well-defined 
infima of pairs of points. 
This motivates (see Figure \ref{fig:ab-rho}):

    \begin{notation} \label{not:notinf}
        Let $T_{\rho}$ be a rooted tree. The {\bf infimum} of $a$ and $b$ for the partial order 
      $\preceq_{\rho}$, that is, the maximal element of $[ \rho a] \cap [ \rho  b]$, is denoted 
      $a\wedge_{\rho} b$. 
   \end{notation}
   
   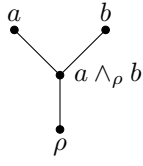
\begin{figure}
\centering
\begin{tikzpicture} [scale=0.6]
 \draw(0,0) -- (0,-1.2) ;  
 \draw(0,0) -- (1,1);
 \draw(0,0) -- (-1,1) ;
 
 \draw (0.1, 0) node[right]{$a \wedge_\rho b$} ;
 \draw (1, 1) node[above]{$b$} ;
\draw (-1,1) node[above]{$a$} ;
\draw (0,-1.2) node[below]{$\rho$} ;

 \node[draw,circle,inner sep=1pt,fill=black] at (0,0) {};
 \node[draw,circle,inner sep=1pt,fill=black] at (1,1) {};
\node[draw,circle,inner sep=1pt,fill=black] at (0,-1.2) {};
\node[draw,circle,inner sep=1pt,fill=black] at (-1,1) {};

    \end{tikzpicture}  
    \caption{The infimum of $a$ and $b$ for the partial order $\preceq_{\rho}$ 
         (see Notation \ref{not:notinf}).}  
    \label{fig:ab-rho}
    \end{figure}

It will be important in the sequel to distinguish the trees in which the root is an end 
(which implies, by Definition \ref{def:toptree}, that the tree has at least two vertices): 

\begin{definition} \label{def:endroot}
    An {\bf end-rooted tree} is a rooted tree $T_{\rho}$ whose root is an end.  
    Then the root $\rho$ has a unique direct   
    successor $\rho^+$ and each leaf 
    $a$ has a unique direct predecessor $a^{-}$. The {\bf core} $\overset{\circ}{T}_{\rho}$ 
    of the end-rooted tree $T_{\rho}$ 
    is the convex hull of $\{ \rho^+\} \cup \{ a^-, \:  a \in \cL(T_{\rho}) \}$, seen as 
    a tree rooted at $\rho^+$. 
    
    A rooted tree which is not end-rooted, that is, such that the root is interior, 
    is called {\bf interior-rooted}. 
\end{definition}

Given an arbitrary rooted tree, there is a canonical way to embed it in an 
end-rooted tree:

\begin{definition}  \label{def:canext}
   Let $T_{\rho}$ be a tree rooted at $\rho$. 
   Its {\bf extension} $\hat{T}_{\hat{\rho}}$ is the tree obtained from 
   $T_{\rho}$ by adding a new root $\hat{\rho}$, which is joined by an edge to $\rho$. 
\end{definition}

We defined arborescent posets starting from arbitrary rooted trees. Conversely, any 
arborescent poset $(V, \preceq)$ has a canonically associated rooted tree $T(V, \preceq)$ 
endowed with an underlying combinatorial tree. The root $\rho$ may not belong to $V$, 
but the vertex set of $T(V, \preceq)$ is exactly $V \cup \{\rho\}$. 
More precisely:

\begin{definition} \label{def:rt} 
   Let $(V, \preceq)$ be an arborescent poset. Its {\bf associated rooted tree} \linebreak
   $T(V, \preceq)$ is defined by: 
   \begin{itemize}
      \item  If $(V, \preceq)$ has a unique minimal element, then the root coincides 
          with it, and the edges are exactly the sets of the form $\{u, v\}$, where 
          $v$ is a direct predecessor of $u$. That is,  $T(V, \preceq)$ is the underlying tree 
          of the Hasse diagram of the poset. 
      \item If $(V, \preceq)$ has several minimal elements, then one considers 
         a new set $\hat{V}:= V \: \sqcup \: \{m\}$ and one extends the order $\preceq$ to it 
         by imposing that $m$ is a predecessor of all the elements of $V$. Then 
         one proceeds as in the previous case, working with $(\hat{V}, \preceq)$ 
         instead of  $(V, \preceq)$. In particular, the root is  the new vertex $m$. 
   \end{itemize}
     The {\bf extended rooted tree $\hat{T}(V, \preceq)$ 
     of} $(V, \preceq)$ is the extension of $T(V, \preceq)$ according to 
     Definition \ref{def:canext}. 
\end{definition}

The fact that the objects introduced in Definition \ref{def:rt} are always trees is a consequence 
of the following elementary proposition, whose proof we leave to the reader: 

\begin{proposition}  \label{prop:chararb}
   A partial order $\preceq$ on the finite set $V$ is arborescent if and only if any 
   element of it has at most one direct predecessor. 
\end{proposition}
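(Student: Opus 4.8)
The statement to prove, Proposition~\ref{prop:chararb}, asserts that a partial order $\preceq$ on a finite set $V$ is arborescent if and only if every element has at most one direct predecessor. The plan is to prove the two implications separately, the forward one being essentially immediate from the definitions and the reverse one requiring the explicit construction of a rooted tree whose associated partial order restricts to $\preceq$.

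\medskip

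\noindent\textbf{The easy direction.} Suppose $\preceq$ is arborescent. By Definition~\ref{def:rootree} this means there is a rooted tree $T_{\rho}$ with vertex set containing $V$ such that $\preceq$ is the restriction of $\preceq_{\rho}$. First I would observe that in $T_{\rho}$ itself, two distinct predecessors $u_1, u_2$ of a vertex $v$ are automatically comparable: since $[\rho u_1] \subset [\rho v]$ and $[\rho u_2] \subset [\rho v]$, and $[\rho v]$ is an arc (a geodesic, hence homeomorphic to an interval with $\rho$ as one endpoint), the two subarcs $[\rho u_1]$ and $[\rho u_2]$ starting at the same endpoint $\rho$ must be nested; hence $u_1 \preceq_{\rho} u_2$ or $u_2 \preceq_{\rho} u_1$. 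Consequently the set of predecessors of any $v$ is totally ordered by $\preceq_{\rho}$, so among those predecessors that happen to lie in $V$ there is (in the finite set $V$) a unique $\preceq$-maximal one, and this is the unique direct predecessor of $v$ in $(V,\preceq)$. (If $v$ has no predecessor in $V$, it has none, which is also ``at most one''.) This gives the forward implication.

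\medskip

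\noindent\textbf{The harder direction.} Conversely, suppose every element of $(V,\preceq)$ has at most one direct predecessor; I must exhibit a rooted tree realizing $\preceq$. The natural candidate is precisely the object built in Definition~\ref{def:rt}: take the Hasse diagram of $(V,\preceq)$, or of $(\hat V,\preceq)$ if there is more than one minimal element, with edges joining each vertex to its direct predecessor, and declare the (unique, after possible augmentation) minimal element to be the root $\rho$. The work is to check that the resulting graph $G$ is a combinatorial tree and that its associated partial order restricts to $\preceq$. For the tree property: $G$ is connected because iterating ``pass to the direct predecessor'' from any vertex reaches $\rho$ in finitely many steps (the chain of strict predecessors terminates by finiteness, and by hypothesis it is unique at each stage, so there is no branching going downward); and $G$ has no cycles because each non-root vertex has exactly one neighbour among its (unique) direct predecessor together with possibly several direct successors — more precisely, orienting each edge from the direct successor to the direct predecessor makes $G$ a functional graph with a single sink $\rho$, which is exactly a rooted tree. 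I would then verify that the partial order $\preceq_{\rho}$ induced by this rooted-tree structure agrees with $\preceq$ on $V$: the geodesic $[\rho v]$ in $G$ is exactly the chain $v \succ v^{(1)} \succ v^{(2)} \succ \cdots \succ \rho$ obtained by repeatedly taking direct predecessors, so $u \preceq_{\rho} v$ iff $u$ appears in that chain iff $u \preceq v$ — here one uses transitivity of $\preceq$ and the fact that in a poset where every element has at most one direct predecessor, $u \prec v$ forces $u$ to lie on the maximal descending chain from $v$ (any strict predecessor of $v$ is $\preceq$ its unique direct predecessor $v^{(1)}$, and one induces downward).

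\medskip

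\noindent\textbf{Main obstacle.} The forward direction is genuinely routine once one uses the arc structure of geodesics. The subtle point in the reverse direction is the claim that $u \prec v$ implies $u$ lies on the descending chain of direct predecessors from $v$; this is where the ``at most one direct predecessor'' hypothesis does real work, and it needs a clean downward induction on, say, the length of a maximal chain between $u$ and $v$ (or on $|\{w : u \preceq w \preceq v\}|$). One must be slightly careful about the case of several minimal elements, where the realization lives on $\hat V = V \cup \{m\}$ and one checks that restricting $\preceq_{m}$ back to $V$ recovers $\preceq$ (the added relations all involve $m \notin V$). Since the paper says the proof is left to the reader, a compact writeup emphasizing exactly these two points — nestedness of descending geodesics in one direction, and the single-descending-chain structure in the other — suffices.
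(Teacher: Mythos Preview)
Your proof is correct. The paper leaves this proposition to the reader, so there is no argument to compare against; your approach---nestedness of predecessors along the geodesic $[\rho v]$ for the forward direction, and building the Hasse diagram (augmented by a new minimum if needed) for the converse---is the natural one and exactly what the authors had in mind when invoking Definition~\ref{def:rt} immediately afterwards. The one point worth tightening in a final writeup is the claim that $u \prec v$ forces $u$ to lie on the descending chain of direct predecessors from $v$: the cleanest justification is that in a finite poset any strict inequality $u \prec v$ refines to a saturated chain, whose penultimate element is a direct predecessor of $v$ and hence equals $v^{(1)}$ by uniqueness, after which induction on the chain length finishes the job---which is precisely what you sketch.
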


The notion of \emph{extended rooted tree} of an arborescent poset  
will play an important role  in our context (see Remark \ref{rem:rootend}).
\begin{remark}
    We took the name of \emph{arborescent partial order} from \cite{FMS 14}. 
   The characterization given in Proposition \ref{prop:chararb} 
    was chosen in \cite{FMS 14} as the definition of this notion.
\end{remark}

One has the following fact, whose proof we leave to the reader:

\begin{proposition} \label{prop:recons}
   Let $T_{\rho}$ be a rooted tree. Then the rooted tree $T(\cV_{top}(T_{\rho}), \preceq_{\rho})$ 
   associated to the arborescent poset $(\cV_{top}(T_{\rho}), \preceq_{\rho})$ is isomorphic 
   to $T_{\rho}$ by an isomorphism which fixes $\cV_{top}(T_{\rho})$. 
\end{proposition}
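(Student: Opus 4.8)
The statement to prove is Proposition \ref{prop:recons}: for a rooted tree $T_{\rho}$, the rooted tree $T(\cV_{top}(T_{\rho}), \preceq_{\rho})$ built from the arborescent poset $(\cV_{top}(T_{\rho}), \preceq_{\rho})$ is isomorphic to $T_{\rho}$ via an isomorphism fixing $\cV_{top}(T_{\rho})$. The plan is to exhibit the identity on $\cV_{top}(T_{\rho})$ as this isomorphism and check it respects the combinatorial structure. First I would recall that $\cV_{top}(T_{\rho}) = \cL(T_{\rho}) \cup \cN(T) \cup \{\rho\}$, so $\rho$ is the unique minimal element of the restricted poset (every geodesic from $\rho$ starts at $\rho$, so $\rho \preceq_{\rho} x$ for all $x$); hence by Definition \ref{def:rt} the root of $T(\cV_{top}(T_{\rho}), \preceq_{\rho})$ is $\rho$ itself, and its edges are the pairs $\{u,v\}$ with $v$ a direct predecessor of $u$ in $(\cV_{top}(T_{\rho}), \preceq_{\rho})$. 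One must first note that $\preceq_{\rho}$ restricted to $\cV_{top}(T_{\rho})$ is indeed arborescent, which follows from Proposition \ref{prop:chararb} together with the observation that in any rooted tree a point has at most one direct predecessor among \emph{all} points, a fortiori among those in $\cV_{top}(T_{\rho})$.

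The core combinatorial claim is then: two elements $u, v \in \cV_{top}(T_{\rho})$ are directly comparable in $(\cV_{top}(T_{\rho}), \preceq_{\rho})$ (say $u$ a direct predecessor of $v$) if and only if the geodesic $[uv]$ in $T_{\rho}$ contains no other point of $\cV_{top}(T_{\rho})$ in its interior --- equivalently, $u$ and $v$ are the endpoints of a ``topological edge'' of $T_{\rho}$, i.e. a maximal arc of $T_{\rho}$ whose relative interior avoids $\cV_{top}(T_{\rho})$. I would prove this by using that the interior of any such maximal arc consists only of points of valency $2$ in $T$ that are neither the root nor leaves; along the geodesic $[\rho v]$ the order $\preceq_{\rho}$ is a total order matching the order of traversal, so a point $w$ with $u \prec_{\rho} w \prec_{\rho} v$ and $w \in \cV_{top}(T_{\rho})$ would sit strictly between $u$ and $v$ on $[uv]$, contradicting that the interior of $[uv]$ avoids $\cV_{top}(T_{\rho})$; conversely if the interior of $[uv]$ met $\cV_{top}(T_{\rho})$ at some $w$, then $u \prec_{\rho} w \prec_{\rho} v$, so $u$ would not be a \emph{direct} predecessor of $v$.

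Granting this, the map ``identity on $\cV_{top}(T_{\rho})$'' sends vertices to vertices bijectively and edges of $T(\cV_{top}(T_{\rho}), \preceq_{\rho})$ to topological edges of $T_{\rho}$ bijectively. To conclude it is an isomorphism of rooted trees, I would check three things: the vertex sets agree (both are $\cV_{top}(T_{\rho})$, since $\rho$ already lies in $\cV_{top}(T_{\rho})$ so no artificial root is adjoined in Definition \ref{def:rt}); the edge sets correspond under the above claim; and the roots correspond, which was handled at the start. The forward-looking part worth stressing is that one should first pass from $T_{\rho}$ to the ``topologically reduced'' tree $\bar{T}$ obtained by suppressing valency-$2$ vertices not in $\cV_{top}(T_{\rho})$ --- this $\bar{T}$ is a combinatorial tree with vertex set exactly $\cV_{top}(T_{\rho})$ and is homeomorphic to $T$ rel $\cV_{top}(T_{\rho})$ --- and observe that $T(\cV_{top}(T_{\rho}), \preceq_{\rho})$ is precisely $\bar{T}$ together with the root $\rho$; this is just a repackaging of the Hasse-diagram description.

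\textbf{Main obstacle.} The only real subtlety, and the step I would spend the most care on, is the ``if and only if'' of direct comparability above --- specifically ruling out that a node or leaf could slip into the interior of a geodesic between directly-comparable elements --- together with the bookkeeping that the construction in Definition \ref{def:rt} does not adjoin a spurious new root here (because $(\cV_{top}(T_{\rho}), \preceq_{\rho})$ genuinely has a unique minimum, namely $\rho$). Both are elementary once one has the fact that $\preceq_{\rho}$ totally orders each geodesic $[\rho x]$ in the order of traversal from $\rho$; everything else is routine verification, which is presumably why the authors leave it to the reader.
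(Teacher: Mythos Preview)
Your proposal is correct and is precisely the kind of elementary verification the paper has in mind; the authors give no proof of this proposition, stating only that it is left to the reader. Your identification of the two key bookkeeping points---that $\rho$ is always the unique minimum of $(\cV_{top}(T_{\rho}), \preceq_{\rho})$ so no artificial root is adjoined, and that direct comparability in the restricted poset matches adjacency in the tree obtained by suppressing non-topological valency-$2$ vertices---is exactly what is needed.
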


      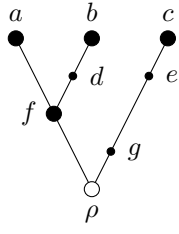
\begin{figure}
\centering
\begin{tikzpicture}  [scale=0.5]
 \draw(0,0) -- (-1, 2) -- (-2, 4);  
 \draw(0,0) -- (0.5 , 1)  -- (1.5 , 3) -- (2, 4);
 \draw(-1 ,  2) -- (-0.5 ,3)  -- (0 , 4) ;

 \draw (-2 , 4.2) node[above]{$a$} ;
 \draw (0 , 4.2) node[above]{$b$} ;
 \draw (2, 4.2) node[above]{$c$} ;
\draw (-0.3 , 3) node[right]{$d$} ;
\draw ( 1.7 , 3) node[right]{$e$} ;
\draw (-1.2 , 2) node[left]{$f$} ;
\draw ( 0.7 , 1) node[right]{$g$} ;
\draw (0 , -0.2) node[below]{$\rho$} ;

 \node[draw,circle,inner sep=2pt,fill=white] at (0,0) {};
 \node[draw,circle,inner sep=2pt,fill=black] at (-1 ,2 ) {};
\node[draw,circle,inner sep=2pt,fill=black] at ( -2, 4) {};
\node[draw,circle,inner sep=2pt,fill=black] at (0,4) {};
\node[draw,circle,inner sep=2pt,fill=black] at (2,4) {};
\node[draw,circle,inner sep=1pt,fill=black] at (-0.5 ,3) {};
\node[draw,circle,inner sep=1pt,fill=black] at (0.5 ,1) {};
\node[draw,circle,inner sep=1pt,fill=black] at ( 1.5 , 3) {};

    \end{tikzpicture}  
    \caption{The vertex set versus the topological vertex set in Example \ref{ex:topvert}.}  
    \label{fig:topvert}
    \end{figure}

A rooted tree may also be encoded as a supplementary structure 
on its set of leaves. Namely:

\begin{definition} \label{def:cluster}
    Let $T_{\rho}$ be a rooted tree. 
    To each point $v \in T_{\rho}$, associate its {\bf cluster} $\bK_{\rho}(v)$ 
    as the set of leaves which have $v$ as predecessor:
       \[ \bK_{\rho}(v) := \{ u \in \cL(T_{\rho}), \:  v \preceq_{\rho} u \}. \]
\end{definition}

   \begin{figure}
\centering
\begin{tikzpicture}  [scale=0.5]
 \draw(0,0) -- (-1, 1) -- (-2, 2) -- (-3,3);  
 \draw(0,0) -- (1 , 1)  -- (2 , 2) ;
 
 \draw(8,0) -- (7, 1) -- (6, 2) -- (5,3);  
 \draw(8,0) -- (9 , 1)  -- (10 , 2) ;
 
 \draw (8,0) -- (8, -1); 
   
 \draw (0.2 , 0) node[right]{$m$} ;
 \draw (-1.2 , 1) node[left]{$f$} ;
 \draw (-2.2, 2) node[left]{$d$} ;
\draw (-3.2 , 3) node[left]{$b$} ;
\draw ( 1.2 , 1) node[right]{$e$} ;
\draw (2.2 , 2) node[right]{$c$} ;

   \draw (8.2 , -1) node[right]{$\hat{m}$} ;
 \draw (8.2 , 0) node[right]{$m$} ;
 \draw (6.8 , 1) node[left]{$f$} ;
 \draw (5.8, 2) node[left]{$d$} ;
\draw (4.8 , 3) node[left]{$b$} ;
\draw ( 9.2 , 1) node[right]{$e$} ;
\draw (10.2 , 2) node[right]{$c$} ;

\draw (-1 , -2) node[right]{$T(V, \preceq)$} ;
\draw (7 , -2) node[right]{$\hat{T}(V, \preceq)$} ;

 \node[draw,circle,inner sep=2pt,fill=black] at (-1 ,1 ) {};
\node[draw,circle,inner sep=2pt,fill=black] at ( -2, 2) {};
\node[draw,circle,inner sep=2pt,fill=black] at (-3,3) {};
\node[draw,circle,inner sep=2pt,fill=black] at (1,1) {};
\node[draw,circle,inner sep=2pt,fill=black] at ( 2, 2) {};

 \node[draw,circle,inner sep=2pt,fill=black] at (7 ,1 ) {};
\node[draw,circle,inner sep=2pt,fill=black] at ( 6, 2) {};
\node[draw,circle,inner sep=2pt,fill=black] at (5,3) {};
\node[draw,circle,inner sep=2pt,fill=black] at (9,1) {};
\node[draw,circle,inner sep=2pt,fill=black] at ( 10, 2) {};

\node[draw,circle,inner sep=1pt,fill=black] at (0 ,0) {};
\node[draw,circle,inner sep=1pt,fill=black] at (8 ,0) {};
\node[draw,circle,inner sep=1pt,fill=black] at ( 8 , -1) {};

    \end{tikzpicture}  
    \caption{The two trees associated canonically to the arborescent poset $V$ of 
         Example \ref{ex:topvert}.} 
    \label{fig:twotrees}
    \end{figure}
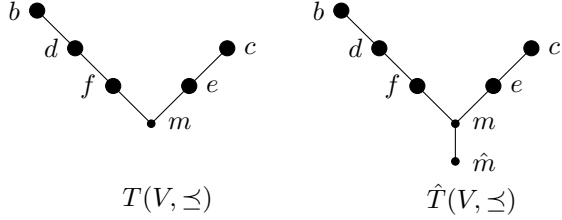

\begin{example} \label{ex:topvert}
  Figure \ref{fig:topvert} shows  a rooted tree $T$ with 
  vertex set $\{a, ..., g \}$ indicated by black bullets. It is 
  rooted at a point $\rho$ which is not a vertex, indicated by a white bullet. Note 
  that $\rho$ lies in the interior of the edge $[fg]$.  The  
  topological vertex set of $T$, indicated with bigger bullets, is 
  $\cV_{top}(T_{\rho}) = \{a,b,c,f,\rho\}$.  The arborescent poset 
  $(V = \{ b, c, d, e, f \}, \preceq_{\rho}) $ 
  (which is taken distinct from the vertex set,
   see Definition \ref{def:rootree}), may be described 
  by the strict inequalities, $f \prec_{\rho} d \prec_{\rho} b$ and $ e \prec_{\rho} c$,
  between directly comparable elements of $V$. 
   Notice that the poset  $V$ has two minimal 
   elements $e$ and $f$. 
   The root $m$ of the tree $T(V, \preceq)$ is a new point, 
   since we are in the second case of Definition \ref{def:rt}.
   The two rooted trees $T(V, \preceq)$ and $\hat{T}(V, \preceq)$ are drawn 
   in Figure \ref{fig:twotrees}. In both cases, the vertices corresponding to the elements 
   of $V$ are represented with bigger bullets. 
  The clusters associated to the vertices and the root of $T_\rho$ are: 
    \[   \begin{array}{c}
         \bK_{\rho}(a) = \{ a \}, \, \, \bK_{\rho}(b) =  \bK_{\rho}(d) = \{b \}, \, \,   
            \bK_{\rho}(f)  = \{ a, b \} ,   \\
         \, \,   \bK_{\rho}(c)  =  \bK_{\rho}(e) =  \bK_{\rho}(g)  = \{ c \}, \, \,  
             \bK_{\rho}(\rho) = \{a, b, c \}.   
               \end{array}
    \]
\end{example}

One has the following direct consequences of Definition \ref{def:cluster}:

\begin{proposition}  \label{prop:cluster}
    $\:  $
    
    \begin{enumerate}
          \item The cluster of a leaf $u$ is $\{u\}$ and the cluster of the root is the entire 
             set of leaves $\cL(T_{\rho})$. 
         \item The clusters $\bK_{\rho}(u)$ and $\bK_{\rho}(v)$ are disjoint if and only if 
            $u$ and $v$ are  incomparable. 
         \item One has $\bK_{\rho}(v) \subseteq \bK_{\rho}(u)$ if and only if $u$ is a predecessor 
             of $v$. 
         \item Two points $u, v \in T$ have the same cluster if and only if one is a predecessor 
            of the other one and the geodesic $[uv]$ does not contain nodes of $T_{\rho}$. 
    \end{enumerate}
\end{proposition}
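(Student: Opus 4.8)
The plan is to prove each of the four statements directly from Definition \ref{def:cluster}, using repeatedly the characterization $u \preceq_{\rho} v \Longleftrightarrow [\rho u] \subset [\rho v]$ of the associated partial order together with the fact that on a topological tree, for any two points $u, v$, the geodesics satisfy $[\rho u] \cap [\rho v] = [\rho\, (u \wedge_\rho v)]$ and $[\rho u] \cup [\rho v] = [\rho\, (u\wedge_\rho v)] \cup [(u\wedge_\rho v)\, u] \cup [(u\wedge_\rho v)\, v]$, the last two arcs meeting only at $u\wedge_\rho v$.

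For item (1): a leaf is maximal for $\preceq_\rho$, so $v \preceq_\rho u$ with $u$ a leaf and $u$ itself a leaf forces $u = v$ only when... more precisely, if $u$ is a leaf then any leaf $w$ with $u \preceq_\rho w$ satisfies $w = u$ by maximality of $u$, so $\bK_\rho(u) = \{u\}$; and the root satisfies $\rho \preceq_\rho w$ for every $w \in T$, in particular for every leaf, so $\bK_\rho(\rho) = \cL(T_\rho)$. For item (3): if $u \preceq_\rho v$ then any leaf $w$ with $v \preceq_\rho w$ satisfies $u \preceq_\rho w$ by transitivity, giving $\bK_\rho(v) \subseteq \bK_\rho(u)$; conversely, if $u$ is not a predecessor of $v$ (and $u \neq v$), I extend the geodesic $[\rho v]$ beyond $v$ (if $v$ is a leaf take $w = v$, otherwise choose any leaf $w$ with $v \prec_\rho w$) to get a leaf $w \in \bK_\rho(v)$; since $u \not\preceq_\rho v \preceq_\rho w$ and $u$ would have to satisfy $u \preceq_\rho w$, one checks using the tree structure that $u \preceq_\rho w$ together with $v \preceq_\rho w$ forces $u$ and $v$ comparable, and then $u \preceq_\rho v$ (the case $v \prec_\rho u$ is excluded since $v \preceq_\rho w \prec_\rho$ nothing, i.e. $w$ maximal), a contradiction; hence $w \in \bK_\rho(v) \setminus \bK_\rho(u)$. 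For item (2): if $u, v$ are incomparable, then a common leaf $w \in \bK_\rho(u) \cap \bK_\rho(v)$ would give $u \preceq_\rho w$ and $v \preceq_\rho w$, hence $[\rho u] \subset [\rho w]$ and $[\rho v] \subset [\rho w]$, and since $[\rho w]$ is an arc (totally ordered by $\preceq_\rho$) its subsets $[\rho u]$ and $[\rho v]$ are nested, making $u$ and $v$ comparable — contradiction; conversely if $u \preceq_\rho v$ (say) then by (3) $\bK_\rho(v) \subseteq \bK_\rho(u)$, and $\bK_\rho(v) \neq \emptyset$ (extend $[\rho v]$ to a leaf as above), so the clusters are not disjoint. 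For item (4): if $u, v$ have the same nonempty cluster, then by (2) they are comparable, say $u \preceq_\rho v$; if the geodesic $[uv]$ contained a node $n$ with $u \preceq_\rho n \preceq_\rho v$, then $n$ has a direct successor $n'$ off the geodesic $[\rho v]$, which can be extended to a leaf $w'$ with $n \preceq_\rho w'$ but $v \not\preceq_\rho w'$ (since $w'$ lies in a different component of $T \setminus \{n\}$ than $v$), giving $w' \in \bK_\rho(u) \setminus \bK_\rho(v)$, a contradiction; conversely, if $u \preceq_\rho v$ and $[uv]$ has no node, then every leaf $w$ with $u \preceq_\rho w$ must have its geodesic $[\rho w]$ pass through $v$ — otherwise $[\rho w]$ branches off $[\rho v]$ at some point $p$ strictly between $u$ and $v$, making $p$ a node of $[uv]$ — so $v \preceq_\rho w$, giving $\bK_\rho(u) \subseteq \bK_\rho(v)$, and the reverse inclusion is item (3).

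The main obstacle is keeping the topological bookkeeping honest: each step rests on the elementary but slightly fiddly fact that in a topological tree a point $p$ of valency $\geq 2$ lying interior to a geodesic $[\rho v]$ either lies "below" everything through it, and that extending a geodesic $[\rho v]$ to terminate at a leaf is always possible (the tree is finite, so maximal elements exist above any given point). These are all consequences of Definition \ref{def:tree} and Definition \ref{def:rootree}, and the cleanest route is to phrase everything in terms of the arcs $[\rho\, \cdot\,]$ and their nesting, which is exactly what $\preceq_\rho$ records. No deep input is needed; the proposition is, as the text says, a direct consequence of Definition \ref{def:cluster}.
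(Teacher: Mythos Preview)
The paper offers no proof of this proposition, noting only that the four items are direct consequences of Definition~\ref{def:cluster}; your step-by-step verification from the definitions of $\preceq_\rho$ and $\bK_\rho$ is precisely what is intended, and your arguments are sound in substance.

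One minor point: the parenthetical in your converse for item~(3), where you try to exclude the case $v \prec_\rho u$, does not actually work as written --- if $v \prec_\rho u$ and the geodesic from $v$ to $u$ meets no node below $u$, then your own item~(4) gives $\bK_\rho(u) = \bK_\rho(v)$, so the inclusion $\bK_\rho(v) \subseteq \bK_\rho(u)$ can hold even though $u$ is not a strict predecessor of $v$. The same phenomenon makes your argument for the forward direction of~(4) break down when the node $n$ coincides with the upper endpoint $v$ (your leaf $w'$ then satisfies $v \preceq_\rho w'$, not $v \not\preceq_\rho w'$). These are not defects in your method but rather reflect a slight looseness in the paper's own formulation: items~(3) and~(4) are literally correct only if one reads ``predecessor'' as $\preceq_\rho$ in~(3) and excludes the upper endpoint from the node condition in~(4). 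With that reading, your proof goes through cleanly.
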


Denote by $2^W$ the {\em power set} of a set $W$, that is, its set of subsets.  
As an immediate consequence of Proposition \ref{prop:cluster} one has 
(recall that $\cV_{top}(T_{\rho})$ denotes the \emph{topological 
vertex set} of $T_{\rho}$, introduced in Definition \ref{def:rootree}):

\begin{corollary}   \label{coro:ordinv}
    The \textbf{cluster map}:
         $$ \begin{array}{cccc}
                    \bK_{\rho}: &      \cV_{top}(T_{\rho})  &  \to & 2^{\cL(T_{\rho})} 
                    \\
                     &  u &  \to &  \bK_{\rho}(u)
               \end{array}  $$
       is decreasing from the poset $(\cV_{top}(T_{\rho}), \preceq_{\rho})$ 
       to the poset $(2^{\cL(T_{\rho})}, \subseteq)$. Moreover: 
          \begin{enumerate}
              \item  \emph{If $T_{\rho}$ is not end-rooted}, then $\bK_{\rho}$ is injective. 
              \item \emph{If $T_{\rho}$ is end-rooted}, then $\bK_{\rho}$ is injective in restriction 
                  to $\cV_{top}(T_{\rho})  \setminus \{ \rho \}$ and 
                  $\bK_{\rho}(\rho) = \bK_{\rho}(\rho^+)$, 
                  where $\rho^+$ is the unique direct successor of $\rho$ in the poset 
                  $(\cV_{top}(T_{\rho}), \preceq_{\rho})$. 
          \end{enumerate}
\end{corollary}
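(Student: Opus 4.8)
The plan is to deduce everything from Proposition \ref{prop:cluster}, which already packages the order-theoretic content of the cluster construction. First I would observe that parts (2) and (3) of Proposition \ref{prop:cluster} immediately give that $\bK_{\rho}$ is order-reversing: if $u \prec_{\rho} v$ then $u$ is a predecessor of $v$, so $\bK_{\rho}(v) \subseteq \bK_{\rho}(u)$; and the inclusion is strict because if $\bK_{\rho}(u) = \bK_{\rho}(v)$ with $u \prec_{\rho} v$, then by part (4) the geodesic $[uv]$ contains no node of $T_{\rho}$, hence no element of $\cN(T) \subset \cV_{top}(T_{\rho})$ lies strictly between $u$ and $v$; combined with the fact that $u,v \in \cV_{top}(T_{\rho})$ and that consecutive topological vertices on $[\rho v]$ are separated by a node (by the very definition \eqref{eq:topvertex} of $\cV_{top}$), this forces $u = v$ unless one of them is the root $\rho$ sitting immediately below its unique direct successor $\rho^+$ — which is precisely the end-rooted exceptional case. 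So the strict monotonicity statement holds, with the single coincidence $\bK_{\rho}(\rho) = \bK_{\rho}(\rho^+)$ possible only when $\rho$ is an end.

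Next, for the injectivity claims, suppose $u, v \in \cV_{top}(T_{\rho})$ satisfy $\bK_{\rho}(u) = \bK_{\rho}(v)$. By part (2) of Proposition \ref{prop:cluster} the clusters being equal (and nonempty, since every cluster contains at least one leaf) forces $u$ and $v$ to be comparable; say $u \preceq_{\rho} v$. Then by part (4), $[uv]$ contains no node of $T_{\rho}$. Now I would argue that two distinct topological vertices on a common geodesic must have a node strictly between them, \emph{unless} the lower one is the root $\rho$ and the upper one its direct successor $\rho^+$: indeed, any interior point of $T$ on the open arc $]uv[$ that is a topological vertex would have to be a node (it is neither a leaf, being interior to $[uv] \subseteq [\rho v]$, nor the root, the root being the absolute minimum and $u$ being a predecessor of it only if $u = \rho$). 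This is where the end-rooted/interior-rooted dichotomy enters: when $\rho$ is interior, $\rho$ itself has valency $\geq 2$ in $T$, but more to the point the arc from $\rho$ to its nearest node passes through no topological vertex, so $\bK_\rho$ separates $\rho$ from everything above it as well; when $\rho$ is an end, the whole half-open arc $[\rho, \rho^+]$ contains no node, so $\rho$ and $\rho^+$ share a cluster. Hence in the non-end-rooted case $u = v$, giving injectivity; in the end-rooted case the only failure is $\{u,v\} = \{\rho, \rho^+\}$, so $\bK_{\rho}$ is injective on $\cV_{top}(T_{\rho}) \setminus \{\rho\}$ and $\bK_{\rho}(\rho) = \bK_{\rho}(\rho^+)$.

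The main obstacle I anticipate is not conceptual but bookkeeping: one must be careful about the possibility that $\rho$ \emph{is} a vertex of the underlying combinatorial tree versus a point interior to an edge, and about degenerate cases (the tree reduced to a point, or a leaf that is simultaneously the root). The cleanest way to handle these uniformly is to phrase the key sublemma as: for $x, y \in \cV_{top}(T_{\rho})$ with $x \prec_{\rho} y$, one has $\bK_{\rho}(x) = \bK_{\rho}(y)$ if and only if $x = \rho$, $y = \rho^+$, and $\rho$ is an end — and to prove this directly from part (4) of Proposition \ref{prop:cluster} together with the description \eqref{eq:topvertex} of the topological vertex set, noting that $\rho^+$, if $\rho$ is an end, is by Definition \ref{def:endroot} the unique direct successor and equals the first node or leaf encountered along $[\rho, \cdot]$. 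Everything else then follows formally, and I would relegate the verification of this sublemma to a couple of lines, since it is exactly the content of Proposition \ref{prop:cluster}(4) read in the poset $\cV_{top}(T_{\rho})$.
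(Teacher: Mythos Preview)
Your proposal is correct and follows the paper's approach. The paper gives no proof beyond asserting the corollary as ``an immediate consequence of Proposition~\ref{prop:cluster}'', and your argument supplies precisely those omitted details, reading off the order-reversal from parts (2)--(3) and isolating the unique failure of injectivity via part (4) together with the description~(\ref{eq:topvertex}) of the topological vertex set.
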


Proposition \ref{prop:cluster} may be reformulated 
by saying that the image of the cluster map is a \emph{hierarchy} in the following sense: 

\begin{definition}  \label{def:hierarchy}
   A {\bf hierarchy} on the finite set $X$ is a subset  of $2^X\setminus \{\emptyset\}$ 
   (whose elements are called {\bf clusters}) satisfying the following 
   properties:
       \begin{enumerate}
           \item All the one-element subsets of $X$ as 
              well as $X$ itself are clusters.  
           \item Given any two clusters, they are either disjoint or one is included inside the other. 
       \end{enumerate}
\end{definition}

 In fact, those properties characterize 
completely the images of cluster maps associated to rooted trees with given 
leaf set (folklore, see \cite[Introduction]{BD 98}):

\begin{proposition}   \label{prop:charimage}
   The images of the cluster maps associated to the rooted trees with finite 
   sets of leaves $X$ are exactly the {\bf hierarchies} on $X$. 
\end{proposition}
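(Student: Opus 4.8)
The statement has two halves, and the forward implication is essentially a repackaging of Proposition~\ref{prop:cluster}. Given a rooted tree $T_\rho$ with $\cL(T_\rho)=X$, I would argue that $\bK_\rho(\cV_{top}(T_\rho))$ satisfies the two axioms of Definition~\ref{def:hierarchy}: part~(1) of Proposition~\ref{prop:cluster} puts all singletons $\{u\}$ (as clusters of leaves) and the full set $X$ (as $\bK_\rho(\rho)$) into the image; and for any two topological vertices $u,v$, either they are incomparable, in which case parts~(2) gives $\bK_\rho(u)\cap\bK_\rho(v)=\emptyset$, or they are comparable, in which case part~(3) gives that one of $\bK_\rho(u),\bK_\rho(v)$ contains the other. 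Since in a poset any two elements are comparable or not, axiom~(2) of a hierarchy holds, and this direction is done in a couple of lines.

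For the converse, let $\cH$ be a hierarchy on $X$. The plan is to reconstruct a rooted tree directly from $\cH$, the natural candidate being the poset $(\cH,\supseteq)$, that is, $\cH$ ordered by \emph{reverse} inclusion (so that, as in Corollary~\ref{coro:ordinv}, the cluster map will be order-reversing relative to $\subseteq$). First I would identify the extreme elements: $X$ is the unique minimal element of $(\cH,\supseteq)$, because $X\in\cH$ and every cluster is contained in $X$; and the maximal elements of $(\cH,\supseteq)$ are exactly the one-element subsets of $X$, since each $\{x\}$ is a cluster while any $C\in\cH$ with $|C|\ge 2$ strictly contains the cluster $\{x\}$ for each $x\in C$ and so is not minimal for $\subseteq$. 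Hence $x\mapsto\{x\}$ is a bijection from $X$ onto the set of maximal elements of $(\cH,\supseteq)$.

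Next I would verify that $(\cH,\supseteq)$ is an arborescent poset by the criterion of Proposition~\ref{prop:chararb}, i.e. that each $C\in\cH$ has at most one direct predecessor in $(\cH,\supseteq)$ — equivalently, at most one minimal cluster strictly containing $C$. If $C',C''$ were two such, then $C\subseteq C'\cap C''$ forces $C',C''$ to be non-disjoint, so the hierarchy property gives, say, $C'\subseteq C''$; but $C'\supsetneq C$, so if $C'\ne C''$ then $C'$ is a cluster with $C\subsetneq C'\subsetneq C''$, contradicting the minimality of $C''$, whence $C'=C''$. So $(\cH,\supseteq)$ is arborescent, and Definition~\ref{def:rt} produces the rooted tree $T(\cH,\supseteq)$; being in the first case of that definition (unique minimal element $X$), its root is $X$, its vertex set is $\cH$, its edges join each $C$ to the unique minimal cluster strictly containing it, and its order $\preceq_X$ coincides with $\supseteq$ on $\cH$. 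Identifying $X$ with $\cL(T(\cH,\supseteq))$ via $x\mapsto\{x\}$, the cluster of a vertex $C$ is $\bK_X(C)=\{\,\{x\}:x\in C\,\}$, which under this identification is $C$ itself, so the image of the cluster map is $\cH$.

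The one point I would treat as the real content rather than bookkeeping is reconciling the fact that the cluster map of Corollary~\ref{coro:ordinv} is defined only on $\cV_{top}(T_\rho)$ with the claim that its image is all of $\cH$. One must check that every $C\in\cH$ is a topological vertex of $T(\cH,\supseteq)$: the singletons are its leaves, $X$ is its root, and any $C$ with $|C|\ge 2$ has at least two maximal proper subclusters (the singletons of $C$ cannot all lie in a single proper subcluster, else that subcluster would contain $C$), hence is a node. Alternatively one can bypass this using Proposition~\ref{prop:cluster}(4), which shows the clusters realized at topological vertices coincide with those realized at arbitrary vertices; either way $\bK_X\big(\cV_{top}(T(\cH,\supseteq))\big)=\cH$. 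A last sanity check is that $\preceq_X$ agrees with $\supseteq$, which holds because in the Hasse-diagram tree of a poset with least element, a vertex lies on the geodesic from the root to $v$ exactly when it precedes $v$ — precisely how $T(\cH,\supseteq)$ and $\preceq_X$ are set up in Definitions~\ref{def:rootree} and~\ref{def:rt}.
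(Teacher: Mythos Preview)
Your proof is correct. The paper does not actually prove Proposition~\ref{prop:charimage}; it states it as folklore with a reference to B\"ocker--Dress, so there is no paper proof to compare against directly. That said, your argument is precisely the one the paper sets up implicitly: the forward direction is the content of Proposition~\ref{prop:cluster}, and for the converse you reprove Lemma~\ref{lem:revincl} (that $(\cH,\preceq_{ri})$ is arborescent) and then invoke Definition~\ref{def:rt} to build the tree, with your final check that every cluster is a topological vertex being exactly Proposition~\ref{prop:charhier}. So your approach coincides with the infrastructure the paper lays down around the proposition.
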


Now, if one orders a hierarchy by \emph{reverse inclusion}, one gets an arborescent poset: 

\begin{lemma}  \label{lem:revincl}
    Let $\cH$ be a hierarchy on the finite set $X$. Define the partial order $\preceq_{ri}$ 
    on $\cH$ by:
        $$ K_1 \preceq_{ri} K_2 \:  \Longleftrightarrow  \:   K_1 \supseteq K_2.$$
     Then $(\cH, \preceq_{ri})$ is an arborescent poset. 
\end{lemma}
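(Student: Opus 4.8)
The plan is to verify directly that $(\cH, \preceq_{ri})$ satisfies the characterization of arborescent posets given in Proposition \ref{prop:chararb}: namely, that every element has at most one direct predecessor for the order $\preceq_{ri}$. So I would fix a cluster $K \in \cH$ and consider the set of its predecessors, that is, the set of clusters $K'$ with $K' \supsetneq K$ (here using that $K_1 \preceq_{ri} K_2$ means $K_1 \supseteq K_2$, so a predecessor of $K$ for $\preceq_{ri}$ is a \emph{strict} superset of $K$). If $K = X$ there are no predecessors and we are done, so assume $K \subsetneq X$; then $X$ itself is a predecessor, so the set of predecessors is nonempty.

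The key step is the following observation. Let $K_1, K_2$ both be predecessors of $K$, so $K_1 \supsetneq K$ and $K_2 \supsetneq K$. Then $K_1 \cap K_2 \supseteq K \neq \emptyset$, so $K_1$ and $K_2$ are not disjoint; by property (2) of a hierarchy, one is contained in the other, say $K_1 \subseteq K_2$. This shows that the set of predecessors of $K$ is totally ordered by inclusion. Since $\cH$ is finite, this totally ordered set has a least element $M$ for inclusion (the smallest cluster strictly containing $K$). For the order $\preceq_{ri}$, $M$ is then the \emph{largest} predecessor of $K$, hence any predecessor $K'$ of $K$ satisfies $K' \supseteq M$, i.e. $K' \preceq_{ri} M$; if moreover $K' \neq M$ then $K' \prec_{ri} M$, so $K'$ is a predecessor of $M$ and therefore not a direct predecessor of $K$. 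Thus $M$ is the unique direct predecessor of $K$.

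Finally I would record that $\preceq_{ri}$ is indeed a partial order (reflexivity, antisymmetry and transitivity are immediate from those of $\supseteq$), and invoke Proposition \ref{prop:chararb} to conclude that $\preceq_{ri}$ is arborescent, i.e. that $(\cH, \preceq_{ri})$ is an arborescent poset. I do not expect any real obstacle here: the only point requiring slight care is distinguishing the direction of the inequality (a predecessor for $\preceq_{ri}$ is a larger set, the direct predecessor being the \emph{smallest} strict superset), and noting that finiteness of $X$ (hence of $\cH$) is what guarantees the chain of predecessors has a minimum.

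\begin{proof}
   It is clear that $\preceq_{ri}$ is a partial order on $\cH$, being obtained by reversing
   the inclusion order. We check that it satisfies the characterization of
   Proposition \ref{prop:chararb}, namely that each $K \in \cH$ has at most one
   direct predecessor. Let $K \in \cH$ and suppose it has a predecessor, that is, a cluster
   $K' \in \cH$ with $K' \supsetneq K$; in particular $K \neq X$. Consider any two
   predecessors $K_1, K_2$ of $K$. Then $K_1 \cap K_2 \supseteq K \neq \emptyset$,
   so $K_1$ and $K_2$ are not disjoint, hence by the second property of a hierarchy one
   of them is contained in the other. Therefore the set of predecessors of $K$ is totally
   ordered by inclusion, and being a nonempty subset of the finite set $\cH$, it has a
   least element $M$ for inclusion. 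For the order $\preceq_{ri}$, the cluster $M$ is then
   the greatest predecessor of $K$: every predecessor $K'$ of $K$ satisfies
   $K' \supseteq M$. Consequently, if $K' \neq M$ then $K' \supsetneq M$, so $K'$ is a
   predecessor of $M$, which shows that $K'$ is not a direct predecessor of $K$. Hence
   $M$ is the unique direct predecessor of $K$. By Proposition \ref{prop:chararb},
   $\preceq_{ri}$ is arborescent, so $(\cH, \preceq_{ri})$ is an arborescent poset.
\end{proof}
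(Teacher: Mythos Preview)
Your proof is correct and follows essentially the same approach as the paper: both invoke Proposition \ref{prop:chararb} and show that the clusters strictly containing a given $K$ form a chain (via property (2) of a hierarchy), whence the smallest such superset is the unique direct predecessor. Your version is somewhat more explicit about the role of finiteness in extracting this minimum, but the argument is the same.
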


\begin{proof}
    By Proposition \ref{prop:chararb}, it is enough to check that for any cluster 
    $K_1  \ne X$, there 
    exists a unique cluster $K_2$ strictly containing it  and such that there are no other 
    clusters between $K_1$ and $K_2$. But this comes from the fact that, by condition 
    (2) of Definition \ref{def:hierarchy}, all the clusters containing $K_1$ form a chain 
    (that is, a totally ordered set) under inclusion. 
\end{proof}

Conversely, one has the following characterization of 
 arborescent posets coming from hierarchies, whose proof we leave to the reader: 

\begin{proposition}  \label{prop:charhier}
   An arborescent poset $(V, \preceq)$ is isomorphic to the poset defined by a hierarchy if 
   and only if any non-maximal element has at least two direct successors. 
   In particular, the associated rooted tree $T(V, \preceq)$ is never end-rooted.
\end{proposition}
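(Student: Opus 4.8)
The plan is to prove both implications of the equivalence directly, using the characterization of arborescent posets from Proposition \ref{prop:chararb} (each element has at most one direct predecessor) together with the two defining conditions of a hierarchy.

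First I would treat the forward direction. Assume $(V,\preceq)$ is isomorphic to $(\cH,\preceq_{ri})$ for some hierarchy $\cH$ on a finite set $X$; by Lemma \ref{lem:revincl} we already know it is an arborescent poset, so only the "at least two direct successors" property needs checking. Let $K \in \cH$ be non-maximal for $\preceq_{ri}$, i.e.\ $K \subsetneq X$ and $K$ is not a one-element subset contained minimally — more precisely, $K$ has a strict successor, meaning there is a cluster strictly contained in $K$. I would argue that $K$ must contain at least two elements of $X$ (a singleton cluster is $\preceq_{ri}$-maximal), and then consider the maximal subclusters of $K$: the clusters $K' \subsetneq K$ that are not contained in any intermediate cluster. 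Each element $x \in K$ lies in the singleton $\{x\} \subseteq K$, and pushing up the chain of clusters containing $\{x\}$ but contained in $K$ (a chain by condition (2) of Definition \ref{def:hierarchy}) yields a maximal subcluster $K_x$ containing $x$. Two elements $x \ne y$ of $K$ may give the same $K_x$, but since the maximal subclusters are pairwise disjoint (again condition (2)) and cover $K$ — every $x\in K$ lies in some $K_x$ — if there were only one maximal subcluster it would equal $K$, contradiction. Hence there are at least two, and these are exactly the direct successors of $K$ in $(\cH,\preceq_{ri})$.

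Next I would treat the converse. Suppose $(V,\preceq)$ is an arborescent poset in which every non-maximal element has at least two direct successors; I want to exhibit a hierarchy $\cH$ and an order isomorphism $(V,\preceq)\cong(\cH,\preceq_{ri})$. The natural candidate is the cluster construction: pass to the associated rooted tree $T(V,\preceq)$ of Definition \ref{def:rt}, whose leaves form a finite set $X:=\cL(T(V,\preceq))$, and use the cluster map $\bK_\rho$ of Definition \ref{def:cluster}. By Corollary \ref{coro:ordinv}, $\bK_\rho$ is a strictly decreasing map from the topological vertex set to $2^X$; the hypothesis that every non-maximal element of $V$ has at least two direct successors means that no vertex of $V$ is "redundant'' along an edge with no node, so by item (4) of Proposition \ref{prop:cluster} the cluster map is injective on (the image of) $V$, and moreover $T(V,\preceq)$ is not end-rooted — in the first case of Definition \ref{def:rt} the root is the unique minimum of $V$, which by hypothesis has $\ge 2$ direct successors hence is a node; in the second case the root is the auxiliary point $m$, again a node. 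So $\bK_\rho$ restricted to $V$ is an order isomorphism onto its image, and that image is a hierarchy on $X$ by Proposition \ref{prop:charimage}. This simultaneously yields the last sentence of the statement, that $T(V,\preceq)$ is never end-rooted.

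The main obstacle I anticipate is the bookkeeping in the converse direction: one must be careful that the "elements of $V$'' are faithfully recorded as clusters, which can fail precisely when a non-maximal vertex has a single direct successor (then it shares its cluster with that successor and injectivity breaks). Making this precise requires pinning down, via Proposition \ref{prop:cluster}(4), exactly when two comparable points have equal clusters — namely when the geodesic between them contains no node — and checking the hypothesis rules this out for pairs inside $V$. Once that is in hand the rest is a routine translation between the three equivalent languages (arborescent posets, rooted trees, hierarchies) already set up in this section.
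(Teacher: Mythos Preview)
The paper leaves this proof to the reader, so there is nothing to compare against; I will just assess correctness.

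Your forward direction is fine. The gap is in the converse, and it is a real one. You argue that in the second case of Definition~\ref{def:rt} (several minimal elements) the auxiliary root $m$ is a node, hence $T(V,\preceq)$ is not end-rooted --- true --- and then conclude that ``$\bK_\rho$ restricted to $V$ is an order isomorphism onto its image, and that image is a hierarchy on $X$''. But in that case the cluster $X=\bK_\rho(m)$ is \emph{not} in the image of $V$, since $m\notin V$; the image of $V$ therefore violates condition~(1) of Definition~\ref{def:hierarchy}. Concretely, take $V=\{a,b,c,d,e,f\}$ with $c\prec a$, $c\prec b$, $f\prec d$, $f\prec e$ and no other relations (two disjoint ``cherries''). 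This is an arborescent poset in which every non-maximal element ($c$ and $f$) has exactly two direct successors, yet it has two minimal elements and hence cannot be isomorphic to any hierarchy poset, which always has the unique minimum $X$.

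So the issue is not merely a patchable oversight in your write-up: the equivalence as stated is false without an extra hypothesis (a unique minimal element), and your argument breaks exactly where the statement does. For the paper's purposes this is harmless --- the proposition is only invoked for posets $(\cB^U,\preceq_{ri})$ that already have a unique minimum --- but your proof of the converse should explicitly assume (or derive) that $V$ has a unique minimum before appealing to the cluster map, and you should flag that the statement needs this hypothesis.
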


\subsection{Ultrametric spaces and dated rooted trees}  \label{ultramdepth}
$\:$ 
\medskip

In this subsection we explain the relation between \emph{finite ultrametric spaces} 
and rooted trees endowed with a \emph{depth function}. 
\medskip

Let us first fix our notations and vocabulary about metric spaces: 

\begin{notation}  \label{not:ball}  
  If $(X,d)$ is a metric space, then a {\bf closed ball} of it is a subset of the form:
     $\B(a,  r):= \{ p \in X\: | \:  d(a, p) \leq r \},$
  where $a$ denotes any point of $X$ and $r \in [0, + \infty)$. 
  Each time a subset of $X$ is presented in this way, one says that $a$ is {\bf a center} 
  and $r$ is {\bf a radius} of it. 
  The {\bf diameter} of a subset $Y \subset X$ is   
  $\mbox{diam}(Y) := \sup\{ d(x,y), x,y \in Y\} \in [0, + \infty].$
\end{notation}

In Euclidean geometry, a closed ball has a unique center and a unique radius. 
None of those two properties remains true in general. There is an extreme situation 
in which \emph{any} point of a closed ball is a center of it:

\begin{definition} \label{def:ultram}
    Let $(X, d)$ be a metric space. The distance function $d: X \times X \to [0, \infty)$ 
    is called an {\bf ultrametric} if one has the following strong form of the triangle 
    inequality, called the {\bf ultrametric inequality}:
       $$ d(a,b) \leq \max \{ d(a, c), d(b,c) \}, \:  \mbox{ for all }  \:   a, b, c \in X.$$
    In this case, we say that $(X,d)$ is {\bf an ultrametric space}. 
\end{definition}

One has the following characterizations of ultrametricity, which result immediately  
from the definition: 

\begin{proposition}   \label{prop:equivultra}
   Let $(X, d)$ be a metric space. Then the following conditions are equivalent:
      \begin{enumerate}
         \item \label{first} 
              $d$ is an ultrametric. 
         \item  \label{second} 
              All the triangles are either equilateral or 
             isosceles with the unequal side being the shortest. 
         \item \label{third} 
               For any closed ball, all its points are centers of it. 
         \item \label{clusterchar} 
             Given any two closed balls, they are either disjoint, or one of them 
           is contained into the other one. 
      \end{enumerate}
\end{proposition}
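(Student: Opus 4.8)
The plan is to prove the cyclic chain of implications $(\ref{first})\Rightarrow(\ref{third})\Rightarrow(\ref{clusterchar})\Rightarrow(\ref{first})$ and, separately, the equivalence $(\ref{first})\Leftrightarrow(\ref{second})$; throughout, the degenerate configurations (two of the three points coinciding, or a ball of radius zero) are trivial and I would dispose of them in one line at the start.

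For $(\ref{first})\Leftrightarrow(\ref{second})$ the idea is that the ultrametric inequality, written for all three labellings of a triple $\{a,b,c\}$, says precisely that the largest of the three side lengths cannot strictly exceed the other two; being also $\geq$ each of them, it must coincide with the second largest. Hence ``the two largest sides of any triangle are equal'', which is exactly condition $(\ref{second})$. Conversely, if the two largest sides always agree, then for arbitrary $a,b,c$ the length $d(a,b)$ is $\leq\max\{d(a,c),d(b,c)\}$ --- either because $d(a,b)$ is the largest side and thus equals the second largest, or because it is not the largest at all --- which is $(\ref{first})$.

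For $(\ref{first})\Rightarrow(\ref{third})$ I would in fact prove the stronger statement that every point $b$ of $\B(a,r)$ satisfies $\B(b,r)=\B(a,r)$: if $d(a,b)\leq r$ and $d(a,p)\leq r$, then $d(b,p)\leq\max\{d(b,a),d(a,p)\}\leq r$, so $\B(a,r)\subseteq\B(b,r)$, and the reverse inclusion follows by symmetry since $a\in\B(b,r)$. For $(\ref{third})\Rightarrow(\ref{clusterchar})$: if two closed balls share a common point $p$, then by $(\ref{third})$ each of them may be rewritten as a closed ball centred at $p$, and two closed balls with the same centre are automatically nested, the one of smaller radius sitting inside the one of larger radius. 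For $(\ref{clusterchar})\Rightarrow(\ref{first})$: given $a,b,c$, relabel so that $d(a,c)\leq d(b,c)$, set $r_1=d(a,c)$, $r_2=d(b,c)$, and note that $\B(a,r_1)$ and $\B(b,r_2)$ both contain $c$; by $(\ref{clusterchar})$ one is contained in the other, and in either case the centre of the smaller ball lies inside the larger one, which gives $d(a,b)\leq r_2=\max\{d(a,c),d(b,c)\}$.

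I do not expect any step to present a real obstacle --- this proposition is essentially a matter of unwinding the definitions. The only places calling for mild care are the bookkeeping of the degenerate cases in $(\ref{first})\Leftrightarrow(\ref{second})$, and the remark (needed in $(\ref{third})\Rightarrow(\ref{clusterchar})$) that one uses only the fact that the common point is \emph{a} centre of each ball --- i.e. a centre for \emph{some} radius --- and never needs to control which radius that is.
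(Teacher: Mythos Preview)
Your proof is correct and complete; each implication is handled cleanly, and the only subtleties you flag (degenerate triples, the use of ``a'' centre versus ``the'' centre in $(\ref{third})\Rightarrow(\ref{clusterchar})$) are genuine but minor. The paper itself does not give a proof of this proposition, merely asserting that the equivalences ``result immediately from the definition'', so there is no approach to compare against --- your argument simply fills in what the paper leaves to the reader.
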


As a consequence of Proposition \ref{prop:equivultra}, we have the following property:

\begin{lemma}  \label{lem:radiusultra}
    Let $(X, U)$ be a finite ultrametric space. If $\cD$ is a closed ball, then 
    its diameter  $\mathrm{diam} (\cD)$ 
    is the minimal radius $r$ such that $\cD = \B(a,r)$ for any $a \in \cD$. 
\end{lemma}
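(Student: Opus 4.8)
The plan is to lean on the two facts about closed balls in an ultrametric space that are already on the table: that the diameter of a ball never exceeds any of its radii, and that, by Proposition \ref{prop:equivultra}(\ref{third}), every point of a ball is a center of it. The argument is short, so the only thing that needs real attention is keeping the two directions of the various ball inclusions straight, so that the reasoning does not become circular.

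First I would record the elementary inequality $\mathrm{diam}(\B(a,r)) \leq r$, valid for any ball in an ultrametric space: if $x,y \in \B(a,r)$, the ultrametric inequality gives $U(x,y) \leq \max\{U(x,a),U(a,y)\} \leq r$, and one takes the supremum over $x,y$. Writing $\delta := \mathrm{diam}(\cD)$ and using that $\cD$ is a ball, this already shows $\delta < \infty$ (finiteness of $X$ also makes $\delta$ attained, but this will not be needed). Next I would show that $\delta$ presents $\cD$ as a ball from every one of its points. Fix $a \in \cD$; by Proposition \ref{prop:equivultra}(\ref{third}) it is a center, so $\cD = \B(a,r)$ for some $r \geq 0$. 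The inclusion $\cD \subseteq \B(a,\delta)$ is immediate since any $p \in \cD$ satisfies $U(a,p) \leq \mathrm{diam}(\cD) = \delta$; the reverse inclusion follows from the first paragraph, which gives $\delta = \mathrm{diam}(\B(a,r)) \leq r$, whence $\B(a,\delta) \subseteq \B(a,r) = \cD$. Thus $\cD = \B(a,\delta)$ for every $a \in \cD$, i.e. $\delta$ is a radius of $\cD$ valid from every center.

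Finally I would settle minimality: if $\cD = \B(a,r')$ for some $a \in \cD$ and some $r' \geq 0$, then applying the diameter-versus-radius inequality once more yields $\delta = \mathrm{diam}(\cD) = \mathrm{diam}(\B(a,r')) \leq r'$. Hence no $r' < \delta$ can exhibit $\cD$ as a ball even centered at a single point, which is a fortiori stronger than the stated claim. I do not anticipate any genuine obstacle here; the subtle point, as noted above, is simply to use "$a$ is a center'' (Proposition \ref{prop:equivultra}(\ref{third})) only where it is actually required and to derive the other inclusions from the plain definition of diameter together with the ultrametric inequality.
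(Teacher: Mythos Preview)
Your argument is correct and is exactly the kind of straightforward unpacking the paper intends: the paper does not actually write out a proof of this lemma, merely stating that it is ``a consequence of Proposition \ref{prop:equivultra}''. Your use of Proposition \ref{prop:equivultra}(\ref{third}) together with the elementary inequality $\mathrm{diam}(\B(a,r)) \le r$ (itself an instance of the ultrametric inequality) is precisely how one fleshes out that claim.
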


The prototypical examples of ultrametric spaces are the fields of $p$-adic numbers or, more 
generally, all the fields endowed with a non-Archimedean norm.  
Our goal in the rest of this section is to describe  the canonical presentation of  
finite ultrametric spaces  as {\em sets of leaves 
of finite rooted trees} (see Proposition \ref{prop:corrultradepth} below).  
A pleasant elementary 
introduction to this view-point is contained in Holly's paper \cite{H 01}. 

The basic fact indicating that rooted trees are related with ultrametric spaces is the 
similarity of the conditions defining hierarchies (see Definition \ref{def:hierarchy}) with the 
characterization of ultrametrics given as point 
(\ref{clusterchar}) of Proposition \ref{prop:equivultra}. 
This characterization, combined with the fact that the closed balls of radius $0$ are 
exactly the subsets with one element, and the fact that on a finite metric space, 
the closed balls of sufficiently big radius are the whole set, shows that:

\begin{lemma}  \label{lem:ballhierarchy}
  The set $\cB(X,U)$ of closed balls  of a finite ultrametric space $(X,U)$  
  is a hierarchy on $X$. 
\end{lemma}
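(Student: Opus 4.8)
The plan is to verify the two defining properties of a hierarchy (Definition \ref{def:hierarchy}) for the collection $\cB^U$ of closed balls of the ultrametric $U$ on the finite set $X$, using the characterizations already assembled in Proposition \ref{prop:equivultra}.

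\smallskip

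\emph{Property (1): the singletons and $X$ itself are closed balls.} For any $a \in X$, the ball $\B(a,0) = \{p \in X : U(a,p) \le 0\}$ equals $\{a\}$ because $U$ is nonnegative and vanishes only on the diagonal; hence every one-element subset is a closed ball. For $X$ itself, since $X$ is finite I set $R := \max\{U(x,y) : x,y \in X\} < \infty$ (the maximum exists because $X$ is finite); then for any fixed $a \in X$ one has $\B(a,R) = X$, so $X \in \cB^U$.

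\smallskip

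\emph{Property (2): any two closed balls are disjoint or nested.} This is precisely the content of implication \eqref{clusterchar} of Proposition \ref{prop:equivultra}: in an ultrametric space, given two closed balls, they are either disjoint or one is contained in the other. I would simply invoke that item. (If one wanted to spell it out: let $\cD_1 = \B(a_1,r_1)$ and $\cD_2 = \B(a_2,r_2)$ with, say, $r_1 \le r_2$, and suppose they meet at a point $c$; by item \eqref{third} of Proposition \ref{prop:equivultra} every point of a closed ball is a center, so $\cD_1 = \B(c,r_1)$ and $\cD_2 = \B(c,r_2)$, whence for any $p \in \cD_1$ we get $U(c,p) \le r_1 \le r_2$, i.e.\ $p \in \cD_2$, so $\cD_1 \subseteq \cD_2$.)

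\smallskip

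Combining the two verified properties, $\cB^U$ satisfies Definition \ref{def:hierarchy} and is therefore a hierarchy on $X$. There is essentially no obstacle here: the statement is a direct repackaging of Proposition \ref{prop:equivultra} together with the trivial observations about balls of radius $0$ and balls of large radius in a finite space. The only point requiring a word of care is the finiteness of $X$, which guarantees both that the maximal distance is attained (so $X$ is itself a ball) and, more generally, that the family of balls is finite — though finiteness of the family is not actually needed for the hierarchy axioms as stated.
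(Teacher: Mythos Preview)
Your proof is correct and follows essentially the same approach as the paper, which sketches the argument in the paragraph immediately preceding the lemma: invoke point~(\ref{clusterchar}) of Proposition~\ref{prop:equivultra} for the nesting property, and note that balls of radius~$0$ give the singletons while balls of sufficiently large radius give all of~$X$ by finiteness. Your write-up simply makes these three observations explicit.
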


 As a consequence of Lemmas \ref{lem:revincl} and \ref{lem:ballhierarchy}, 
 an ultrametric $U$ on a finite set $X$ defines canonically two rooted trees with leaf-set $X$ 
 and topological vertex set $\cB(X,U)$: 
  
  \begin{definition}  \label{def:asstree}
      Let $U$ be an ultrametric on the finite set $X$. The {\bf interior-rooted tree $T^U$ 
      associated to the ultrametric} $U$ is the rooted tree $T(\cB(X,U), \preceq_{ri})$ 
      determined by the arborescent poset of  closed balls of $U$.  
      The  {\bf end-rooted tree $\hat{T}^U$ associated to the ultrametric} $U$ 
      is the extended rooted tree $\hat{T}(\cB(X,U), \preceq_{ri})$. 
  \end{definition}
  The terminology is motivated by Proposition \ref{prop:charhier}, which implies that 
  $T^U$ is indeed always interior-rooted and $\hat{T}^U$ always end-rooted. 
 By Definition \ref{def:rt}  and Lemma \ref{lem:revincl},  
 the root $\rho$ of $T^U$ corresponds to the set $X$,
  while the root $\hat{\rho}$ of $\hat{T}^U$ is defined as the immediate predecessor of $\rho$ in  
  the tree $\hat{T}^U$.

One may encode also the \emph{values} of the metric $U$ 
on its end-rooted tree $\hat{T}^U$, as a decoration 
on its set $\cI(\hat{T}^U)$ of interior vertices:

\begin{definition}  \label{def:depthultra}
      Let $(X,U)$ be a finite ultrametric space. Its {\bf diametral function}: 
            $$\delta^U : \cI(\hat{T}^U)    \to \R_+^*,$$ 
       defined on the set of interior vertices of the end-rooted 
       tree $\hat{T}^U$ of  $(X,U)$, 
      associates to each vertex $u \in \cI(\hat{T}^U) $ 
      the diameter of its cluster $\bK_{\hat{\rho}}(u) $.
\end{definition}

The root $\rho$ of $T^U$ is always an interior vertex of $\hat{T}^U$, and its diameter  
$\delta^U (\rho)$ is equal to the diameter of $X$. Notice also that  
$\bK_{\rho}(u) = \bK_{\hat{\rho}}(u) $, for all $u \in  \cI(\hat{T}^U)  = \cI({T}^U) $.

The diametral function of a finite ultrametric space is a \emph{depth function on the 
associated end-rooted tree} in the following sense:

\begin{definition} \label{def:depthmap}
     A {\bf depth function} on a rooted tree $T_{\rho}$ is a strictly decreasing function: 
         $$\delta: (\cI(T_{\rho}) , \preceq_{\rho})  \to (\R_+^* , \leq).$$ 
     That is,  ${\delta}(v) <  {\delta}(u)$ whenever $u \prec_{\rho} v$. 
     A pair $(T_{\rho}, {\delta})$ of a rooted tree and a depth function on it is called 
     a {\bf depth-dated tree}. 
\end{definition}

Intuitively, such a function $\delta$ measures the depth of the interior vertices as seen 
from the leaves, 
if one imagines that the leaves are above the root, as modeled by the partial order 
$\preceq_{\rho}$.

We have  explained  
how to pass from an ultrametric on a finite set to a 
depth-dated rooted tree (see Definitions \ref{def:asstree} and \ref{def:depthultra}). 
Conversely, given such a tree, 
one may construct an ultrametric on its set of  leaves (recall  that $a \wedge_{\rho} b$ 
is the infimum of $a$ and $b$ 
for the partial order $\preceq_{\rho}$, see Notation  
\ref{not:notinf}):

\begin{lemma}  \label{lem:depthultra}
   Let $(T_{\rho}, \delta)$ be a depth-dated rooted tree. Then the function 
   $U^{\delta}: \cL(T_{\rho}) \times \cL(T_{\rho}) \to \R_+$ defined by:
      $$ U^{\delta}(a,b) := \left\{ \begin{array}{ll}
                                               \delta(a \wedge_{\rho} b) & \mbox{ if } a \neq b, \\
                                               0 & \mbox{ if } a = b,
                                           \end{array}
                                                   \right.  $$
   is an ultrametric on $\cL(T_{\rho})$. 
 \end{lemma}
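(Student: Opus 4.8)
The plan is to verify the three defining properties of an ultrametric for $U^p$ directly. The only non-trivial point is the ultrametric inequality; positivity off the diagonal follows because $p$ takes values in $\R_+^*$, and symmetry is immediate from $a \wedge_\rho b = b \wedge_\rho a$. So the bulk of the proof consists of checking that for any three leaves $a, b, c$ of $T_\rho$ one has $U^p(a,b) \leq \max\{U^p(a,c), U^p(b,c)\}$.

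First I would reduce to the case where $a, b, c$ are pairwise distinct, since otherwise the inequality is trivial (if $a = b$ the left side is $0$; if, say, $a = c$, the right side contains $U^p(a,b)$ itself). For distinct leaves, the key combinatorial fact is about the three infima $x := a \wedge_\rho b$, $y := b \wedge_\rho c$, $z := a \wedge_\rho c$: in a rooted tree, among these three elements, two coincide and are $\preceq_\rho$ the third. This is the standard ``tripod'' property of the partial order $\preceq_\rho$, and I would prove it by looking at the geodesics $[\rho a]$, $[\rho b]$, $[\rho c]$ and where they branch apart. Concretely, $x$ is the branch point of $[\rho a]$ and $[\rho b]$, and the three branch points of the convex hull $[\rho a b c]$ of the four points $\rho, a, b, c$ give the claim: say the two that coincide are $x = y \preceq_\rho z$ (after relabeling; the other cases are symmetric).

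Given this, since $z = a \wedge_\rho c$ satisfies $x = y \preceq_\rho z$, and $p$ is strictly decreasing with respect to $\preceq_\rho$ by the definition of a depth function (Definition \ref{def:depthmap}), we get $p(z) \leq p(x)$, hence also $p(z) \leq p(y)$. Translating back, this says $U^p(a,c) = p(z) \leq p(x) = U^p(a,b)$ and $U^p(a,c) \leq U^p(b,c)$, and in particular the two larger of the three numbers $U^p(a,b), U^p(b,c), U^p(a,c)$ are equal, which is exactly the ultrametric inequality in its strong ``isosceles'' form (condition (\ref{second}) of Proposition \ref{prop:equivultra}). One must be slightly careful that relabeling covers all cases, but by symmetry of the roles of $a, b, c$ this is automatic. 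I do not expect any real obstacle here; the only subtlety worth spelling out is the tripod lemma for $\preceq_\rho$, and once that is in hand the rest is a one-line deduction from strict monotonicity of $p$.

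Alternatively, and perhaps more in the spirit of the surrounding text, I could invoke Proposition \ref{prop:equivultra}\,(\ref{clusterchar}) together with the cluster formalism: define for $r > 0$ the ``ball'' around a leaf as appropriate sublevel sets of the clusters $\bK_\rho(v)$ weighted by $p(v)$, and observe that these form a hierarchy because the clusters do (Proposition \ref{prop:cluster}\,(2),(3)). But the direct tripod argument above is shorter and self-contained, so that is the route I would take.
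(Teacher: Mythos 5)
Your proof is correct. The paper states Lemma \ref{lem:depthultra} without proof (it is one of the elementary facts of Section \ref{sec:treeultram} left to the reader), so there is no argument of the authors to compare with; your route --- the tripod fact that among $a\wedge_{\rho}b$, $b\wedge_{\rho}c$, $a\wedge_{\rho}c$ two coincide and precede the third, combined with strict monotonicity of $p$ and the isosceles characterization of Proposition \ref{prop:equivultra}\,(\ref{second}) --- is exactly the standard argument the authors implicitly have in mind, and your handling of degenerate triples and of symmetry is fine. The only point you might add, which is really a fuzziness of the paper's own definitions rather than of your proof, is that one should check $a\wedge_{\rho}b$ lies in the domain $\cI(T_{\rho})$ of $p$: for distinct leaves the meet is a node or the root, and in the intended setting (root a vertex, or the tree end-rooted as in the applications) this is indeed an interior vertex.
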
 
 
\begin{proof}
   Consider $a,b,c \in  \cL(T_{\rho})$. The inequality: 
       $$  U^{\delta}(a,b) \leq \max \{ U^{\delta}(a,c), U^{\delta}(b,c) \}$$ 
   is equivalent to:
      $$ {\delta}(a \wedge_{\rho} b)  
            \leq \max \{  {\delta}(a \wedge_{\rho} c),  {\delta}(b \wedge_{\rho} c) \},$$
    which is in turn a consequence of the facts that ${\delta}$ is a depth function and that: 
     $$a \wedge_{\rho} b  \geq \min \{  a \wedge_{\rho} c,  b \wedge_{\rho} c \}.$$
     The previous inequality, including the existence of this minimum (taken relative 
     to the rooted tree partial order $\prec_{\rho}$)  
     is a basic property of rooted trees. 
 \end{proof}

 Moreover, if the tree $T_{\rho}$ is end-rooted, then one may reconstruct it as the end-rooted tree 
 associated to the ultrametric space $(\cL(T_{\rho}), U^{\delta})$: 
 
 \begin{proposition} \label{prop:reconstr} 
   Let $(T_{\rho}, {\delta})$ be a depth-dated end-rooted tree.  There exists a unique 
   isomorphism fixing the set of leaves between the combinatorial rooted trees 
   underlying the depth-dated trees:
     \begin{itemize}
          \item $T_{\rho}$ endowed with the topological vertex set $\cV_{top}(T_{\rho})$ 
              and with the restriction to its set of interior vertices of the depth function ${\delta}$;  
          \item the depth-dated tree $(\hat{T}^{U^{\delta}}, \delta^{U^{\delta}})$ associated to the 
          ultrametric $U^{\delta}$.
     \end{itemize} 
\end{proposition}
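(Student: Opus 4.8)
The plan is to describe the closed balls of $U^{p}$ explicitly as clusters of $T_{\rho}$, and then to feed this description into the combinatorial dictionary of this section. Throughout we may assume that $T_{\rho}$ has at least two leaves, the remaining (degenerate) cases being immediate. Since $T_{\rho}$ is end-rooted, its root $\rho$ is not an interior vertex, so $\rho\prec_{\rho}v$ for every $v\in\cI(T_{\rho})$, and Corollary \ref{coro:ordinv}(2) is available; I will also use that the unique direct successor $\rho^{+}$ of $\rho$ in $\cV_{top}(T_{\rho})$ (in the sense of Corollary \ref{coro:ordinv}(2)) is the lowest node of $T_{\rho}$, and that $\bK_{\rho}(\rho)=\bK_{\rho}(\rho^{+})=\cL(T_{\rho})$.

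\emph{Step 1: the closed balls of $U^{p}$.} Fix a leaf $a$. Because $p$ is strictly decreasing along $\preceq_{\rho}$, for each $r\ge 0$ the set of interior vertices $v$ with $v\preceq_{\rho}a$ and $p(v)\le r$ is an up-set of the chain $\cI(T_{\rho})\cap[\rho a]$; when it is nonempty let $w^{(a)}_{r}$ be its least element. Unravelling the formula for $U^{p}$ (Lemma \ref{lem:depthultra}) together with Notation \ref{not:notinf} gives $\B(a,r)=\bK_{\rho}(w^{(a)}_{r})$ when $w^{(a)}_{r}$ is defined, and $\B(a,r)=\{a\}$ otherwise. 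Conversely, for $v\in\cI(T_{\rho})$ and any leaf $a\succeq_{\rho}v$ one checks $w^{(a)}_{p(v)}=v$, so $\bK_{\rho}(v)=\B(a,p(v))$ is a closed ball. Hence $\cB^{U^{p}}=\{\{a\}:a\in\cL(T_{\rho})\}\cup\{\bK_{\rho}(v):v\in\cI(T_{\rho})\}$. A short analysis of the subtree $\{x:x\succeq_{\rho}v\}$, whose leaf set is $\bK_{\rho}(v)$, shows that each such cluster is either a singleton $\{a\}=\bK_{\rho}(a)$ or equals $\bK_{\rho}(n)$ for the lowest node $n\succeq_{\rho}v$; combining this with Corollary \ref{coro:ordinv}(2), the cluster map restricts to a \emph{bijection} $\bK_{\rho}\colon\cV_{top}(T_{\rho})\setminus\{\rho\}\xrightarrow{\sim}\cB^{U^{p}}$.

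\emph{Step 2: the tree isomorphism and its dating.} By Proposition \ref{prop:cluster}(3) the bijection above reverses $\preceq_{\rho}$ and $\preceq_{ri}$, hence is an isomorphism of posets onto $(\cB^{U^{p}},\preceq_{ri})$. On the other hand $(\cV_{top}(T_{\rho}),\preceq_{\rho})$ has $\rho$ as least element with unique direct successor $\rho^{+}$, so comparing Definitions \ref{def:rt} and \ref{def:canext} identifies $T(\cV_{top}(T_{\rho}),\preceq_{\rho})$ with the extension $\hat T(\cV_{top}(T_{\rho})\setminus\{\rho\},\preceq_{\rho})$, the added vertex corresponding to $\rho$. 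Transporting through the poset isomorphism of Step 1 and applying Proposition \ref{prop:recons} produces a combinatorial rooted-tree isomorphism $T_{\rho}\xrightarrow{\sim}\hat T(\cB^{U^{p}},\preceq_{ri})=\hat T^{U^{p}}$ which is the identity on $\cL(T_{\rho})$ (identified with the leaf set of $\hat T^{U^{p}}$ as in Definition \ref{def:asstree}) and sends $\rho\mapsto\hat\rho$. It remains to match the datings: the interior vertices of $\hat T^{U^{p}}$ are the non-singleton closed balls, which by Step 1 are exactly the $\bK_{\rho}(n)$ with $n$ a node of $T_{\rho}$, and these correspond to the interior vertices of $T(\cV_{top}(T_{\rho}),\preceq_{\rho})$. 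For such $n$, Lemma \ref{lem:radiusultra} identifies $\delta^{U^{p}}(\bK_{\rho}(n))$ with the least $r$ for which $\B(a,r)=\bK_{\rho}(n)$ with $a\in\bK_{\rho}(n)$; Step 1 gives $\B(a,p(n))=\bK_{\rho}(n)$, while for $r<p(n)$ one gets $w^{(a)}_{r}\succ_{\rho}n$ strictly, hence $\bK_{\rho}(w^{(a)}_{r})\subsetneq\bK_{\rho}(n)$ precisely because $n$ is a node. Thus $\delta^{U^{p}}(\bK_{\rho}(n))=p(n)$, so the isomorphism intertwines the restriction of $p$ to the nodes with $\delta^{U^{p}}$.

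\emph{Uniqueness and main obstacle.} Any leaf-fixing isomorphism of rooted trees preserves the roots, hence also the unique direct successor of the root, and on each node $n$ of $T_{\rho}$ it must produce the vertex of $\hat T^{U^{p}}$ whose set of leaves lying above it equals $\bK_{\rho}(n)$, which is uniquely determined by Corollary \ref{coro:ordinv}(2). So the isomorphism is forced on all of $\cV_{top}(T_{\rho})$, hence — being a tree isomorphism — everywhere, which gives uniqueness. I expect the main obstacle to be purely organizational: keeping straight the three vertex sets in play (the full vertex set of $T_{\rho}$, the topological vertex set $\cV_{top}(T_{\rho})$, and the set $\cB^{U^{p}}$ of closed balls), together with the end-rooted bookkeeping encoded in the coincidence $\bK_{\rho}(\rho)=\bK_{\rho}(\rho^{+})$. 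The one genuinely computational ingredient is the explicit description of $\B(a,r)$ in Step 1; everything afterwards is formal.
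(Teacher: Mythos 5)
Your proof is correct. The paper does not actually print a proof of Proposition \ref{prop:reconstr} --- it belongs to the expository material of Section \ref{sec:treeultram} left to the reader --- and your argument is exactly the standard verification the text implicitly relies on: you identify the closed balls of $U^{p}$ with the clusters $\bK_{\rho}(v)$ of the leaves and nodes, transport the resulting order isomorphism onto $(\cB^{U^{p}},\preceq_{ri})$ through Definitions \ref{def:rt}, \ref{def:canext} and Proposition \ref{prop:recons}, match the datings via Lemma \ref{lem:radiusultra}, and force uniqueness via the injectivity statement of Corollary \ref{coro:ordinv}(2); all steps check out, including the harmless reduction (which you flag) to the case of at least two leaves, which guarantees that $a\wedge_{\rho}b$ is always a node and hence an interior vertex on which $p$ is defined.
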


Taken together, the previous considerations prove the announced bijective 
correspondence between ultrametrics on a finite set $X$ and a special type 
of depth-dated end-rooted trees with set of leaves $X$: 

\begin{proposition}  \label{prop:corrultradepth}
        Let $X$ be a finite set. The map which associates to an ultrametric $U$ on $X$ 
        the diametral function $\delta^U$ on the end-rooted tree $\hat{T}^U$ realizes a bijective 
        correspondence between ultrametrics on $X$ and isomorphism classes 
        of depth-dated end-rooted rooted trees $(T_{\rho}, {\delta})$ with set of vertices equal to 
        their topological vertex set and with set of leaves equal to $X$. 
\end{proposition}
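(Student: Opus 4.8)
The plan is to establish the bijective correspondence claimed in Proposition \ref{prop:corrultradepth} by verifying that the two constructions already assembled in this subsection --- namely $U \mapsto (\hat{T}^U, \delta^U)$ and $(T_{\rho}, p) \mapsto U^p$ --- are mutually inverse, and then checking that the objects landed in by each construction are exactly those described in the statement. Concretely, I would proceed in three steps.

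\emph{Step 1: the target of $U \mapsto (\hat{T}^U, \delta^U)$ is of the stated type.} By Definition \ref{def:asstree} and the remark following it (which invokes Proposition \ref{prop:charhier}), the tree $\hat{T}^U = \hat{T}(\cB^U, \preceq_{ri})$ is always end-rooted. By Proposition \ref{prop:charhier} applied to the hierarchy $\cB^U$ (Lemma \ref{lem:ballhierarchy}), the underlying arborescent poset comes from a hierarchy, so its associated rooted tree has vertex set equal to its topological vertex set; passing to the extension $\hat{T}$ preserves this (the one added vertex $\hat\rho$ is an end, hence topological). Finally $\delta^U$ is a depth function by Definition \ref{def:depthultra} together with the observation, which I would spell out using Lemma \ref{lem:radiusultra} and part (3) of Proposition \ref{prop:cluster}, that strictly nested clusters of an ultrametric have strictly decreasing diameters (a proper sub-ball has strictly smaller diameter because its elements are a proper subset realizing fewer distances, and in an ultrametric a ball's diameter equals its minimal radius). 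So the map of the statement does land in isomorphism classes of depth-dated end-rooted trees with vertex set $=$ topological vertex set and leaf set $X$.

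\emph{Step 2: $U^{\delta^U} = U$.} Fix $a \neq b$ in $X$. The interior vertex $a \wedge_{\hat\rho} b$ of $\hat{T}^U$ corresponds, under the cluster map, to the smallest closed ball of $(X,U)$ containing both $a$ and $b$ (this is where I use parts (2) and (3) of Proposition \ref{prop:cluster}: the infimum of the two leaves is the unique minimal cluster containing $\bK(a)=\{a\}$ and $\bK(b)=\{b\}$). By Lemma \ref{lem:radiusultra}, the diameter of that ball equals its minimal radius; and by point (3) of Proposition \ref{prop:equivultra} every point of the ball is a center, so the minimal radius is exactly $\max\{U(a,x): x \text{ in the ball}\}$, which by ultrametricity (point (2) of Proposition \ref{prop:equivultra}) equals $U(a,b)$ --- any $x$ in the smallest ball containing $a,b$ satisfies $U(a,x)\le U(a,b)$, for otherwise $\B(a,U(a,b))$ would be a strictly smaller ball still containing $b$. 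Hence $\delta^U(a\wedge_{\hat\rho}b) = U(a,b)$, i.e. $U^{\delta^U}=U$.

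\emph{Step 3: $(\hat{T}^{U^p}, \delta^{U^p}) \cong (T_{\rho}, p)$.} This is precisely the content of Proposition \ref{prop:reconstr}, which I would simply invoke: it furnishes the unique leaf-fixing isomorphism of combinatorial rooted trees matching the depth functions, for any depth-dated end-rooted tree $(T_\rho,p)$ whose vertex set equals its topological vertex set. Combining Steps 2 and 3 shows the two maps are mutually inverse bijections, and Step 1 (plus the analogous trivial check that $U^p$ from Lemma \ref{lem:depthultra} is a genuine ultrametric) pins down the exact domains and codomains, completing the proof. The main obstacle is Step 2 --- correctly identifying $\delta^U(a\wedge b)$ with $U(a,b)$ requires carefully threading together Lemma \ref{lem:radiusultra}, the center-interchangeability of ultrametric balls, and the minimality of the ball corresponding to the infimum vertex; everything else is bookkeeping on top of the already-proved structural results, with Proposition \ref{prop:reconstr} doing the heavy lifting for the reconstruction direction.
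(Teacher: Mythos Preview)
Your proposal is correct and follows essentially the same approach as the paper: the paper's own ``proof'' is simply the sentence ``Taken together, the previous considerations prove the announced bijective correspondence\ldots'', referring back to Lemmas \ref{lem:ballhierarchy} and \ref{lem:depthultra}, Definition \ref{def:depthultra}, and Proposition \ref{prop:reconstr}. You have correctly identified and spelled out the details that the paper leaves implicit, in particular the verification in Step~2 that $\delta^U(a\wedge_{\hat\rho} b)=U(a,b)$, which the paper does not make explicit.
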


The following notion is dual to that of  \emph{depth functions}:

\begin{definition} \label{def:heightmap}
     A {\bf height function} on a rooted tree $T_{\rho}$ is a strictly increasing function: 
         $$h: (\cI(T_{\rho}) , \preceq_{\rho})  \to (\R_+, \leq).$$ 
     That is,   $h(u) < h(v)$ whenever $u \prec_{\rho} v$. 
     A pair $(T_{\rho}, h)$ of a rooted tree and a height function on it is called 
     a {\bf height-dated tree}. 
\end{definition}

\begin{remark} \label{lem:dissym}
Note the slight assymmetry of the two definitions: we impose that depth functions 
take positive values, but we allow a height function to vanish.
This assymmetry is motivated by the fact that we use 
depth functions to define ultrametrics by Lemma \ref{lem:depthultra}. The condition 
of strict increase on a height function imposes that a vanishing may occur only at the minimal 
element of $\cI(T_{\rho})$, which is either the root $\rho$  (if $T_{\rho}$ is interior-rooted) or 
its immediate successor in $\cV(T_{\rho})$ (if $T_{\rho}$ is end-rooted). 
\end{remark}

Any strictly decreasing function allows to transform height functions into depth functions: 

\begin{lemma} \label{lem:transf}
    Any strictly decreasing map:
       $$s: (\R_+, \leq) \to (\R_+^*, \leq)$$
     transforms by left-composition all height functions on a rooted tree into depth functions. 
\end{lemma}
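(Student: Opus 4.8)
The statement to prove is Lemma~\ref{lem:transf}: any strictly decreasing map $s: (\R_+, \leq) \to (\R_+^*, \leq)$ transforms by left-composition all height functions on a rooted tree into depth functions.

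\medskip

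The plan is to unwind the definitions directly; this is a purely formal verification with no real obstacle. First I would recall that a height function $h$ on a rooted tree $T_{\rho}$ is by Definition~\ref{def:heightmap} a strictly increasing function $h: (\cI(T_{\rho}), \preceq_{\rho}) \to (\R_+, \leq)$, and that a depth function is by Definition~\ref{def:depthmap} a strictly decreasing function $(\cI(T_{\rho}), \preceq_{\rho}) \to (\R_+^*, \leq)$. Given such an $h$ and the map $s$, I would set $p := s \circ h$ and check that $p$ is a depth function. Its codomain is correct: since $s$ takes values in $\R_+^*$, so does $p = s \circ h$, which matches the requirement that a depth function be $\R_+^*$-valued (this is exactly the asymmetry flagged in Remark~\ref{lem:dissym}, and it is the reason $s$ is required to land in $\R_+^*$ rather than merely $\R_+$).

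\medskip

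Next I would verify strict monotonicity. Suppose $u \prec_{\rho} v$ in $\cI(T_{\rho})$. Since $h$ is strictly increasing, $h(u) < h(v)$. Since $s$ is strictly decreasing on $\R_+$, applying $s$ to the inequality $h(u) < h(v)$ (both values lie in $\R_+$, the domain of $s$) yields $s(h(v)) < s(h(u))$, that is $p(v) < p(u)$. Hence $p$ is strictly decreasing on $(\cI(T_{\rho}), \preceq_{\rho})$, so $p$ is a depth function on $T_{\rho}$. Since $h$ was an arbitrary height function, $s$ transforms all height functions into depth functions, as claimed. There is no hard part here; the only subtlety worth a word is the codomain bookkeeping, which is precisely why the hypothesis on $s$ is stated with target $\R_+^*$.
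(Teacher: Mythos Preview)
Your proof is correct and is exactly the straightforward verification from Definitions~\ref{def:depthmap} and~\ref{def:heightmap} that the paper has in mind; the paper itself states the lemma without proof, treating it as immediate.
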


\begin{remark}  \label{rem:inspir}
In \cite{BD 98}, B\"ocker and Dress defined more general \emph{symbolically dating maps} 
on trees, taking values in arbitrary sets, and characterize the associated 
\emph{symbolic ultrametrics} by a list of axioms. We don't use here that generalized 
setting. Nevertheless we mention it because that paper inspired us in our work. 
For instance, we introduced the names \emph{depth-dated/height-dated tree} 
by following its ``\emph{dating}'' terminology (which seems standard in part of the 
mathematical literature concerned with problems of classifications, as 
mathematical phylogenetics). 
\end{remark}

\begin{example}  \label{ex:constree}
    Consider a set $X = \{u, v, x, y, z \}$ and a function $U : X^2 \to \R_+$ whose 
    matrix $(U(a,b))_{a,b \in X}$ is (for the order $u < \cdots < z$): 
           \[
\begin{pmatrix}
0 & 1 &  1 & 2 & 3  \\
1 & 0 & 1 & 2 &  3 \\
1 & 1  &  0  & 2 &  3  \\
2 &  2 &  2 &  0 &   3 \\
3 &  3  & 3 &   3 &  0 
\end{pmatrix}. 
\]
It is immediate to check that $U$ is an ultrametric distance. The hierarchy of its closed 
balls is:
  $$\left\{ \:  \{u\}, ..., \{z\}, \{u,v,x\}, \{u,v,x,y\}, \{u,v,x,y,z\} \:  \right\}. $$ 
 The associated rooted trees $T^U$ and $\hat{T}^U$ 
 are represented in Figure \ref{fig:assoctree}, $T^U$ being drawn with thicker segments. 
 Near each vertex is written the associated cluster. Near each interior vertex $u$ of 
  $\hat{T}^U$ is also written the value of the diametral function $\delta^U(u)$, that is, 
 the diameter of the cluster $\bK_{\rho}(u)$. 
 
   \begin{figure}
\centering
\begin{tikzpicture}  [scale=0.8]

 \draw(0,0) -- (0,1) ;  
 \draw [very thick] (0,1) -- (-1,2);
 \draw  [very thick] (0,1) -- (3,4) ;
  \draw [very thick] (-1,2) -- (-2,3) ; 
  \draw [very thick] (-1,2) -- (1,4) ;  
 \draw [very thick] (-2,3) -- (-3,4);
 \draw [very thick] (-2,3) -- (-2,4) ;
  \draw [very thick] (-2,3) -- (-1,4) ; 
   
 \draw (-3, 4.3) node[above]{$\{u\}$} ;
 \draw (-2, 4.3) node[above]{$\{v\}$} ;
 \draw (-1, 4.3) node[above]{$\{x\}$} ;
\draw (1, 4.3) node[above]{$\{y\}$} ;
\draw (3, 4.3) node[above]{$\{z\}$} ;
\draw (0, -0.2) node[below]{$\hat{\rho}$} ;

 \draw (-2.2, 3) node[left]{$\{u, v, x \}$} ;
 \draw (-1.2, 2) node[left]{$\{u, v, x, y \}$} ;
 \draw (-0.2, 1) node[left]{$ \rho = \{u, v, x, y, z \}$} ;
 
  \draw (-1.8, 3) node[right]{$1$} ;
 \draw (-0.8, 2) node[right]{$2$} ;
 \draw (0.2, 1) node[right]{$3$} ;

 \node[draw,circle,inner sep=1pt,fill=black] at (0,0) {};
 \node[draw,circle,inner sep=1pt,fill=black] at (0,1) {};
\node[draw,circle,inner sep=1pt,fill=black] at (-1,2) {};
\node[draw,circle,inner sep=1pt,fill=black] at (-2,3) {};
\node[draw,circle,inner sep=1pt,fill=black] at (-3,4) {};
\node[draw,circle,inner sep=1pt,fill=black] at (-2,4) {};
\node[draw,circle,inner sep=1pt,fill=black] at (-1,4) {};
\node[draw,circle,inner sep=1pt,fill=black] at (1,4) {};
\node[draw,circle,inner sep=1pt,fill=black] at (3,4) {};

    \end{tikzpicture}  
    \caption{The depth-dated tree associated to the ultrametric of Example \ref{ex:constree}.}  
    \label{fig:assoctree}
    \end{figure}
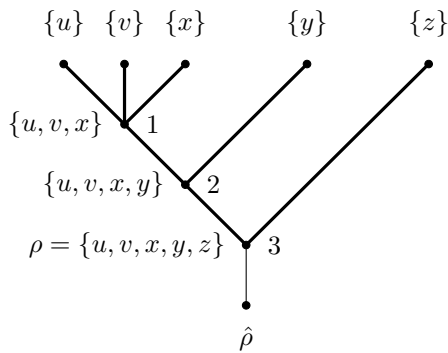   
 
\end{example}

\subsection{Additive distances on trees}  \label{addist}
$\:$  \medskip

Let us pass now to the notion of \emph{additive distance} on an unrooted tree. 
Our aim in this subsection is to explain in which way such a distance defines plenty of 
ultrametrics (see Proposition \ref{prop:addistultram}).

\begin{definition}  \label{def:addmet}
    Let $(V, A)$ be  a combinatorial tree. An {\bf additive distance} on it is a 
    symmetric map $d: V \times V \to \R_+$ such that:
       \begin{enumerate}
           \item $d(u,v) =0$ if and only if $u =v$;  
           \item $d(u,v) + d(v,w) = d(u,w)$ whenever $v \in [uw]$.  
       \end{enumerate}
\end{definition}

  Of course, the additive distance $d$ is a metric on $V$. 
Buneman \cite{B 74} characterized such metrics in the following way:

\begin{proposition}  \label{prop:chardist}
   A metric $d$ on the finite set $V$ comes from an additive distance function on 
   a combinatorial tree with vertex set $V$ if and only if it satisfies the following 
   {\bf four points condition}:
         $$d(l,u) + d(v,w) \leq 
            \max \{ d(l,v) + d(u,w), d(l,w) + d(u,v)  \}  \mbox{ for any } l,u,v,w \in V. $$
\end{proposition}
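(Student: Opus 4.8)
For necessity, suppose $d$ is the additive distance of a weighted combinatorial tree $T$ with vertex set $V$, and fix $l,u,v,w\in V$. The point is that the convex hull $[luvw]$ in $T$ is a subtree with at most two branch points, so up to relabelling it consists of two ``cherries'' --- one bearing two of the four chosen vertices along pendant arcs of lengths $a,b$, the other bearing the remaining two along pendant arcs of lengths $c,e$ --- joined by a central arc of length $m\ge 0$, the degenerate cases $m=0$ or a vanishing pendant length being allowed and covering the ``star'' and ``path'' configurations. A one-line computation then gives the three sums $d(\cdot,\cdot)+d(\cdot,\cdot)$ taken over the three perfect matchings of $\{l,u,v,w\}$: the matching compatible with the quartet topology yields $a+b+c+e$, while each of the other two yields $a+b+c+e+2m$. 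Hence the maximum of the three sums is always attained at least twice, which is precisely the four points condition for every ordering of $l,u,v,w$.

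For sufficiency I would argue by induction on $n=|V|$, producing a weighted combinatorial tree realising $d$. The cases $n\le 3$ are immediate: a single vertex for $n=1$, an edge of length $d(u,v)$ for $n=2$, and a tripod for $n=3$ whose three pendant lengths $\tfrac12\big(d(u,v)+d(u,w)-d(v,w)\big)$ and their cyclic variants are non-negative by the triangle inequality. For the inductive step, choose $x\in V$, put $V':=V\setminus\{x\}$, and observe that the restriction of $d$ to $V'$ still satisfies the four points condition; by induction there is a weighted tree $T'$ realising it, which I may assume reduced (every vertex not in $V'$ having valency at least $3$). It remains to graft $x$ onto $T'$. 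To this end, fix an auxiliary point $u_0\in V'$; for each $v\in V'$ let $y_v$ be the point of the geodesic $[u_0v]$ of $T'$ lying at distance $\tfrac12\big(d(u_0,x)+d(u_0,v)-d(x,v)\big)$ from $u_0$, a quantity which is in $[0,d(u_0,x)]$ by the triangle inequality. I would take $y$ to be whichever $y_v$ is farthest from $u_0$, subdivide the relevant edge of $T'$ by a new vertex at $y$ if $y$ is not already one, attach $x$ to $y$ by a pendant edge of length $d(u_0,x)-d_{T'}(u_0,y)$, and then check that the resulting weighted tree $T$ satisfies $d_T(x,v)=d(x,v)$ for every $v\in V'$.

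The step I expect to be the real work is this last verification: showing that $y$ is well defined (independent of the maximising $v$ and of the auxiliary point $u_0$) and that all the new distances $d_T(x,v)$ come out correctly. This is exactly where the four points condition is used, applied to the quadruples $\{x,u_0,v,v'\}$ with $v,v'\in V'$ --- the cherry/central-arc identities from the necessity part, read in reverse, are what pin down the position of $x$. One caveat, already present in Buneman's original theorem: the tree produced by this construction will in general acquire finitely many new vertices beyond $V$, so the phrase ``combinatorial tree with vertex set $V$'' is to be understood as ``combinatorial tree whose vertex set contains $V$'', equivalently, $d$ is realised as the restriction to $V$ of an additive distance on a possibly larger tree.
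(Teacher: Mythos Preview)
The paper does not prove this proposition: it is quoted as Buneman's theorem and attributed to \cite{B 74} without argument. So there is no ``paper's own proof'' to compare against; your proposal stands on its own.

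Your necessity argument is correct and complete: the quartet topology of $[luvw]$ forces two of the three pairwise sums to coincide and dominate the third, which is exactly the four points condition. Your sufficiency argument is the standard inductive construction (essentially Buneman's), and the outline is sound, but you explicitly leave the verification step --- that the attachment point $y$ is well defined and that all distances $d_T(x,v)$ come out right --- as ``the real work'', so as written this is a sketch rather than a proof. If you want it to be self-contained you should carry out that verification; the four points condition applied to $\{x,u_0,v,v'\}$ does exactly what you say it does, and the computation is short once set up.

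Your final caveat is well taken and worth emphasising: the tree realising $d$ will in general require auxiliary (Steiner) vertices not in $V$ --- already for three equidistant points one needs a central vertex --- so the phrase ``combinatorial tree with vertex set $V$'' in the statement must indeed be read as ``with vertex set containing $V$''. The paper's own Definition \ref{def:addmet} fixes the tree in advance with vertex set exactly $V$, which is consistent with how the proposition is actually used later (in Corollary \ref{prop:ineqmin} the tree $\Gamma$ is given and $V=\cV$ is its full vertex set), but the converse direction of the proposition as literally stated is slightly loose on this point, and you are right to flag it.
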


    In fact, one has the following more precise inequality, which we leave as an exercise to the reader:
    
    \begin{proposition}  \label{prop:moreprec}
    Let  $d$ be an additive distance on the finite combinatorial tree $(V,A)$. 
       Consider also its geometric realization, inside which will be taken convex hulls. 
       Then, for every $l, u, v, w \in V$, one has:
           $$d(l,v) + d(u,w) \geq  d(l,u) +  d(v,w) \:  \Longleftrightarrow \:   [lv] \cap [uw] 
              \neq \emptyset,$$
        with equality if and only if one has moreover $[lu] \cap [vw] \neq \emptyset$, that is, 
        if and only if the convex hull $[luvw]$ is like in the right-most tree of Figure \ref{gentree}, 
        or as in one of its degenerations. 
    \end{proposition}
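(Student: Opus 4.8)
The plan is to reduce everything to the combinatorial geometry of the tree $T = (V,A)$ and its geometric realization, using only the two defining properties of an additive distance. First I would recall the standard fact that in a tree, for any four points $l,u,v,w$, the convex hull $[luvw]$ is (up to relabelling and degeneration) a ``quartet tree'': there are two points $p,q$ on the tree (possibly $p=q$) such that $[lu]$ and $[vw]$ each pass through the segment $[pq]$ in opposite ``directions'', while the three pairings of $\{l,u,v,w\}$ into two pairs split as $\{l,u\}|\{v,w\}$ realized by one topology, and $\{l,v\}|\{u,w\}$, $\{l,w\}|\{u,v\}$ realized by the other two (which coincide when $p=q$). Concretely, writing $m$ for the median of $l,v,u$ (the unique point $[lv]\cap[lu]\cap[vu]$) and $m'$ for the median of $l,v,w$, one has a canonical decomposition of each of the six pairwise distances into sums of lengths of the segments appearing in $[luvw]$. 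I would set up this notation once and express $d(l,v)+d(u,w)$, $d(l,u)+d(v,w)$, $d(l,w)+d(u,v)$ in those terms via property (2) (additivity along geodesics).

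Next, with that bookkeeping in place, the inequality becomes transparent. The plan is to let $e := [lv]\cap[uw]$ (an intersection of two geodesics in a tree, hence either empty or a single sub-geodesic $[pq]$), and to show
$$d(l,v)+d(u,w) - \bigl(d(l,u)+d(v,w)\bigr) = \begin{cases} 2\,d(p,q) \geq 0 & \text{if } [lv]\cap[uw]=[pq]\neq\emptyset,\\ -2\,d(\cdot,\cdot) < 0 & \text{if } [lv]\cap[uw]=\emptyset,\end{cases}$$
where in the first case $d(p,q)$ is the length of the common segment. This is a direct computation once each of $d(l,v), d(u,w), d(l,u), d(v,w)$ is written as a sum of segment-lengths along $[luvw]$; all the ``outer branch'' contributions cancel in pairs and what survives is $\pm 2$ times the length of the overlap segment. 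This simultaneously proves the $\Longleftrightarrow$ (the sign of the difference detects whether the overlap is nonempty) and identifies the equality case: equality holds iff $d(p,q)=0$, i.e. $p=q$, i.e. the two geodesics $[lv]$ and $[uw]$ meet in a single point $p$; and then, in the quartet picture, $p$ is simultaneously the median of every triple, which is exactly the statement that $[lu]\cap[vw]\neq\emptyset$ (both equal $\{p\}$), i.e. the convex hull $[luvw]$ is the ``star'' degeneration — the right-most tree of Figure \ref{gentree} collapsed to a point, or equivalently one of the degenerations referred to in the statement. I would also note that Proposition \ref{prop:chardist} (the four-point condition) follows immediately by taking the maximum over the two ``outer'' pairings: the difference computed above shows the largest two of the three sums $d(l,u)+d(v,w)$, $d(l,v)+d(u,w)$, $d(l,w)+d(u,v)$ are equal and the third is no larger — the ``tree-like'' refinement of the triangle inequality.

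The main obstacle I anticipate is purely organizational rather than mathematical: one must pin down, cleanly and without an explosion of cases, the precise shape of the convex hull $[luvw]$ of four points in a tree and the resulting decomposition of the six pairwise distances. The cleanest route is to avoid enumerating topologies by hand and instead argue intrinsically: define $x$ to be the point of $[uw]$ closest to $l$ and $y$ the point of $[uw]$ closest to $v$ (these exist and are unique by the tree property, being the respective ``projections'' onto the geodesic $[uw]$); then $[lv]\cap[uw]$ is exactly $[xy]$, which is nonempty iff $x$ and $y$ lie in that order consistent with $l$ and $v$ being on the ``same side'' — and additivity gives $d(l,v) = d(l,x) + d(x,y) + d(y,v)$ when $[xy]$ is traversed, versus $d(l,v) = d(l,x) - d(x,y) + ...$-type corrections otherwise. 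Carrying out this projection argument carefully, and matching it to the figure's degenerations, is the only delicate part; everything else is cancellation of like terms. I would present the projection lemma as a short preliminary observation, then do the one-line distance algebra, then read off both the equivalence and the equality case.
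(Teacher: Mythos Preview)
The paper does not actually supply a proof of this proposition; it is stated as a sharpening of Buneman's four-point condition and left to the reader, with Figure~\ref{gentree} serving as the visual guide. So your proposal must be judged on its own merits.

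Your overall strategy --- reduce to the quartet tree $[luvw]$, decompose each pairwise distance into segment-lengths, and read off the sign of the difference --- is the natural one and would succeed. However, the central formula you write down is wrong. You claim that when $[lv]\cap[uw]=[pq]\neq\emptyset$ one has
\[
d(l,v)+d(u,w)-\bigl(d(l,u)+d(v,w)\bigr)=2\,d(p,q).
\]
Test this against the right-most tree of Figure~\ref{gentree}, where $l,w$ attach at $m$ and $u,v$ attach at $n$. There $[lv]\cap[uw]=[mn]$, which has length $d(m,n)>0$ in general, yet a direct computation gives
\[
d(l,v)+d(u,w)=d(l,u)+d(v,w)=d(l,m)+d(m,w)+d(n,u)+d(n,v)+2d(m,n),
\]
so the difference is $0$, not $2d(m,n)$. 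Your equality analysis then goes astray: you conclude that equality forces $p=q$ (the star degeneration), whereas the statement says --- correctly --- that the entire right-most topology, including the generic case $m\neq n$, already gives equality. Your projection claim ``$[lv]\cap[uw]$ is exactly $[xy]$'' is likewise false in the middle topology, where the projections of $l$ and $v$ onto $[uw]$ coincide (both equal $n$) but $[lv]\cap[uw]=\emptyset$.

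The repair is easy once you drop the single-formula ambition. Just record the standard fact that among the three sums $d(l,u)+d(v,w)$, $d(l,v)+d(u,w)$, $d(l,w)+d(u,v)$, two are always equal and the third is smaller by exactly $2d(m,n)$, where $[mn]$ is the inner edge of the quartet; which sum is the small one is determined by the labeling in Figure~\ref{gentree}. Then check by inspection that $[lv]\cap[uw]\neq\emptyset$ precisely in the left-most and right-most labelings (so exactly when $d(l,u)+d(v,w)$ is \emph{not} the strictly largest sum, i.e.\ when the stated inequality holds), and that additionally $[lu]\cap[vw]\neq\emptyset$ precisely in the right-most labeling (so exactly when the two sums in question are both among the large pair, i.e.\ when equality holds). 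That is the whole proof.
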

    
 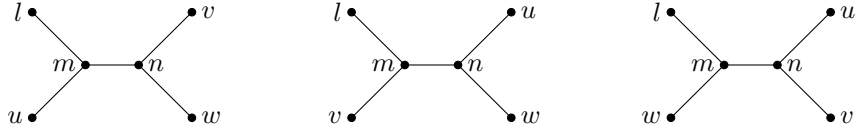
\begin{figure}
\centering
\begin{tikzpicture} [scale=0.7]
    \draw (0,0) -- (-1,1) ; 
    \draw (0,0) -- (-1,-1) ; 
    \draw (0,0) -- (1,0) ;
    \draw (1,0) -- (2,1) ; 
    \draw (1,0) -- (2,-1) ;
    \draw (-1,1) node[left]{$l$} ; 
     \draw (-1,-1) node[left]{$u$} ; 
     \draw (2,1) node[right]{$v$} ;
     \draw (2,-1) node[right]{$w$} ; 
     \draw (0,0) node[left]{$m$} ;
     \draw (1,0) node[right]{$n$} ; 
           \draw (6,0) -- (5,1) ; 
           \draw (6,0) -- (5,-1) ; 
           \draw (6,0) -- (7,0) ;
           \draw (7,0) -- (8,1) ; 
           \draw (7,0) -- (8,-1) ; 
            \draw (5,1) node[left]{$l$} ; 
            \draw (5,-1) node[left]{$v$} ; 
            \draw (8,1) node[right]{$u$} ;
            \draw (8,-1) node[right]{$w$} ; 
             \draw (6,0) node[left]{$m$} ;
             \draw (7,0) node[right]{$n$} ; 
    \draw (12,0) -- (11,1) ; 
    \draw (12,0) -- (11,-1) ; 
    \draw (12,0) -- (13,0) ;
    \draw (13,0) -- (14,1) ; 
    \draw (13,0) -- (14,-1) ;
    \draw (11,1) node[left]{$l$} ; 
     \draw (11,-1) node[left]{$w$} ; 
     \draw (14,1) node[right]{$u$} ;
     \draw (14,-1) node[right]{$v$} ; 
      \draw (12,0) node[left]{$m$} ;
     \draw (13,0) node[right]{$n$} ; 
     
      \node[draw,circle,inner sep=1pt,fill=black] at (0,0) {};
 \node[draw,circle,inner sep=1pt,fill=black] at (-1, -1) {};
\node[draw,circle,inner sep=1pt,fill=black] at (-1, 1) {};
\node[draw,circle,inner sep=1pt,fill=black] at (1, 0) {};
\node[draw,circle,inner sep=1pt,fill=black] at (2, 1) {};
\node[draw,circle,inner sep=1pt,fill=black] at (2, -1) {};

 \node[draw,circle,inner sep=1pt,fill=black] at (6,0) {};
 \node[draw,circle,inner sep=1pt,fill=black] at (5,1) {};
\node[draw,circle,inner sep=1pt,fill=black] at (5,-1) {};
\node[draw,circle,inner sep=1pt,fill=black] at (7, 0) {};
\node[draw,circle,inner sep=1pt,fill=black] at (8 , 1) {};
\node[draw,circle,inner sep=1pt,fill=black] at ( 8, -1) {};

 \node[draw,circle,inner sep=1pt,fill=black] at (12,0) {};
 \node[draw,circle,inner sep=1pt,fill=black] at (11,1) {};
\node[draw,circle,inner sep=1pt,fill=black] at (11 ,-1) {};
\node[draw,circle,inner sep=1pt,fill=black] at (13, 0) {};
\node[draw,circle,inner sep=1pt,fill=black] at (14, 1) {};
\node[draw,circle,inner sep=1pt,fill=black] at ( 14, -1) {};

    \end{tikzpicture}  
    \caption{The three possible generic trees with four leaves. } 
    \label{gentree}
\end{figure}

    More degenerate configurations are obtained 
      by contracting to points one or more segments of Figure \ref{gentree}. 
      For instance, it is possible to have $m=n$ or $m=l$, etc.

Assume now that $(T,d)$ is a topological tree endowed with an additive distance  
(on its underlying combinatorial tree). Choose a root $\rho \in \cV(T)$. The 
distance function may now be encoded alternatively as a more general \emph{height function}:

\begin{definition}  \label{def:heightbis}
  The {\bf remoteness function} associated to the additive distance  $d$ on the 
  rooted tree $T_{\rho}$ is defined by:
       $$ \begin{array}{cccc} 
                h_{d,\rho} :   &  \cV(T_{\rho})  & \to &  \R_+ \\
                          &  u & \to &  d(u, \rho).
             \end{array} $$
\end{definition}

Note that, as may be verified simply by looking at Figure \ref{fig:ab-rho}, the remoteness  
function allows to reconstruct the additive distance: 

\begin{lemma}  \label{lem:heightinf} 
   Assume that $d$ is an additive distance on the rooted tree $T_{\rho}$. 
   Then one has the following equivalent equalities: 
     \begin{enumerate}
        \item 
           $d(a,b) = h_{d,\rho}(a) + h_{d,\rho}(b) - 2 h_{d,\rho}(a\wedge_{\rho} b)$ 
             for all  $a,b \in \cV(T_{\rho})$. 
        \item  
              $h_{d,\rho}(a\wedge_{\rho} b) = \frac{1}{2} \cdot( d(\rho,a) + d(\rho,b) - d(a,b) )$ 
                for all $a,b \in \cV(T_{\rho})$. 
      \end{enumerate}
\end{lemma}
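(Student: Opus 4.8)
The plan is to reduce both equalities to a single geometric observation about the point $c := a \wedge_{\rho} b$: it is the median of the three vertices $a$, $b$, $\rho$, and in particular it lies simultaneously on the three geodesics $[\rho a]$, $[\rho b]$ and $[ab]$. Granting this, the two equalities come out of three applications of the additivity axiom (2) of Definition \ref{def:addmet} followed by elementary algebra. The only step requiring care is the tree-topological claim that $a \wedge_{\rho} b$ lies on $[ab]$; everything else is bookkeeping, and the equivalence of the two stated identities is a mere transposition of terms (so it suffices to prove one of them, say (2)).

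First I would establish the geometric fact. By Notation \ref{not:notinf}, $c$ is the maximal element of $[\rho a] \cap [\rho b]$ for $\preceq_{\rho}$. Since the intersection of two geodesics issuing from a common point in a topological tree is again a geodesic issuing from that point, we have $[\rho a] \cap [\rho b] = [\rho c]$. Hence $[\rho a] = [\rho c] \cup [ca]$ and $[\rho b] = [\rho c] \cup [cb]$, and therefore $[ca] \cap [cb] \subseteq ([\rho a] \cap [\rho b]) \cap [ca] = [\rho c] \cap [ca] = \{c\}$. Thus $[ca] \cup [cb]$ is an embedded arc joining $a$ to $b$, so by the uniqueness of such arcs in a tree (Definition \ref{def:tree}) it coincides with $[ab]$; in particular $c \in [ab]$. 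Note also that $a$, $b$, $\rho$ being vertices of the underlying combinatorial tree, so is $c$, so that $h_{d,\rho}(c)$ is defined. (This is exactly the configuration drawn in Figure \ref{fig:ab-rho}.)

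Finally, since $c \in [\rho a]$, $c \in [\rho b]$ and $c \in [ab]$, axiom (2) of Definition \ref{def:addmet} gives the three relations $d(\rho, a) = d(\rho, c) + d(c, a)$, $d(\rho, b) = d(\rho, c) + d(c, b)$ and $d(a,b) = d(a,c) + d(c,b)$. Adding the first two and subtracting the third yields $d(\rho,a) + d(\rho,b) - d(a,b) = 2\, d(\rho, c)$, which, since $h_{d,\rho}(u) = d(u,\rho)$ by Definition \ref{def:heightbis}, is precisely equality (2). Transposing, $d(a,b) = h_{d,\rho}(a) + h_{d,\rho}(b) - 2 h_{d,\rho}(c)$, which is equality (1); the reverse implication is the same computation run backwards, so the two formulas are equivalent. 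The main obstacle, as noted, is the claim $a \wedge_{\rho} b \in [ab]$, and it is disposed of by the uniqueness of embedded arcs in a tree.
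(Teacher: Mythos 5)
Your proof is correct and follows exactly the argument the paper leaves implicit (it only says the identity ``may be verified simply by looking at Figure \ref{fig:ab-rho}''): the point $a\wedge_{\rho}b$ is the median of $a$, $b$, $\rho$, lying on all three geodesics, and three applications of the additivity axiom of Definition \ref{def:addmet} give the formula. Your extra care in checking that $a\wedge_{\rho}b$ is a vertex and lies on $[ab]$ is a welcome filling-in of details the paper omits, not a departure from its approach.
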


In the following results we consider an end-rooted tree $T_{\hat{\rho}}$ with non-empty core 
$\overset{\circ}{T}_{\rho}$ (see Definition \ref{def:endroot}). 
Notice that the set of vertices of the core $\overset{\circ}{T}_{\rho}$ is
the set of interior vertices of $T_{\hat{\rho}}$ and that 
$\overset{\circ}{T}_{\rho}$ is considered as a rooted-tree, 
the root $\rho$ being the immediate successor of $\hat{\rho}$ in $T_{\hat{\rho}}$.
It is immediate to see that:

\begin{lemma} \label{lem:heightcore}
    Consider an end-rooted tree $T_{\hat{\rho}}$  with non-empty core $\overset{\circ}{T}_{\rho}$. 
     Then the map:
        $ d \to h_{d, \rho},$
     which associates to an additive distance function of  $\overset{\circ}{T}_{\rho}$ 
     its remoteness function, establishes a bijection from the set of additive 
     distances on $\overset{\circ}{T}$ to the set of height functions of $T_{\hat{\rho}}$  
     which vanish  at $\rho$. 
 \end{lemma}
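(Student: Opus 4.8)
The plan is to verify that the map $d \mapsto h_{d,\rho}$ is well-defined (lands in the set of height functions on $T_{\hat{\rho}}$ vanishing at $\rho$), and then to exhibit an explicit inverse. For well-definedness, let $d$ be an additive distance on $\accentset{\circ}{T}$. Since the vertices of $\accentset{\circ}{T}_\rho$ are exactly the interior vertices of $T_{\hat{\rho}}$, the remoteness function $h_{d,\rho}$ is a function on $\cI(T_{\hat{\rho}})$. It vanishes at $\rho$ because $d(\rho,\rho)=0$. To see it is strictly increasing with respect to $\preceq_\rho$: if $u \prec_\rho v$ then $[\rho u] \subsetneq [\rho v]$, so $u$ lies strictly between $\rho$ and $v$ on the geodesic $[\rho v]$; by the additivity axiom (2) in Definition \ref{def:addmet}, $d(\rho,v) = d(\rho,u) + d(u,v)$, and since $u \neq v$, axiom (1) gives $d(u,v) > 0$, hence $h_{d,\rho}(u) < h_{d,\rho}(v)$. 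Thus $h_{d,\rho}$ is a height function on $T_{\hat{\rho}}$ (in the sense of Definition \ref{def:heightmap}) vanishing at $\rho$.

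Next I would construct the inverse. Given a height function $h$ on $T_{\hat{\rho}}$ with $h(\rho) = 0$, define $d_h$ on the vertex set $\cV(\accentset{\circ}{T}_\rho) = \cI(T_{\hat{\rho}})$ by the formula suggested by Lemma \ref{lem:heightinf}:
\[
d_h(a,b) := h(a) + h(b) - 2\,h(a \wedge_\rho b),
\]
where $\wedge_\rho$ is the infimum for $\preceq_\rho$ in the rooted tree $\accentset{\circ}{T}_\rho$ (Notation \ref{not:notinf}). I would check that $d_h$ is an additive distance on $\accentset{\circ}{T}$. Symmetry is clear. For axiom (1): if $a = b$ then $a \wedge_\rho b = a$ and $d_h(a,a) = 0$; conversely if $a \neq b$, then $a \wedge_\rho b$ is a proper predecessor of at least one of $a, b$ (say of $b$), so by strict increase $h(a \wedge_\rho b) < h(b) \le \max\{h(a),h(b)\}$, and a short case analysis comparing $h(a \wedge_\rho b)$ with both $h(a)$ and $h(b)$ — using that $a \wedge_\rho b \preceq_\rho a$ and $a \wedge_\rho b \preceq_\rho b$ — forces $d_h(a,b) > 0$. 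For the additivity axiom (2): if $v \in [uw]$, the geodesic $[uw]$ in a rooted tree decomposes through $u \wedge_\rho w$, and one of the standard configurations holds — either $v \in [u, u\wedge_\rho w]$, or $v \in [u \wedge_\rho w, w]$, or $v = u \wedge_\rho w$ — and in each case the identities $u \wedge_\rho v = v$ (resp. $v \wedge_\rho w = v$) and $u \wedge_\rho w = v \wedge_\rho w$ (resp. $= u \wedge_\rho v$) reduce $d_h(u,v) + d_h(v,w) = d_h(u,w)$ to a cancellation of the $h$-terms.

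Finally I would check the two composites are identities. Starting from $d$, Lemma \ref{lem:heightinf}(1) says precisely $d(a,b) = h_{d,\rho}(a) + h_{d,\rho}(b) - 2 h_{d,\rho}(a \wedge_\rho b) = d_{h_{d,\rho}}(a,b)$, so $d_{h_{d,\rho}} = d$. Starting from $h$, one has $h_{d_h,\rho}(a) = d_h(a,\rho) = h(a) + h(\rho) - 2 h(a \wedge_\rho \rho) = h(a)$, since $\rho$ is the absolute minimum of $\preceq_\rho$ so $a \wedge_\rho \rho = \rho$ and $h(\rho) = 0$. Hence the two maps are mutually inverse bijections, which is the assertion.

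The only genuinely delicate point — and hence the main obstacle — is the verification that $d_h$ satisfies the betweenness-additivity axiom (2), since this requires a careful case analysis of how an arbitrary vertex $v$ on the geodesic $[uw]$ sits relative to $u \wedge_\rho w$ in the rooted tree $\accentset{\circ}{T}_\rho$, together with the corresponding infimum identities. Everything else is either immediate or a direct substitution into Lemma \ref{lem:heightinf}; indeed the statement is flagged in the excerpt as "immediate to see," so the intended proof is essentially the bookkeeping of these infima. One subtlety worth noting explicitly is that the formula for $d_h$ uses the $\wedge_\rho$ of the \emph{core} $\accentset{\circ}{T}_\rho$ rooted at $\rho$, not of $T_{\hat{\rho}}$; but since $\rho \preceq_{\hat\rho} x$ for every interior vertex $x$, the two infima agree on $\cI(T_{\hat{\rho}})$, so no ambiguity arises.
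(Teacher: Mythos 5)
Your proof is correct, and it is exactly the verification the paper leaves implicit (the lemma is stated there as ``immediate to see''): well-definedness plus the inverse $d_h(a,b)=h(a)+h(b)-2h(a\wedge_{\rho}b)$ coming from Lemma \ref{lem:heightinf}. The case analysis for the betweenness axiom and the remark that $\preceq_{\hat{\rho}}$ and $\preceq_{\rho}$ (and their infima) agree on the interior vertices are precisely the right points to check, so nothing is missing.
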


Combining Lemmas  \ref{lem:heightcore}, \ref{lem:transf} and \ref{lem:depthultra}, we get:

\begin{proposition} \label{prop:addistultram} 
     Assume that $T_{\hat{\rho}}$ is an end-rooted tree
    with non-empty core $\overset{\circ}{T}_{\rho}$, which is 
     endowed with an additive distance function $d$. 
    Then, for any strictly decreasing function  $s: (\R_+, \leq) \to (\R_+^*, \leq)$, the function  
    $U^{d, \rho, s} : \cL(T_{\hat{\rho}}) \times \cL(T_{\hat{\rho}}) \to \R_+$ defined by:
       $$ U^{d, \rho, s}(a,b) := \left\{ \begin{array}{ll}
                                               s(h_{d,\rho}(a\wedge_{\rho} b) ) & \mbox{ if } a \neq b \\
                                               0 & \mbox{otherwise},
                                           \end{array}
                                                   \right.  $$
   is an ultrametric on the set of leaves $\cL(T_{\hat{\rho}})$.
\end{proposition}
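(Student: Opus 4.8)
The plan is to simply chain together the three lemmas indicated in the statement, checking that their hypotheses are compatible. First I would unwind the objects: since $T_{\hat\rho}$ is an end-rooted tree with non-empty core $\accentset{\circ}{T}_{\rho}$, Lemma \ref{lem:heightcore} applies, and it tells us that the given additive distance $d$ on $\accentset{\circ}{T}$ corresponds, under $d \mapsto h_{d,\rho}$, to a height function on $T_{\hat\rho}$ that vanishes at $\rho$. In particular $h_{d,\rho}$ is a \emph{strictly increasing} function on $(\cI(T_{\hat\rho}), \preceq_{\rho})$ with values in $(\R_+, \leq)$, recalling that $\cV(\accentset{\circ}{T}_\rho) = \cI(T_{\hat\rho})$ and that $\rho$ is the minimal interior vertex.

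Next I would apply Lemma \ref{lem:transf}: given any strictly decreasing $s: (\R_+, \leq) \to (\R_+^*, \leq)$, the left-composition $s \circ h_{d,\rho}$ is a \emph{depth function} on $T_{\hat\rho}$, i.e.\ a strictly decreasing map $\cI(T_{\hat\rho}) \to \R_+^*$. Thus $(T_{\hat\rho}, s \circ h_{d,\rho})$ is a depth-dated rooted tree in the sense of Definition \ref{def:depthmap}. Now Lemma \ref{lem:depthultra} applies verbatim to this depth-dated tree: the function sending $(a,b)$ with $a \neq b$ to $(s \circ h_{d,\rho})(a \wedge_\rho b)$ and sending $(a,a)$ to $0$ is an ultrametric on $\cL(T_{\hat\rho})$. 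But this function is exactly $U^{d,\rho}$ as defined in the statement, since $(s\circ h_{d,\rho})(a\wedge_\rho b) = s(h_{d,\rho}(a \wedge_\rho b))$. This completes the argument.

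The only points requiring a moment of care — and the closest thing to an obstacle, though it is mild — are the bookkeeping of the index sets and the fact that $a \wedge_\rho b$ is an interior vertex. When $a, b \in \cL(T_{\hat\rho})$ are distinct leaves, their infimum $a \wedge_\rho b$ for $\preceq_\rho$ is a node (or at least a non-leaf vertex) of $T_{\hat\rho}$, hence lies in $\cI(T_{\hat\rho})$, which is precisely the domain of both $h_{d,\rho}$ (as a height function) and of $s \circ h_{d,\rho}$ (as a depth function); so the composition is well-defined at every argument that occurs. One should also note that the hypothesis that the core is non-empty is what guarantees $\cI(T_{\hat\rho}) \neq \emptyset$ and that $\rho \in \cI(T_{\hat\rho})$, so that the vanishing-at-$\rho$ condition in Lemma \ref{lem:heightcore} makes sense and matches the domain conventions of Definition \ref{def:heightmap} (a height function is allowed to vanish only at the minimal interior vertex, cf.\ Remark \ref{lem:dissym}). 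Once these identifications are in place, no computation is needed: the proposition is the formal composite of Lemmas \ref{lem:heightcore}, \ref{lem:transf} and \ref{lem:depthultra}, applied in that order.
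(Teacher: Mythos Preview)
Your proof is correct and follows exactly the approach the paper takes: the paper presents this proposition as an immediate consequence of ``combining Lemmas \ref{lem:heightcore}, \ref{lem:transf} and \ref{lem:depthultra}'', with no further argument given. Your write-up is in fact more detailed than the paper's, since you spell out the bookkeeping (that $a \wedge_\rho b$ lands in $\cI(T_{\hat\rho})$ and that the domain conventions match), which the paper leaves implicit.
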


Therefore, starting from an end-rooted tree whose core is endowed with an additive distance, 
one gets 
plenty of ultrametric spaces, depending on the choice of map $s$. Each such 
ultrametric space has two associated rooted trees, as explained in Definition  
\ref{def:asstree}. By Proposition \ref{prop:corrultradepth}, the end-rooted one may be 
identified topologically with the initial end-rooted tree:

\begin{proposition}  \label{prop:embedtree}
     Let $T_{\hat{\rho}}$ be an end-rooted tree
    with non-empty core $\overset{\circ}{T}_{\rho}$. We assume that  $\overset{\circ}{T}_{\rho}$  
    is endowed with an additive distance function $d$. 
   Consider $T_{\hat{\rho}}$ as a combinatorial rooted tree with vertex set equal to 
   its topological vertex set
   $\cV_{top}(T_{\hat{\rho}})$, as defined by (\ref{eq:topvertex}). Let
   $s: (\R_+, \leq) \to (\R_+^*, \leq)$ be a strictly decreasing map 
   and let $U^{d, \rho, s}: \cL(T_{\hat{\rho}}) \times \cL(T_{\hat{\rho}}) \to \R_+$ 
   be the ultrametric defined in Proposition \ref{prop:addistultram}. Then: 
       \begin{enumerate}
             \item The end-rooted tree 
                  $\hat{T}^{U^{d, \rho, s}}$ is uniquely isomorphic with $T_{\hat{\rho}}$ as a 
                  combinatorial rooted tree 
                  with leaf-set $\cL(T_{\hat{\rho}})$. 
                   In restriction to $\cI(T_{\hat{\rho}})$, 
                   this isomorphism identifies  the diametral function $\delta^{U^{d, \rho, s}}$ 
                   of Definition \ref{def:depthultra} with $s \circ h_{d,\rho}$. 
            \item  The previous isomorphism identifies the interior-rooted tree  
                  $T^{U^{d, \rho, s}}$ with the convex hull $[\cL(T_{\hat{\rho}}) ]$, as rooted trees  
                   with leaf-set $\cL(T_{\hat{\rho}})$. 
       \end{enumerate}
\end{proposition}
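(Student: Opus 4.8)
The plan is to deduce Proposition~\ref{prop:embedtree} from the general correspondence theory already built in Subsection~\ref{subs:ultr}, by tracking carefully the three ``metamorphoses'' that link an additive distance, a height function, a depth function, and an ultrametric. First I would recall that by Lemma~\ref{lem:heightcore} the additive distance $d$ on the core $\accentset{\circ}{T}_{\rho}$ corresponds to the height function $h_{d,\rho}$ on $T_{\hat{\rho}}$ vanishing at $\rho$, that by Lemma~\ref{lem:transf} the strictly decreasing map $s$ turns $h_{d,\rho}$ into a depth function $s\circ h_{d,\rho}$ on $T_{\hat{\rho}}$, and that by Lemma~\ref{lem:depthultra} this depth function produces the ultrametric $U^{d,\rho}(a,b)=s(h_{d,\rho}(a\wedge_\rho b))$ on $\cL(T_{\hat{\rho}})$ --- which is exactly the ultrametric from Proposition~\ref{prop:addistultram}. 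So $(T_{\hat{\rho}},\, s\circ h_{d,\rho})$ is a depth-dated end-rooted tree, and $U^{d,\rho}$ is the ultrametric it induces via Lemma~\ref{lem:depthultra}.

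For part~(1), I would then invoke Proposition~\ref{prop:reconstr} directly: given the depth-dated end-rooted tree $(T_{\hat{\rho}}, s\circ h_{d,\rho})$, there is a unique leaf-fixing isomorphism between $T_{\hat{\rho}}$ (equipped with its topological vertex set $\cV_{top}(T_{\hat{\rho}})$ and the depth function $s\circ h_{d,\rho}$ restricted to $\cI(T_{\hat{\rho}})$) and the depth-dated tree $(\hat{T}^{U^{d,\rho}}, \delta^{U^{d,\rho}})$ associated to $U^{d,\rho}$. Since $U^{p}$ for $p = s\circ h_{d,\rho}$ is precisely $U^{d,\rho}$, this gives the asserted isomorphism of end-rooted combinatorial trees with leaf-set $\cL(T_{\hat{\rho}})$, and it identifies $\delta^{U^{d,\rho}}$ with $s\circ h_{d,\rho}$ on interior vertices. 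One small point to check is that the hypothesis of Proposition~\ref{prop:corrultradepth}/\ref{prop:reconstr} --- namely that the vertex set of $T_{\hat{\rho}}$ equals its topological vertex set --- is exactly what we have stipulated in the statement of Proposition~\ref{prop:embedtree}; so no extra work is needed there.

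For part~(2), I would argue that the interior-rooted tree $T^{U^{d,\rho}} = T(\cB^{U^{d,\rho}}, \preceq_{ri})$ is, by Definition~\ref{def:asstree}, obtained from the end-rooted tree $\hat{T}^{U^{d,\rho}}$ by removing the extension vertex $\hat\rho$ and its pendant edge (Definition~\ref{def:canext}); equivalently, it is $\hat{T}^{U^{d,\rho}}$ with the root moved from the end $\hat\rho$ to its unique direct successor $\rho$. Under the isomorphism of part~(1), this corresponds to $T_{\hat{\rho}}$ with $\hat\rho$ and the edge $[\hat\rho\,\rho]$ removed, i.e.\ to the tree $T$ re-rooted at $\rho = \rho^+$. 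But $\rho^+$ together with the points $a^-$ for $a \in \cL(T_{\hat{\rho}})$ are precisely the nodes and root of the core, and the convex hull $[\cL(T_{\hat{\rho}})]$ inside the geometric realization of $T$, rooted at the point where the pendant edge to $\hat\rho$ attaches, is exactly this re-rooted core by Definition~\ref{def:endroot}. Hence $T^{U^{d,\rho}}$ is identified with $[\cL(T_{\hat{\rho}})]$ as a rooted tree with the given leaf-set, which is the claim. The main obstacle, such as it is, is purely bookkeeping: making sure the root conventions match at each step (interior-rooted versus end-rooted, the passage $T \leftrightarrow \hat T$, and the identification of $\rho$ with $\rho^+$ and with the maximal ball $X = \cL(T_{\hat{\rho}})$ in $\cB^{U^{d,\rho}}$), so that the various ``unique leaf-fixing isomorphisms'' supplied by Propositions~\ref{prop:recons} and~\ref{prop:reconstr} really do compose to the asserted single isomorphism. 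No genuinely new idea is required beyond assembling the lemmas of Subsection~\ref{subs:ultr} in the right order.
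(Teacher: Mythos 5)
Your overall route is exactly the one the paper intends: the paper gives no independent argument for this proposition, merely observing before the statement that it follows from Proposition \ref{prop:corrultradepth} (itself assembled from Lemmas \ref{lem:heightcore}, \ref{lem:transf}, \ref{lem:depthultra} and Proposition \ref{prop:reconstr}) and noting afterwards that it is a special case of the theory of B\"ocker and Dress \cite{BD 98}. Your treatment of part (1) via Proposition \ref{prop:reconstr} applied to the depth-dated tree $(T_{\hat{\rho}}, s\circ h_{d,\rho})$ is precisely this, and is correct.

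There is, however, one incorrect step in your part (2): you assert that the convex hull $[\cL(T_{\hat{\rho}})]$, rooted at $\rho$, ``is exactly this re-rooted core by Definition \ref{def:endroot}.'' It is not. The core $\accentset{\circ}{T}_{\rho}$ is the convex hull of $\{\rho^+\}\cup\{a^-\colon a\in\cL(T_{\hat{\rho}})\}$; it contains neither the leaves $a$ nor the segments $[a^- a]$, whereas $[\cL(T_{\hat{\rho}})]$ contains both, so the two sets differ whenever there is at least one leaf. Fortunately the statement you actually need does not go through the core at all: you need that deleting $\hat{\rho}$ together with the half-open pendant edge $[\hat{\rho}\,\rho)$ from $T_{\hat{\rho}}$ leaves exactly $[\cL(T_{\hat{\rho}})]$. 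This holds because the vertex set is the topological vertex set, so every vertex other than $\hat{\rho}$ is either a leaf or a node; a node disconnects $T_{\hat{\rho}}$ into at least three components, at most one of which contains $\hat{\rho}$, so it separates two leaves and lies on a leaf-to-leaf geodesic, and the same separation argument shows every edge except $\{\hat{\rho},\rho\}$ lies on such a geodesic. With that substitution (and the harmless check that $a\wedge_{\hat{\rho}}b=a\wedge_{\rho}b$ for leaves $a\neq b$, which you flag as bookkeeping), your argument is complete and matches the paper's intended derivation.
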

Proposition \ref{prop:embedtree} is a special case of the theory explained by 
B\"ocker and Dress in \cite{BD 98}.

\section{Arborescent singularities and their ultrametric spaces of branches}
\label{sec:main}

In this section we introduce the notion of \emph{arborescent} singularity   
and we prove that, given any good resolution of such a singularity, there is a natural additive 
metric on its dual tree, constructed from a determinantal identity of Eisenbud 
and Neumann. We deduce from this additivity the announced results about the fact 
that the functions $U_L$ defined in the introduction are ultrametrics, and their 
relation with the dual trees of resolutions.

\subsection{Determinant products for arborescent singularities} \label{subsec:det}
$\: $ 
\medskip

In this subsection we explain the basic facts about arborescent singularities 
needed in the sequel. 
\medskip

\begin{definition}  \label{def:arb}
    A normal surface singularity is called {\bf arborescent} if the dual graph 
    of some good resolution of it is a tree. 
\end{definition}

It is immediate to see, using the fact that there exists a minimal good resolution,  
which any good resolution dominates by a sequence of blow-ups of points, that 
the dual graph of some good resolution is a tree if and only if this is the case for any 
good resolution. 

\begin{remark}  \label{rem:exarb}
 All normal quasi-homogeneous, rational, minimally elliptic singularities which are 
not cusp singularities or Neumann and Wahl's \cite{NW 05} 
splice quotient singularities are arborescent. Note that the first three classes of 
singularities are special cases of splice quotients whenever their boundaries 
are rational homology spheres (which is always the case for rational ones, but 
means that the quotient by the $\C^*$-action is a rational curve in the quasihomogeneous case, 
and that one does not have a simply elliptic singularity -- a special case of 
quasihomogeneous singularities, in which this quotient is elliptic and there are no special 
orbits -- in the minimally elliptic case). This was proved by Neumann \cite{N 83} for 
the quasi-homogeneous singularities and by Okuma \cite{O 06} for the other classes 
(see also \cite[Appendix]{NW 05}). 
Note that all splice quotients  are special 
cases of normal surface singularities with rational homology sphere links, which 
in turn are all arborescent by \cite[Proposition 3.4 of Chap. 2]{D 92}. 
\end{remark}

\begin{remark}   \label{rem:frompepe}
    Jos\'e Seade told us that arborescent singularities had also appeared, but 
    without receiving a special name, in Camacho's paper \cite{C 88}. 
\end{remark}

The notions explained in the following definition were introduced with slightly different 
terminology by Eisenbud and Neumann \cite[Sect. 20, 21]{EN 85} (recall that the 
notion of \emph{determinant} of a weighted graph was explained in Definition \ref{def:detgraph}): 

\begin{definition} \label{def:detrees}
   Let $S$ be an arborescent singularity. Consider any good resolution of it. 
   For each vertex $u$ of its dual tree $\Gamma$ and each edge $e$ 
   containing $u$, we say that the {\bf subtree $\Gamma_{u,e}$ of $u$ 
   in the direction $e$} is the full subtree of $\Gamma$ whose vertices are 
   those vertices $t$ of $\Gamma$ distinct from $u$, which are seen from 
   $u$ in the direction of the edge $e$, that is, such that 
   $e \subset [ut]$ (see Figure \ref{fig:subgraph}). 
   The {\bf edge determinant $\det_{u,e}(\Gamma)$ 
   at the vertex  $u$ in the direction $e$} is the determinant of $\Gamma_{u,e}$. 
   For any $v, w \in \cV$, define  the {\bf determinant product} $p(v, w)$ 
   {\bf of the pair} $(v, w)$ as the product of the determinants at all the points of the 
   geodesic $[vw]$ which connects $v$ and $w$, in the directions of the edges 
   which are {\em not} contained in that geodesic. 
\end{definition}

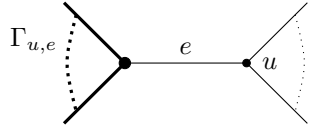
\begin{figure}
\centering
\begin{tikzpicture} [scale=0.8]

    \draw[very thick](0,0) -- (-1,1) ; 
       
 \draw[very thick,dotted] (-0.8, 0.8) .. controls (-1 ,0) .. (-0.8, -0.8);
 \draw[dotted] (2.8, 0.8) .. controls (3 ,0) .. (2.8, -0.8);

    \draw[very thick] (0,0) -- (-1,-1) ; 
     \draw(0,0) -- (2,0) ;
    \draw (2,0) -- (3,1) ; 
    \draw (2,0) -- (3,-1) ;
        \draw (2.1,0) node[right]{$u$} ;
       \draw (1,0) node[above]{$e$} ;
         \draw (-1.5,0) node[above]{$\Gamma_{u,e}$} ;
        \node[draw,circle,inner sep=1.5pt,fill=black] at (0,0) {};
 \node[draw,circle,inner sep=1pt,fill=black] at (2,0) {};
    \end{tikzpicture}  
    \caption{The subtree $\Gamma_{u,e}$ of Definition \ref{def:detrees} 
          is sketched with thicker segments. }
    \label{fig:subgraph}
    \end{figure}

Note that the definition implies that  $p(v, w) = p(w, v) \in \N^*$ for any $v,w \in \cV$.

In order to compute  determinant products  
in concrete examples, it is important to be able to compute rapidly determinants of weighted 
trees. One could use the general algorithms of linear algebra. Happily, there exists 
a special algorithm adapted to tree determinants, which was presented in Duchon's 
thesis  \cite[Sect. III.1]{D 82} and studied in \cite[Section 21]{EN 85}. We used it 
a lot for our experimentations. 
This algorithm may be formulated as follows:

\begin{proposition} \label{prop:duchon}
   Let $\Gamma_u$ be the dual tree of a good resolution of an arborescent singularity, 
   rooted at one of its vertices $u$. For any vertex $v$ of $\Gamma_u$, 
   denote by $-\alpha_v <0$ the self-intersection 
   of the component $E_v$.  Denote also by $(e_j)_{j \in J(u)}$ the edges of $\Gamma_u$ 
   containing $u$. Each subtree $\Gamma_{u, e_j}$ is considered to be rooted 
   at the vertex of $e_j$ different from $u$. 
   Define recursively the {\bf continued fraction} $cf(\Gamma_u)$ of the 
   weighted rooted tree $\Gamma_u$  by: 
      \[
       cf(\Gamma_u)  = 
       \left\{ 
      \begin{array}{lcl}
               \alpha_u, &   \mbox{ if }  & 
                     \Gamma_u \mbox{ is reduced to the vertex } $u$,      \\
        \alpha_u -  {\sum_{j \in J(u)}  ( {cf(\Gamma_{u,e_j}) } )^{-1} }
                          &  & \mbox{ otherwise}. 
                   \end{array} 
                   \right.  
                  \]
    Then: 
        \[\det(\Gamma) = cf(\Gamma_u) \cdot \prod_{j \in J(u)} \det(\Gamma_{u,e_j}).\]           
\end{proposition}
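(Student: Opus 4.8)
The plan is to prove the identity by induction on the number of vertices of $\Gamma$, expanding $\det(\Gamma)$ by a Schur complement along the row and column indexed by the root $u$. It is convenient to prove the statement for an arbitrary finite tree whose vertices are weighted by positive numbers $\alpha_v$ and for which the matrix $N:=(\delta_{vw}\alpha_v-a_{vw})_{(v,w)\in\cV^2}$ is positive definite, where $a_{vw}=1$ if $v,w$ are adjacent and $0$ otherwise. For the dual tree of a good resolution one has $\det(\Gamma)=\det N$ by Definition \ref{def:detgraph}, and $N$ is positive definite there by Theorem \ref{prop:negdef}. Any full subtree, endowed with the induced weights, again has this property, its matrix being a principal submatrix of $N$, so the inductive hypothesis will apply to the subtrees that appear below, and all determinants of subtrees occurring are positive.

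First I would fix notation: let $(e_j)_{j\in J(u)}$ be the edges containing $u$, let $u_j$ be the endpoint of $e_j$ other than $u$, and let $V_j$ be the vertex set of $\Gamma_{u,e_j}$, so that $\cV=\{u\}\sqcup\bigsqcup_{j\in J(u)}V_j$. Removing $u$ disconnects the sets $V_j$ from one another, hence $N$ has no nonzero entry between $V_j$ and $V_k$ for $j\neq k$; with respect to the partition $\{u\}\sqcup\bigsqcup_jV_j$, the matrix $N$ thus has the scalar block $\alpha_u$ at $u$, the (positive definite, hence invertible) matrix $N_j$ of $\Gamma_{u,e_j}$ on $V_j\times V_j$, zero blocks on $V_j\times V_k$ for $j\neq k$, and the vector $-\mathbf e_{u_j}$ coupling $u$ to $V_j$. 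Applying the block-determinant (Schur complement) formula relative to the invertible block $\mathrm{diag}(N_j)_j$ gives
\[
\det(\Gamma)=\Bigl(\prod_{j\in J(u)}\det N_j\Bigr)\cdot\Bigl(\alpha_u-\sum_{j\in J(u)}(N_j^{-1})_{u_j u_j}\Bigr),
\]
and $\det N_j=\det(\Gamma_{u,e_j})$. Together with the recursive definition of $cf(\Gamma_u)$ and the base case (when $\Gamma$ is the single vertex $u$, both sides equal $\alpha_u$ and $J(u)=\emptyset$), it remains only to identify $(N_j^{-1})_{u_j u_j}$ with $cf(\Gamma_{u,e_j})^{-1}$.

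For this I would use Cramer's rule: $(N_j^{-1})_{u_j u_j}$ is the $(u_j,u_j)$ cofactor of $N_j$ divided by $\det N_j$, and that cofactor is the determinant of the matrix obtained from $N_j$ by deleting the row and column of $u_j$. Since $\Gamma_{u,e_j}$ is rooted at $u_j$ and $e_j$ is the unique edge joining $u_j$ toward $u$, that deleted matrix is exactly the intersection matrix of the forest $\bigsqcup_e\Gamma_{u_j,e}$, where $e$ ranges over the edges at $u_j$ inside $\Gamma_{u,e_j}$, equivalently over the edges at $u_j$ other than $e_j$; being block diagonal, its determinant is $\prod_e\det(\Gamma_{u_j,e})$. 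On the other hand, applying the inductive hypothesis to $\Gamma_{u,e_j}$ rooted at $u_j$ gives $\det(\Gamma_{u,e_j})=cf(\Gamma_{u,e_j})\cdot\prod_e\det(\Gamma_{u_j,e})$, with the same index set for $e$ and all factors positive (so in particular $cf(\Gamma_{u,e_j})>0$). Dividing, $(N_j^{-1})_{u_j u_j}=cf(\Gamma_{u,e_j})^{-1}$, and substituting into the displayed formula yields $\det(\Gamma)=cf(\Gamma_u)\cdot\prod_{j\in J(u)}\det(\Gamma_{u,e_j})$.

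The only points requiring care are bookkeeping ones: verifying that deleting the root $u_j$ from $\Gamma_{u,e_j}$ yields precisely the disjoint union of the trees $\Gamma_{u_j,e}$ with $e\neq e_j$, so that the relevant minor factors as a product of subtree determinants, and checking that this is the very index set of edges appearing in the recursive definition of $cf(\Gamma_{u,e_j})$. Positive-definiteness of the principal submatrices $N_j$, needed both for the Schur complement step and to ensure $cf(\Gamma_{u,e_j})\neq0$, is inherited from Theorem \ref{prop:negdef}.
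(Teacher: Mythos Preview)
Your argument is correct. The Schur complement expansion along the root vertex is exactly the right tool, and your identification of $(N_j^{-1})_{u_j u_j}$ with $cf(\Gamma_{u,e_j})^{-1}$ via Cramer's rule and the inductive hypothesis is clean and complete. The bookkeeping points you flag (that deleting $u_j$ from $\Gamma_{u,e_j}$ yields precisely $\bigsqcup_{e\neq e_j}\Gamma_{u_j,e}$, and that positive-definiteness is inherited by principal submatrices) are handled correctly.

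Note, however, that the paper does not actually supply its own proof of this proposition: it is stated as a known algorithm, attributed to Duchon's thesis and to Eisenbud--Neumann, and is used only as a computational tool in examples. So there is no argument in the paper to compare yours against. Your proof is a self-contained justification of a result the authors chose to quote without proof.
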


The following multiplicative property of determinant products will be fundamental 
for us in the sequel:

\begin{proposition}  \label{lem:equalfund}
    For any three vertices $u,v,w \in \cV$ such that $v \in [uw]$, one has:
       \begin{equation} \label{eq:prod1} 
             p(u,v) \cdot p(v,w) = p(v,v) \cdot p(u,w).
       \end{equation}
     Equivalently:
        \begin{equation} \label{eq:prod2} 
              \frac{p(u,v)}{\sqrt{p(u,u) \cdot p(v,v)}} \cdot \frac{p(v,w)}{\sqrt{p(v,v) \cdot p(w,w)}} = 
                  \frac{p(u,w)}{\sqrt{p(u,u) \cdot p(w,w)}}. 
       \end{equation}
\end{proposition}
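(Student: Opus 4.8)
The plan is to prove the identity (\ref{eq:prod1}) directly from the definition of the determinant product by a bookkeeping argument on geodesics, and then to deduce (\ref{eq:prod2}) by a division. First I would dispose of the degenerate cases. If $v=u$, then $[uw]=[vw]$ and by Definition \ref{def:detrees} one has $p(u,v)=p(v,v)$ (both equal $\prod_{e\ni v}\det(\Gamma_{v,e})$, the product over all edges incident to $v$), so (\ref{eq:prod1}) reads $p(v,v)\cdot p(v,w)=p(v,v)\cdot p(v,w)$; the case $v=w$ is symmetric, while $u=w$ together with $v\in[uw]$ forces $u=v=w$. Hence I may assume that $u,v,w$ are pairwise distinct and that $v$ lies in the interior of the geodesic $[uw]$, so $[uw]=[uv]\cup[vw]$ with $[uv]\cap[vw]=\{v\}$.

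For a vertex $t$ of the dual tree $\Gamma$ and an edge $e$ incident to $t$, write $c_e(t):=\det(\Gamma_{t,e})$; this depends only on the pair $(t,e)$, not on any geodesic. By Definition \ref{def:detrees}, $p(u,v)=\prod_{t\in[uv]}\prod_{e\ni t,\ e\not\subset[uv]}c_e(t)$, and similarly for $p(v,w)$, $p(u,w)$ and $p(v,v)$. The key local observation is: if $t$ is a vertex of $[uv]$ with $t\neq v$, then a sufficiently small neighbourhood of $t$ inside $[uw]$ is contained in $[uv]$ (the points of $[vw]$ lie at distance at least $d(t,v)>0$ from $t$), so the edges incident to $t$ that are contained in $[uw]$ are precisely those contained in $[uv]$; hence $t$ contributes the same factor to $p(u,w)$ as to $p(u,v)$. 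By symmetry, every vertex $t\neq v$ of $[vw]$ contributes the same factor to $p(u,w)$ as to $p(v,w)$.

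It remains to compare the contributions of $v$ itself. Let $\cE_v$ be the set of edges of $\Gamma$ incident to $v$, and let $e_1\in[uv]$ and $e_2\in[vw]$ be the two edges of $[uw]$ incident to $v$; since $v$ is interior to the geodesic, $e_1\neq e_2$. Then $v$ contributes $\prod_{e\in\cE_v\setminus\{e_1\}}c_e(v)$ to $p(u,v)$, $\prod_{e\in\cE_v\setminus\{e_2\}}c_e(v)$ to $p(v,w)$, $\prod_{e\in\cE_v\setminus\{e_1,e_2\}}c_e(v)$ to $p(u,w)$, and $\prod_{e\in\cE_v}c_e(v)$ to $p(v,v)$. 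Comparing multiplicities edge by edge, the product of the first two equals the product of the last two: each $e\in\cE_v\setminus\{e_1,e_2\}$ occurs with multiplicity $2$ on both sides, while $e_1$ and $e_2$ each occur with multiplicity $1$ on both sides. Multiplying this local identity at $v$ by the equalities of the remaining per-vertex factors from the previous step gives $p(u,v)\cdot p(v,w)=p(v,v)\cdot p(u,w)$, that is (\ref{eq:prod1}). Dividing both sides by the positive integer $p(v,v)\sqrt{p(u,u)\,p(w,w)}$ and using $\sqrt{p(v,v)\,p(v,v)}=p(v,v)$ then yields (\ref{eq:prod2}).

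Since the whole content of the statement has been packaged into the definition of $p(v,w)$, there is no genuine obstacle; the only points requiring care are the localisation step — checking that, at a vertex of one half-geodesic other than $v$, being contained in $[uw]$ and being contained in that half select the same incident edges — and the bookkeeping of the degenerate coincidences among $u,v,w$. I would also remark that (\ref{eq:prod1}) is equivalent, through the expression of $-E_u^*\cdot E_v^*$ in terms of $p(u,v)$ and $\det(\Gamma)$ furnished by Proposition \ref{prop:equlfund}, to the determinantal identity $(-E_u^*\cdot E_v^*)(-E_v^*\cdot E_w^*)=(-E_v^*\cdot E_v^*)(-E_u^*\cdot E_w^*)$ stated in the introduction; so one may equally well deduce Proposition \ref{lem:equalfund} from that identity rather than proving it from scratch.
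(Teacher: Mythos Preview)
Your proof is correct and follows essentially the same approach as the paper: both decompose $p(u,v)$, $p(v,w)$, $p(v,v)$, $p(u,w)$ into products of edge determinants along the geodesic $[uw]$ and verify the identity by direct bookkeeping of the factors. The paper packages the decomposition into named quantities $P(u)$, $P(uv)$, $P(v)$, $P(vw)$, $P(w)$, $M$, $N$ (with a figure), while you organise the same comparison vertex by vertex; you are more explicit about the degenerate cases $u=v$, $v=w$ and about why vertices $t\neq v$ contribute identically, but the substance is the same.
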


\begin{proof}  The following proof is to be followed on Figure \ref{fig:sk}. We define: 
     \begin{itemize}
        \item  $P(u) = $  the product of edge determinants 
            at the vertex $u$, over the set of edges starting from $u$ and 
            not contained in $[uw]$. The products $P(v)$ and $P(w)$ are defined analogously. 
        \item $P(uv) = $  the product of edge determinants at all vertices 
           of $\Gamma$ situated in the interior of the geodesic $[uv]$, over 
           the edges not contained in $[uw]$. $P(vw)$ is defined analogously. 
        \item $M = $  the edge determinant at $v$ in the direction of the unique edge 
           starting from $v$ and contained in $[uv]$. $N$ is defined analogously. 
     \end{itemize}

Then one has the following formulae, clearly understandable on Figure \ref{fig:sk}:
  $$ \begin{array}{lclcll}
          p(u,v) & = & P(u) \cdot P(uv) \cdot P(v) \cdot N, & \quad
          p(v,w) & = &  M \cdot P(v) \cdot P(vw) \cdot P(w), \\
          p(v,v) & = &  M \cdot N \cdot P(v), & \quad 
          p(u,w) &  = &  P(u) \cdot P(uv) \cdot P(v) \cdot P(vw) \cdot P(w).
   \end{array} $$
 The equality (\ref{eq:prod1}) is a direct consequence of the previous factorisations. 
 Finally, it is immediate to see that  (\ref{eq:prod1}) and (\ref{eq:prod2}) are equivalent. 
\end{proof}

\begin{figure}
\centering
\begin{tikzpicture} [scale=0.7]

    \draw[thick](0,0) -- (1.5,0) ;  
     \draw[thick,dotted] (1.5, 0) -- (2.5,0); 
       \draw[thick](2.5,0) -- (5.5,0) ;  
     \draw[thick,dotted] (5.5, 0) -- (6.5,0); 
    \draw[thick](6.5,0) -- (8, 0) ; 
     
    
     \draw(0,0) -- (0.25,1) ;  
     \draw[dotted] (-0.1, 0.75) -- (0.1,0.75);
     \draw(0,0) -- (-0.25,1) ;

     \draw(4,0) -- (4.25,1) ;  
     \draw[dotted] (3.9, 0.75) -- (4.1,0.75);
     \draw(4,0) -- (3.75,1) ;

     \draw(8,0) -- (8.25,1) ;  
     \draw[dotted] (7.9, 0.75) -- (8.1,0.75);
     \draw(8,0) -- (7.75,1) ;   
     
     
     \draw(1,0) -- (0.75,-1) ;  
     \draw[dotted] (0.9, -0.75) -- (1.1,-0.75);
     \draw(1,0) -- (1.25,-1) ;

     \draw(3,0) -- (3.25,-1) ;  
     \draw[dotted] (2.9, -0.75) -- (3.1,-0.75);
     \draw(3,0) -- (2.75,-1) ;

     \draw(5,0) -- (5.25,-1) ;  
     \draw[dotted] (4.9, -0.75) -- (5.1,-0.75);
     \draw(5,0) -- (4.75,-1) ;

     \draw(7,0) -- (7.25,-1) ;  
     \draw[dotted] (6.9, -0.75) -- (7.1,-0.75);
     \draw(7,0) -- (6.75,-1) ;

    \draw (-0.2,0) node[left]{$u$} ;
  \draw (8.2,0) node[right]{$w$} ;
 \draw (4,-0.2) node[below]{$v$} ;
 \draw (3.9,0.3) node[left]{$M$} ;
 \draw (4.1,0.3) node[right]{$N$} ;

  \draw (2, -1.45) node[below]{$P(uv)$} ;
  \draw (6, -1.45) node[below]{$P(vw)$} ;
  \draw (0, 1.45) node[above]{$P(u)$} ;
   \draw (4, 1.45) node[above]{$P(v)$} ;
    \draw (8, 1.45) node[above]{$P(w)$} ;
    
\draw (2, -1.15) node[below]{
    $\underbrace{ \, \,  \, \, \, \,  \, \,  \, \, \, \,  \, \,  \, \, \, \,  \, \,  \, \, \, \,   \, \,  \, \, \, \, }$} ;
\draw (6, -1.15) node[below]{
    $\underbrace{ \, \,  \, \, \, \,  \, \,  \, \, \, \, \, \,  \, \, \, \,  \, \,  \, \, \, \,   \, \,  \, \, \, \, }$} ;

\draw (0, 1.05) node[above]{$\overbrace{  \, \,  \, \, \, \,   \, \,   }$} ;
\draw (4, 1.05) node[above]{$\overbrace{  \, \,  \, \, \, \,   \, \,   }$} ;
\draw (8, 1.05) node[above]{$\overbrace{  \, \,  \, \, \, \,   \, \,   }$} ;
      \node[draw,circle,inner sep=1pt,fill=black] at (1,0) {};
       \node[draw,circle,inner sep=1pt,fill=black] at (3,0) {};
       \node[draw,circle,inner sep=1pt,fill=black] at (5,0) {};
        \node[draw,circle,inner sep=1pt,fill=black] at (7,0) {};

    \node[draw,circle,inner sep=2pt,fill=black] at (0,0) {};
     \node[draw,circle,inner sep=2pt,fill=black] at (4,0) {};
      \node[draw,circle,inner sep=2pt,fill=black] at (8,0) {};
    \end{tikzpicture}  
    \caption{Illustration for the proof of Proposition \ref{lem:equalfund}.}  \label{fig:sk}
    \end{figure}
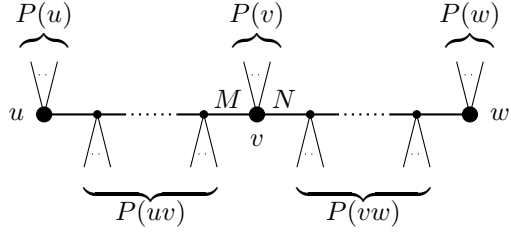

The following proposition was proved by Eisenbud and Neumann \cite[Lemma 20.2]{EN 85} 
(see also Neumann and Wahl \cite[Theorem 12.2]{NW 05}) for trees corresponding to the 
singularities whose boundaries $\partial M$ are rational homology spheres. 
Nevertheless, their proofs use only the fact that those are weighted trees 
appearing as dual graphs of singularities. 

\begin{proposition}  \label{prop:equlfund}
    Let $S$ be an arborescent singularity. Consider any good resolution of it. 
   Then, for any $v,  w \in \cV$, one has: 
              \[ {p(v, w)} = (- \check{E}_v \cdot \check{E}_w) \cdot {\det(S)}.\]
\end{proposition}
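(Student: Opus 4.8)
The plan is to prove the identity $p(v,w) = -(E_v^* \cdot E_w^*)\cdot \det(S)$ by induction on the number of vertices of $\Gamma$ together with a suitable induction on the ``distance'' between $v$ and $w$ in the dual tree, reducing everything to Cramer's rule applied to the intersection matrix. First I would recall that by Proposition \ref{prop:invmat} the matrix $(E_u^*\cdot E_v^*)_{(u,v)\in\cV^2}$ is the inverse of the intersection matrix $(E_u\cdot E_v)_{(u,v)\in\cV^2}$; consequently, by Cramer's rule,
\[
 -(E_v^*\cdot E_w^*)\cdot\det(\Gamma) = (-1)^{?}\cdot(\text{cofactor of the }(v,w)\text{ entry of }-I),
\]
so $-(E_v^*\cdot E_w^*)\cdot\det(S)$ is, up to sign, the minor of $-I$ obtained by deleting the row indexed by $v$ and the column indexed by $w$. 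The whole point is then to identify this cofactor combinatorially. Since $\Gamma$ is a tree, deleting the row of $v$ and the column of $w$ from the (symmetric) matrix $-I$ produces a matrix whose determinant, by a standard expansion along the geodesic $[vw]$, factors as a product of off-diagonal $\pm 1$ entries along $[vw]$ and the determinants of the connected pieces of $\Gamma$ that hang off the geodesic. That product is, by inspection, exactly the determinant product $p(v,w)$ of Definition \ref{def:detrees}.

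The key steps, in order, are: (1) express $-(E_v^*\cdot E_w^*)\cdot\det(\Gamma)$ as a cofactor of $-I$ via Cramer's rule and Proposition \ref{prop:invmat}; (2) handle the diagonal case $v=w$ first, where the cofactor is literally the determinant of $\Gamma$ with the vertex $v$ removed, i.e. $\prod_{e\ni v}\det(\Gamma_{v,e}) = p(v,v)$ (this already uses the tree structure to split $\Gamma\setminus\{v\}$ into the subtrees $\Gamma_{v,e}$); (3) for $v\neq w$, with $v'$ the neighbour of $v$ on the geodesic $[vw]$, expand the cofactor along the column indexed by $v'$: the only surviving term comes from the entry $(-E_v\cdot E_{v'}) = -1$ (all other entries in that column correspond either to neighbours of $v'$ off the geodesic, which after further expansion contribute $\det$-factors, or give zero); (4) conclude by induction that what remains is $p(v',w)$ times the edge determinants at $v$ in the directions not on $[vw]$, i.e. $p(v,w)$; alternatively, and more cleanly, verify that both sides satisfy the multiplicative ``three-point'' relation of Proposition \ref{lem:equalfund} — indeed $-(E_u^*\cdot E_v^*)(-E_v^*\cdot E_w^*) = (-E_v^*\cdot E_v^*)(-E_u^*\cdot E_w^*)$ when $v\in[uw]$ is Proposition \ref{lem:equalfund} applied to the quantities $-E^*\cdot E^*$, which reduces the general case to the case of \emph{adjacent} $v,w$.

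Thus a cleaner route would be: (a) prove the formula when $v,w$ are \emph{adjacent vertices} (or $v=w$) by direct cofactor expansion as above; (b) observe that both $p(v,w)/\det(S)$ and $-(E_v^*\cdot E_w^*)$ satisfy the same multiplicative decomposition along geodesics — the former by Proposition \ref{lem:equalfund}, the latter by the corresponding identity for the $E^*$'s which itself follows from Proposition \ref{lem:equalfund} once the base case is established, or can be proved independently by a block-matrix computation — and (c) deduce the general case by writing $[vw]$ as a concatenation of edges and multiplying. The main obstacle I anticipate is Step (3)/(a): correctly bookkeeping the signs in the cofactor expansion of the non-symmetric minor (row $v$, column $w$ deleted) and checking that the off-geodesic entries really do organize into the edge determinants $\det_{t,e}(\Gamma)$ for interior vertices $t$ of $[vw]$; this is where the tree hypothesis is essential, since in a graph with cycles the minor would not factor. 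Everything else is bookkeeping with Cramer's rule and the recursive structure of tree determinants recorded in Proposition \ref{prop:duchon}.
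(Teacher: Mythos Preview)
The paper does not give its own proof of this proposition: it cites Eisenbud--Neumann \cite[Lemma 20.2]{EN 85} and Neumann--Wahl \cite[Thm.~12.2]{NW 05}, remarking only that their arguments use nothing beyond the tree structure of the dual graph. Your first approach --- Cramer's rule plus cofactor expansion along the geodesic $[vw]$ --- is the standard argument and is correct. The identification in step~(1) is immediate from Proposition~\ref{prop:invmat}: if $M=(-E_u\cdot E_v)$ then $M^{-1}=(-E_u^*\cdot E_v^*)$, so $-(E_v^*\cdot E_w^*)\det(S)$ is the $(v,w)$-cofactor of $M$. Step~(2) is exactly right: removing row and column $v$ leaves a block-diagonal matrix whose blocks are the intersection matrices of the $\Gamma_{v,e}$. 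Steps~(3)--(4) work as you describe once you order the vertices with the geodesic $v=v_0,v_1,\dots,v_k=w$ first; each edge on the geodesic contributes one factor $-1$ from the off-diagonal entry of $M$ and one factor $-1$ from the cofactor sign, and the remaining block is the product of the $\det(\Gamma_{t,e})$ for $t\in[vw]$ and $e\not\subset[vw]$, which is $p(v,w)$.

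One warning about your ``cleaner route'': step~(b) is circular as written. The identity $(-E_u^*\cdot E_v^*)(-E_v^*\cdot E_w^*)=(-E_v^*\cdot E_v^*)(-E_u^*\cdot E_w^*)$ for $v\in[uw]$ is, in this paper, \emph{deduced from} the present proposition combined with Proposition~\ref{lem:equalfund} (see the remark immediately following the statement). You cannot invoke it here without the independent block-matrix argument you allude to but do not give. That argument can be made, but it is no shorter than the direct expansion, so stick with your steps~(1)--(4).
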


\begin{remark}  \label{rem:jonem}
    Using Proposition \ref{prop:equlfund}, formula (\ref{eq:prod1}) becomes the equality:  
    \begin{equation}  \label{eq:caseeq}
         (- \check{E}_u \cdot \check{E}_v) (- \check{E}_v \cdot \check{E}_w) = 
          (- \check{E}_v \cdot \check{E}_v) (- \check{E}_u \cdot \check{E}_w), 
          \mbox{ for any } v \in [uw]
    \end{equation}
    of the introduction. After having seen a previous version of this paper, Jonsson told us 
    that he had proved this equality for dual trees of compactifying divisors of $\C^2$ 
    and N\'emethi told us that the equality could also be proved using Lemma 4.0.1 
    from his paper \cite{BN 10} written with Braun.  
\end{remark}

\begin{remark}  \label{rem:pyth}
    Let us consider the real vector space $\Lambda_{\R}=\check{\Lambda}_{\R}$ 
    endowed with the 
    opposite of the intersection form. It is a Euclidean vector space. 
    Measuring the angles with this Euclidean metric, the equality (\ref{eq:caseeq}) becomes: 
       $$ \cos (\angle \check{E}_u \check{E}_v) \cdot \cos (\angle \check{E}_v \check{E}_w) 
                      =  
                \cos (\angle \check{E}_u \check{E}_w), \mbox{ for any } v \in [uw].$$
     By the spherical pythagorean theorem, this means that the spherical 
     triangle whose vertices are the unit vectors determined by $ \check{E}_u, 
      \check{E}_v,  \check{E}_w$ is rectangle at the vertex corresponding to $ \check{E}_v$. 
      This interpretation was noticed by the third author in the announcement 
      \cite{P 16} of the main results of this paper. 
\end{remark}

\begin{corollary} \label{cor:ineqd}
   For any $u,v \in \cV$, one has the inequality:
            $$  p(u,u) \cdot p(v,v) \geq p^2(u,v), $$
   with equality if and only if $u=v$. 
\end{corollary}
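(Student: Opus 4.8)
The plan is to reduce the inequality to the Cauchy--Schwarz inequality for the (negative definite) intersection form, using the identification provided by Proposition \ref{prop:equlfund}.

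First I would rewrite both sides of the asserted inequality in terms of the dual divisors. By Proposition \ref{prop:equlfund} one has $p(v,w) = -(E_v^* \cdot E_w^*)\cdot \det(S)$ for all $v,w \in \cV$, and $\det(S) \in \N^*$. Hence, dividing by $\det(S)^2 > 0$, the inequality $p(u,u)\cdot p(v,v) \geq p^2(u,v)$ is equivalent to
\[
  (E_u^* \cdot E_u^*)(E_v^* \cdot E_v^*) \;\geq\; (E_u^* \cdot E_v^*)^2 ,
\]
and the equality case of the corollary corresponds exactly to the equality case here. Note that both $E_u^* \cdot E_u^*$ and $E_v^* \cdot E_v^*$ are negative (Proposition \ref{prop:stricpos}), so the left-hand side equals $(-E_u^* \cdot E_u^*)(-E_v^* \cdot E_v^*)$, while the right-hand side is $(-E_u^* \cdot E_v^*)^2$.

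Next I would invoke Theorem \ref{prop:negdef}: the intersection form $I$ is negative definite, so $-I$ is a positive definite bilinear form on $\Lambda^*_{\R}$. Applying the Cauchy--Schwarz inequality for $-I$ to the two vectors $E_u^*$ and $E_v^*$ gives $(-E_u^* \cdot E_v^*)^2 \leq (-E_u^* \cdot E_u^*)(-E_v^* \cdot E_v^*)$, which is precisely the displayed inequality.

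Finally, for the characterization of equality: equality in Cauchy--Schwarz for a positive definite form holds if and only if $E_u^*$ and $E_v^*$ are linearly dependent. Since $(E_w^*)_{w \in \cV}$ is a basis of $\Lambda^*$ by Definition \ref{def:Lipbasis}, the divisors $E_u^*$ and $E_v^*$ are linearly dependent if and only if $u = v$; and conversely if $u = v$ then trivially $p(u,v) = p(u,u)$ and equality holds. I do not expect any genuine obstacle here: once Proposition \ref{prop:equlfund} is available the statement is essentially Cauchy--Schwarz, and the only points requiring a moment's care are the bookkeeping of signs and the use of the linear independence of distinct Lipman basis vectors for the equality case.
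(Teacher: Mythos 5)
Your proof is correct and follows essentially the same route as the paper: translate the inequality via Proposition \ref{prop:equlfund} into $(-(E_u^*\cdot E_u^*))(-(E_v^*\cdot E_v^*)) \geq (-(E_u^*\cdot E_v^*))^2$ and apply Cauchy--Schwarz for the positive definite form $-I$, with equality exactly when $E_u^*$ and $E_v^*$ are proportional, i.e.\ when $u=v$. No issues.
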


 \begin{proof}
        Using Proposition \ref{prop:equlfund}, the desired inequality may 
        be rewritten as:
            $$(-\check{E}_u \cdot \check{E}_u) \cdot (-\check{E}_v\cdot \check{E}_v) 
                \geq (-\check{E}_u\cdot \check{E}_v)^2$$
         which is simply the Cauchy-Schwartz inequality implied by the positive 
         definiteness of the intersection form $- I$ on $\check{\Lambda}_{\R}$. One has equality 
         if and only if the vectors $\check{E}_u$ and $\check{E}_v$ are proportional, which is 
         the case if and only if $u=v$. 
 \end{proof}
 
 Let us introduce a new function, which is well-defined thanks to Corollary \ref{cor:ineqd}: 
 
 \begin{definition} \label{def:distprod}
    The {\bf determinant distance} $d: \cV \times \cV \to \R_+$ is defined by:
       $$d(u,v) := - \log \frac{p(u,v)}{\sqrt{p(u,u) \cdot p(v,v)}}, \mbox{ for all }  u,v \in \cV.$$
 \end{definition}

 \begin{remark}  \label{rem:gendet}
     In the paper \cite{GR 17}, Gignac and Ruggiero defined the 
     \emph{angular distance} for any normal surface singularity, as:
         $$\rho(u,v) := - \log \frac{(-\check{E}_u \cdot \check{E}_v)^2}
         {(-\check{E}_u \cdot \check{E}_u) \cdot (-\check{E}_v \cdot \check{E}_v)}, 
         \mbox{ for all }  u,v \in \cV.$$
         By Proposition \ref{prop:equlfund}, on arborescent singularities one gets 
         $\rho =  2d$. In the sequel \cite{GGPR 17} to the present paper, concerning 
         arbitrary normal surface singularities, we also work with the angular distance 
         $\rho$ as a replacement of what we call here the \emph{determinant distance}. 
 \end{remark}

 Reformulated in terms of the determinant distance, equation (\ref{eq:prod2}) 
 provides:

 \begin{proposition}  \label{prop:equalcos}  
    For any three vertices $u,v,w \in \cV$ such that $v \in [uw]$, one has:
      $$d(u,v) + d(v,w) = d(u,w).$$
     Moreover $d$ is symmetric and $d(u,v) \geq 0$, with equality if and only 
     if $u=v$. That is, the determinant distance $d$ is an additive distance on the tree $\Gamma$, 
     in the sense of Definition \ref{def:addmet}.
\end{proposition}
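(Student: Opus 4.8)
The plan is to deduce the statement formally from two results already in hand: the multiplicative identity (\ref{eq:prod2}) of Proposition \ref{lem:equalfund} and the Cauchy--Schwarz-type inequality of Corollary \ref{cor:ineqd}. The substantive work is already behind us, so what remains is essentially bookkeeping: checking that the logarithm in Definition \ref{def:distprod} is applied to a strictly positive number, reading off symmetry, reading off non-negativity and the equality case, and applying $-\log$ to (\ref{eq:prod2}).

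First I would verify that $d$ is well defined. By Definition \ref{def:detrees} one has $p(u,v)\in\N^*$ for all $u,v\in\cV$ (and in particular the diagonal values $p(u,u),p(v,v)$ are positive), so the argument $p(u,v)/\sqrt{p(u,u)\,p(v,v)}$ of the logarithm in Definition \ref{def:distprod} is a strictly positive real number, and $d(u,v)\in\R$ makes sense. Symmetry of $d$ is then immediate from the symmetry $p(u,v)=p(w,v)$, i.e. $p(u,v)=p(v,u)$, noted right after Definition \ref{def:detrees}.

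Next, for non-negativity and the equality clause, I would invoke Corollary \ref{cor:ineqd}: from $p(u,u)\,p(v,v)\geq p^2(u,v)$ we get $0 < p(u,v)/\sqrt{p(u,u)\,p(v,v)} \leq 1$, whence $d(u,v) = -\log\bigl(p(u,v)/\sqrt{p(u,u)\,p(v,v)}\bigr)\geq 0$; and by the equality statement of Corollary \ref{cor:ineqd}, this quotient equals $1$ — equivalently $d(u,v)=0$ — if and only if $u=v$. For the additivity along geodesics, suppose $v\in[uw]$ and apply $-\log$ to both sides of the identity (\ref{eq:prod2}) of Proposition \ref{lem:equalfund}: since $\log$ of a product is the sum of the logs, this gives at once $d(u,v)+d(v,w)=d(u,w)$. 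Together with the previous point, this verifies the two axioms of Definition \ref{def:addmet}, so $d$ is an additive distance on the tree $\Gamma$.

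As for the main obstacle: at this stage there is essentially none, since the real content has been proved in Propositions \ref{prop:equlfund} and \ref{lem:equalfund} (the determinantal identity $p(u,v)\,p(v,w)=p(v,v)\,p(u,w)$ and its normalized reformulation) and in Corollary \ref{cor:ineqd} (the inequality coming from negative-definiteness of the intersection form on $\Lambda^*$). The only care needed is to ensure the logarithm is taken of strictly positive quantities and that the degenerate configurations — $u=v$, or $u,v,w$ not pairwise distinct — are automatically handled by the same formulas, which they are because all the numbers $p(\cdot,\cdot)$ are positive integers and a geodesic $[uw]$ always contains its endpoints.
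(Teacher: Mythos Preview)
Your argument is correct and follows exactly the approach intended in the paper: the additivity is obtained by applying $-\log$ to identity (\ref{eq:prod2}), and the non-negativity with its equality case comes from Corollary \ref{cor:ineqd}. You have simply spelled out the details (well-definedness, symmetry, degenerate cases) that the paper leaves implicit; note the small typo where you wrote $p(u,v)=p(w,v)$ before correcting to $p(u,v)=p(v,u)$.
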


Therefore, Proposition \ref{prop:chardist} implies that:

\begin{corollary}  \label{prop:ineqmin}
    Let $S$ be an arborescent singularity. Consider any good resolution of it. 
   Then, for any vertices $u,v,w,l \in \cV$, one has:
      \begin{equation}  \label{eq:addineq}
            d(l,u) + d(v,w) \leq 
            \max \{ d(l,v) + d(u,w), d(l,w) + d(u,v)  \}. 
       \end{equation}
    Equivalently:
         \begin{equation}  \label{eq:multineq}
            p(l,u) \cdot p(v,w) \geq 
            \min \{ p(l,v) \cdot p(u,w), p(l,w) \cdot p(u,v)  \}. 
       \end{equation}       
\end{corollary}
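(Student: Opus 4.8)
The statement is an immediate consequence of what has already been assembled. By Proposition \ref{prop:equalcos}, the determinant distance $d$ is an additive distance on the dual tree $\Gamma$ in the sense of Definition \ref{def:addmet}; in particular it is a genuine metric on $\cV$. The plan is therefore simply to invoke Buneman's characterization, Proposition \ref{prop:chardist}: since $d$ arises from an additive distance function on the combinatorial tree $\Gamma$ with vertex set $\cV$, it satisfies the four points condition, which is precisely inequality \eqref{eq:addineq}. No real obstacle remains at this stage; if one wants, one may instead quote Proposition \ref{prop:moreprec}, which in fact describes exactly when equality is attained in \eqref{eq:addineq} in terms of the combinatorics of the convex hull $[luvw]$ inside the geometric realization of $\Gamma$.

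\textbf{Passage to the multiplicative form.} It remains to check that \eqref{eq:addineq} and \eqref{eq:multineq} are equivalent. Write $Q := \sqrt{p(l,l)\, p(u,u)\, p(v,v)\, p(w,w)}$, which is a fixed positive number depending only on $l,u,v,w$. Unwinding Definition \ref{def:distprod}, one has
\[
 d(l,u) + d(v,w) = -\log\!\left( \frac{p(l,u)\, p(v,w)}{Q} \right),
\]
and likewise $d(l,v) + d(u,w) = -\log\!\left( p(l,v)\, p(u,w) / Q \right)$ and $d(l,w) + d(u,v) = -\log\!\left( p(l,w)\, p(u,v) / Q \right)$, the normalizing factor $Q$ being the same in all three expressions. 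Applying the decreasing function $x \mapsto -\log x$, inequality \eqref{eq:addineq} for the left-hand sides translates into the reversed inequality for the arguments, with $\max$ turned into $\min$; after cancelling the common factor $Q^{-1}$ one obtains exactly \eqref{eq:multineq}. Conversely the same computation run backwards derives \eqref{eq:addineq} from \eqref{eq:multineq}, so the two are equivalent and the corollary follows.
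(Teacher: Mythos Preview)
Your proof is correct and follows exactly the paper's approach: the corollary is stated as an immediate consequence of Proposition~\ref{prop:chardist} applied to the additive distance $d$ furnished by Proposition~\ref{prop:equalcos}. Your explicit verification of the equivalence between \eqref{eq:addineq} and \eqref{eq:multineq} via the common normalizing factor $Q$ is a helpful addition that the paper leaves implicit.
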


\begin{remark} \label{rem:cosine}
We could have worked instead with the function $e^{-d}$, which is a \emph{multiplicative 
distance function}, that is, a distance with values in the cancellative abelian 
monoid $( (0, + \infty), \cdot)$, in the sense of Bandelt and Steel \cite{BS 95}. We prefer 
to work instead with a classical additive distance, in order not to complicate the understanding  
of the reader who is not accustomed with this more general setting, which is generalized 
even more by B\"ocker and Dress \cite{BD 98}. Note also that, 
 as a consequence of Proposition \ref{prop:equlfund}, 
 one has:
      $$ e^{- d(u,v)} = 
          \frac{-(\check{E}_u \cdot \check{E}_v)}{\sqrt{ ( -(\check{E}_u \cdot \check{E}_u)) 
              \cdot (-(\check{E}_v\cdot \check{E}_v))} }, $$
 which is the cosine of the angle formed by the vectors $\check{E}_u$ 
 and $\check{E}_v$ with respect to the euclidean scalar product $-I$ on $\check{\Lambda}_{\R}$. 
 \end{remark}

Note the following consequence of Proposition \ref{prop:moreprec}, which refines  
inequality (\ref{eq:multineq}) from Corollary \ref{prop:ineqmin}:

\begin{proposition} \label{prop:refine} 
    For any $u,v,w,l \in \cV$, one has the equivalence:
      $$ p(l,v) \cdot p(u,w) \leq p(l,u) \cdot p(v,w)  \:  \Longleftrightarrow \:   [lv] \cap [uw] 
         \neq \emptyset$$
    with equality if and only if one has moreover $[lu] \cap [vw] \neq \emptyset$. 
\end{proposition}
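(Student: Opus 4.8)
The plan is to deduce Proposition \ref{prop:refine} directly from Proposition \ref{prop:moreprec} applied to the additive distance $d$ on the dual tree $\Gamma$, and then translate the statement between its additive and multiplicative forms using the definition of $d$.

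\textbf{Step 1: Reduce to the additive statement.} By Proposition \ref{prop:equalcos}, $d$ is an additive distance on the combinatorial tree $\Gamma$ (with vertex set $\cV$), whose geometric realization is the one inside which Proposition \ref{prop:moreprec} takes its convex hulls. Applying Proposition \ref{prop:moreprec} to $d$ and to the four vertices $l,u,v,w \in \cV$, we get the equivalence
\[
d(l,v) + d(u,w) \geq d(l,u) + d(v,w) \:  \Longleftrightarrow \:   [lv] \cap [uw] \neq \emptyset,
\]
with equality on the left precisely when moreover $[lu] \cap [vw] \neq \emptyset$.

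\textbf{Step 2: Translate to the multiplicative form.} By Definition \ref{def:distprod}, for all $x,y \in \cV$ one has
\[
d(x,y) = -\log \frac{p(x,y)}{\sqrt{p(x,x)\cdot p(y,y)}}.
\]
Hence $d(l,v)+d(u,w) \geq d(l,u)+d(v,w)$ is equivalent, after multiplying out and exponentiating, to
\[
\frac{p(l,v)\, p(u,w)}{\sqrt{p(l,l)p(v,v)p(u,u)p(w,w)}} \leq \frac{p(l,u)\, p(v,w)}{\sqrt{p(l,l)p(u,u)p(v,v)p(w,w)}},
\]
and the common square-root factor (which is strictly positive by Corollary \ref{cor:ineqd}, since each $p(x,x)>0$) cancels, leaving exactly $p(l,v)\cdot p(u,w) \leq p(l,u)\cdot p(v,w)$. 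Since $t \mapsto e^{-t}$ is a strictly decreasing bijection of $\R$ onto $(0,\infty)$, the inequality $d(l,v)+d(u,w)\geq d(l,u)+d(v,w)$ holds with equality if and only if the corresponding $p$-inequality holds with equality. Combining this with the equivalence from Step 1 yields precisely the asserted statement, including the characterization of the equality case via $[lu]\cap[vw]\neq\emptyset$.

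\textbf{Main obstacle.} There is essentially no obstacle here: the content of the proposition is already carried by Proposition \ref{prop:moreprec} (Buneman's refined four-point inequality for additive tree metrics) together with the additivity established in Proposition \ref{prop:equalcos}, and the only thing to check is the bookkeeping of the logarithm/exponential translation, in particular that the cancellation of the symmetric normalizing factor $\sqrt{p(l,l)p(u,u)p(v,v)p(w,w)}$ reverses the inequality (because of the minus sign in $-\log$) and preserves equality cases. The one point worth stating carefully is that the positivity of all $p(x,x)$, guaranteed by Corollary \ref{cor:ineqd} (or by Proposition \ref{prop:equlfund} together with the negative definiteness of the intersection form), is what makes the normalizing factor nonzero so that the cancellation is legitimate.
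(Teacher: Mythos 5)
Your proposal is correct and follows essentially the same route as the paper: Proposition \ref{prop:refine} is presented there precisely as a consequence of Proposition \ref{prop:moreprec} applied to the additive determinant distance $d$ of Definition \ref{def:distprod} (additivity coming from Proposition \ref{prop:equalcos}), with the multiplicative form obtained by exponentiating and cancelling the symmetric normalizing factor. Your careful handling of the inequality reversal under $-\log$ and of the equality case matches the intended argument.
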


This proposition or, alternatively, the weaker statement of Corollary \ref{prop:ineqmin}  
will imply in turn our main results   announced in the introduction (that is, 
Corollary \ref{cor:upbound}, Theorem \ref{thm:ultram} and Theorem \ref{thm:ordsemival}).

\begin{example} \label{ex:4p} 
   Let us consider a germ of arborescent singularity $S$ which 
   has a good resolution whose dual graph is indicated in Figure \ref{fig:edge-det}. 
   All self-intersections are equal to $-2$, with the exception of $E_f\cdot E_f = -3$. 
   The genera are arbitrary.  
The edge determinants at each vertex are indicated in Figure \ref{fig:edge-det} near the corresponding edge. For instance, $\det_{a, [ab]}(\Gamma)$ is the determinant 
of the subtree $\Gamma_{a, [ab]}$, which is the full subtree with vertices $e,b,f$. 
   This allows to compute the determinant product of any pair of vertices. For instance:   
        \begin{center}
           $  p(a,b) = \det_{a, [ac]}(\Gamma) \cdot  \det_{a, [ad]}(\Gamma)  \cdot
              \det_{b, [be]}(\Gamma) \cdot  \det_{b, [bf]}(\Gamma) = 2 \cdot 2 \cdot 2 \cdot 3 = 24. $
        \end{center}
    The matrix of determinant products $(p(u,v))_{u,v}$ is the following one, for the 
    ordering $a < \cdots < f$ of the vertices of $\Gamma$:  
\[
\begin{pmatrix}
28 & 24 &  14 & 14 & 12  & 8\\
24 & 24 & 12 & 12 &  12  &  8\\
14 & 12  &  9  & 7 &  6  & 4\\
14 &  12 &  7 &  9 &   6  & 4\\
12 &  12  & 6 &   6 &  8  & 4\\
8 &  8&   4  & 4 &  4 &   4
\end{pmatrix}. 
\]
Moreover, we have $\det(S)=4$, as may be computed easily using Proposition \ref{prop:duchon}. 
\end{example}

\begin{figure}
\centering
\begin{tikzpicture}

    \draw (0,0) -- (-1,1) ; 
    \draw (0,0) -- (-1,-1) ; 
    \draw (0,0) -- (2,0) ;
    \draw (2,0) -- (3,1) ; 
    \draw (2,0) -- (3,-1) ;
 \draw (-1,1) node[left]{$c$} ; 
     \draw (-1,-1) node[left]{$d$} ; 
     \draw (3,1) node[right]{$e$} ;
     \draw (3,-1) node[right]{$f$} ; 
     \draw (-0.1,0) node[left]{$a$} ;
     \draw (2.2,0) node[right]{$b$} ; 
     
\draw (-0.6,0.7) node[above]{$9$} ;
\draw (-0.1,0.1) node[above]{$2$} ;   
\draw (-0.1,-0.1) node[below]{$2$} ; 
\draw (-0.9,-0.9) node[right]{$9$} ; 
\draw (0.1,0.2) node[right]{$7$} ; 
\draw (1.9,0.2) node[left]{$4$} ;       
\draw (2.1,0.1) node[above]{$2$} ;   
\draw (2.1,-0.1) node[below]{$3$} ;   
\draw (2.9,0.9) node[left]{$8$} ;  
\draw (2.9,-0.9) node[left]{$4$} ;  

\node[draw,circle,inner sep=1pt,fill=black] at (0,0) {};

\node[draw,circle,inner sep=1pt,fill=black] at (-1,1) {};

\node[draw,circle,inner sep=1pt,fill=black] at (3,1) {};

\node[draw,circle,inner sep=1pt,fill=black] at (3,-1) {};

\node[draw,circle,inner sep=1pt,fill=black] at (-1,-1) {};

\node[draw,circle,inner sep=1pt,fill=black] at (2,0) {};

    \end{tikzpicture}  
    \caption{The edge determinants in Example \ref{ex:4p}. } 
    \label{fig:edge-det}
    \end{figure}
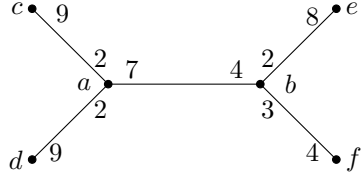
    
       \begin{remark}  \label{compnextart}
            Specialize Proposition \ref{prop:refine} by putting  $v=w$.  
             Then one has automatically  
           $[lv] \cap [uw]  \neq \emptyset$, which implies:
               $$ p(l,v) \cdot p(u,v) \leq p(l,u) \cdot p(v,v)$$
         for any $l, u, v \in \cV$. This inequality may be rewritten as:
         \begin{equation} \label{eq: GR}
               (- \check{E}_l \cdot \check{E}_v) (- \check{E}_u \cdot \check{E}_v) \leq 
          (- \check{E}_l \cdot \check{E}_u) (- \check{E}_v \cdot \check{E}_v),
          \end{equation}
        with equality if and only if $v \in [lu]$. 
       In \cite{GR 17}, Gignac and Ruggiero proved that 
        the inequality (\ref{eq: GR}) is valid for \emph{all} normal surface singularities, 
        and moreover that the equality in (\ref{eq: GR}) holds if and only if 
        \emph{$v$ separates $u$ from $w$ in the dual graph}. 
        This condition is a generalization of the condition $v \in [lu]$ when $S$ is arborescent. 
        Their result is the central ingredient of the sequel \cite{GGPR 17} of the present paper, 
        written in collaboration with Ruggiero.  
    \end{remark}

\medskip
\subsection{The ultrametric associated to a branch on an arborescent singularity}
$\:$ 
\medskip

The main results of this subsection are our generalization of P\l oski's 
theorem to arbitrary arborescent singularities (Theorem \ref{thm:ultram}) and the interpretation 
of the associated rooted trees as convex hulls in dual graphs of resolutions 
(Theorem \ref{thm:topint}).   

\medskip

Recall the notation $U_L$ explained in the introduction: 

\begin{notation} \label{not:ultram}
   Let $S$ be a normal surface singularity and let $L$ be a 
  fixed branch on it. If $A, B$ denote two branches on $S$ different from $L$, define: 
    $$ U_L(A, B):= \left\{ \begin{array}{lcl}
              \dfrac{(L \cdot A) \: (L \cdot B)}{A \cdot B}, &  \mbox{ if } &  A \neq B \\
              0,  & \mbox{ if } &  A = B . 
                    \end{array} \right. $$
\end{notation}

The following proposition explains several ways to compute or to think about $U_L$ 
in the case of \emph{arborescent} singularities 
(recall that the notation $u(A)$ was introduced in Definition \ref{def:embres}): 

\begin{proposition}  \label{prop:ultramform}
   Let $S$ be an \emph{arborescent} singularity and let $L$ be a 
  fixed branch on it. Assume moreover that  $A, B$ are two distinct branches on $S$ 
  different from $L$ and that $\tilde{S}$ is an embedded resolution of $A + B +L$, with 
  dual tree $\Gamma$. 
   Denote $a = u(A), b= u(B), l= u(L)$. Then:
    \begin{enumerate}
         \item[1.] \label{it1} 
              $ U_L(A,B)= \det(S)^{-1}  \cdot \dfrac{p(l, a) \cdot p(l, b)}{p(a, b)}.$
         \item[2.] \label{it2} 
            $ U_L(A,B)= 
                 \det(S)^{-1}  \cdot \dfrac{p^2(l, a \wedge_l b)}{p(a \wedge_l b , a \wedge_l b)}.$ 
         \item[3.] \label{it3} 
             $ U_L(A,B)=    \det(S)^{-1}  \cdot {p(l,l)} \cdot e^{- 2 h_l(a \wedge_l b)}$, 
                where $h_l$ is the remoteness function on $\Gamma$ associated to the 
                  determinant distance $d$ and the root $l$, as explained in Definition \ref{def:heightbis}.
    \end{enumerate}
\end{proposition}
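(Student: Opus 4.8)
The plan is to derive all three formulas from the definition of $U_L$ together with the determinantal identity of Proposition \ref{prop:equlfund} and the additivity results of Section \ref{sec:treeultram}.

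First I would establish formula \eqref{it1}. By Corollary \ref{cor:invint}, since $\tilde{S}$ is an embedded resolution of $A+B+L$ and $A,B,L$ are pairwise distinct branches, one has $L\cdot A = -E_l^*\cdot E_a^*$, $L\cdot B = -E_l^*\cdot E_b^*$ and $A\cdot B = -E_a^*\cdot E_b^*$, all strictly positive. Substituting into the definition of $U_L(A,B)$ gives
\[
U_L(A,B) = \frac{(-E_l^*\cdot E_a^*)(-E_l^*\cdot E_b^*)}{-E_a^*\cdot E_b^*}.
\]
Now apply Proposition \ref{prop:equlfund}, which says $-(E_v^*\cdot E_w^*) = p(v,w)\cdot\det(S)^{-1}$ for any $v,w\in\cV$. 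The two factors of $\det(S)^{-1}$ in the numerator and the single one in the denominator combine to a net factor $\det(S)^{-1}$, yielding exactly
\[
U_L(A,B) = \det(S)^{-1}\cdot\frac{p(l,a)\cdot p(l,b)}{p(a,b)},
\]
which is \eqref{it1}.

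Next I would obtain \eqref{it2} from \eqref{it1}. Let $c := a\wedge_l b$ be the infimum of $a$ and $b$ in the dual tree $\Gamma$ rooted at $l$ (Notation \ref{not:notinf}); thus $c$ lies on the geodesic $[la]$, on $[lb]$, and on $[ab]$, and in particular $c\in[lb]$, $c\in[la]$, and $c\in[ab]$. Applying the multiplicative identity \eqref{eq:prod1} of Proposition \ref{lem:equalfund} to the triples $(l,c,a)$, $(l,c,b)$ and $(a,c,b)$ gives
\[
p(l,a) = \frac{p(l,c)\,p(c,a)}{p(c,c)},\qquad
p(l,b) = \frac{p(l,c)\,p(c,b)}{p(c,c)},\qquad
p(a,b) = \frac{p(a,c)\,p(c,b)}{p(c,c)}.
\]
Forming the quotient $p(l,a)p(l,b)/p(a,b)$, the factors $p(c,a)$ and $p(c,b)$ cancel, one factor of $p(c,c)$ cancels as well, and what remains is $p(l,c)^2/p(c,c)$. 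Combined with \eqref{it1} this is precisely \eqref{it2}.

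Finally, for \eqref{it3} I would rewrite \eqref{it2} in terms of the determinant distance $d$ of Definition \ref{def:distprod} and the associated height function $h_l = h_{d,l}$ of Definition \ref{def:heightbis}. By definition $d(l,c) = -\log\bigl(p(l,c)/\sqrt{p(l,l)p(c,c)}\bigr)$, so $p(l,c) = \sqrt{p(l,l)p(c,c)}\,e^{-d(l,c)}$, hence
\[
\frac{p(l,c)^2}{p(c,c)} = \frac{p(l,l)p(c,c)e^{-2d(l,c)}}{p(c,c)} = p(l,l)\,e^{-2d(l,c)}.
\]
Since $h_l(c) = d(l,c)$ by Definition \ref{def:heightbis}, substituting into \eqref{it2} gives
\[
U_L(A,B) = \det(S)^{-1}\cdot p(l,l)\cdot e^{-2h_l(a\wedge_l b)},
\]
which is \eqref{it3}. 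I do not anticipate a serious obstacle here: the only points requiring care are the bookkeeping of the $\det(S)^{-1}$ factors and, in \eqref{it2}, the verification that $c=a\wedge_l b$ genuinely lies on each of the three geodesics $[la]$, $[lb]$, $[ab]$ so that Proposition \ref{lem:equalfund} applies — the first two are immediate from the definition of the infimum, and $c\in[ab]$ follows because in a tree the geodesic $[ab]$ is $[ac]\cup[cb]$ precisely when $c=a\wedge_l b$.
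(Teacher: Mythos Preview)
Your proof is correct and follows essentially the same approach as the paper: the paper also derives \eqref{it1} from Corollary~\ref{cor:invint} and Proposition~\ref{prop:equlfund}, obtains \eqref{it2} from \eqref{it1} by applying the multiplicative identity \eqref{eq:prod1} to the same three triples $(l,c,a)$, $(l,c,b)$, $(a,c,b)$ with $c=a\wedge_l b$, and rewrites \eqref{it2} as \eqref{it3} via Definition~\ref{def:distprod} and $h_l(c)=d(l,c)$. Your explicit verification that $c$ lies on all three geodesics is a welcome addition that the paper leaves implicit.
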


\begin{proof} 
{\bf Let us prove point 1}.  By Corollary \ref{cor:invint}, $A \cdot B = 
        - \check{E}_a \cdot \check{E}_b$. 
        By Proposition \ref{prop:equlfund}, 
        $- \check{E}_a \cdot \check{E}_b = \det(S)^{-1} \cdot p(a,b)$. 
        Using the analogous formulae in order to transform the intersection numbers 
        $L \cdot A$ and $L \cdot B$, we get the desired equality. 
    
\medskip
{\bf We prove now  point 2}.
    Given the equality of the previous point, the second 
      equality is equivalent to: 
         $$ p(l, a) \cdot p(l, b) \cdot p(a \wedge_l b , a \wedge_l b) = 
              p(a,b) \cdot p^2(l, a \wedge_l b).$$
       But this may be obtained by multiplying termwise the following special cases of 
       formula (\ref{eq:prod1}) stated in Proposition \ref{lem:equalfund} 
       (in which, for simplicity, we have denoted $c:= a \wedge_l b$):
         \[ \begin{array}{lcl}
                    p(l,a) \cdot p(c,c) & = &  p(l,c) \cdot p(c,a),\\
                    p(l,b) \cdot p(c,c)  & = & p(l,c) \cdot p(c,b),\\
                    p(a,c) \cdot p(c,b) & = & p(a,b) \cdot p(c,c).
              \end{array} \]
    
    \medskip
   {\bf Finally, let us prove point 3}. Using Definition \ref{def:distprod}, 
      the equality (\ref{it2}) may be rewritten as:
         $$U_L(A,B)= \dfrac{p(l,l)}{ \det(S)} 
                   \cdot \dfrac{p^2(l, a \wedge_l b)}{p(l,l) \cdot p(a \wedge_l b , a \wedge_l b)}  = 
                      \dfrac{p(l,l)}{ \det(S)} 
                   \cdot e^{- 2 d(l,  a \wedge_l b)}. $$
        But, by Definition \ref{def:heightbis}, $d(l, a \wedge_l b) = h_l(a \wedge_l b)$, 
        and the formula is proved. 
\end{proof}

The first equality stated in Proposition \ref{prop:ultramform} allows to compute the 
maximum of $U_L$:

\begin{corollary}  \label{cor:upbound}
    Whenever $L, A, B$ are three pairwise distinct branches on the arborescent  
    singularity $S$, one has: 
        $$U_L(A,B) \leq - \check{E}_l \cdot \check{E}_l,$$
    with equality if and only if $l \in [ab]$. 
\end{corollary}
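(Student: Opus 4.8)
The plan is to start from the explicit formula for $U_L$ given by part (\ref{it1}) of Proposition \ref{prop:ultramform}, namely
$$U_L(A,B) = \det(S)^{-1} \cdot \frac{p(l,a)\,p(l,b)}{p(a,b)},$$
where $a=u(A)$, $b=u(B)$, $l=u(L)$ are the attaching vertices on an embedded resolution of $A+B+L$. The goal is to show this is at most $-E_l^*\cdot E_l^* = \det(S)^{-1}\, p(l,l)$ (using Proposition \ref{prop:equlfund} again), i.e.\ it suffices to prove the purely combinatorial inequality
$$p(l,a)\,p(l,b) \leq p(l,l)\,p(a,b),$$
with equality exactly when $l\in[ab]$.

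First I would observe that this is precisely the content of inequality (\ref{eq:multineq}) from Corollary \ref{prop:ineqmin}, applied with the quadruple $(l,u,v,w) = (l,l,a,b)$: that corollary gives $p(l,l)\,p(a,b) \geq \min\{p(l,a)\,p(l,b),\, p(l,b)\,p(l,a)\}$, but the two terms in the minimum coincide, so we get exactly $p(l,l)\,p(a,b) \geq p(l,a)\,p(l,b)$. Dividing by $\det(S)\cdot p(a,b) > 0$ yields $U_L(A,B) \leq \det(S)^{-1}p(l,l) = -E_l^*\cdot E_l^*$, which is the desired bound. (Alternatively, one can invoke the Cauchy--Schwarz argument of Corollary \ref{cor:ineqd} in the form $-E_l^*\cdot E_l^*$ versus the normalized product, but routing through Corollary \ref{prop:ineqmin} is cleanest since the four-point structure is exactly what controls the equality case.)

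The main point then becomes the equality discussion, and here I would use Proposition \ref{prop:refine}, which sharpens Corollary \ref{prop:ineqmin}: for $u,v,w,l\in\cV$ one has $p(l,v)\,p(u,w) \leq p(l,u)\,p(v,w)$ iff $[lv]\cap[uw]\neq\emptyset$, with equality iff moreover $[lu]\cap[vw]\neq\emptyset$. Applying this with the substitution that makes the left side $p(l,a)\,p(l,b)$ and the right side $p(l,l)\,p(a,b)$ — concretely, take the roles so that the two geodesics to be intersected are $[la]$ (or $[lb]$) and $[lb]$, and the equality condition becomes $[ab]\cap[ll]\neq\emptyset$, i.e.\ $l\in[ab]$ — gives that equality holds precisely when $l$ lies on the geodesic joining $a$ and $b$ in the dual tree $\Gamma$. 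The care needed here is purely bookkeeping: matching the abstract four-point inequality of Proposition \ref{prop:refine} to our specific product, since one of the ``four points'' is repeated ($l$ appears twice). I expect that matching — verifying that the degenerate convex hull $[l\,l\,a\,b]$ puts us in exactly the equality case of Proposition \ref{prop:moreprec}/\ref{prop:refine} iff $l\in[ab]$ — to be the only subtle step; everything else is a direct substitution into results already established. Finally, since the vertices $a,b,l$ on the dual tree of an embedded resolution of $A+B+L$ are pairwise distinct (as $A,B,L$ are pairwise distinct branches) and all the determinant products involved are positive by Corollary \ref{cor:ineqd}, no degeneracy issues arise and the argument is complete.
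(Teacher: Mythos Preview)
Your proof is essentially the same as the paper's: reduce to the inequality $p(l,a)\,p(l,b) \leq p(l,l)\,p(a,b)$ via Proposition \ref{prop:ultramform} and Proposition \ref{prop:equlfund}, then invoke the four-point inequality (the paper goes directly to Proposition \ref{prop:refine} for both the inequality and the equality case, while you route the inequality through Corollary \ref{prop:ineqmin} first, but this is cosmetic). The identification of the equality condition $[ll]\cap[ab]\neq\emptyset$, i.e.\ $l\in[ab]$, is exactly the paper's argument.

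One small correction: your final sentence asserts that $a,b,l$ are pairwise distinct because $A,B,L$ are pairwise distinct branches. This is false. In an embedded resolution of $A+B+L$, the strict transforms of distinct branches meet the exceptional divisor at distinct \emph{points}, but nothing prevents them from meeting the same irreducible \emph{component} $E_u$; thus $u(A)=u(B)$ (or $u(A)=u(L)$, etc.) is perfectly possible. Fortunately this does not damage the argument: Propositions \ref{prop:moreprec} and \ref{prop:refine} are stated for arbitrary (possibly repeated) vertices, and the degenerate cases $a=b$ or $l=a$ are handled correctly by the same formula (e.g.\ when $a=b$ the inequality becomes $p(l,a)^2 \leq p(l,l)\,p(a,a)$, which is Corollary \ref{cor:ineqd}, with equality iff $l=a$, i.e.\ iff $l\in[aa]$). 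So simply drop the distinctness claim.
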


  \begin{proof}
      As $[la] \cap [lb] \neq \emptyset$, Proposition \ref{prop:refine} implies that:
        $p(l,a) \cdot p(l,b) \leq p(l,l) \cdot p(a,b). $
      Combining this with the first equality stated in Proposition \ref{prop:ultramform}, we get:
         $$U_L(A,B) \leq {p(l,l)} \cdot { \det(S)}^{-1} = - \check{E}_l \cdot \check{E}_l,$$
       where the last equality is a consequence of Proposition \ref{prop:equlfund}.    
       The fact that one has equality if an only if $l \in [ab]$ follows from Proposition 
       \ref{prop:refine}. 
  \end{proof}

Recall that $\cB(S)$ denotes the set of branches on $S$. The following 
is our generalization of P\l oski's theorem recalled at the beginning of the introduction: 

\begin{theorem}  \label{thm:ultram}
   For any four pairwise distinct branches $(L, A, B, C)$ on the arborescent singularity 
   $S$, one has:
      $$U_L(A,B)  \leq \max \{ U_L(A, C), U_L(B, C) \}.$$
      Therefore, the function $U_L$ is 
      an ultrametric on $\cB(S) \setminus \{ L \}$. 
\end{theorem}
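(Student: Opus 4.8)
The plan is to reduce the ultrametric inequality for $U_L$ to the multiplicative inequality \eqref{eq:multineq} of Corollary \ref{prop:ineqmin}, applied on a common embedded resolution. First I would fix an embedded resolution $\tilde S$ of the divisor $L+A+B+C$, with dual tree $\Gamma$ and vertex set $\cV$; all four branches have well-defined representative vertices $l = u(L)$, $a = u(A)$, $b = u(B)$, $c = u(C)$ in the larger vertex set $\cV \cup \{l,a,b,c\}$ of the dual graph of the total transform, and since $L,A,B,C$ are pairwise distinct these four vertices are pairwise distinct. By Proposition \ref{prop:ultramform}(\ref{it1}), for any two of these branches the value of $U_L$ is $\det(S)^{-1}$ times a ratio of determinant products; the crucial point is that $\det(S)$ and $p(l,l)$ are the same for all three quotients $U_L(A,B)$, $U_L(A,C)$, $U_L(B,C)$, because they depend only on $S$, $L$ and the chosen resolution, not on the pair of branches compared.

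Concretely, write $U_L(A,B) = \det(S)^{-1} \cdot p(l,a) p(l,b) / p(a,b)$ and similarly for the other two pairs. Dividing through by the common positive constant $\det(S)^{-1} p(l,a) p(l,b) p(l,c)$, the desired inequality
$$U_L(A,B) \leq \max\{U_L(A,C),\ U_L(B,C)\}$$
becomes
$$\frac{1}{p(a,b)} \leq \max\left\{ \frac{1}{p(a,c)\,p(l,b)/p(l,c)\ \cdot\ (p(l,c)/p(l,b))^{-1}},\ \dots \right\},$$
which after clearing denominators carefully is equivalent to
$$p(a,c)\, p(l,b)\ \cdot\ p(b,c)\, p(l,a) \ \leq\ p(a,b)\,p(l,c)\ \cdot\ \max\{p(b,c)\,p(l,a),\ p(a,c)\,p(l,b)\}.$$
Rather than manipulate this by hand, I would instead directly apply Corollary \ref{prop:ineqmin}: taking the four vertices $(l,a,b,c)$ and using inequality \eqref{eq:multineq} in the form
$$p(l,c)\cdot p(a,b) \ \geq\ \min\{\,p(l,a)\cdot p(b,c),\ p(l,b)\cdot p(a,c)\,\}$$
(this is \eqref{eq:multineq} with the relabelling $l \mapsto l$, $u \mapsto c$, $v \mapsto a$, $w \mapsto b$). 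Multiplying both sides by the common positive quantity $\det(S)^{-2}\,p(l,a)p(l,b)p(l,c) / \big(p(a,b)p(a,c)p(b,c)\big)$ and using Proposition \ref{prop:ultramform}(\ref{it1}) three times converts this literally into
$$U_L(A,C)\cdot U_L(B,C) \cdot \big(\text{positive factor}\big) \geq \min\{\,\cdots\,\},$$
and a short bookkeeping check shows it is exactly $\max\{U_L(A,C),U_L(B,C)\} \geq U_L(A,B)$.

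The only genuine content is the inequality \eqref{eq:multineq}, which is already established (it is Buneman's four-point condition applied to the additive determinant distance $d$ of Definition \ref{def:distprod}, valid precisely because $S$ is arborescent so that $\Gamma$ is a tree). So the main obstacle is purely notational: one must match up the three occurrences of the $\det(S)^{-1}$-normalized ratios with the six determinant products $p(l,a),p(l,b),p(l,c),p(a,b),p(a,c),p(b,c)$ appearing in \eqref{eq:multineq}, and verify that after cancelling the common positive factor the $\min$ on one side turns into the $\max$ on the other. Once this translation is done, the final assertion that $U_L$ is an ultrametric on $\cB(S)\setminus\{L\}$ follows immediately: $U_L$ is symmetric and non-negative by construction, vanishes exactly on the diagonal since $p(a,b)>0$ and the numerator is positive, and the strong triangle inequality is the statement just proved (the case where two of $A,B,C$ coincide being trivial or reducing to the already-established identity $U_L(A,A)=0 \leq U_L(A,C)$).
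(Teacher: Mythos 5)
Your proposal is correct and takes essentially the same route as the paper's own first proof: fix an embedded resolution of $L+A+B+C$, rewrite the three values of $U_L$ via Proposition \ref{prop:ultramform}(\ref{it1}), and reduce the ultrametric inequality, after inverting and clearing the positive determinant products, to the four-point inequality (\ref{eq:multineq}) of Corollary \ref{prop:ineqmin}. One small inaccuracy: the representative vertices $u(L), u(A), u(B), u(C)$ need not be pairwise distinct even though the branches are (the paper explicitly allows them to coincide), but since Corollary \ref{prop:ineqmin} holds for arbitrary, possibly coinciding, vertices, your argument is unaffected.
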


\begin{proof} We will give two different proofs of this theorem. 
   
    \medskip
    \noindent
     {\bf The first proof.} 
      Let $\pi: \tilde{S} \to S$ be an embedded resolution 
      of $A+B+C+L$. By Proposition \ref{prop:intexcep}, we know that 
      the pairwise intersection numbers on $S$ of the four branches are the 
      opposites of the intersection numbers on $\tilde{S}$ of their exceptional 
      transforms by the morphism $\pi$. 
      By Lemma \ref{lem:excdual}, there exist four (possibly coinciding) indices 
      $l,a,b,c \in \cV$ such that $(\pi^*A)_{ex} = - \check{E}_a, (\pi^*B)_{ex} = -\check{E}_b, 
          (\pi^*C)_{ex} = -\check{E}_c,  (\pi^*L)_{ex} = -\check{E}_l$. 
      Using the first equality of Proposition \ref{prop:ultramform}, 
      the stated inequality is equivalent to:
         $$ \frac{p(l, a) \cdot p(l, b)}{ p(a, b)}  
           \leq  \max \left\{\frac{p(l, a) \cdot p(l, c)}{ p(a, c)}  ,  
                  \frac{p(l, b) \cdot p(l, c)}{ p(b, c)}\right\}. $$
        By taking the inverses of the three fractions and multiplying then all of them by 
        $p(l, a) \cdot  p(l, b) \cdot p(l, c)$, we see that the previous 
        inequality is equivalent to:
          $$ p(l,c) \cdot p(a,b) \geq \min \{ p(l,b) \cdot p(a,c),  \:  p(l,a) \cdot p(b,c)\}.$$
        But this inequality is true by Corollary \ref{prop:ineqmin}. 
        
        \medskip
        \noindent
        {\bf The second proof.} 
       We could have argued also by combining the third equality of 
        Proposition \ref{prop:ultramform} with Proposition \ref{prop:addistultram}. 
        Indeed, the function $s: (\R_+, \leq) \to (\R_+^*, \leq)$ defined by:
            $s(x) :=  \frac{p(l,l)}{ \det(S)} \cdot e^{-2 x}$,
        is strictly decreasing. 
        This line of reasoning may be easily followed on Figure \ref{fig:abcl}. 
        Up to permuting $a, b, c$,  it represents the generic tree $[labc]$. All other 
        topological possibilities are degenerations of it. Using the third equality of 
        Proposition \ref{prop:ultramform}, we have: 
            \[\begin{array}{rcl}
                       U_L(A,B) & =  & { \det(S)}^{-1} \cdot {p(l,l)} \cdot e^{- 2 h_l(a \wedge_l b)}, \\
                       U_L(A,C)= U_L(B,C) & = & 
                            { \det(S)}^{-1} \cdot {p(l,l)} \cdot e^{- 2 h_l(b \wedge_l c)}. 
                 \end{array} \]
        But  the inequality $b \wedge_l c \preceq_l a \wedge_l b$ implies that:
           $ h_l(b \wedge_l c) \leq h_l(a \wedge_l b). $
        Therefore: 
               $$U_L(A,C)= U_L(B,C) \geq U_L(A,B),$$
         which is the ultrametric inequality (recall Proposition \ref{prop:equivultra} (\ref{second})).
    \end{proof}
               
        \begin{figure}
\centering
\begin{tikzpicture} [scale=0.8]
 \draw(0,0) -- (0,-2) ;  
 \draw(0,0) -- (1,1);
 \draw(0,0) -- (-1,1) ;
  \draw(0,-1) -- (2,1) ; 
   
 \draw (-0.1, 0) node[left]{$a \wedge_l b$} ;
 \draw (-0.1, -1) node[left]{$a \wedge_l c = b \wedge_l c$} ;
 \draw (1, 1) node[above]{$b$} ;
\draw (-1,1) node[above]{$a$} ;
\draw (2,1) node[above]{$c$} ;
\draw (0,-2) node[below]{$l$} ;

 \node[draw,circle,inner sep=1pt,fill=black] at (0,0) {};
 \node[draw,circle,inner sep=1pt,fill=black] at (1,1) {};
\node[draw,circle,inner sep=1pt,fill=black] at (0,-2) {};
\node[draw,circle,inner sep=1pt,fill=black] at (-1,1) {};
\node[draw,circle,inner sep=1pt,fill=black] at (2,1) {};
\node[draw,circle,inner sep=1pt,fill=black] at (0,-1) {};

    \end{tikzpicture}  
    \caption{A generic position of $a, b, c$ and $l$.}  
    \label{fig:abcl}
    \end{figure}
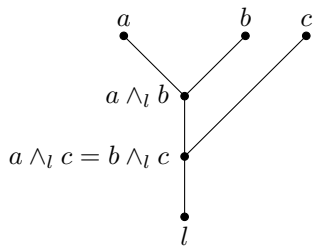

\begin{remark}  \label{rem:histultra} 
    The previous theorem was proved in this form for \emph{smooth} 
complex germs $S$ and $L$ by Ch\c adzy\'nski and P\l oski 
\cite[Section 4]{CP 88} and again  by Favre and Jonsson in 
\cite[Lemma 3.56]{FJ 04} for \emph{smooth} germs $S$ and $L$.  
Ab\'{\i}o, Alberich-Carrami\~{n}ana and 
Gonz\'alez-Alonso later explored in \cite{AAG 11} the values taken by this 
ultrametric, also in the case of smooth germs.  Their results were extended 
recently by the first author and P\l oski \cite{GP 17} 
to smooth surfaces defined over algebraically closed fields of positive characteristic. 
Favre and Jonsson were not conscious about 
the result of Ch\c adzy\'nski and P\l oski, and they attributed the theorem to 
the first author's thesis \cite[Cor. 1.2.3]{G 96}. This last reference  
states in fact that a related function is an ultrametric, a result which 
combined with \cite[Prop. 1.2.2]{G 96} implies indeed that $U_L$ is an ultrametric. 
Note that at the time of writing  \cite{G 96}, the first author did not know the papers 
\cite{P 85} and \cite{CP 88}.  
\end{remark}

As a consequence of Proposition \ref{prop:embedtree}, one gets also 
the announced topological interpretation of the two rooted trees associated 
to the ultrametric $U_L$ (see Definition \ref{def:asstree}):

\begin{theorem}  \label{thm:topint} 
   Let $S$ be an arborescent singularity and $L$ a fixed branch on it. Assume that 
 $\cF$ is a \emph{finite} set of branches on $S$, all of them 
 different from $L$. Consider an embedded resolution of the sum of $L$ and of the branches 
     in $\cF$. Let  $\Gamma_L(\cF)$ be the dual tree of the total transform 
     of this divisor. Then we have:
         \begin{enumerate}
             \item  the end-rooted tree $\hat{T}^{U_L}$ associated to the ultrametric space 
     $(\cF, U^L)$ is isomorphic as a rooted tree with leaf set $\cF$  
     with the convex hull of $\{L\} \cup \cF$  
     in $\Gamma_L(\cF)$, endowed with its topological vertex set and with root at $L$; 
             \item  the previous isomorphism sends the interior-rooted tree $T^{U_L}$ 
               associated to the ultrametric space $(\cF, U^L)$ onto the convex hull 
               of $\cF$ in $\Gamma_L(\cF)$. 
        \end{enumerate}
\end{theorem}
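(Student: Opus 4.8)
The plan is to reduce the statement to the abstract tree machinery of Section~\ref{sec:treeultram}, in fact to Proposition~\ref{prop:embedtree}. First I would fix the given embedded resolution $\pi\colon\tilde{S}\to S$ of $D:=L+\sum_{j\in J}C_j$, let $\Gamma$ be its dual tree and $\hat{\Gamma}$ the dual tree of the total transform of $D$, so that $\hat{\Gamma}$ is obtained from $\Gamma$ by attaching to each $u(A)$, $A\in\cF\cup\{L\}$, a new leaf (still written $A$) by a single edge. Set $l:=u(L)$. Let $\hat{\Gamma}_{\cF}$ be the union of the geodesics in $\hat{\Gamma}$ joining $L$ to the $C_j$, i.e.\ the convex hull of $\{L\}\cup\{C_j:j\in J\}$; this is a convex subtree of $\hat{\Gamma}$ whose ends are among $L$ and the $C_j$. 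I would root it at $L$: since $L$ has valency one in $\hat{\Gamma}$, the tree $(\hat{\Gamma}_{\cF})_L$ is end-rooted, the unique direct successor of its root is $l=L^+$, and its set of leaves is exactly $\cF$ (the $C_j$ being the ends other than the root, hence the maximal vertices). Its core is then the convex hull of $\{l\}\cup\{u(C_j):j\in J\}$, a subtree of $\Gamma$, which is non-empty since it contains $l$, and whose vertex set is precisely the set of interior vertices of $(\hat{\Gamma}_{\cF})_L$.

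Next I would equip this core with the additive distance obtained by restricting the determinant distance $d$ of Definition~\ref{def:distprod} (additive on all of $\Gamma$, hence on any subtree, by Proposition~\ref{prop:equalcos}), and with the strictly decreasing map $s\colon(\R_+,\leq)\to(\R_+^*,\leq)$, $s(x):=\det(S)^{-1}\,p(l,l)\,e^{-2x}$. The heart of the argument is the verification that the ultrametric $U^{d,l}$ on $\cF$ produced from these data by Proposition~\ref{prop:addistultram} coincides with $U_L$; this is essentially the computation carried out in the second proof of Theorem~\ref{thm:ultram}. Concretely, for distinct branches $A,B\in\cF$ with $a:=u(A)$ and $b:=u(B)$, the leaves $A,B$ are attached to $a,b$ by single edges of $(\hat{\Gamma}_{\cF})_L$, so the infimum $A\wedge_l B$ for $\preceq_l$ is a vertex of the core and equals the vertex $a\wedge_l b$ appearing in Proposition~\ref{prop:ultramform}; since $\tilde{S}$ is in particular an embedded resolution of $A+B+L$, part~(\ref{it3}) of that proposition reads $U_L(A,B)=\det(S)^{-1}\,p(l,l)\,e^{-2h_l(a\wedge_l b)}=s\bigl(h_l(A\wedge_l B)\bigr)=U^{d,l}(A,B)$, with $h_l=h_{d,l}$ the remoteness function of $d$ for the root $l$, and both functions vanish on the diagonal.

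Once $U_L=U^{d,l}$ is established, I would conclude by applying Proposition~\ref{prop:embedtree} to the end-rooted tree $(\hat{\Gamma}_{\cF})_L$, presented as a combinatorial rooted tree with vertex set equal to its topological vertex set $\cV_{top}$: part~(1) of that proposition yields the unique isomorphism, with leaf set $\cF$, between $\hat{T}^{U_L}$ and $(\hat{\Gamma}_{\cF})_L$ rooted at $L$ and carrying its topological vertex set, which is assertion~(1); part~(2) identifies $T^{U_L}$ with the convex hull $[\{C_j:j\in J\}]$ inside $(\hat{\Gamma}_{\cF})_L$, which by convexity of $\hat{\Gamma}_{\cF}$ in $\hat{\Gamma}$ is the union of the pairwise geodesics of the $C_j$ in $\hat{\Gamma}$, giving assertion~(2). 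I expect the main obstacle to be not any single deep fact but the bookkeeping of this reduction: checking that $L$ and the $C_j$ really are the ends of $\hat{\Gamma}_{\cF}$, that $l$ is the unique direct successor of the root and that the core is non-empty with vertex set the interior vertices, and above all that the abstract ``infimum of two leaves'' used in Propositions~\ref{prop:addistultram} and~\ref{prop:embedtree} is literally the vertex $a\wedge_l b$ to which Proposition~\ref{prop:ultramform} refers --- together with disposing of the degenerate cases, such as $|\cF|=1$ or several branches $C_j$ meeting the same exceptional component.
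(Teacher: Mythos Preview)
Your proposal is correct and follows essentially the same route as the paper: the theorem is deduced directly from Proposition~\ref{prop:embedtree}, applied to the convex hull of $\{L\}\cup\cF$ in $\hat{\Gamma}$ rooted at $L$, whose core carries the determinant distance $d$, with the strictly decreasing function $s(x)=\det(S)^{-1}p(l,l)e^{-2x}$; the identification $U_L=U^{d,l}$ via Proposition~\ref{prop:ultramform}(\ref{it3}) is exactly what the paper has in mind (and is made explicit in the second proof of Theorem~\ref{thm:ultram} and in Remark~\ref{rem:rootend}). You have simply spelled out the bookkeeping that the paper leaves to the reader.
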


\begin{remark} \label{rem:rootend}
   Note that the root $L$ and the branches in $\cF$ are always ends of $\Gamma_L(\cF)$. 
   This is the reason why we have decided to associate systematically  to 
   an ultrametric a rooted tree in which the root is an end, its \emph{end-rooted tree} 
    (see Definition \ref{def:asstree}). Note also that the convex hull 
    $[\{u(L)\}  \cup \{u(A), \:   A \in \cF\}]$ inside $\Gamma_L(\cF)$, which is equal to the core 
    of the end-rooted tree $\Gamma_L(\cF)$, is equipped with the additive distance $d$ 
    of Definition \ref{def:distprod}. This fact has to be used 
    when one deduces Theorem \ref{thm:topint} from Proposition \ref{prop:embedtree}. 
\end{remark}

\begin{example}   \label{ex:twoval}
   Let us consider an arborescent singularity $S$ as in Example \ref{ex:4p}. That is, 
   we assume that it admits a good resolution $\tilde{S}$  with weighted dual graph $\Gamma$ 
   as in Figure \ref{fig:edge-det}. 
   Consider branches $L, A, B, C, D, E, F$ on $S$ such that the total transform of 
   $L + A + \cdots + F$ on $\tilde{S}$ is a normal crossings divisor. Moreover, we assume that 
   the strict transforms of $A, ..., F$ intersect $a, ..., f$ respectively, and that the 
   strict transform of $L$ intersects $E_a$. Therefore, with the notations of Proposition 
   \ref{prop:ultramform}, we have $l=a$. We see on Figure \ref{fig:edge-det} that when 
   $x,y$ vary among $\{a, ..., f\}$ and remain distinct, their infimum $x \wedge_l y$ 
   relative to the root $l=a$ of $\Gamma$ is either $a$ or $b$. By the second 
   equality of  Proposition  \ref{prop:ultramform}, we deduce that the only 
   possible values of $\det(S) \cdot U_L(X,Y)$, when $X \neq Y$ vary among 
   $\{A, ..., F\}$, are: 
   \[          \dfrac{p^2(a,a)}{p(a,a)} = p(a,a) = 28, \quad
                      \dfrac{p^2(a,b)}{p(b,b)} = \dfrac{24^2}{24} = 24.
    \]
   Continuing to use the second equality of  Proposition  \ref{prop:ultramform}, we get 
   the following values for the entries of the matrix $(U_L(X,Y))_{X,Y}$ 
   (recall from Example  \ref{ex:4p} that $\det(S)=4$): 
   \[
\begin{pmatrix}
0 & 7 &  7 & 7 & 7  & 7\\
7 & 0 & 7 & 7 &  6  &  6\\
7 & 7  &  0  & 7 &  7  & 7\\
7 &  7 &  7 &  0 &   7  & 7\\
7 &  6  & 7 &   7 &  0  & 6\\
7 &  6 &   7  & 7 &  6 &   0
\end{pmatrix}. 
\]
One may check immediately on this matrix that $U_L$ is an ultrametric 
on the set $\{A, ..., F\}$. 
In Figure \ref{fig:both} we represent 
   both the dual tree $\hat{\Gamma}$ of the total transform of $L + A + \cdots + F $ and 
   the end-rooted tree $\hat{T}^{U_L}$ associated to the ultrametric $U_L$. Near the two nodes 
   of $\hat{T}^{U_L}$ we  
   indicate both the corresponding clusters and their diameters (as in Figure 
   \ref{fig:assoctree}).  
\end{example}

   \begin{figure}
\centering
\begin{tikzpicture} [scale=0.8]

    \draw (0,0) -- (-1,1) ; 
    \draw (0,0) -- (-1,-1) ; 
    \draw (0,0) -- (2,0) ;
    \draw (2,0) -- (3,1) ; 
    \draw (2,0) -- (3,-1) ;
     \draw (-1.2 ,1) node[left]{$c$} ; 
     \draw (-1.2 ,-1) node[left]{$d$} ; 
     \draw (3.2 ,1) node[right]{$e$} ;
     \draw (3.2 ,-1) node[right]{$f$} ; 
     \draw (-0.1,0) node[left]{$a$} ;
     \draw (2.2,0) node[right]{$b$} ;

      \draw [->] (0,0) -- (0,1) ; 
    \draw[very thick] [->]  (0,0) -- (0,-1) ; 
    \draw [->]  (-1,1) -- (-1,2) ;
    \draw [->]  (-1,-1) -- (-1,-2) ; 
    \draw [->]  (2,0) -- (2,1) ;
     \draw [->]  (3,1) -- (3,2) ; 
    \draw [->]  (3,-1) -- (3,-2) ;

      \draw (-1 ,2) node[above]{$C$} ; 
     \draw (-1 ,-2) node[below]{$D$} ; 
     \draw (3 ,2) node[above]{$E$} ;
     \draw (3 ,-2) node[below]{$F$} ; 
     \draw (0,1) node[above]{$A$} ;
     \draw (2,1) node[above]{$B$} ; 
      \draw (0,-1) node[below]{$L$} ; 
 
\node[draw,circle,inner sep=1pt,fill=black] at (0,0) {};

\node[draw,circle,inner sep=1pt,fill=black] at (-1,1) {};

\node[draw,circle,inner sep=1pt,fill=black] at (3,1) {};

\node[draw,circle,inner sep=1pt,fill=black] at (3,-1) {};

\node[draw,circle,inner sep=1pt,fill=black] at (-1,-1) {};

\node[draw,circle,inner sep=1pt,fill=black] at (2,0) {};


 \draw(9, -2) -- (9, -1) ;  
 \draw(9, -1) -- (7, 1);
 \draw(9, -1) -- (9, 2) ;
  \draw(9, - 1) -- (10, 2) ; 
  \draw(9, -1) -- (11, 2) ;  
 \draw(7, 1) -- (6,  2);
 \draw(7, 1) -- (7, 2) ;
  \draw(7,  1) -- (8,  2) ; 
   
 \draw (6,  2) node[above]{$\{B\}$} ;
 \draw (7, 2) node[above]{$\{E\}$} ;
 \draw (8, 2) node[above]{$\{F\}$} ;
\draw (9, 2) node[above]{$\{A\}$} ;
\draw (10, 2) node[above]{$\{C\}$} ;
\draw (11, 2) node[above]{$\{D\}$} ;
\draw (9, -2.2) node[below]{$\hat{\rho}$} ;

 \draw (7, 1) node[left]{$\{B, E, F \}$} ;
 \draw (9, -1) node[left]{$\{A, B, C, D, E, F \}$} ;
 
  \draw (7.2 , 1) node[right]{$6$} ;
 \draw (9.2 , -1) node[right]{$7$} ;

 \node[draw,circle,inner sep=1pt,fill=black] at (9 , -2) {};
 \node[draw,circle,inner sep=1pt,fill=black] at (9 , -1) {};
\node[draw,circle,inner sep=1pt,fill=black] at (7 , 1) {};
\node[draw,circle,inner sep=1pt,fill=black] at (6  , 2) {};
\node[draw,circle,inner sep=1pt,fill=black] at (7 , 2) {};
\node[draw,circle,inner sep=1pt,fill=black] at (8 , 2) {};
\node[draw,circle,inner sep=1pt,fill=black] at (9 , 2) {};
\node[draw,circle,inner sep=1pt,fill=black] at (10 , 2) {};
\node[draw,circle,inner sep=1pt,fill=black] at (11 , 2) {};

    \end{tikzpicture}  
    \caption{An illustration of Theorem \ref{thm:topint}: Example \ref{ex:twoval}}  
    \label{fig:both}
    \end{figure}
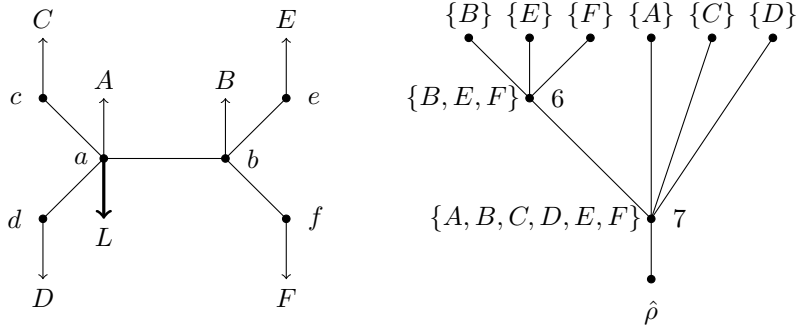

\medskip
In the introduction we recalled the following result of Teissier \cite[page 40]{T 77}, 
which inspired us to formulate Corollary \ref{cor:upbound}:

\begin{proposition}  \label{prop:teissier}
      If $S$ is a normal surface singularity, $A, B$ are two divisors 
  without common branches on it and $m_O$ denotes the function which 
  gives the multiplicity at $O$, then one has the inequality:
      $$\frac{m_O(A) \cdot m_O(B)}{A \cdot B} \leq m_O(S).$$
\end{proposition}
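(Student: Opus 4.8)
The plan is to reformulate every quantity in the statement as an intersection number of exceptional transforms on a single resolution, using the \emph{maximal ideal cycle}, and then to recognize the inequality as a reinforced Cauchy--Schwarz inequality for the negative definite intersection form.

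First I would pick a good resolution $\pi:\tilde S\to S$ that dominates the blow-up of the maximal ideal $\mathfrak m_O$ of $\mathcal{O}_{S,O}$. For such a $\pi$ one has $\mathfrak m_O\,\mathcal{O}_{\tilde S}=\mathcal{O}_{\tilde S}(-Z)$ for a unique effective divisor $Z$ supported on $E$, the maximal ideal cycle, and since this ideal sheaf is generated by its global sections (the pull-backs of the elements of $\mathfrak m_O$), the cycle $-Z$ lies in the nef cone $\check{\sigma}$ of $\pi$. Choosing a generic $h\in\mathfrak m_O$, its divisor $H:=Z_h$ on $S$ is a reduced curve germ --- a generic hyperplane section --- and comparing $\mathrm{div}(\pi^*h)$ with the total transform $\pi^*H=\tilde H+D_H$ one sees that $D_H=Z$.

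Next I would express the three quantities of the statement through intersection numbers on $\tilde S$. For $h$ generic one has $m_O(A)=A\cdot H$, $m_O(B)=B\cdot H$ (the multiplicity of a curve germ at $O$ equals its intersection number with a generic hyperplane section) and $m_O(S)=H\cdot H'$ for two generic sections $H,H'$ (the multiplicity of the surface germ equals the self-intersection of a generic hyperplane section). By Proposition \ref{prop:intexcep} together with $D_H=D_{H'}=Z$ these become
$$ m_O(A)=-D_A\cdot Z,\qquad m_O(B)=-D_B\cdot Z,\qquad A\cdot B=-D_A\cdot D_B,\qquad m_O(S)=-Z\cdot Z. $$
Writing $\langle x,y\rangle:=-x\cdot y$ for the positive definite form on $\Lambda^*_{\R}$ provided by Theorem \ref{prop:negdef}, the desired inequality $m_O(A)\,m_O(B)\le m_O(S)\,(A\cdot B)$ reads
$$ \langle D_A,Z\rangle\,\langle D_B,Z\rangle\ \le\ \langle Z,Z\rangle\,\langle D_A,D_B\rangle . $$
Decomposing $D_A$ and $D_B$ into their components along $\R Z$ and along its $\langle\,,\,\rangle$-orthogonal complement gives the identity $\langle Z,Z\rangle\langle D_A,D_B\rangle-\langle D_A,Z\rangle\langle D_B,Z\rangle=\langle Z,Z\rangle\,\langle D_A^{\perp},D_B^{\perp}\rangle$, so that (as $\langle Z,Z\rangle=m_O(S)>0$) the claim is equivalent to the single inequality $\langle D_A^{\perp},D_B^{\perp}\rangle\ge 0$, i.e.\ to $D_A^{\perp}\cdot D_B^{\perp}\le 0$ for the $Z$-orthogonal parts of the two exceptional transforms.

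The heart of the matter, and what I expect to be the main obstacle, is exactly this last inequality. It is \emph{not} a formal consequence of the positive definiteness of $-I$: for a vector chosen arbitrarily in the nef cone in place of $Z$ the analogous statement can fail, so one must exploit that $Z$ is the maximal ideal cycle --- equivalently that $-Z$ is not merely nef but globally generated. Geometrically, this is what guarantees that the strict transform of a generic hyperplane section moves freely in its linear system and meets the fixed curves $\tilde A$ and $\tilde B$ trivially, and it is this freedom that ultimately controls the sign of $D_A^{\perp}\cdot D_B^{\perp}$; making that control precise (and, along the way, identifying the equality case) is where the real work of the proof lies.
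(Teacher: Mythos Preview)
The paper does not give its own proof of this proposition: it is quoted as a result of Teissier \cite[Page 40]{T 77} and serves only as motivation. The closest the paper comes to an argument is Remark \ref{rem:partcase}, which observes that \emph{under the extra hypotheses of Theorem \ref{thm:ultraint}} (arborescent singularity with irreducible generic hyperplane section) the inequality follows from Corollary \ref{cor:upbound}, by taking $L$ to be a generic hyperplane section and identifying $-E_l^*\cdot E_l^*$ with $m_O(S)$. So there is no general proof here to compare against; only a derivation in a restricted situation via the paper's determinantal machinery.

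Your proposal is a different, intersection-theoretic approach aimed at the general case, and the setup is entirely correct: the reformulation through the maximal ideal cycle $Z$, the identities $m_O(A)=-D_A\cdot Z$, $m_O(S)=-Z\cdot Z$, and the orthogonal decomposition reducing the claim to $\langle D_A^{\perp},D_B^{\perp}\rangle\ge 0$ are all sound. But this is precisely where you stop, and you say so yourself: ``making that control precise \ldots\ is where the real work of the proof lies''. As written, then, this is not a proof but a reduction to an inequality that you have not established. Your own remark that the statement can fail when $Z$ is replaced by an arbitrary nef class shows the reduction is non-trivial; the vague invocation of global generation of $\mathcal{O}_{\tilde S}(-Z)$ does not, by itself, produce the sign of $D_A^{\perp}\cdot D_B^{\perp}$. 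Teissier's original argument does not proceed along these lines at all --- it goes through the theory of mixed multiplicities and a Minkowski-type inequality --- so if you want to push your resolution-based approach to a complete proof, you will need a genuinely new ingredient beyond what is in this paper.
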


This result suggests to consider the following analog of the function $U_L$ 
introduced in Notation \ref{not:ultram}:

\begin{notation} \label{not:invultra}
        Let $S$ be a normal surface singularity. If $A, B$ denote two 
        branches on $S$, define: 
    $$ U_O(A, B):= \left\{ \begin{array}{lcl}
              \dfrac{m_O(A) \cdot m_O(B)}{A \cdot B}, &  \mbox{ if }  & A \neq B, \\
              0 , \:  & \mbox{ if } & A = B . 
                    \end{array} \right. $$
\end{notation}

An immediate consequence of the previous results is the following analog 
of Theorem \ref{thm:ultram} (which holds for a restricted class of arborescent 
singularities):

\begin{theorem}  \label{thm:ultraint}
   For any three pairwise distinct branches $(A, B, C)$ on the arborescent singularity 
   $S$ \emph{with irreducible generic hyperplane section}, one has:
      $$U_O(A,B)  \leq \max \{ U_O(A, C), U_O(B, C) \}.$$
      Therefore,  the function $U_O$  is an ultrametric on the set $\cB(S)$ of branches of $S$. 
\end{theorem}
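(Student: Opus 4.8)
The plan is to reduce Theorem \ref{thm:ultraint} to Theorem \ref{thm:ultram} by taking as reference branch $L$ a sufficiently generic hyperplane section of $S$. Fix three pairwise distinct branches $A,B,C$ on $S$, and work with a fixed embedding of the germ $S$ in some $(\C^N,0)$, so that the multiplicities $m_O$ make sense. For a linear form $\ell\in\mathfrak m_{\C^N,0}$, let $L_\ell:=\mathrm{div}_S(\ell|_S)$ be the associated principal Weil divisor on $S$. Three facts about $\ell$ are needed, each of which holds for $\ell$ in a Zariski-dense open set of linear forms: (i) $L_\ell$ is reduced, and by the hypothesis on $S$ it is moreover irreducible, hence a branch on $S$; (ii) $L_\ell\notin\{A,B,C\}$, which can be arranged since distinct generic hyperplane sections give distinct branches (a curve germ lying in an arbitrarily small linear subspace would be a line, which is excluded for generic sections of a surface germ of embedding dimension $\geq 2$), so the family of generic hyperplane sections contains infinitely many distinct branches; (iii) for each fixed branch $X$ among $A,B,C$ one has $\mathrm{ord}_X(\ell)=m_O(X)$, the classical description of the multiplicity of a branch as the order of vanishing of a generic linear form along it. Since each of (i), (ii), (iii) is generic and there are only finitely many branches to avoid, one may fix a single $\ell$ satisfying all of them, and set $L:=L_\ell$.

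\textbf{Main step.} Mumford's intersection number of a branch $X$ with the principal divisor $L=\mathrm{div}_S(\ell)$ is exactly $\mathrm{ord}_X(\ell)$ (this is the definition underlying the semivaluation $\mathrm{int}_X$; see Definition \ref{def:mumfint}). Combined with (iii), this gives $X\cdot L=m_O(X)$ for $X\in\{A,B,C\}$. Hence, for any two distinct $X,Y\in\{A,B,C\}$,
\[
U_O(X,Y)=\frac{m_O(X)\,m_O(Y)}{X\cdot Y}=\frac{(L\cdot X)(L\cdot Y)}{X\cdot Y}=U_L(X,Y),
\]
using Notations \ref{not:invultra} and \ref{not:ultram} and the fact that $L\notin\{X,Y\}$; and $U_O(X,X)=0=U_L(X,X)$. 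Thus $U_O$ and $U_L$ agree on $\{A,B,C\}$. Since $S$ is arborescent and $L,A,B,C$ are pairwise distinct branches, Theorem \ref{thm:ultram} applies to the quadruple $(L,A,B,C)$ and yields
\[
U_O(A,B)=U_L(A,B)\leq\max\{U_L(A,C),U_L(B,C)\}=\max\{U_O(A,C),U_O(B,C)\}.
\]

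\textbf{Conclusion.} As $A,B,C$ were arbitrary, $U_O$ satisfies the ultrametric inequality on $\cB(S)$. It is symmetric and non-negative by construction; and for distinct branches $X,Y$ one has $m_O(X),m_O(Y)\geq 1$ and $X\cdot Y\in\Q_+^*$, so $U_O(X,Y)>0$, which together with $U_O(X,X)=0$ shows that $U_O$ separates points. Therefore $U_O$ is an ultrametric on $\cB(S)$. The delicate point of the argument is the genericity discussion in the first paragraph --- specifically, showing that the three requirements on $\ell$ (that $L_\ell$ be irreducible, that $L_\ell\notin\{A,B,C\}$, and that $X\cdot L_\ell=m_O(X)$ for the finitely many branches $X$ at hand) can be met simultaneously: the first is the standing hypothesis, the third is classical, and the second rests on the observation that a surface germ of embedding dimension at least two has infinitely many pairwise distinct generic hyperplane sections. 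Everything else is a direct transfer of Theorem \ref{thm:ultram} through the identity $U_O|_{\{A,B,C\}^2}=U_L|_{\{A,B,C\}^2}$.
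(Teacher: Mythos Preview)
Your proof is correct and follows essentially the same approach as the paper: choose a generic hyperplane section $L$ (irreducible by hypothesis, distinct from $A,B,C$, and computing multiplicities via intersection), observe that $U_O$ coincides with $U_L$ on the triple $\{A,B,C\}$, and then invoke Theorem~\ref{thm:ultram}. The paper's argument is terser about the genericity conditions, but the strategy and the key identity $U_O(X,Y)=U_L(X,Y)$ are identical.
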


\begin{proof}
   By definition, a \emph{generic hyperplane section} of a normal surface 
   singularity $S$ is the divisor defined by a generic element of the 
   maximal ideal of the local ring of $S$. For instance, if $S$ is 
   smooth, the generic hyperplane sections are  smooth branches 
   on $S$. 
   
    Fix an embedding of $S$ in a smooth space $(\C^n,0)$. Choose a generic hyperplane 
     $H$ in this space which is transversal to the three branches $A, B, C$. Therefore, 
     its intersection numbers with the branches are equal to their multiplicities. Denote 
     by $L$ the intersection of $S$ and $H$, which is a branch by hypothesis. 
     Since the intersection number of a branch with $H$ in the ambient smooth space 
     $(\C^n,0)$ is equal to the intersection number of the branch with $L$ on 
     $S$, we get:
       	\begin{equation} \label{mO}
	m_O(A) = L \cdot A, \quad  m_O(B) = L \cdot B, \quad  m_O(C) = L \cdot C. 
	\end{equation}
     Therefore:
       $$U_O(A, B) = U_L(A,B), \quad  U_O(A, C) = U_L(A,C), \quad U_O(B, C) = U_L(B,C).$$
     We conclude using Theorem \ref{thm:ultram}. 
\end{proof}

\begin{remark}  \label{rem:ratirr}
   Assume that $S$ is a \emph{rational} surface singularity and that 
   $\tilde{S}$ is a resolution of it.  Then the exceptional transform 
   on $\tilde{S}$ of a generic hyperplane section $L$ of $S$ is the \emph{fundamental 
   cycle} $Z_f$ of the resolution, defined by Artin \cite[Page 132]{A 66} (see also 
   Ishii \cite[Definition 7.2.3]{I 14}). The total transform of $L$ is in this case a 
   normal crossings divisor. The number of branches of $L$ whose strict 
   transforms intersect a component $E_u$ of $E$ is equal to the intersection 
   number $- Z_f \cdot E_u$. Therefore, the generic hyperplane section is irreducible  
   if and only if all these numbers vanish, with the exception of one 
   of them, which is equal to $1$ (that is, if and only if there exists $u \in \cV$ 
   such that $Z_f = - \check{E}_u$). This may be easily checked. For instance, 
   starting from the list of rational surface singularities of multiplicity $2$ or $3$ 
   given at the end of Artin's paper \cite{A 66}, one sees that among the rational 
   double points $A_n, D_n, E_n$, only those of type $A_n$ do not have irreducible 
   generic hyperplane sections. And among rational triple points, those which 
   do not have irreducible generic hyperplane sections are the first three of the left 
   column and the first one of the right column of that paper. 
 \end{remark}

\begin{remark} \label{rem:partcase}
    Under the hypothesis of Theorem \ref{thm:ultraint}, Teissier's inequality  
    stated in Proposition \ref{prop:teissier} may be proved as a particular case 
    of the inequality stated in Corollary \ref{cor:upbound}. Indeed, arguing as in the 
    proof of Theorem \ref{thm:ultraint}, we may assume that we work 
    with an irreducible hyperplane section $L$ such that the equalities (\ref{mO}) hold. 
    Let $L'$ be a second hyperplane section, whose strict transform to the 
    resolution with which we work is disjoint from the strict transform of $L$, 
    but intersects the same component $E_l$. Moreover, we may assume that 
    both are transversal to $E_l$. By Corollary \ref{cor:invint}, we have 
    $L \cdot L' = - E_l^2$. But $L \cdot L' = m_O(S)$. This shows, as announced, that 
    our inequality becomes  Teissier's one. 
\end{remark}

\medskip
\subsection{P\l oski's theorem and the ultrametric nature of Eggers-Wall trees}  
\label{aplloski}
$\: $ 
\medskip

In this subsection we assume that both the surface $S$ and the branch $L$ are smooth. 
Consider a finite set $\cF$ of branches on $S$, distinct from $L$. 
We explain first how to associate to them a rooted tree 
$\Theta_L(\cF)$, whose set of leaves is labeled by the elements 
of $\cF$ and whose root is labeled by $L$: the \emph{Eggers-Wall 
tree of $\cF$ relative to $L$}. Then we prove that the function $U_L$ 
is an ultrametric on $\cF$ with associated end-rooted tree 
isomorphic to $\Theta_L(\cF)$, by showing that in restriction 
to $\cF$, the function $U_L$ corresponds to a depth function on 
$\Theta_L(\cF)$. Note that the content of this subsection cannot be 
extended to algebraically closed fields of positive characteristic, because 
in this setting there are Weiertrass polynomials whose roots are not 
expressible as Newton-Puiseux series  (see the related Remark \ref{Campillo}). 
\medskip

 Choose a coordinate system $(x,y)$ on $S$  such that $L$ is defined by $x=0$. 

The following considerations on characteristic exponents are classical. 
One may find information about their historical evolution in \cite[Section 2]{GBGPPP 17}.  

 Let $A$ be a branch on $S$ different 
 from $L$. Relative to the coordinate system $(x,y)$, it 
 may be defined by a Weierstrass polynomial 
 $f_A \in \C [[x]][y]$, which is unitary, irreducible and of degree $d_A= L \cdot A$. 
 For simplicity, we mention only the dependency on $A$, not on the coordinate system $(x,y)$. 
 
 By the Newton-Puiseux theorem, $f_A$ has $d_A$ roots inside $\C [[x^{1/ d_A}]]$ 
 (the {\bf Newton-Puiseux roots} of $A$ in the coordinate system $(x,y)$), 
 which may be obtained from a fixed one of them $\xi(x^{1/ d_A})$ by 
 replacing $x^{1/ d_A}$ with the other $d_A$-th roots of $x$ (here $\xi \in \C [[t]]$ denotes 
 a formal power series with non-negative integral exponents). Therefore, all 
 the Newton-Puiseux roots have the same exponents. Some of those exponents 
 may be distinguished by looking at the differences of roots:

 \begin{definition}  \label{def:charexp}
     The {\bf characteristic exponents} of $A$ relative to $L$ are the $x$-orders 
     $\nu_x(\eta - \eta')$ 
     of the differences between Newton-Puiseux roots $\eta, \eta'$ 
     of $A$ in the coordinate system $(x,y)$. 
 \end{definition}
 
 The characteristic exponents may be read from a given Newton-Puiseux root 
 $\eta \in \C [[x^{1/ d_A}]]$ of $f_A$ by looking at the increasing sequence of exponents 
 appearing in $\eta$ and by keeping those which cannot be written as a quotient of 
 integers with the same smallest common denominator as the previous ones. In this 
 sequence, one starts from the first exponent which is not an integer. 
 
 The \emph{Eggers-Wall tree of $A$ relative to $L$} is a geometrical way of encoding 
 the sequences of characteristic exponents and of their successive 
 common denominators:

 \begin{definition} \label{def:EWirr}
     The {\bf Eggers-Wall tree} $\Theta_L(A)$ relative to $L$ 
     is a compact oriented segment endowed with the following 
     supplementary structures: 
         \begin{itemize} 
             \item  an increasing homeomorphism $\ex_{L,A} : \Theta_L(A) \to [0, \infty]$, 
                the {\bf exponent function}; 
             \item  marked points, which are by definition the points whose exponents 
                 are the characteristic exponents of $A$ relative to $L$, as well as the smallest 
                 end of $\Theta_L(A)$, labeled by $L$, and the greatest point, labeled by $A$. 
             \item an {\bf index function}  $\de_{L,A}: \Theta_L(A) \to \N$, which associates 
                 to each point $P \in \Theta_L(A)$ the index of $(\Z, +)$ in the 
                 subgroup of $(\Q, +)$ generated by the characteristic exponents 
                 of $A$ which are strictly smaller than $\ex_{L,A}(P)$. 
          \end{itemize}
 \end{definition}
 
 The index $\de_{L, A}(P)$ may be also seen as the smallest common denominator 
 of the exponents of a Newton-Puiseux root of $f_A$ which are strictly less than 
 $\ex_{L,A}(P)$. 
 
 \medskip

  \begin{example} \label{exchar}
       Assume that $A$ has as Newton-Puiseux root $x + x^{5/2} + x^{8/3} +  x^{17/6}$.  
       The set of characteristic exponents 
       of $A$ relative to the branch $L$ defined by $x=0$ is $\cE(A) = \{5/2, 8/3\}$. 
     The Eggers-Wall tree $\Theta_L(A)$ 
     is drawn in Figure \ref{fig:EWbranch}. We wrote the value of the exponent 
     function near each vertex and of the index function near each edge 
     on which it is constant.
  \end{example}

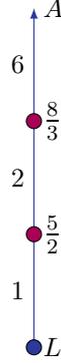
\begin{figure}
\centering
\begin{tikzpicture}[scale=1.5]
\draw [->, >=latex, color={rgb:red,45;green,58;blue,162}](0,0) -- (0,3);
\node[draw,circle,inner sep=2pt,fill={rgb:red,45;green,58;blue,162}] at (0,0){};
\node[draw,circle,inner sep=2pt,fill={rgb:red,255;green,0;blue,128}] at (0,1) {};
\node[draw,circle,inner sep=2pt,fill={rgb:red,255;green,0;blue,128}] at (0,2) {};
\draw (0,0) node[right]{$L$} ;
\draw (0,1) node[right]{\Large$\frac{5}{2}$} ;
\draw (0,2) node[right]{\Large$\frac{8}{3}$} ;
\draw (0,0.5) node[left]{$1$} ;
\draw (0,1.5) node[left]{$2$} ;
\draw (0,2.5) node[left]{$6$} ;
\draw (0,3) node[right]{$A$} ;
\end{tikzpicture}
   \caption{The Eggers-Wall tree of the series of Example \ref{exchar}} 
   \label{fig:EWbranch}
\end{figure}

We give now the definition of the Eggers-Wall tree associated 
to several branches. In addition to the characteristic exponents of the 
individual branches, we need to know the \emph{orders of coincidence} 
of the pairs of branches:

\begin{definition}  \label{def:ordcoin}
 If $A$ and $B$ are two distinct branches, which are also distinct from $L$, 
 then their {\bf order of coincidence} relative to $L$ is defined by:
      $$k_L(A, B):=\max  \{ \nu_x(\eta_A - \eta_B) \: | \: \eta_A  
         \in  Z(f_A),   \: \:    \eta_B  \in Z(f_B) \}  \in  \Q_+^* . $$
\end{definition}

Informally speaking, the order of coincidence is the greatest rational number $k$ for which 
one may find Newton-Puiseux roots of the two branches coinciding up to that 
number ($k$ excluded). 

Notice that the order of coincidence is symmetric: $k_L(A,B) = k_L(B,A)$.

 \begin{definition} \label{def:EW}
      Let $\cF$ be a finite set of branches on $S$, different from $L$. 
     The {\bf Eggers-Wall tree} $\Theta_L(\cF)$ of $\cF$ relative to $L$ 
     is the rooted tree obtained 
     as the quotient of the disjoint union of the individual Eggers-Wall trees $\Theta_L(A)$, 
     for $A \in \cF$,  by the equivalence relation generated by the following natural 
     gluing of $\Theta_L(A)$ with $\Theta_L(B)$ along the initial segments 
     $\ex_{L,A}^{-1}[0, k_L(A,B)]$ and $\ex_{L,B}^{-1}[0, k_L(A,B)]$:
         $$\ex_{L,A}^{-1}(\alpha) \sim \ex_{L,B}^{-1}(\alpha), \: \mbox{ for all } \:  
         \alpha \in [0, k_L(A,B)].$$
     One endows $\Theta_L(\cF)$ with the {\bf exponent function} 
     $\ex_L : \Theta_L(\cF) \to  [0, \infty]$ and the 
     {\bf index function} $\de_L: \Theta_L(\cF) \to \N$ obtained by 
     gluing the initial 
     functions $\ex_{L, A}$ and $\de_{L, A}$ respectively, for 
     $A$ varying among the elements of $\cF$. 
  \end{definition}
  
  It is an instructive exercise to prove that indeed the index functions of the branches 
  of $\cF$ get glued into a single index function on $\Theta_L(\cF)$. Note that, by 
  construction, $k_L(A, B) = e_L( A \wedge_L B)$ for any pair $(A,B)$ of distinct branches 
  of $\cF$. Here $A \wedge_L B$ denotes the infimum 
  of the leaves of $\Theta_L(\cF)$ labeled by $A$ and $B$, 
  relative to the partial order on the vertices of $\Theta_L(\cF)$ 
  defined by the root $L$ (see Notation \ref{not:notinf}).

 \begin{example} \label{EWmany}
      Consider a set $\cF$ of branches in $(\C^2, 0)$, whose elements 
     $C_i$ (where $i \in \{1, \dots, 5 \}$)  
     have the following Newton-Puiseux roots $ \eta_i$:
        $$\begin{array}{l}
             \eta_1 = x^2 \\
              \eta_2 =  x^{5/2} + x^{8/3} \\
             \eta_3 = x^{5/2} + x^{11/4}  \\
             \eta_4 = x^{7/2}  +  x^{17/4} \\
             \eta_5 = x^{7/2} + 2 x^{17/4} + x^{14/3}.
          \end{array} $$
     As before, we assume that $L$ is defined by $x=0$. 
    Then $k_L(C_1, C_2) = k_L(C_1, C_3) = k_L(C_1, C_4) = k_L(C_1, C_5) =2$, 
      $k_L(C_2, C_4) = k_L(C_2, C_5) = 5/2$, $k_L(C_2, C_3) = 8/3$, \linebreak 
      $k_L(C_3, C_4)=k_L(C_3, C_5)=5/2$, 
      $k_L(C_4, C_5) = 17/4$ 
    and the Eggers-Wall tree of $\cF$ relative to $L$ is as 
    drawn in Figure \ref{fig:EWfive}. 
 \end{example}

\begin{figure}[h!] 
\vspace*{6mm}
\labellist \small\hair 2pt 
\pinlabel{$L$} at 160 -10
\pinlabel{$C_1$} at 266 185
\pinlabel{$C_2$} at 300 315
\pinlabel{$C_3$} at 200 370
\pinlabel{$C_4$} at 110 370
\pinlabel{$C_5$} at 0 375

\pinlabel{$\mathbf{0}$} at 140 6
\pinlabel{$\mathbf{2}$} at 107 82
\pinlabel{$\mathbf{5/2}$} at 80 120
\pinlabel{$\mathbf{7/2}$} at 44 215
\pinlabel{$\mathbf{17/4}$} at 17 260
\pinlabel{$\mathbf{14/3}$} at -10 310
\pinlabel{$\mathbf{8/3}$} at 164 195
\pinlabel{$\mathbf{11/4}$} at 140 295

\pinlabel{$1$} at 150 50
\pinlabel{$1$} at 124 110
\pinlabel{$1$} at 200 125
\pinlabel{$1$} at 95 175

\pinlabel{$2$} at 132 165
\pinlabel{$2$} at 60 255
\pinlabel{$2$} at 165 245
\pinlabel{$4$} at 42 295
\pinlabel{$4$} at 80 300
\pinlabel{$4$} at 195 315
\pinlabel{$6$} at 220 245
\pinlabel{$12$} at 30 340

\endlabellist 
\centering 
\includegraphics[scale=0.70]{EWfive} 
\vspace*{5mm} 
\caption{The Eggers-Wall tree of Example \ref{EWmany}} 
\label{fig:EWfive}
\end{figure}

\begin{remark}  \label{rem:ultradual}
       Eggers \cite{E 83} had constructed a tree 
      which is nearly homeomorphic to the tree of Definition \ref{def:asstree} (there may 
      be a difference in the neighborhood of their roots, which the interested reader 
      may find without difficulties). He did not consider on it the index function. 
      Instead, he considered two types of edges which, given the exponent 
      function,  allowed to encode the same information as the index function.  
      Another difference with Definition \ref{def:EW} is that Eggers assumed that 
      the smooth branch $L$ is transversal to the tangents of all the branches of 
      $\cF$. What we call \emph{Eggers-Wall tree} was introduced by Wall \cite{W 03} 
      in his reinterpretation of Eggers' work using computations of $0$-chains and 
      $1$-chains supported on the tree. 
 \end{remark}

 \begin{remark} 
      Before Eggers' work, Kuo and Lu \cite{KL 77} had introduced a related tree 
      associated to a finite set $\cF$ of branches on $(\C^2, 0)$, different from the $y$-axis. 
      Namely, they represented by a segment each \emph{Newton-Puiseux root} of 
      the branches of $\cF$, exactly as in Definition \ref{def:EWirr}. 
      Then the construction proceeded exactly like for 
      Eggers-Wall trees, with a slight variant. Namely, they used a general graphical convention 
      for building \emph{dendrograms} in genetics, using only horizontal and vertical segments. 
      If one proceeds instead as in our Definition \ref{def:EW}, one may prove that 
      the Galois group of the extension of $\C[[x]]$ obtained by adjoining the roots 
      of $\cF$ acts on the resulting tree, and that its quotient by this action is the 
      Eggers-Wall tree $\Theta_L(\cF)$. Moreover, the index of each point of 
      $\Theta_L(\cF)$ is the cardinal of the fiber of this quotient map. 
 \end{remark}

The fact that in the previous notations $\Theta_L(\cF), \ex_L, \de_L$ we mentioned 
only the dependency on $L$, and not the whole coordinate system $(x,y)$, 
comes from the following fact: 

\begin{proposition}  \label{smoothcv}
 The Eggers-Wall tree  $\Theta_L(\cF)$, 
 seen as a rooted tree endowed with the exponent function $\ex_L$ and the index  
 function $\de_L$, depends only on the pair $(\cF, L)$, where $L$ is defined 
 by $x=0$.
\end{proposition}

\begin{proof}
     $\bullet$ \emph{Assume first that $A$ is a branch distinct from $L$.}
     
    Choose some $p \in \N^*$. Let $\phi_p : \tilde{S} \to S$ be the cyclic cover of $S$ of 
    degree $p$, ramified along $L$. Denote by $\tilde{O}\in \tilde{S}$ the preimage of 
    $O\in S$. 
    Consider then the (total) pullback $\phi_p^* A$. By computing 
    in the coordinates $(x,y)$, with respect to which the morphism $\phi_p$ is simply given by 
    $x= u^p, y = v$ for suitably chosen coordinates $(u,v)$ on $\tilde{S}$, 
    one sees that this pullback has only smooth branches if and only if 
    $p$ is divisible by $A \cdot L$. 
    
    Suppose that this is the case. Then again by computing in local coordinates, one sees 
    that the set $\cE(A)$ of characteristic exponents of $A$ with respect to $(x,y)$ 
    is equal to the set of rationals of the form $\frac{1}{p} A_i \cdot_{\tilde{O}} A_j$, where 
    $(A_i, A_j)$ varies among the set of couples of distinct branches of $\phi_p^* A$ 
    and the intersection numbers are computed at $\tilde{O}$.    
    This shows that $\cE(A)$ depends only on the pair $(A,L)$, and not on the 
    coordinate system $(x,y)$ chosen such that $L$ is defined by  $x=0$. 
    
    \medskip
    $\bullet$ \emph{Assume now that $A$ and $B$ are two different branches distinct from $L$.}
    Take $p \in \N^*$ divisible by both $A \cdot L$ and $B \cdot L$. Again by computing 
    in the coordinates $(x,y)$, we see that $k_L(A,B)$ is the maximal value of the 
    rational numbers of the form $\frac{1}{p} A_i \cdot_{\tilde{O}} B_j$, 
    where $A_i$ varies among the 
    branches of $\phi_p^* A$ and $B_j$ among those of $\phi_p^*  B$. 
    This shows that the order of coincidence 
    of $A$ and $B$ with respect to $(x,y)$ depends also only on $L$. 
    
    \medskip
    By combining the two invariance properties we deduce the proposition. 
\end{proof}

Let us introduce a third function defined on the Eggers-Wall tree:

\begin{definition} \label{Intcoef}  
    Let $A$ be a branch  on $S$. 
    The {\bf contact complexity}  
    ${\ic}_L(P)$ of a point $P \in \Theta_L(A)$ 
    is defined by:
       $${\ic}_L(P) :=   \left(\sum_{j =1}^{l} \frac{\alpha_j - \alpha_{j-1}}{{\de}_{j-1}}\right)  + 
          \frac{{\ex}_L(P)- \alpha_l}{{\de}_l}.$$
     where $\alpha_0:=0$, $\alpha_1<  ... < \alpha_g$ are the characteristic exponents 
     of $A$ relative to $L$, that ${\de}_j :=  \de(\alpha_0, \dotsc, \alpha_j)$ is 
     the value of the index function ${\de}_L$ in restriction to the half-open interval  
     $ ( P_j = \ex_L^{-1}(\alpha_j) , P_{j+1} = \ex_L^{-1}(\alpha_{j+1})]$ 
     and ${\ex}_L(P) \in [\alpha_l, \alpha_{l +1}]$ 
   is the value of the exponent function 
     at the point $P$. The possibility  $\alpha_l=\alpha_0=0$ is  allowed.   
\end{definition}

\begin{remark}
    The previous definition gives the same value to ${\ic}_L(P)$ when 
    ${\ex}_L(P) = \alpha_l$, if we compute it by looking at $\alpha_l$ 
    either as an element of $[ \alpha_{l-1}, \alpha_l]$ or as an element 
    of $[ \alpha_{l}, \alpha_{l+1}]$. 
\end{remark}

Note that the right-hand side of the formula defining ${\ic}_L(P)$ may be reinterpreted 
as an integral of the piecewise constant function $1/ {\de}_L$  along the segment 
$[LP]$ of $\Theta_L(A)$, the measure being the one determined by the exponent 
function:

\begin{equation}  \label{intcoefint}
    {\ic}_L(P) = \int_L^{P} \frac{d \: {\ex}_L}{{\de}_L}. 
\end{equation}

This allows to express conversely ${\ex}_L$ in terms of ${\ic}_L(P)$ and ${\de}_L$:

\begin{equation}  \label{expfunctint}
    {\ex}_L(P) = \int_L^{P} {\de}_L \:  d \: {\ic}_L  . 
\end{equation}

\begin{remark}   \label{rem:integform}
   Formulae (\ref{intcoefint})  and (\ref{expfunctint}) are inspired by the 
   formulae (3.7) and (3.9) of Favre and Jonsson's book \cite{FJ 04}, 
   relating \emph{thinness} and \emph{skewness} 
   as functions on the valuative tree. 
\end{remark}

As the function $\de_L : \Theta_L(A) \to \N^*$ is increasing along the segment 
$\Theta_L(A)$, formulae (\ref{intcoefint})  and (\ref{expfunctint}) imply:

\begin{corollary} \label{convconc}
    The contact complexity $\ic_L$ is an increasing homeomorphism from 
    $\Theta_L(A)$ to $[0, \infty]$. Moreover, it is piecewise affine and 
    concave in terms of the parameter $\ex_L$. Conversely, the function $\ex_L$ 
    is continuous piecewise affine and 
    convex in terms of the parameter $\ic_L$. 
\end{corollary}

Let us consider now the case of a finite set $\cF$ of branches. 
As an easy consequence of Definition \ref{Intcoef}, we get:

\begin{lemma} \label{contint} $\,$ 
  The contact functions of the branches of $\cF$ glue 
        into a continuous strictly increasing surjection $\ic_L : \Theta_L(\cF) \to [0, \infty]$.
\end{lemma}

This allows us to formulate the following definition:

\begin{definition} \label{Intcoefgen}
    Assume that both $S$ and $L$ are smooth. 
    Let $\cF$ be a finite set of branches on $S$. 
    The {\bf contact complexity}
     ${\ic}_L : \Theta_L(\cF) \to [0, \infty]$ 
    is the function obtained by gluing the contact complexities  
    of the individual branches of $\cF$. 
\end{definition}

We chose the name of this function motivated by the following theorem, which is a 
reformulation of a result of Max Noether's paper \cite{N 90} 
(see also \cite[Theorem 4.1.6]{W 04}):

\begin{theorem}  \label{intcomp}
     Let $A$ and $B$ be two distinct branches of $\cF$. 
     Then, in terms of the partial order defined by the root $L$ on the set of vertices 
     of $\Theta_L(\cF)$, one has:
       $$U_L (A, B) = {\ic}_L(A \wedge_L B)^{-1} .$$
\end{theorem}

As a consequence, we get the following strengthening of P\l oski's theorem 
(what is stronger is the fact that the end-rooted tree associated to the ultrametric 
$U_L$ is isomorphic to the Eggers-Wall tree $\Theta_L(\cF)$):

\begin{theorem}  \label{thm:EgWallisom}
     Let $L$ be a smooth branch on a smooth germ of surface $S$. Consider 
     a finite set $\cF$ of branches on $S$, distinct from $L$. Then 
     $U_L$ is an ultrametric on $\cF$ and the associated end-rooted tree is 
     isomorphic to the Eggers-Wall  tree $\Theta_L(\cF)$ of $\cF$ relative to $L$.
\end{theorem}

\begin{proof}
    By Lemma \ref{contint}, $\ic_L $ restricts to a height function on the rooted tree 
     $\Theta_L(\cF)$. Therefore, its inverse $\ic_L^{-1}$ is a depth 
    function. By Lemma \ref{lem:depthultra}, we deduce that $U_L$ is an 
    ultrametric. Using then Proposition \ref{prop:reconstr}, applied to the 
    end-rooted tree $\Theta_L(\cF)$ depth-dated by $\ic_L^{-1}$, we 
    deduce that the end-rooted tree associated to the ultrametric $U_L$ on 
    $\cF$ is indeed isomorphic to $\Theta_L(\cF)$. 
\end{proof}

As a consequence of the previous theorem and of Theorem \ref{thm:topint}, we get:

\begin{theorem}  \label{thm:EWcvx}
     Let $L$ be a smooth branch on a smooth germ of surface $S$. Consider 
     a finite set $\cF$ of branches on $S$, distinct from $L$. Then the 
     Eggers-Wall tree $\Theta_L(\cF)$ is isomorphic as a rooted tree with the 
     convex hull of $\{L\} \cup \cF$ in the dual graph of an embedded 
     resolution of their sum, rooted at the strict transform of $L$. 
\end{theorem}

\begin{remark} \label{initisom}
    It was the third author who proved first an isomorphism theorem of this kind 
    in \cite[Theorem 4.4.1]{P 01}. There the isomorphism was proved by embedding 
    the two trees in a common space and proving that the images coincided. In that work  
    a slightly different notion of Eggers-Wall tree was used, coinciding topologically 
    with Eggers' original definition from \cite{E 83}. Later, \cite[Theorem 4.4.1]{P 01} 
      was refined by Wall 
      \cite[Sect. 9.4]{W 04} (see also \cite[Sect. 9.10, Rem. on Sect. 9.4]{W 04}). 
     Let us mention also that with the hypothesis of Theorem \ref{thm:EWcvx}, 
     the Eggers-Wall tree $\Theta_{L} (\cF)$ is combinatorially 
     isomorphic to the dual graph 
     of a \emph{partial} embedded resolution of $L + \cF$ (see  \cite[Section 3.4]{GP 03}).
\end{remark}

\begin{remark} \label{Campillo} 
     When both $S$ and $L$ are smooth and $\cF$ is a finite set of branches 
     on $S$ different from $L$, the fact that $U_L$ is an 
  ultrametric distance on $\cF$ even when the base field has positive characteristic 
   was proved before by the first author and P\l oski in \cite[Theorem 2.8]{GP 15}. 
  The associated end-rooted tree provides, in a way, a generalization of the notion of 
  characteristic exponents in positive characteristic introduced in Campillo's book (see     
  \cite[Chapter III]{C 80}). As noted in the introduction, our approach also works 
  for arborescent surface singularities $S$ defined over algebraically closed fields in positive 
  characteristic, therefore $U_L$ is an ultrametric also in this generality, for any 
  branch $L$ on $S$. 
\end{remark}

\section{Valuative considerations}
\label{sec:valcons}

In this section we recall first the notions of \emph{valuation} and \emph{semivaluation} on 
the local ring of $S$  and the natural partial order on the set of all 
semivaluations. We introduce the \emph{order valuations} defined by irreducible 
exceptional divisors and the \emph{intersection semivaluations} defined by the branches 
lying on $S$. The choice of a fixed branch $L$ on $S$ allows 
to define versions of the previous (semi)valuations which are 
\emph{normalized relative to $L$}. We prove then that, for 
arborescent singularities, two such normalized (semi)valuations 
are in the same order relation as their representative points in the dual tree of an 
embedded resolution of them and of the branch $L$, seen as a tree rooted at $L$.

\subsection{Basic types of valuations and semivaluations}
$\:$ 
\medskip

In this subsection we define the types of valuations and semivaluations 
considered in the sequel. We do not assume here that the normal surface singularity $S$ is 
arborescent.
\medskip

Denote by $\cO$ the local ring of $S$ and by $\mathfrak{m}$ its maximal ideal. 
Denote also:
   $$ \overline{\R}_+ := \R_+ \cup \{+ \infty\} = [0, + \infty]$$
 endowed with the usual total order. 
 
In full generality, a \emph{valuation} or a \emph{semivaluation} takes its values 
in an arbitrary totally ordered abelian group enriched with a symbol $+ \infty$ which 
is greater than any element of the group. Here we will restrict to the special 
case where the totally ordered abelian group is $(\R, +)$:

\begin{definition}  \label{def:semival}
    A {\bf semivaluation} on $\cO$ is a map $\nu: \cO \to \overline{\R}_+$ such that:
       \begin{itemize}
           \item $\nu(gh) = \nu(g) + \nu(h)$ for all $g,h \in \cO$. 
           \item $\nu(g+ h)Ê\geq \min\{ \nu(g), \nu(h)\}$ for all $g,h \in \cO$. 
           \item $\nu(1) =0$ and $\nu(0) = + \infty$.
       \end{itemize}
    If  $\nu^{-1}(+ \infty) =\{0\}$, then one says that $\nu$ is a {\bf valuation}. 
    Denote by $\mathrm{Val}(S)$ the set of valuations of $\cO$ and 
    by $\overline{\mathrm{Val}}(S)$ the set of semivaluations.  There is a natural 
    partial order $\leq_{val}$ on $\overline{\mathrm{Val}}(S)$, defined by:
       $$\nu_1 \leq_{val} \nu_2 \:  \Longleftrightarrow \:   \nu_1(h) \leq \nu_2(h), \:  \forall \:  h \in \cO.$$
\end{definition}

In the sequel we will consider the following special types of valuations and semivaluations:

\begin{definition}  \label{def:exval}
   Let $L$ be a branch on $S$ and $\pi$ be an embedded resolution of it. 
   As usual, $(E_v)_{v \in \cV}$ denote the irreducible components of its exceptional divisor. 
   Let $A$ be a branch different from $L$. 
    \begin{enumerate}
        \item If $v \in \cV$, the {\bf $v$-order}, denoted by  
             $\mathrm{ord}_{v} : \cO \to \overline{\R}_+$,  
              is defined by:  
            $$\mathrm{ord}_{v}(h) := \mbox{ order of vanishing of }  \pi^*(h) \mbox{ along } E_v. $$
        \item
             If $v \in \cV$, the {\bf $v$-order relative to $L$}, denoted by  
             $\mathrm{ord}_{v}^L : \cO \to \overline{\R}_+$,    is defined by:  
               $$\mathrm{ord}_{v}^L(h) := \frac{\mathrm{ord}_{v}(h)}
                      {- \check{E}_v \cdot \check{E}_l}. $$
       \item The {\bf $A$-intersection order}, denoted by 
          $\mathrm{int}_A : \cO \to \overline{\R}_+$, is defined by:
          $$ \mathrm{int}_A(h) := \left\{ \begin{array}{lcl} 
                              A \cdot Z_h & \mbox{ if }  & A \mbox{ is not a branch of } Z_h \\
                              + \infty &  & \mbox{ otherwise.}
                                   \end{array} \right. $$
       \item The {\bf $A$-intersection order relative to $L$}, denoted by 
            $\mathrm{int}_A^L : \cO \to \overline{\R}_+$, is   defined by:
                   $$ \mathrm{int}_A^L(h) :=   \frac{\mathrm{int}_A(h)}{A \cdot L}. $$
    \end{enumerate}
\end{definition}

Note that the functions $\mathrm{ord}_v$ and $\mathrm{ord}_v^L$ are valuations, but that the 
functions $ \mathrm{int}_A$ and $\mathrm{int}_A^L$ are semivaluations which 
are not valuations. Indeed, they take the value $+ \infty$ on all the elements of 
$\mathcal{O}$ which vanish on the branch $A$. 

\begin{remark}  \label{rem:normaliz}
    In \cite{FJ 04}, a semivaluation $\nu$ of the local ring $\C[[x,y]]$ is called 
    \emph{normalized} relative to the variable $x$ if $\nu(x) =1$. If $L$ 
    denotes the branch $Z_x$ then, with our notations, $\mathrm{ord}^L$ and 
    $\mathrm{int}_A^L$ are normalized relative to $x$. In our more general 
    context of arbitrary normal surface singularities, the branch $L$ is not 
    necessarily a principal divisor. 
\end{remark}

\begin{remark}  \label{rem:skewness}
   In \cite[Sect. 7.4.8]{J 15}, in which $S$ is considered to be \emph{smooth}, 
   Jonsson defines a function $\alpha$ on the set of valuations of the local ring 
    $\mathcal{O}$ which are proportional to the divisorial valuations $\mbox{ord}_u$, 
    by the following formula:
       $ \alpha( t \cdot  \mbox{ord}_u) = t^2 (\check{E}_u \cdot \check{E}_u).$
     That is, $\alpha$ is homogeneous of degree $2$ and takes the value 
     $\check{E}_u \cdot \check{E}_u$ on the valuation $\mbox{ord}_u$. 
     In \cite[Sect. 7.6.2, Note 13]{J 15}, he remarks that 
     this function $\alpha$ is the \emph{opposite} of the \emph{skewness} function 
     denoted with the same symbol $\alpha$ in \cite{FJ 04}. A smooth germ 
     $S$ is arborescent and verifies $\det(S) =1$. Therefore, by Proposition 
     \ref{prop:equlfund}, his definition may be reexpressed in the following way 
     in the same case of smooth germs $S$:
        $$ \alpha( t \cdot \mbox{ord}_u) = - t^2 p(u,u) = - t^2 \det(S)^{-1} \cdot p(u,u).$$ 
     This indicates two possible generalizations of the function $\alpha$ to 
     arbitrary arborescent singularities, depending on which of the two previous 
     equalities is taken as a definition. 
   \end{remark}

\subsection{The valuative partial order for arborescent singularities}
$\:$ 
\medskip

The following theorem extends Lemma 3.69 of Favre and Jonsson \cite{FJ 04} from 
a smooth germ $S$ of surface and a smooth branch $L$ on it, to arborescent singularities and 
arbitrary branches on them: 

\begin{theorem}  \label{thm:ordsemival}
   Let $L, A$ be two distinct branches on the arborescent singularity $S$. 
   Let $\pi$ be an embedded resolution of $L + A$ and let $\Gamma_L(A)$ 
   be the dual tree of the total transform of $L + A$. Consider $\Gamma_L(A)$ as a   
   combinatorial tree 
   rooted at $L$ and let $\preceq_L$ be the corresponding partial order. 
   Assume that $u, v \in \cV$. Then:
     \begin{enumerate}
          \item  \label{ineqord} 
              $\mathrm{ord}_{u}^L \leq_{val} \mathrm{ord}_{v}^L$ if and only if 
                $u \preceq_L v$. 
                \smallskip
          \item   \label{ineqint} 
               $\mathrm{ord}_{u}^L \leq_{val} \mathrm{int}_A^L$ if and only if 
               $ u \preceq_L A$. 
     \end{enumerate}   
\end{theorem}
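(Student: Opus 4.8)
The strategy is to reduce both statements to a comparison of intersection numbers via the dual divisors $E_v^*$, and then to invoke the multiplicative identity of Proposition~\ref{lem:equalfund} (equivalently Proposition~\ref{prop:equlfund}) which governs the numbers $-E_u^*\cdot E_v^*$ along geodesics. First I would record the basic computational facts: for any component $E_u$ and any branch $B$ with strict transform $\tilde B$ meeting $E_{u(B)}$, one has $\mathrm{ord}_u(f_B)=-E_u^*\cdot E_{u(B)}^*$, since the exceptional transform of the principal divisor $\mathrm{div}(f_B)$ is $-E_{u(B)}^*$ by Lemma~\ref{lem:excdual} and its multiplicity along $E_u$ is $(-E_{u(B)}^*)\cdot E_u^* $ read off from~(\ref{eq:sense3}). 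More generally, for $h\in\mathcal O$ with strict transform $\tilde Z_h=\sum n_j \tilde C_j$, one has $\mathrm{ord}_u(h)=\sum_j n_j(-E_u^*\cdot E_{u(C_j)}^*)$ and $\mathrm{int}_A(h)=A\cdot Z_h=\sum_j n_j(-E_{u(A)}^*\cdot E_{u(C_j)}^*)$ (when $A$ is not among the $C_j$). Dividing by the normalizing factors $-E_u^*\cdot E_l^*$ and $A\cdot L=-E_{u(A)}^*\cdot E_l^*$ respectively gives closed formulas for $\mathrm{ord}_u^L(h)$ and $\mathrm{int}_A^L(h)$ as positive-coefficient linear combinations of the quantities $\dfrac{-E_u^*\cdot E_w^*}{-E_u^*\cdot E_l^*}$ and $\dfrac{-E_{u(A)}^*\cdot E_w^*}{-E_{u(A)}^*\cdot E_l^*}$, where $w$ ranges over the indices $u(C_j)$.

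\textbf{Reduction to a pointwise inequality on the tree.} Because every $h$ produces a non-negative combination of these ratios as $w$ runs over the components met by strict transforms of branches, and because every $w\in\cV$ is of the form $u(C)$ for some branch $C$ (blow up a generic point of $E_w$), the inequality $\mathrm{ord}_u^L\leq_{val}\mathrm{ord}_v^L$ is equivalent to the family of inequalities
\[
\frac{-E_u^*\cdot E_w^*}{-E_u^*\cdot E_l^*}\ \leq\ \frac{-E_v^*\cdot E_w^*}{-E_v^*\cdot E_l^*}\qquad\text{for all }w\in\cV,
\]
and similarly $\mathrm{ord}_u^L\leq_{val}\mathrm{int}_A^L$ is equivalent to the same inequalities with $E_v^*$ replaced by $E_{u(A)}^*$ and $w$ again ranging over $\cV$. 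Using Proposition~\ref{prop:equlfund} to convert $-E_x^*\cdot E_y^*$ into $\det(S)^{-1}p(x,y)$, this becomes
\[
\frac{p(u,w)}{p(u,l)}\ \leq\ \frac{p(v,w)}{p(v,l)}\qquad\text{for all }w\in\cV,
\]
i.e. $p(u,w)\,p(v,l)\leq p(v,w)\,p(u,l)$, and likewise with $v$ replaced by $a:=u(A)$. So both parts of the theorem follow once one shows: \emph{for vertices $u,v$ of the tree $\hat\Gamma$ rooted at $l:=u(L)$, one has $p(u,w)\,p(v,l)\leq p(v,w)\,p(u,l)$ for every vertex $w$ if and only if $u\preceq_L v$}.

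\textbf{Proving the tree inequality.} For the ``if'' direction, assume $u\preceq_L v$, i.e. $u\in[lv]$. Then for any $w$, one of the geodesics $[lw]$, passing or not through $u$ and $v$, falls into finitely many configurations; in each, Proposition~\ref{prop:refine} (the refined four-point inequality, applied to the quadruple $l,u,v,w$) yields exactly $p(l,v)\,p(u,w)\leq p(l,u)\,p(v,w)$ provided $[lv]\cap[uw]\neq\emptyset$, which holds because $u\in[lv]$ forces $u\in[lv]\cap[lw]\subseteq[lv]\cap[uw]$ whenever $u\in[lw]$, and the remaining case $u\notin[lw]$ is handled by noting $[uw]$ then meets $[lv]$ at the point where $[uw]$ joins the geodesic $[lw]$, which lies on $[lu]\subseteq[lv]$. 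For the ``only if'' direction, suppose $u\not\preceq_L v$. I would choose a convenient test vertex $w$: take $w=u$ itself. Then the required inequality reads $p(u,u)\,p(v,l)\leq p(v,u)\,p(u,l)$, and by Corollary~\ref{cor:ineqd} (Cauchy--Schwarz, strict unless $v=u$) together with Proposition~\ref{prop:refine}, this forces $[lv]\cap[uu]=\{u\}\neq\emptyset$, i.e. $u\in[lv]$, contradicting $u\not\preceq_L v$; so the inequality fails for $w=u$, hence $\mathrm{ord}_u^L\not\leq_{val}\mathrm{ord}_v^L$. The same test vertex $w=u$ works verbatim for part~(\ref{ineqint}), with $v$ replaced by $a=u(A)$, using $A\cdot L=-E_a^*\cdot E_l^*>0$ to divide.

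\textbf{Anticipated main obstacle.} The genuinely delicate point is the ``if'' direction of the tree inequality: one must check $p(u,w)\,p(v,l)\leq p(v,w)\,p(u,l)$ for \emph{all} $w$, not just those ``beyond'' $v$, and the sign/direction of the four-point inequality of Proposition~\ref{prop:refine} depends on how the convex hull $[luvw]$ degenerates. I expect to organize this into the three generic tree shapes of Figure~\ref{gentree} plus their degenerations, and in each case verify that $u\preceq_L v$ (i.e. $u\in[lv]$) indeed guarantees $[lv]\cap[uw]\neq\emptyset$, which is precisely the condition Proposition~\ref{prop:refine} requires for the inequality to point the correct way. A secondary technical point is justifying that testing on principal divisors $f_C$ (equivalently, on all $w\in\cV$) suffices to detect $\leq_{val}$ on all of $\mathcal O$: this follows because $\mathrm{ord}_u^L$ and $\mathrm{int}_A^L$ are determined on $h$ by the pair $(\tilde Z_h, \text{coefficients})$ through non-negative combinations of the $w$-values, so an inequality valid for every single-branch test is automatically valid for every $h$.
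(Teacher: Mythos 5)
Your forward implications (that $u \preceq_L v$ implies $\mathrm{ord}_u^L \leq_{val} \mathrm{ord}_v^L$, and likewise with $\mathrm{int}_A^L$) are correct and follow the paper's own route: express $\mathrm{ord}_u(h)$ and $\mathrm{int}_A(h)$ as non-negative combinations of the numbers $-E_u^*\cdot E_w^*$ via Proposition \ref{prop:exprexcept}, Lemma \ref{lem:excdual} and Corollary \ref{cor:invint}, translate into determinant products by Proposition \ref{prop:equlfund}, and conclude with Proposition \ref{prop:refine}. The case analysis you sketch for $[lv]\cap[uw]\neq\emptyset$ is unnecessary (and the inclusion $[lv]\cap[lw]\subseteq[lv]\cap[uw]$ you invoke is not correct in general): since $u\preceq_L v$ gives $u\in[lv]$ and trivially $u\in[uw]$, the intersection always contains $u$.

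The genuine gap is in the converse implications, at the step ``the inequality fails for $w=u$, hence $\mathrm{ord}_u^L\not\leq_{val}\mathrm{ord}_v^L$.'' The order $\leq_{val}$ is tested only on elements $h\in\mathcal{O}$, and the exceptional transforms $D_{Z_h}$ of principal divisors form a proper sub-semigroup of $-\check{\sigma}\cap\Lambda$; the paper itself warns, right after Proposition \ref{prop:exprexcept}, that not every element of this semigroup is the exceptional transform of a principal divisor. Your reduction rests on the claim that every $w\in\cV$ equals $u(C)$ for some branch $C$ and that one may then test with ``the principal divisor $f_C$''; but on a general arborescent singularity the local ring is not factorial, such a curvetta $C$ is a Weil divisor that need not be principal, so there may be no $f_C\in\mathcal{O}$ with $Z_{f_C}=C$, and a non-principal branch cannot serve as a test object for $\leq_{val}$. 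Note also that $E_u^*$ lies on the boundary of the nef cone, so one cannot directly invoke realizability of nef classes either. The paper closes exactly this hole: from $u\not\preceq_L v$ it gets the \emph{strict} inequality $p(l,v)\,p(u,u)>p(l,u)\,p(v,u)$, perturbs $E_u^*$ to $H=E_u^*+\sum_{w\neq u}\epsilon_w E_w^*$ with the $\epsilon_w>0$ small enough that the strict inequality persists and $H$ lies in the \emph{interior} of $\check{\sigma}$, and then applies Proposition \ref{prop:intrealis} to find $n\in\N^*$ and $h\in\mathfrak{m}$ with $D_{Z_h}=-nH$, which yields $\mathrm{ord}_u^L(h)>\mathrm{ord}_v^L(h)$; the same construction is needed for part (\ref{ineqint}). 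Without Proposition \ref{prop:intrealis} (or an equivalent realizability statement plus this perturbation), your claimed equivalence between $\leq_{val}$ and the finite family of $w$-indexed inequalities is unproved in the direction you need, and your argument covers only the factorial (e.g.\ smooth) case, which is the already known situation.
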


\begin{proof}
   The proof of this theorem is  strongly based on the determinantal 
   formula of Eisenbud and Neumann stated in Proposition \ref{prop:equlfund}. 
\medskip

\noindent 
 {\bf The proof of the implication $u \preceq_L v 
            \Longrightarrow \mathrm{ord}_{u}^L \leq_{val} \mathrm{ord}_{v}^L $ in point \ref{ineqord}. }
            Consider an arbitrary germ of function $h \in \frak{m}$.  We want to prove that 
            $\mathrm{ord}_{u}^L(h) \leq \mathrm{ord}_{v}^L(h) $.
            Let us work with an embedded resolution of $L + Z_h$. This is no reduction of 
            generality, as the truth of the relation $u \preceq_L v$ does not depend on the 
            resolution on which $E_u$ and $E_v$ appear as irreducible components of the 
            exceptional divisor. 
           As $\mathrm{ord}_{u}(h)$ is the coefficient of $E_u$ in the exceptional transform 
           $(\pi^* Z_h)_{ex}$ of $Z_h$, the expansion (\ref{eq:sense3}) shows that: 
              \begin{equation} \label{eq:ordexpr} 
                    \mathrm{ord}_{u}(h)=  \check{E}_u \cdot (\pi^*{Z_h})_{ex}. 
              \end{equation}
            The desired inequality $\mathrm{ord}_{u}^L(h) \leq \mathrm{ord}_{v}^L(h)$ becomes:
              $$ \frac{ \check{E}_u \cdot (\pi^*Z_h)_{ex}}{- \check{E}_u \cdot \check{E}_l} 
                   \leq  \frac{ \check{E}_v \cdot (\pi^* Z_h)_{ex}}{- \check{E}_v \cdot \check{E}_l}. $$
           The divisor $(\pi^* Z_h)_{ex}$ being a linear combination with non-negative coefficients 
           of the divisors $(-\check{E}_w)_{w \in \cV}$ (see Proposition \ref{prop:exprexcept}), it is 
           enough to prove that:
               $$ \frac{ - \check{E}_u \cdot \check{E}_w}{- \check{E}_u \cdot \check{E}_l} 
                   \leq  \frac{ - \check{E}_v \cdot \check{E}_w}{- \check{E}_v \cdot \check{E}_l} 
                     \mbox{ for all } w \in \cV.$$
            By Proposition \ref{prop:equlfund}, the previous inequality is equivalent to: 
              $$ p(l,v) \cdot p(u,w) \leq p(l,u) \cdot p(v,w).$$
            But this last inequality is true, as a consequence of Proposition \ref{prop:refine}. 
            Indeed, the inequality $u \preceq_L v$ implies that $u \in [lv] \cap [uw]$, therefore 
            $[lv] \cap [uw] \neq \emptyset$.

         \medskip   
     \noindent
        {\bf The proof of the implication  
         $\mathrm{ord}_{u}^L \leq_{val} \mathrm{ord}_{v}^L 
            \Longrightarrow u \preceq_L v$ in  point \ref{ineqord}. }
             Assume by contradiction that the inequality $u \preceq_L v$ is not true. 
             This means that $[lv] \cap [uu] = \emptyset$. 
             As $[lu] \cap [vu] \neq \emptyset$ (because $u$ belongs to this intersection), Proposition 
             \ref{prop:refine} implies the inequality:
                 $$p(l,v) \cdot p(u,u) > p(l,u) \cdot p(v,u),$$
             which may be rewritten, using Proposition \ref{prop:equlfund}, as:
                $$ \frac{ - \check{E}_u \cdot \check{E}_u}{- \check{E}_u \cdot \check{E}_l} 
                   >  \frac{ - \check{E}_v \cdot \check{E}_u}{- \check{E}_v \cdot \check{E}_l}.$$
             Therefore, whenever the positive rational numbers 
             $(\epsilon_w)_{w \in \cV \setminus \{u \} }$ are small enough, one has also the strict 
             inequality: 
               \begin{equation} \label{eq:choiceH}
                      \frac{ - \check{E}_u \cdot H}{- \check{E}_u \cdot \check{E}_l} 
                      >  \frac{ - \check{E}_v \cdot H}{- \check{E}_v \cdot \check{E}_l},
               \end{equation}
             where $H \in \Lambda_{\Q}$ is defined by:
                 $H := \check{E}_u +  \sum_{w \in \cV \setminus \{u \} } \epsilon_w \check{E}_w.$
              As $H \in \check{\sigma}$, Proposition \ref{prop:intrealis} shows that 
              there exists $n \in \N^*$ such that $-nH$ is the exceptional transform of 
              a principal divisor. Denote by $h \in \frak{m}$ a defining function of such a 
              divisor. Therefore, $-nH = (\pi^* Z_h)_{ex}$, and the inequality (\ref{eq:choiceH}) 
              implies:
              \begin{equation} \label{eq: uhvh}
                  \frac{ \check{E}_u \cdot (\pi^* Z_h)_{ex}}{- \check{E}_u \cdot \check{E}_l} 
                     >  \frac{ \check{E}_v \cdot (\pi^* Z_h)_{ex}}{- \check{E}_v \cdot \check{E}_l}. 
                    \end{equation}

               Using formula (\ref{eq:ordexpr}), this inequality may be rewritten as:
                 $ \mathrm{ord}_{u}^L(h) > \mathrm{ord}_{v}^L(h). $
               But this contradicts the hypothesis 
               $\mathrm{ord}_{u}^L \leq_{val} \mathrm{ord}_{v}^L$.                              
   
   \medskip
        \noindent
       {\bf The proof of the implication $u \preceq_L A 
            \Longrightarrow \mathrm{ord}_{u}^L \leq_{val} \mathrm{int}_{A}^L $ in  point \ref{ineqint}.}
            Consider an arbitrary germ of function $h \in \frak{m}$.  We want to prove that 
            $\mathrm{ord}_{u}^L(h) \leq \mathrm{int}_{A}^L(h) $.
            We may assume that we work with a resolution of $L + A + Z_h$. 
            By Proposition \ref{prop:intexcep},  we have that 
            $\mathrm{int}_{A}(h) = - (\pi^* A)_{ex}\cdot (\pi^* Z_h)_{ex}$. 
            Using Lemma \ref{lem:excdual}, we deduce the equality:
                \begin{equation} \label{eq:intform} 
                      \mathrm{int}_{A}(h) = \check{E}_a \cdot (\pi^* Z_h)_{ex},
                \end{equation}
             where $E_a$ denotes the unique component of the exceptional divisor $E$ which 
             intersects the strict transform of $A$. By Corollary \ref{cor:invint}, $A \cdot L = 
              - \check{E}_a \cdot \check{E}_l$. 
              Therefore, the desired inequality $\mathrm{ord}_{u}^L(h) \leq \mathrm{int}_{A}^L(h) $ 
              becomes:
                 $$ \frac{ \check{E}_u \cdot (\pi^* Z_h)_{ex}}{- \check{E}_u \cdot \check{E}_l} 
                   \leq  \frac{ \check{E}_a \cdot (\pi^* Z_h)_{ex}}{- \check{E}_a \cdot \check{E}_l}. $$
           As before, it is enough to prove that:
               $$ \frac{ - \check{E}_u \cdot \check{E}_w}{- \check{E}_u \cdot \check{E}_l} 
                   \leq  \frac{ - \check{E}_a \cdot \check{E}_w}{- \check{E}_a \cdot \check{E}_l}  
                     \mbox{ for all } w \in \cV.$$      
            By  Proposition \ref{prop:equlfund}, the previous inequality is equivalent to: 
              $$ p(l,a) \cdot p(u,w) \leq p(l,u) \cdot p(a,w).$$
            But this last inequality is true, as a consequence of Proposition \ref{prop:refine}. 
            Indeed, the inequality $u \leq_L A$ implies that $u \in [la] \cap [uw]$, therefore 
            $[lv] \cap [uw] \neq \emptyset$.

            \medskip
     \noindent
      {\bf The proof of the implication $\mathrm{ord}_{u}^L \leq_{val} \mathrm{int}_{A}^L 
            \Longrightarrow u \preceq_L A$ in  point \ref{ineqint}.}
            We reason again by contradiction, assuming that the inequality $u \preceq_L {A}$ 
            is not true. This means that $[la] \cap [uu] = \emptyset$. 
             As $[lu] \cap [au] \neq \emptyset$ (because $u$ belongs to this intersection), 
             Proposition \ref{prop:refine} implies that:
                 $$p(l,a) \cdot p(u,u) > p(l,u) \cdot p(a,u).$$
            Replacing $v$ by $a$ in the reasoning done in the proof of formula 
            (\ref{eq: uhvh}) above,  we arrive 
            at the following inequality:
               $$  \frac{ \check{E}_u \cdot (\pi^* Z_h)_{ex}}{- \check{E}_u \cdot \check{E}_l} 
                     >  \frac{ \check{E}_a \cdot (\pi^* Z_h)_{ex}}{- \check{E}_a \cdot \check{E}_l}. $$
            Combining it with formulae (\ref{eq:ordexpr}) and (\ref{eq:intform}), as well as 
            Proposition \ref{prop:intexcep}, it becomes:
              $\mathrm{ord}_{u}^L(h) >   \mathrm{int}_{A}^L(h).$
            But this contradicts the hypothesis $\mathrm{ord}_{u}^L \leq_{val} \mathrm{int}_{A}^L$. 
\end{proof}

By combining Theorem \ref{thm:ordsemival} with 
Theorem \ref{thm:topint}, we get: 

\begin{corollary}  \label{cor:valtree}
   Let $S$ be an arborescent  singularity and $\cF$ a finite set of branches  
   on it. Let $L$ be a branch not belonging to $\cF$. 
   Consider any embedded resolution $\pi$ of the sum $D$ of $L$ with the 
   elements of $\cF$. Let $(E_u)_{u \in \cV}$ be the components  of the exceptional 
   divisor of $\pi$. Then the partial 
   order $\preceq_{val}$ is arborescent in restriction to the set:
      $$ \{\mathrm{ord}_u^L \:  | \:   u \in \cV \} \cup \{ \mathrm{int}_{A}^L \: | \:  A \in \cF \}$$
   and the associated extended rooted tree (in the sense of Definition \ref{def:rt}) 
   is isomorphic with the convex hull of $\{ L \} \cup \cF$ in the dual tree of the 
   total transform of $D$ by $\pi$, rooted at the strict transform of $L$. 
\end{corollary}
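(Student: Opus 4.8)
The plan is to build an explicit isomorphism of posets between the vertex set of the dual tree (minus its root) and the set of (semi)valuations in the statement, and then to transport it along the functorial passage from arborescent posets to rooted trees of Definition~\ref{def:rt}. Write $\hat{\Gamma}$ for the dual tree of the total transform of $C+L$ by $\pi$ (so I will assume $\pi$ resolves $C+L$, which is clearly the intended hypothesis), let $\tilde{L}$ be the strict transform of $L$, viewed as the root of $\hat{\Gamma}$, and let $\preceq_L$ be the associated partial order on the vertex set $\cV\cup\{C_j : j\in J\}\cup\{\tilde{L}\}$. The map $\Phi$ to study sends $u\in\cV$ to $\mathrm{ord}_u^L$ and $C_j$ to $\mathrm{int}_{C_j}^L$, and the goal is to prove it is an isomorphism from $(\cV\cup\{C_j : j\in J\},\preceq_L)$ onto its image in $\overline{\mathrm{Val}}(S)$, ordered by $\leq_{val}$.

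I would first dispatch the comparisons between two valuations $\mathrm{ord}_u^L,\mathrm{ord}_v^L$, and between $\mathrm{ord}_u^L$ and an $\mathrm{int}_{C_j}^L$, using Theorem~\ref{thm:ordsemival}(\ref{ineqord}) and~(\ref{ineqint}) on the resolution $\pi$, which translate $\leq_{val}$ into $\preceq_L$ in both directions. The one point requiring genuine work will be the two comparisons involving the $\mathrm{int}_{C_j}^L$ that are not covered by that theorem: one must check that these semivaluations behave exactly like the leaves $\tilde{C}_j$ of $\hat{\Gamma}$, namely that they are maximal for $\leq_{val}$ and pairwise incomparable. This follows from the fact that $\mathrm{int}_{C_j}^L$ takes the value $+\infty$ on every nonzero $h$ in the ideal of $C_j$, whereas each $\mathrm{ord}_u^L$ is a genuine valuation and hence finite on nonzero elements: thus $\mathrm{int}_{C_j}^L\not\leq_{val}\mathrm{ord}_u^L$ (matching that $\tilde{C}_j$ is an end of $\hat{\Gamma}$); and for $i\neq j$ one picks a nonzero $h$ in the ideal of $C_i$ but not of $C_j$ (possible because $C_i\neq C_j$, so neither of the corresponding height-one primes of $\mathcal{O}$ contains the other), whence $\mathrm{int}_{C_i}^L(h)=+\infty>\mathrm{int}_{C_j}^L(h)$, and symmetrically, so $\mathrm{int}_{C_i}^L,\mathrm{int}_{C_j}^L$ are incomparable. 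These observations make $\Phi$ order-preserving and order-reflecting; together with the antisymmetry of $\preceq_L$ they also force $\Phi$ injective, so $\Phi$ is a poset isomorphism onto its image.

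It then follows immediately that $(\{\mathrm{ord}_u^L\}\cup\{\mathrm{int}_{C_j}^L\},\leq_{val})$ is arborescent, being isomorphic to the restriction of the partial order of the rooted tree $\hat{\Gamma}_{\tilde{L}}$ to a subset of its vertex set. For the tree identification I would note that $\tilde{L}$ is an end of $\hat{\Gamma}$ whose only neighbour is the component $E_{u(L)}$ meeting it; hence $u(L)$ is the unique minimal element of $(\cV\cup\{C_j : j\in J\},\preceq_L)$, and the underlying tree of its Hasse diagram is precisely $\hat{\Gamma}$ with the vertex $\tilde{L}$ deleted, rooted at $u(L)$. By Definitions~\ref{def:rt} and~\ref{def:canext}, its associated extended rooted tree is obtained by adjoining a new vertex joined to $u(L)$ and taking that vertex as root; identifying it with $\tilde{L}$ gives an isomorphism with $\hat{\Gamma}$ rooted at $\tilde{L}$. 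Transporting along $\Phi$, the same holds for $\hat{T}(\{\mathrm{ord}_u^L\}\cup\{\mathrm{int}_{C_j}^L\},\leq_{val})$, which is the assertion. The main obstacle is therefore confined to the intersection semivaluations, plus the bookkeeping that the Hasse diagram recovers all of $\hat{\Gamma}$ rather than some contraction of it; Theorem~\ref{thm:topint} can serve as a cross-check on the subtree spanned by $\tilde{L}$ and the $\tilde{C}_j$, but is not enough by itself, since the valuations $\mathrm{ord}_u^L$ for vertices $u$ off the geodesics are what reconstruct the full dual tree.
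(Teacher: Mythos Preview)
Your argument is correct and is essentially what the paper's one-line hint (``by combining Theorem~\ref{thm:ordsemival} with Theorem~\ref{thm:topint}'') is pointing at, fleshed out carefully. Two points worth recording: first, you rightly flag that the statement should require $\pi$ to be an embedded resolution of $C+L$, not just of $C$; second, your observation that Theorem~\ref{thm:ordsemival} alone does not settle the comparisons $\mathrm{int}_{C_j}^L \leq_{val} \mathrm{ord}_u^L$ and $\mathrm{int}_{C_i}^L \leq_{val} \mathrm{int}_{C_j}^L$, and your use of the $+\infty$ values to dispatch them, is exactly the missing ingredient --- and shows that Theorem~\ref{thm:topint} is in fact unnecessary here, the whole dual tree (not just the convex hull of the leaves) being recovered from Theorem~\ref{thm:ordsemival} together with those elementary remarks.
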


\begin{remark} \label{rem:birat}
Till now we worked with fixed embedded resolutions of the various reduced divisors 
considered on $S$. But, given a fixed divisor, one could consider the 
projective system of all its resolutions. One gets an associated direct system of  
embeddings of the dual graphs of total transforms. The associated ultrametric spaces 
are instead the same. Consider the set of all 
reduced divisors on $S$, directed by inclusion. One gets an associated direct 
system of isometric 
embeddings of ultrametric spaces, therefore of  isometric embeddings  
of associated trees. 
One could prove then an analog of Jonsson's \cite[Theorem 7.9]{J 15} 
(which concerns only \emph{smooth} germs $S$), which 
presents a valuative tree associated to the singularity $S$ (that is, a quotient 
of a Berkovich space) as a projective 
limit of dual trees. We prefer not to do this here, in order to restrict 
to phenomena visible on fixed resolutions of $S$ and which may be described 
by elementary combinatorial means, without any appeal to Berkovich geometry. 
\end{remark}

\begin{remark}   \label{RuGi}
  After having seen a previous version of this paper, Ruggiero sent us the preliminary 
  version \cite{GR 17} of the paper he writes with Gignac. In that paper they 
  extend to the spaces $\overline{\mathrm{Val}}(S)$ of semivaluations 
  of normal surface singularities $S$, part of the theory described 
  in \cite{FJ 04} and \cite{J 15}. This started our collaboration with Ruggiero 
  leading to the sequel \cite{GGPR 17} of the present paper.
\end{remark}

\section{Perspectives on non-arborescent singularities}
\label{sec:oprob}

In this section we give two examples, showing that for singularities 
which are not necessarily arborescent, $U_L$ is not necessarily an ultrametric 
or even a metric on the set of branches distinct from $L$. Then we state some open 
problems related with this phenomenon.

\medskip
\subsection{Non-arborescent examples}
$\:$

\begin{example} \label{ex:non-arborescent}
Consider the weighted dual graph $\Gamma$ represented in Figure \ref{X1} 
(the self-intersections being indicated between brackets and the genera  
being arbitrary). Denote by $I$ the associated intersection form. 
Consider the matrix of $-I$, obtained after having ordered 
the vertices $a,b,c,d$. By computing its principal minors, one sees that 
this symmetric matrix is positive definite, which shows that $I$ is 
negative definite. By a theorem of Grauert \cite[Page 367]{G 62} 
(see also Laufer \cite[Theorem 4.9]{L 71}), any 
divisor with normal crossings in a smooth complex surface which admits 
this weighted dual graph may be contracted to a normal surface singularity $S$.  
The graph $\Gamma$ 
admitting cycles, the singularity is not arborescent. Denote by 
$\pi$ the resolution of $S$ whose dual graph is $\Gamma$. 

Let us consider branches $A, B, C, L$ on $S$ whose strict transforms by $\pi$ 
are smooth and intersect transversally $E_a, E_b, E_c, E_l$ at smooth points 
of the total exceptional divisor $E$.  Therefore $(\pi^* A)_{ex}= -\check{E}_a$, 
$(\pi^* B)_{ex} = -\check{E}_b$, 
$(\pi^* C)_{ex} = -\check{E}_c$, and $(\pi^* L)_{ex} =  - \check{E}_l$.  

The entries $-(\check{E}_u\cdot \check{E}_v) \cdot \det(S)$ 
of the adjoint matrix of $(- E_u \cdot E_v)_{uv}$ are as  indicated in Figure \ref{X2}. 
Using Corollary \ref{cor:invint}, one may compute then the values of $U_L$, getting: 
  \begin{eqnarray*}
            \det(S) \cdot U_L(A,B)  
            = \dfrac{64 \cdot 70 \cdot 114}{6272}, \,   \\
            \det(S)  \cdot  U_L(A,C) 
             = \dfrac{64 \cdot 70 \cdot 114}{6440} , \,   \\
            \det(S)  \cdot  U_L(B,C) 
            = \dfrac{64 \cdot 70 \cdot 114}{6384} .
       \end{eqnarray*}
  
 As the three values are pairwise distinct, we see that $U_L$ is not an ultrametric 
 on the set $\{A, B, C\}$. Therefore it is nor an ultrametric on  the set 
  of branches $\mathcal{B}(S) \setminus \{ L \}$. 
  Let us mention that $\det(S) = 56$, even if one does not need this in order to do the 
  previous computations. 
  One may check immediately on the above values 
  that $U_L$ is nevertheless a metric on the set $\{A, B, C\}$. We do not 
  know if it is also a metric on $\mathcal{B}(S) \setminus \{ L \}$.
\end{example}

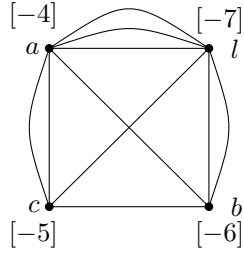
\begin{figure}
\centering
\begin{tikzpicture} [scale=0.7]

 \draw (0,0) .. controls (1.5 ,1) .. (3,0);
 \draw (0,0) .. controls (1.5,0.5) .. (3,0);
 \draw (0,0) .. controls (1,0) .. (3,0);

 \draw (0,0) .. controls (-0.5,-1.5) .. (0,-3);
 \draw (0,0) .. controls (0,-1) .. (0,-3);

  \draw (3,0) .. controls (3,-1) .. (3,-3);
 \draw (3,0) .. controls (3.5,-1.5) .. (3,-3);
 
 \draw (0,0) -- (3,-3);
  \draw (3,0) -- (0,-3);
  \draw (0,-3) -- (3,-3);
  
\node[draw,circle,inner sep=1pt,fill=black] at (0,0) {};

\node[draw,circle,inner sep=1pt,fill=black] at (3,0) {};

\node[draw,circle,inner sep=1pt,fill=black] at (0,-3) {};

\node[draw,circle,inner sep=1pt,fill=black] at (3,-3) {};

\draw (0 ,0) node[left]{ \hspace{-0,12cm} $a$ } ;

\draw (0 ,-3) node[left]{ \hspace{-0,12cm} $c$} ;

\draw (3 ,-3) node[right]{ \hspace{0,05cm} $b$ } ;

\draw (3 ,0) node[right]{ \hspace{0,05 cm} $l$} ;

\draw (-0.3 ,0.2) node[above]{  $[-4]$} ;
\draw (3.2 ,0.1) node[above]{  $[-7]$} ;
\draw (-0.3 ,-3.1) node[below]{ $[-5]$} ;
\draw (3.2 ,-3.1) node[below]{ $[-6]$} ;

\end{tikzpicture}
\caption{The dual graph of the singularity in  Example \ref{ex:non-arborescent}.} 
\label{X1}
\end{figure}

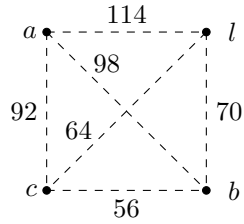
\begin{figure}
\centering
\begin{tikzpicture} [scale=0.7]

   \draw [dashed] (0,0) .. controls (1,0) .. (3,0);
 \draw [dashed]  (0,0) .. controls (0,-1) .. (0,-3);

  \draw [dashed]  (3,0) .. controls (3,-1) .. (3,-3);
 
 \draw [dashed]  (0,0) -- (3,-3);
  \draw [dashed]  (3,0) -- (0,-3);
  \draw [dashed]  (0,-3) -- (3,-3);
  
\node[draw,circle,inner sep=1pt,fill=black] at (0,0) {};

\node[draw,circle,inner sep=1pt,fill=black] at (3,0) {};

\node[draw,circle,inner sep=1pt,fill=black] at (0,-3) {};

\node[draw,circle,inner sep=1pt,fill=black] at (3,-3) {};

\draw (0 ,0) node[left]{ \hspace{-0,12cm} $a$ } ;

\draw (0 ,-3) node[left]{ \hspace{-0,12cm} $c$} ;

\draw (3 ,-3) node[right]{ \hspace{0,05cm} $b$ } ;

\draw (3 ,0) node[right]{ \hspace{0,05 cm} $l$} ;

 \draw (1.5 ,0) node[above]{  $114$} ; 
 \draw (0 , -1.5) node[left]{$92$} ; 
\draw (1.5 ,-3) node[below]{ $56$} ;
\draw (3 ,-1.5) node[right]{ $ 70$} ; 
\draw (0.7,-0.6) node[right]{ $98$} ; 
\draw (0.6,-2.2) node[above]{ $64$} ; 
\end{tikzpicture}
\caption{The label on the edge $[u v]$ is the number 
      $-(\check{E}_u\cdot \check{E}_v) \cdot \det(S)$ in Example \ref{ex:non-arborescent}. } 
   \label{X2}
\end{figure}

\begin{example} \label{ex:non-metric}
Consider the weighted graph $\Gamma$ represented in Figure \ref{X3} 
(the self-intersections being indicated between brackets and the genera  
being arbitrary). As in the previous example, we see that there exist 
normal surface singularities with such weighted dual graphs, and that 
they are not arborescent. 

Let us consider branches $A, B, C, L$ on $S$ with the same properties 
as in the previous example. 

The entries $-(\check{E}_u\cdot \check{E}_v) \cdot \det(S)$ 
of the adjoint matrix of $(- E_u \cdot E_v)_{uv}$ are as  indicated in Figure \ref{X4}. 
Using Corollary \ref{cor:invint}, one may compute then the values of $U_L$, getting: 
  \[
            \det(S) \cdot U_L(A,B) = 
            75, \quad
            \det(S) \cdot U_L(A,C) =  
             35 , \quad
            \det(S) \cdot U_L(B,C) = 
            35 .
     \]
 One sees that in this case $U_L$ is not even a metric on the set $\{A,B,C\}$. 
\end{example}

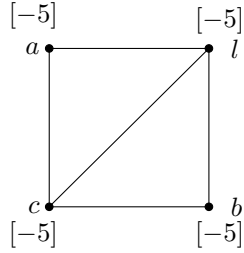
\begin{figure}
\centering
\begin{tikzpicture}  [scale=0.7]

 \draw (0,0) .. controls (1,0) .. (3,0);

 \draw (0,0) .. controls (0,-1) .. (0,-3);

  \draw (3,0) .. controls (3,-1) .. (3,-3);

  \draw (3,0) -- (0,-3);
  \draw (0,-3) -- (3,-3);
  
\node[draw,circle,inner sep=1pt,fill=black] at (0,0) {};

\node[draw,circle,inner sep=1pt,fill=black] at (3,0) {};

\node[draw,circle,inner sep=1pt,fill=black] at (0,-3) {};

\node[draw,circle,inner sep=1pt,fill=black] at (3,-3) {};

\draw (0 ,0) node[left]{ \hspace{-0,12cm} $a$ } ;

\draw (0 ,-3) node[left]{ \hspace{-0,12cm} $c$} ;

\draw (3 ,-3) node[right]{ \hspace{0,05cm} $b$ } ;

\draw (3 ,0) node[right]{ \hspace{0,05 cm} $l$} ;

\draw (-0.3 ,0.2) node[above]{  $[-5]$} ;
\draw (3.2 ,0.1) node[above]{  $[-5]$} ;
\draw (-0.3 ,-3.1) node[below]{ $[-5]$} ;
\draw (3.2 ,-3.1) node[below]{ $[-5]$} ;

\end{tikzpicture}
\caption{The dual graph of the singularity in Example \ref{ex:non-metric}.} \label{X3}
\end{figure}

\begin{figure}
\centering
\begin{tikzpicture} [scale=0.7]

   \draw [dashed]  (0,0) .. controls (1,0) .. (3,0);
   \draw [dashed]  (0,0) .. controls (0,-1) .. (0,-3);

  \draw [dashed] (3,0) .. controls (3,-1) .. (3,-3);
 
 \draw [dashed]  (0,0) -- (3,-3);
  \draw [dashed]  (3,0) -- (0,-3);
  \draw [dashed]  (0,-3) -- (3,-3);
  
\node[draw,circle,inner sep=1pt,fill=black] at (0,0) {};

\node[draw,circle,inner sep=1pt,fill=black] at (3,0) {};

\node[draw,circle,inner sep=1pt,fill=black] at (0,-3) {};

\node[draw,circle,inner sep=1pt,fill=black] at (3,-3) {};

\draw (0 ,0) node[left]{ \hspace{-0,12cm} $a$ } ;

\draw (0 ,-3) node[left]{ \hspace{-0,12cm} $c$} ;

\draw (3 ,-3) node[right]{ \hspace{0,05cm} $b$ } ;

\draw (3 ,0) node[right]{ \hspace{0,05 cm} $l$} ;

 \draw (1.5 ,0) node[above]{  $30$} ; 
 \draw (0 , -1.5) node[left]{$30$} ; 
\draw (1.5 ,-3) node[below]{ $30$} ;
\draw (3 ,-1.5) node[right]{ $ 30$} ; 
\draw (0.7,-0.6) node[right]{ $12$} ; 
\draw (0.6,-2.2) node[above]{ $35$} ; 
\end{tikzpicture}
\caption{The label on the edge $[u v]$ is the number 
$-(\check{E}_u\cdot \check{E}_v) \cdot \det(S)$ in Example \ref{ex:non-metric}. } \label{X4}
\end{figure}
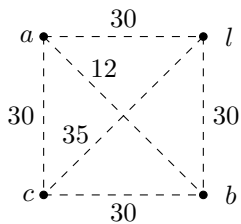

\medskip

\subsection{Some open problems}
$\:$ 
\medskip

Let us end this paper with some open problems: 
\begin{enumerate}
\item[1.]  \emph{Characterize the normal surface singularities for which 
    $U_L$ is a metric (compare with Examples \ref{ex:non-arborescent} and 
    \ref{ex:non-metric}). }
\item[2.] \emph{Characterize the normal surface singularities whose generic 
   hyperplane section is irreducible (compare with Theorem \ref{thm:ultraint}). }
\item[3.] \emph{Characterize the normal surface singularities  
    for which   $U_O$ is an ultrametric (compare with Theorem \ref{thm:ultraint}). }
\item[4.] \emph{Characterize the normal surface singularities for which 
    $U_O$ is a metric.}
\end{enumerate}

\bigskip
{\bf Acknowledgements.}
      This research was partially supported 
by the  French grant ANR-12-JS01-0002-01 SUSI and 
Labex CEMPI (ANR-11-LABX-0007-01), and also by  the Spanish 
Projects MTM2016-80659-P, 
MTM2016-76868-C2-1-P.
    The third author is grateful to Mar\'{\i}a Angelica Cueto, 
    Andr\'as N\'emethi and Dmitry Stepanov for inspiring conversations. 
    We are also grateful to Nicholas Duchon for having sent us his thesis, and to 
    Charles Favre, Mattias Jonsson, Andr\'as N\'emethi, 
    Walter Neumann and Matteo Ruggiero for their comments 
    on a previous version of this paper.


\begin{thebibliography}{00} 

    \bibitem{AAG 11} Ab\'{\i}o, I., Alberich-Carrami\~{n}ana, M., Gonz\'alez-Alonso V. 
        {\em The ultrametric space of plane branches.} Comm. in Algebra {\bf 39} No. 11 
        (2011), 4206-4220. 
        
     \bibitem{A 66} Artin, M.  {\em On isolated rational singularities of surfaces.} 
        American Journ. Math {\bf 83} (1966), 129-136. 
        
    \bibitem{BS 95} Bandelt, H.-J., Steel, M. A. {\em Symmetric matrices representable by 
       weighted trees over a cancellative abelian monoid.} SIAM J. Discrete Math. {\bf 8} (4) 
       (1995), 517-525. 
        
    \bibitem{BD 98} B\"ocker, S., Dress, S. {\em Recovering symbolically dated, rooted trees 
       from symbolic ultrametrics.} Advances in Maths. {\bf 138} (1998), 105-125. 
       
     \bibitem{BN 10} Braun, G., N\'emethi, A. {\em Surgery formula for Seiberg-Witten invariants 
         of negative definite plumbed $3$-manifolds.} J. Reine Angew. Math. {\bf 638} (2010), 
         189-208. 
       
    \bibitem{B 74} Buneman, P. {\em A note on the metric properties of trees}. 
       Journ. of Combinatorial Theory {\bf 17} (1974), 48-50. 
       
    \bibitem{C 88} Camacho, C. {\em Quadratic Forms and Holomorphic Foliations 
         on Singular Surfaces.} Math. Ann. {\bf 282} (1988), 177-184.

   \bibitem{C 80}  Campillo, A.  {\em Algebroid Curves in Positive Characteristic}. 
       Lecture Notes in Mathematics {\bf 813}. Springer, Berlin, 1980.
              
    \bibitem{CNP 06} Caubel, C., N\'emethi, A., Popescu-Pampu, P.  
        {\em Milnor open books and Milnor fillable contact 3-manifolds}.   
      Topology {\bf 45} (2006), 673-689. 
     
      
      \bibitem{CP 88} 
    Ch\c adzy\'nski, J., P\l oski, A. {\em An inequality for the intersection 
        multiplicity of analytic curves.} Bull.
    Polish Acad. Sci. Math. {\bf 36} (3-4)  (1988), 113-117.  
    
    \bibitem{C 34} Coxeter, H. S. M. {\em Discrete groups generated by reflections.} 
         Annals of Math. {\bf 35} No. 3 (1934), 588-621.
         
   \bibitem{C 73} Coxeter, H. S. M. {\em Regular polytopes.} Dover Publ. Inc., New York, 1973. 
        

    \bibitem{D 92} Dimca, A. {\em Singularities and topology of hypersurfaces}. 
        Universitext. Springer-Verlag, New York,  1992.
              
    \bibitem{D 82} Duchon, N. {\em Involutions of plumbed manifolds}. Thesis, 
        Univ. of Maryland, College Park, 1982. Available at 
          \href{http://sandsduchon.org/duchon/DuchonThesis.zip}{http://sandsduchon.org/duchon/DuchonThesis.zip}
          
           \bibitem{DV 40} Du Val, P.  {\em The unloading problem for plane curves}. 
        American Journ. of Maths. {\bf 62} No. 1 (1940), 307-311. 
          
           \bibitem{DV 44} Du Val, P.  {\em On absolute and non-absolute
    singularities of algebraic surfaces.} Revue de la Facult{\'e} des
    Sciences de l'Univ. d'Istanbul (A) \textbf{91} (1944), 159-215.

          
    \bibitem{E 83} Eggers, H. {\em Polarinvarianten und die Topologie von Kurvensingul\"aritaten.} 
       Bonner Math. Schriften {\bf 147} (1983). 
        
    \bibitem{EN 85} Eisenbud, D., Neumann, W.  {\em Three-dimensional link 
      theory and invariants of plane curve singularities.} Princeton Univ. Press, 1985. 
      
    \bibitem{FMS 14} Fauvet, F., Menous, F., Sauzin, D.  {\em Explicit linearization of 
       one-dimensional germs through tree-expansions}. 
       https://hal.archives-ouvertes.fr/hal-01053805v2. Submitted on 22 Jan 2015. 
      
      \bibitem{FJ 04} Favre, C., Jonsson, M.  {\em The valuative tree.}
  Lecture Notes in Mathematics, \textbf{1853}.  Springer-Verlag, Berlin,  2004. 
    
     \bibitem{G 96} Garc\'{\i}a Barroso, E.  {\em Invariants des singularit\'es de courbes 
       planes et courbure des fibres de Milnor}. PhD thesis, Univ. La Laguna, Tenerife 
       (Spain), 1996. Available at \href{http://ergarcia.webs.ull.es/tesis.pdf}{http://ergarcia.webs.ull.es/tesis.pdf}. 
              
        \bibitem{GP 15}  Garc\'{\i}a Barroso, E.,  P\l oski A., 
        {\em An approach to plane algebroid branches}.  Rev. Mat. Complut., 
            \textbf{28} (1) (2015), 227-252.
            
     \bibitem{GP 17}  Garc\'{\i}a Barroso, E.,  P\l oski A., 
        {\em On the intersection multiplicity of plane branches.} 
        \href{https://arxiv.org/pdf/1710.05346.pdf}{ArXiv:1710.05346}.   
    
     \bibitem{GBGPPP 17} Garc\'{\i}a Barroso, E., Gonz\'alez P\'erez, P.D., Popescu-Pampu, P. 
       {\em Variations on inversion theorems for Newton-Puiseux series}. Math. Annalen 
       {\bf 368} (2017), 1359-1397. 
       
      \bibitem{GGPR 17} Garc\'{\i}a Barroso, E., Gonz\'alez P\'erez, P.D., Popescu-Pampu, P., 
         Ruggiero, M. {\em Ultrametric distances on valuative spaces}. 
          \href{http://arxiv.org/abs/1802.01165}{ArXiv:1802.01165}. 
       
       \bibitem{GR 17} Gignac, W., Ruggiero, M. {\em Local dynamics of non-invertible maps 
          near normal surface singularities}. 
          \href{http://arxiv.org/abs/1704.04726}{Arxiv:1704.04726}.  
               
    \bibitem{GP 03} Gonz\'alez P\'erez, P.D. 
       {\em Toric embedded resolutions of quasi-ordinary hypersurface singularities}.     
       Ann. Inst. Fourier, Grenoble {\bf 53}, 6  (2003), 1819-1881. 
         
       \bibitem{G 62} Grauert, H.   {\em \"Uber Modifikationen und
                exzeptionnelle analytische Mengen.} Math. Ann. \textbf{146} (1962), 331-368.
          
      \bibitem{H 01} Holly, J. E. {\em Pictures of ultrametric spaces, the $p$-adic numbers, 
        and valued fields}. Amer. Math. Monthly {\bf 108} (2001), 721-728. 
        
      \bibitem{H 77} Hartshorne, R. {\em Algebraic geometry.} Springer, 1977. 
        
      \bibitem{I 14} Ishii, S. {\em Introduction to Singularities.} Springer,  2014.
      
      \bibitem{J 15}  Jonsson, M. {\em Dynamics on Berkovich spaces in low dimensions.}
       In {\em Berkovich spaces and applications}, 205-366. 
       Lecture Notes in Mathematics {\bf 2119}. Springer,  2015.      
       
       \bibitem{KL 77} Kuo, T.C., Lu, Y.C. {\em On analytic function germs of 
         two complex variables.} Topology {\bf 16} (1977), no. 4, 299-310.
       
       \bibitem{L 71} Laufer, H. {\em Normal two-dimensional singularities.}  
       Annals of Mathematics Studies {\bf 71}.  Princeton Univ. Press,  1971. 
        
     \bibitem{L 69} Lipman, J. {\em Rational singularities with applications to algebraic surfaces 
         and unique factorization}. Inst. Hautes \'Etudes Sci. Publ. Math.
          No. {\bf 36} (1969), 195-279. 
      
    \bibitem{M 61} Mumford, D.  {\em The topology of normal singularities of
         an algebraic surface and a criterion for simplicity.} Inst. Hautes \'Etudes Sci. Publ. Math.
          No. {\bf 9} (1961), 5-22.
          
    \bibitem{N 83} Neumann, W.  {\em Abelian covers of quasihomogeneous surface 
    singularities.}  In {\em Singularities, Arcata 1981}.  Proc. Symp. Pure Math. {\bf 40} 
    Amer. Math. Soc. 1983, 233-243.
          
    \bibitem{N 89} Neumann, W.  {\em On bilinear forms represented by trees}. 
       Bull. Austral. Math. Soc. {\bf 40} (1989), 303-321. 
           
    \bibitem{NW 05} Neumann, W., Wahl, J.  {\em Complete intersection singularities 
       of splice type as universal abelian covers.} Geometry and Topology {\bf 9} 
       (2005), 699-755. 
       
    \bibitem{N 90} Noether, M. \textit{Les combinaisons caract\'eristiques 
  dans la transformation d'un point singulier.} Rend.Circ.Mat.Palermo 
  \textbf{IV} (1890), 89-108, 300-301.
       
    \bibitem{O 06} Okuma, T.  {\em Universal abelian covers of certain surface singularities}. 
        Math. Ann. {\bf 334} (2006), 753-773.
       
     \bibitem{P 85} P\l oski, A. {\em Remarque sur la multiplicit\'e d'intersection des branches 
        planes.} Bulletin of the Polish Academy of Sciences. Mathematics. 
        Volume {\bf 33} (1985), 601-605.
        
      \bibitem{P 01} Popescu-Pampu, P. {\em Arbres de contact des singularit\'es 
         quasi-ordinaires et graphes d'adjacence pour les $3$-vari\'et\'es r\'eelles.} 
          PhD thesis, Univ. Paris 7, 2001. 
          Available at \href{https://tel.archives-ouvertes.fr/tel-00002800v1}{https://tel.archives-ouvertes.fr/tel-00002800v1}. 
          
     \bibitem{P 16} Popescu-Pampu, P. {\em Ultrametric spaces of branches on arborescent 
        singularities.} Math. Forsch. Oberwolfach Report {\bf 46} (2016), 2655-2658. 
                
       \bibitem{T 77} Teissier, B. {\em Sur une in\'egalit\'e \`a la Minkowski pour les multiplicit\'es.}
        Appendix to Eisenbud, D., Levine, H. : {\em An algebraic formula for the degree 
        of a $C^{\infty}$-map germ.} Ann. Math. {\bf 106} (1977), 19-44 (38-44).
        
         
    \bibitem{W 03} Wall, C.T.C. \textit{Chains on the Eggers tree and polar curves.} 
Proc. of the Int. Conf. on Algebraic Geometry and Singularities (Sevilla, 2001). 
Rev. Mat. Iberoamericana {\bf 19} (2003), no. 2, 745-754.
    
     \bibitem{W 04} Wall, C.T.C. {\em Singular points of plane curves.} 
    London Mathematical Society Student Texts, \textbf{63}. 
   Cambridge University Press, 2004.
      
    \bibitem{Z 62} Zariski, O. {\em The theorem of Riemann-Roch for high multiples 
       of an effective divisor on an algebraic surface.} Ann. Math. {\bf 76} No. 3 (1962), 560-615. 
    
           
\end{thebibliography}
\end{document}